\DeclareRobustCommand{\em}{%
	\@nomath\em \if b\expandafter\@car\f@series\@nil
	\normalfont \else \slshape \fi}
\newcommand{\spaceplease}{\needspace{5\baselineskip}}
\newtheoremstyle{mytheorem}
{\topsep}
{\topsep}
{\slshape}
{0pt}
{\bfseries}
{.}
{ }
{\thmname{#1}\thmnumber{ #2}\thmnote{ {\normalfont(#3)}}}
\newtheoremstyle{mydefinition}
{\topsep}
{\topsep}
{\normalfont}
{0pt}
{\bfseries}
{.}
{ }
{\thmname{#1}\thmnumber{ #2}\thmnote{ {\normalfont\slshape(#3)}}}
\theoremstyle{mytheorem}
\newtheorem{theorem}{Theorem}[section]
\newtheorem*{rep@theorem}{\rep@title}
\newcommand{\newreptheorem}[2]{%
	\newenvironment{rep#1}[1]{%
		\def\rep@title{#2 \ref{##1}}%
		\begin{rep@theorem}}%
		{\end{rep@theorem}}}
\newtheorem{lemma}[theorem]{Lemma}
\newtheorem{proposition}[theorem]{Proposition}
\newtheorem{corollary}[theorem]{Corollary}
\theoremstyle{mydefinition}\makeatletter
\newtheorem*{repx@theorem}{\repx@title}
\newcommand{\newrepxtheorem}[2]{%
	\newenvironment{repx#1}[1]{%
		\def\repx@title{#2 \ref{##1}}%
		\begin{repx@theorem}}%
		{\end{repx@theorem}}}
\newtheorem{definition}[theorem]{Definition}\newrepxtheorem{definition}{Definition}
\newenvironment{remark}
{\pushQED{\hfill$\lozenge$}\remm}
{\popQED\endremm}
\numberwithin{equation}{section}
\renewcommand{\today}{\ifcase \month \or January\or February\or March\or %
	April\or May \or June\or July\or August\or September\or October\or November\or %
	December\fi {} \number  \year} 
\newcommand{\monthyeardate}{%
	\DTMenglishmonthname{\@dtm@month}, \@dtm@year
}
\newtheorem*{theorem*}{Theorem}
\newtheorem*{corollary*}{Corollary}
\renewcommand\section{\@startsection {section}{1}{\z@}%
	{-3.5ex \@plus -1ex \@minus -.2ex}%
	{2.3ex \@plus.2ex}%
	{\normalfont\scshape\centering}}
\titleformat{\subsection}[runin]
{\normalfont\bfseries}
{\thesubsection}
{0.5em}
{}
[.]
\definecolor{mygreen}{rgb}{0, 0.471, 0}
\definecolor{myred}{rgb}{0.784, 0, 0}
\definecolor{myorange}{rgb}{0.922, 0.627, 0}
\definecolor{myviolet}{rgb}{0.686, 0.275, 0.882}
\newcommand{\C}{\mathbb{C}}
\newcommand{\R}{\mathbb{R}}
\newcommand{\bigslant}[2]{{\raisebox{.2em}{$#1$}\left/\raisebox{-.2em}{$#2$}\right.}}
\newcommand{\RepqG}{\text{Rep}_q\text{GL}_t}
\newcommand{\RepqH}{\text{Rep}_q\text{L}_{t}}
\newcommand{\RepqP}{\text{Rep}_q\text{P}_{t}}
\newcommand{\Hom}{\text{Hom}}
\newcommand{\End}{\text{End}}
\newcommand{\Rib}{\text{Rib}}
\newcommand{\Rep}{\text{Rep}}
\newcommand{\GL}{\text{GL}}
\renewcommand{\H}{\text{L}}
\renewcommand{\L}{\text{L}}
\renewcommand{\C}{\mathcal{C}}
\newcommand{\Sk}{\text{SkCat}}
\renewcommand{\P}{\text{P}}
\newcommand{\id}{\text{id}}
\newcommand{\Uqgln}{U_q(\mathfrak{gl}_n)}
\newcommand{\Uqglm}{U_q(\mathfrak{gl}_m)}
\newcommand{\Uq}{{U_q(\mathfrak{gl}_N)}}
\newcommand{\rot}{\text{rot}}
\newcommand{\ev}{\text{ev}}
\newcommand{\coev}{\text{coev}}
\newcommand{\Bimod}{\text{Bimod}}
\newcommand{\A}{\mathcal{A}}
\newcommand*{\QEDA}{\null\nobreak\hfill\ensuremath{\blacksquare}}
\newcommand{\orup}{\boldsymbol{\textcolor{myorange}{\uparrow}}}
\newcommand{\upp}{\text{\textcolor{blue}{$\boldsymbol{\uparrow}$}}}
\newcommand{\downn}{\text{\textcolor{blue}{$\boldsymbol{\downarrow}$}}}
\newcommand{\gup}{\text{\textcolor{mygreen}{$\boldsymbol{\uparrow}$}}}
\newcommand{\rup}{\text{\textcolor{myred}{$\boldsymbol{\uparrow}$}}}
\newcommand{\gdown}{\text{\textcolor{mygreen}{$\boldsymbol{\downarrow}$}}}
\newcommand{\rdown}{\text{\textcolor{myred}{$\boldsymbol{\downarrow}$}}}
\newcommand{\Vect}{\textsc{Vect}}
\newcommand{\Cat}{\textsc{Cat}}
\newcommand{\res}{\text{res}_{t}}
\newcommand{\pres}{\text{pRes}}
\newcommand{\Skh}{\text{SkCat}_{\text{L}_t}(S)}
\newcommand{\Skg}{\Sk_{\GL_t}(S)}
\newcommand{\skg}{\text{Sk}_{\GL_t}(S)}
\newcommand{\skh}{\text{Sk}_{\text{L}_t}(S)}
\newcommand{\gl}{\text{gl}}
\newcommand{\op}{\text{op}}
\newcommand{\gsquare}{{\textcolor{mygreen}{\blacksquare}}}
\newcommand{\redsquare}{{\textcolor{myred}{\blacksquare}}}
\newcommand{\orsquare}{{\textcolor{myorange}{\blacksquare}}}
\newcommand{\bsquare}{{\textcolor{blue}{\blacksquare}}}
\newcommand{\vsquare}{{\textcolor{myviolet}{\blacksquare}}}
\DeclareMathOperator*{\bigboxtimes}{\scalerel*{\boxtimes}{\sum}}
\let\emptyset\varnothing
\newcommand{\blueposx}{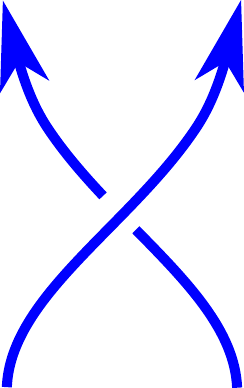}
\newcommand{\bluenegx}{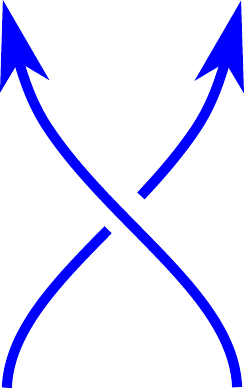}
\newcommand{\blueidid}{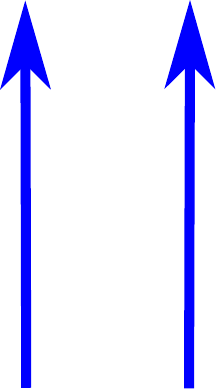}
\newcommand{\bluetwista}{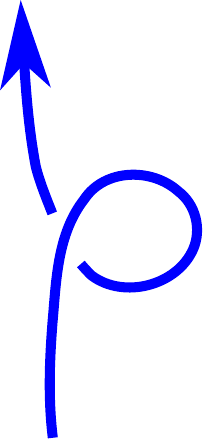}
\newcommand{\RIIIa}{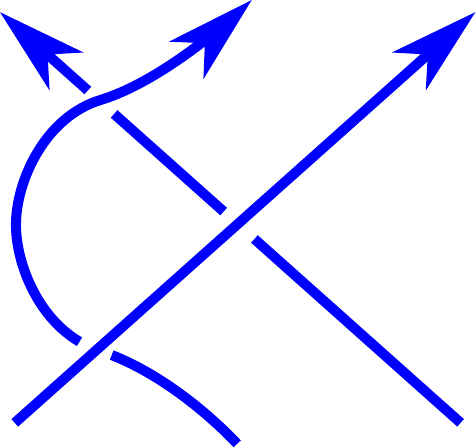}
\newcommand{\RIIIb}{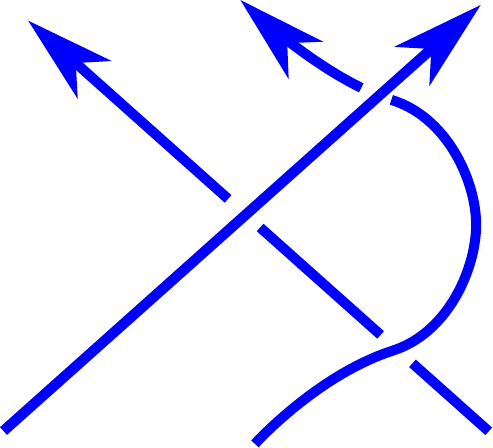}
\newcommand{\blueid}{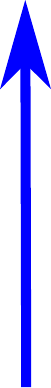}
\newcommand{\blueiddual}{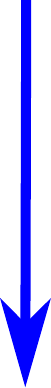}
\newcommand{\bknot}{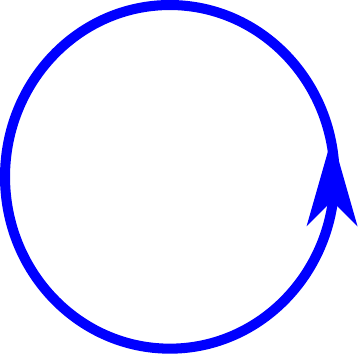}
\newcommand{\blueevv}{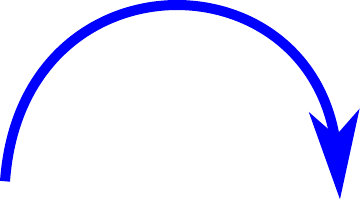}
\newcommand{\bluecoev}{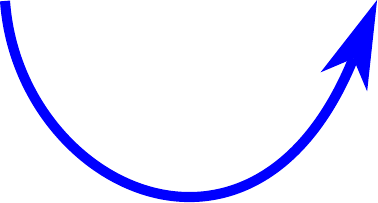}
\newcommand{\blueposz}{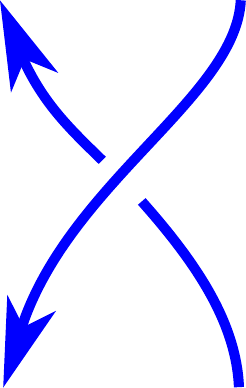}
\newcommand{\zigzaga}{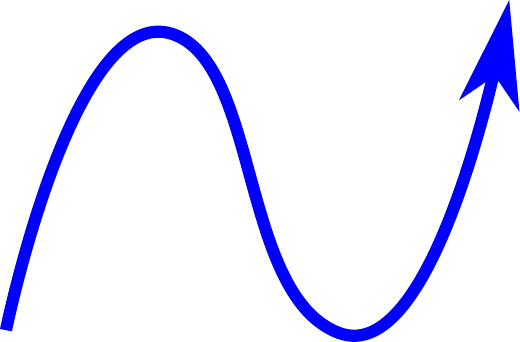}
\newcommand{\zigzagb}{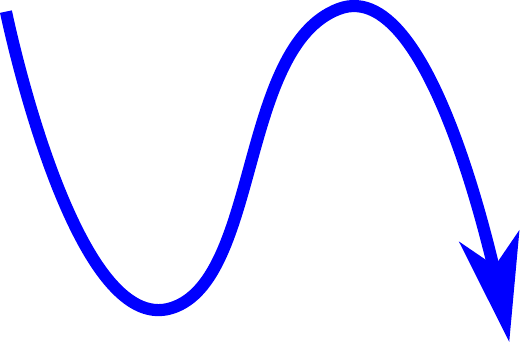}
\newcommand{\blueinvz}{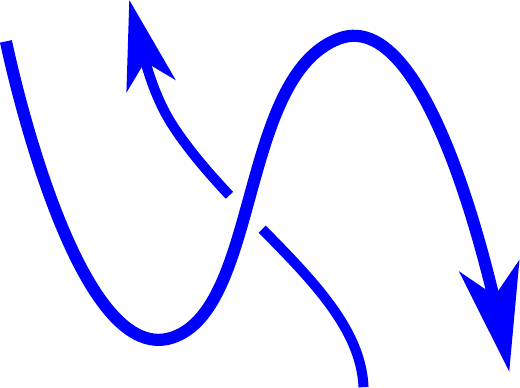}
\newcommand{\bluedimrelation}{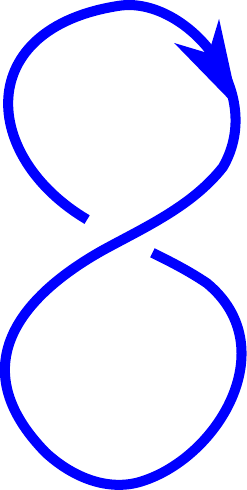}
\newcommand{\evfot}{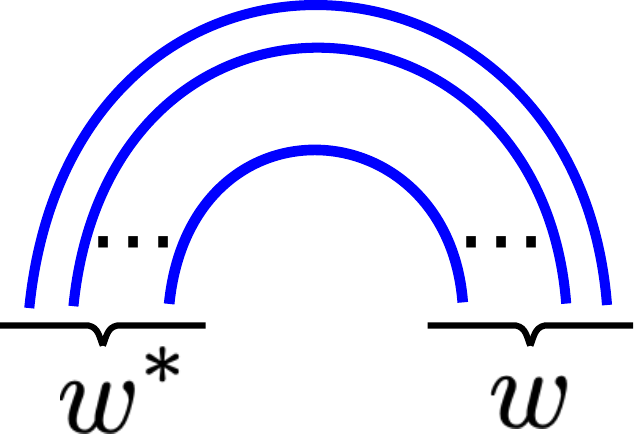}
\newcommand{\coevfot}{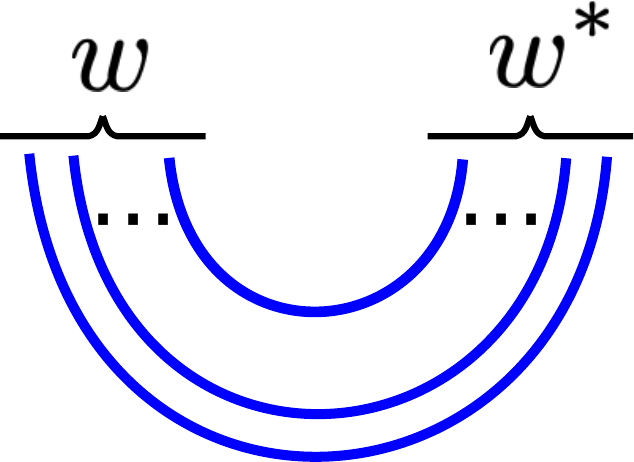}
\newcommand{\twistfot}{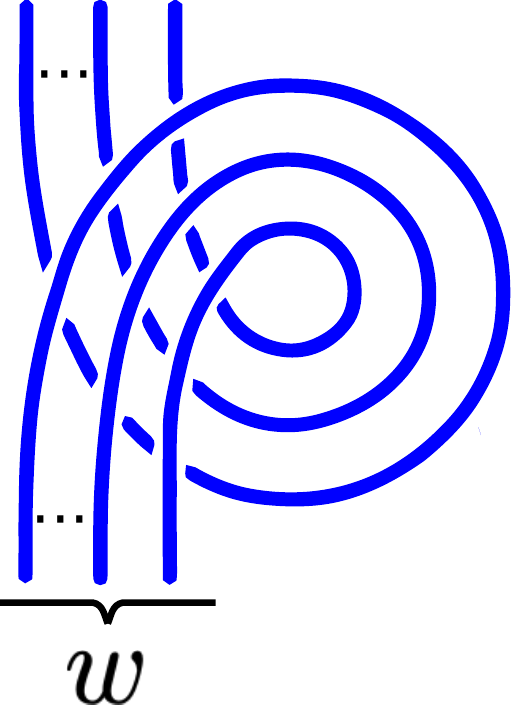}
\newcommand{\bluexx}{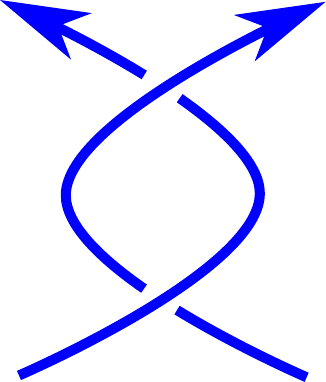}
\newcommand{\boxx}{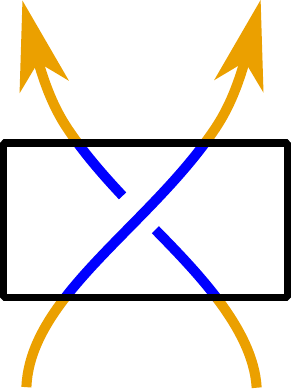}
\newcommand{\boxz}{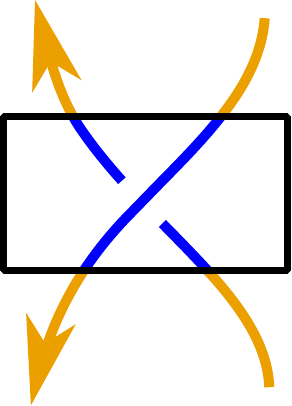}
\newcommand{\boxgooox}{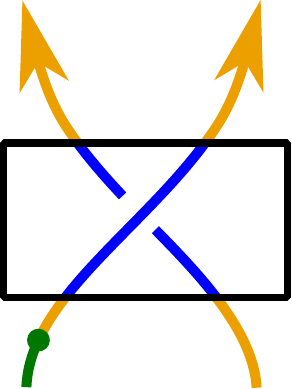}
\newcommand{\boxooorx}{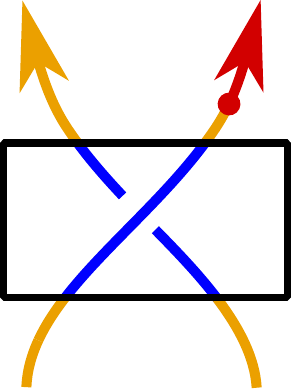}
\newcommand{\ooorx}{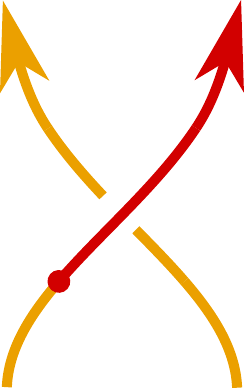}
\newcommand{\boxroooz}{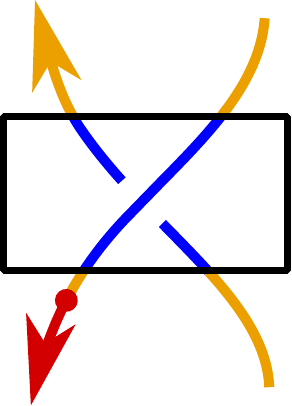}
\newcommand{\roooz}{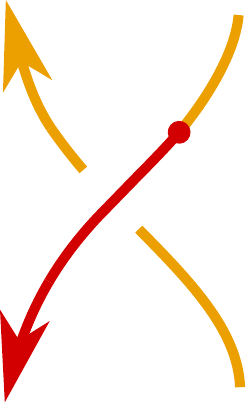}
\newcommand{\boxooogz}{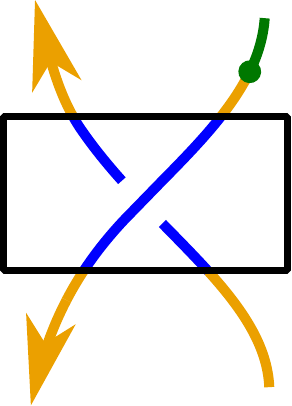}
\newcommand{\ooogz}{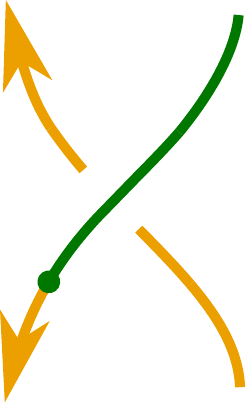}
\newcommand{\boxogoox}{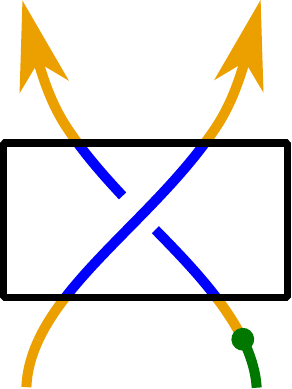}
\newcommand{\ogoox}{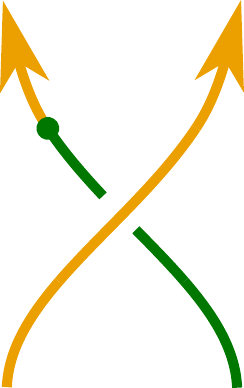}
\newcommand{\boxrooox}{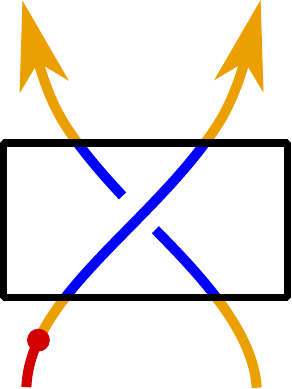}
\newcommand{\rooox}{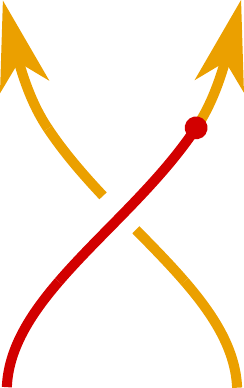}
\newcommand{\boxdim}{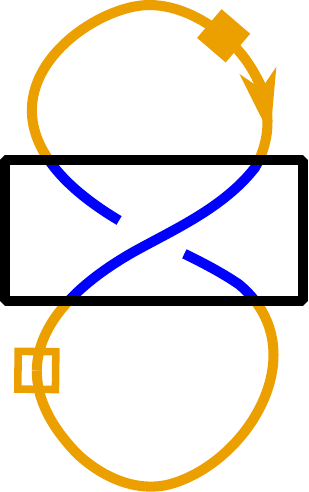}
\newcommand{\gooox}{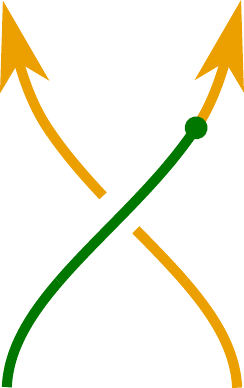}
\newcommand{\oposx}{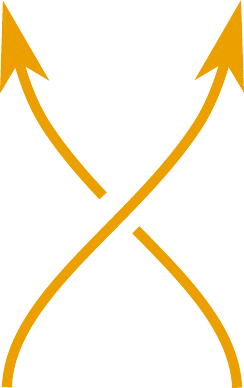}
\newcommand{\ornegx}{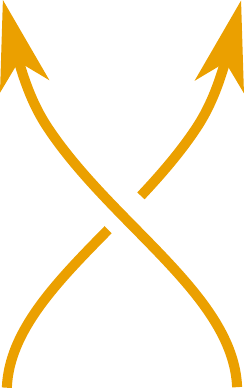}
\newcommand{\ortwist}{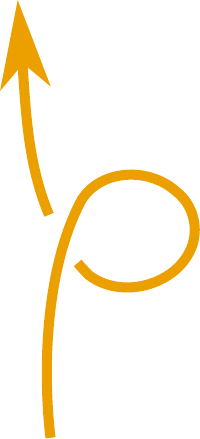}
\newcommand{\orid}{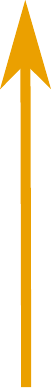}
\newcommand{\orknot}{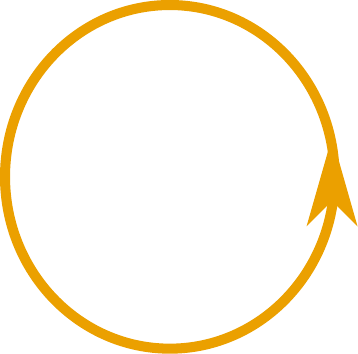}
\newcommand{\orip}{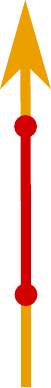}
\newcommand{\ogip}{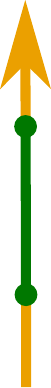}
\newcommand{\ogposx}{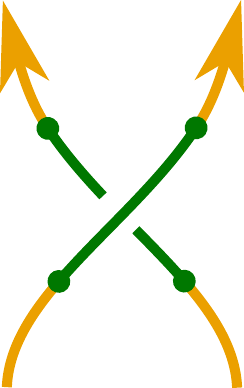}
\newcommand{\orposx}{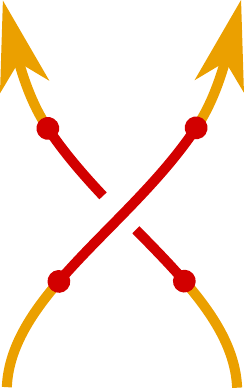}
\newcommand{\ogrx}{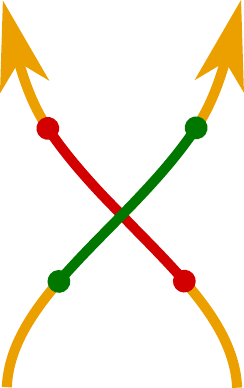}
\newcommand{\orgx}{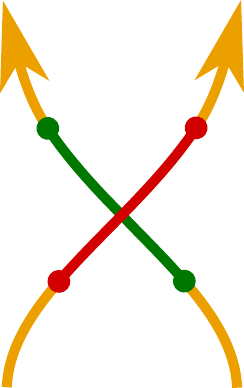}
\newcommand{\ogocoev}{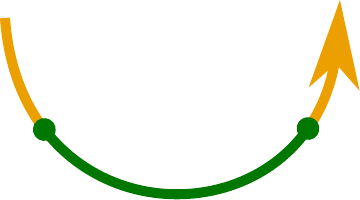}
\newcommand{\orocoev}{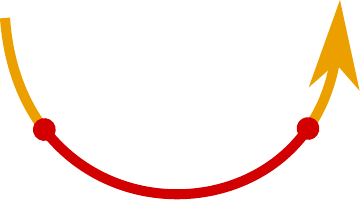}
\newcommand{\ogoevv}{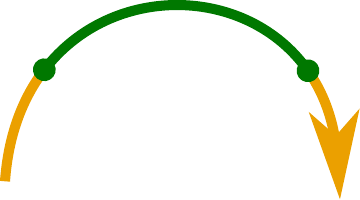}
\newcommand{\oroevv}{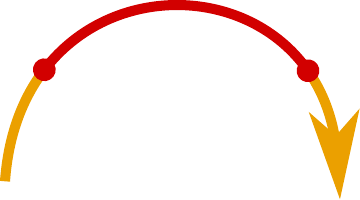}
\newcommand{\ogrevvcoevv}{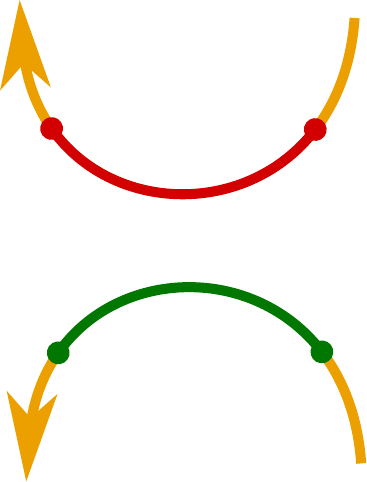}
\newcommand{\ogrevcoev}{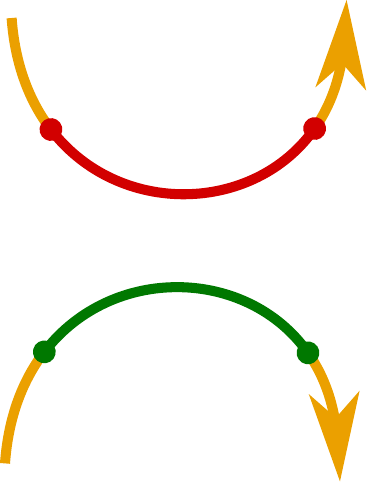}
\newcommand{\oposz}{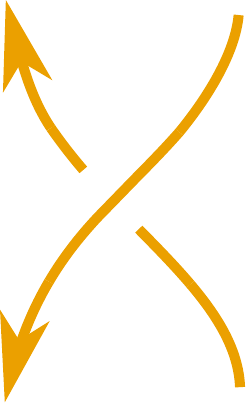}
\newcommand{\ozinv}{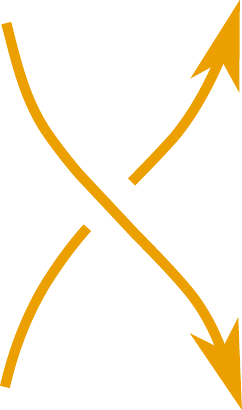}
\newcommand{\oRIII}{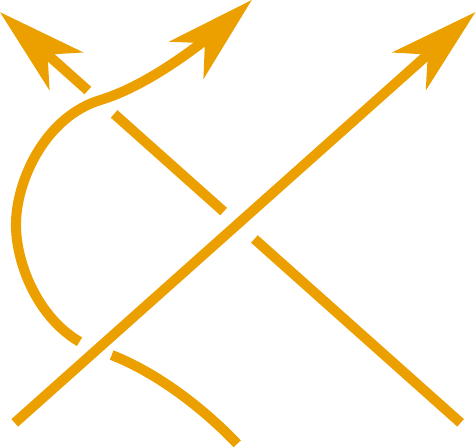}
\newcommand{\orgxx}{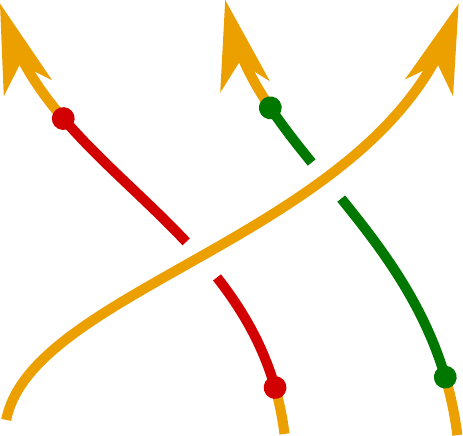}
\newcommand{\ogipip}{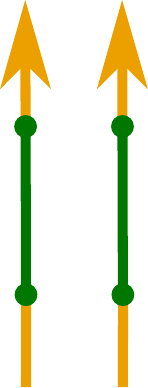}
\newcommand{\oripip}{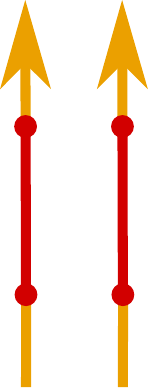}
\newcommand{\orgipip}{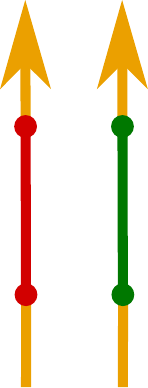}
\newcommand{\gid}{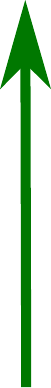}
\newcommand{\grposx}{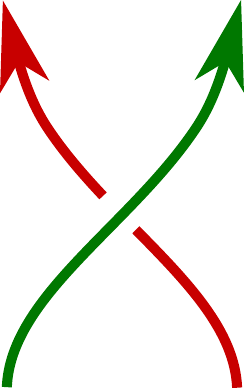}
\newcommand{\grnegx}{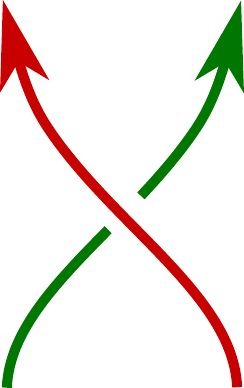}
\newcommand{\ginc}{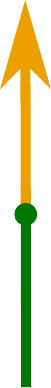}
\newcommand{\gproj}{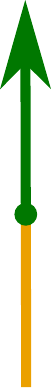}
\newcommand{\ggip}{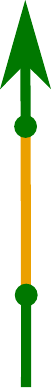}
\newcommand{\grip}{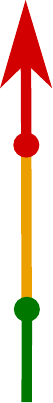}
\newcommand{\gdim}{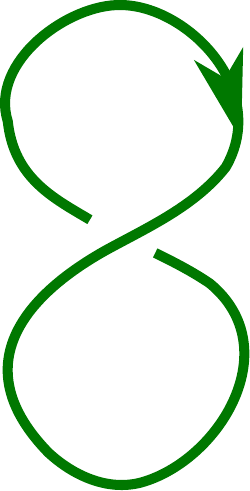}
\newcommand{\rid}{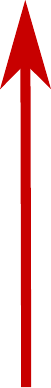}
\newcommand{\rgposx}{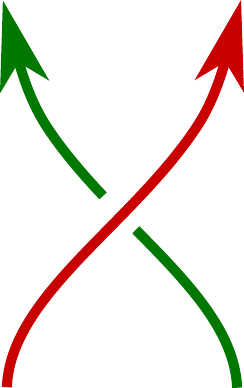}
\newcommand{\rgnegx}{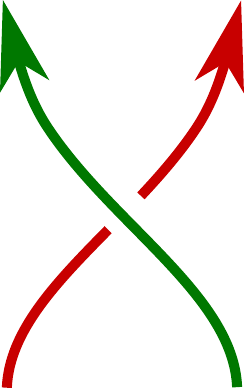}
\newcommand{\rinc}{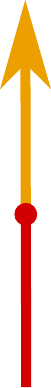}
\newcommand{\rproj}{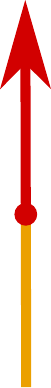}
\newcommand{\rgip}{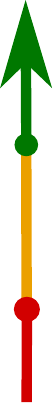}
\newcommand{\rrip}{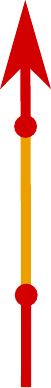}
\newcommand{\rdim}{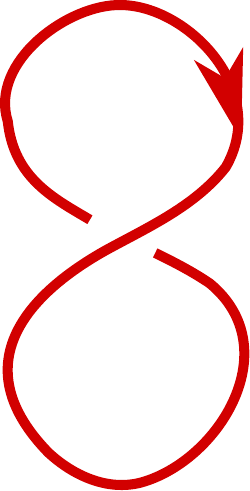}
\newcommand{\rgknot}{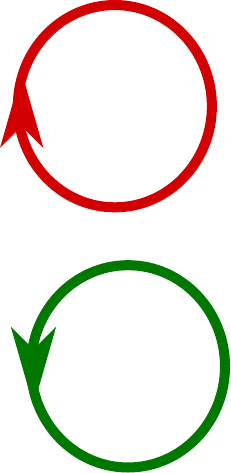}
\newcommand{\rgoxx}{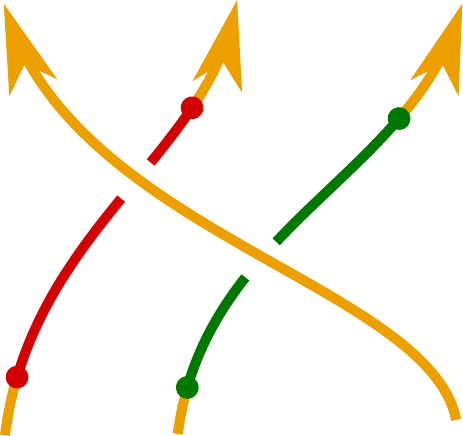}
\newcommand{\blackposx}{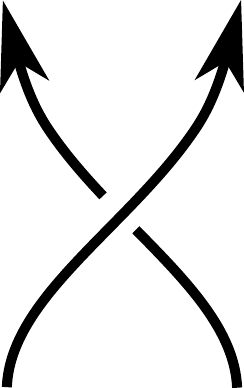}
\newcommand{\blacknegx}{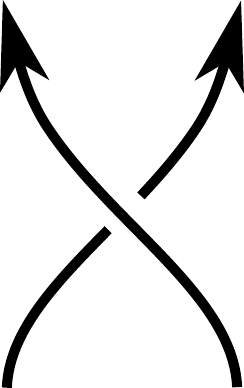}
\newcommand{\blackidid}{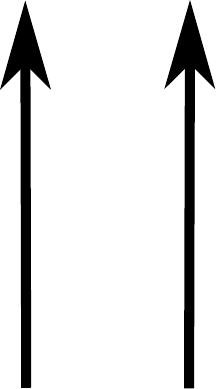}
\newcommand{\blacktwist}{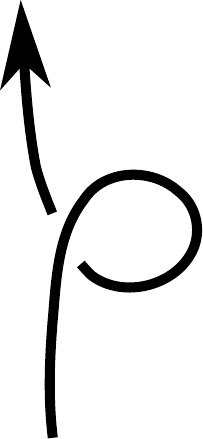}
\newcommand{\blackdim}{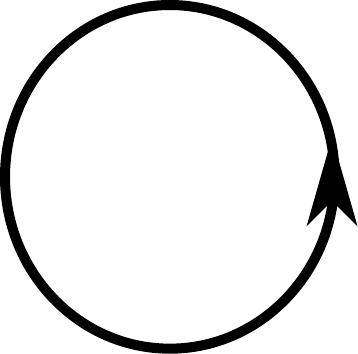}
\newcommand{\blackid}{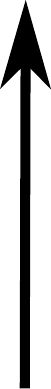}
\newcommand{\MyFigure}[1]{\;\includegraphics[scale=0.2, valign=c]{#1}\;}
\begin{document}

\begin{center}	
	\textbf{\Large{HOMFLY parabolic restriction, defect skein theory and the Turaev coproduct}\\} 
	\vspace{0.6cm}	{\large Juan Ramón \textsc{Gómez García}} \\ \vspace{0.3cm} {Institut de Mathématiques de Jussieu-Paris Rive Gauche}\\ UMR 7586\\ Université Paris Cité, Sorbonne Université, CNRS\\ F-75013 Paris\\ France
\end{center}	
\vspace{0.3cm}

\begin{abstract}
\noindent \textbf{Abstract.} We define a HOMFLY version of the category $\Rep_q\P$ of quantum representations of a parabolic subgroup $\P\subseteq\GL_{m+n}$ of block triangular matrices. Alongside this category, we construct functors that interpolate the usual restriction functors between $\GL_{m+n}$, $\P$ and the subgroup $\L\subseteq\GL_{m+n}$ of block-diagonal matrices, yielding a universal version of the formalism of parabolic restriction. Based on this formalism, we construct central algebras and centred bimodules which serve as algebraic ingredients for defining a skein theory on $3$-manifolds with surface and line defects. We recover the Turaev coproduct on the HOMFLY skein algebra as a particular instance of this theory. In particular, this coproduct is compatible with the cutting and gluing of surfaces.
 \end{abstract}

	\tableofcontents
	\normalsize

	\spaceplease

\section{Introduction}

\subsection{Context and motivation} Shortly after the discovery of the Jones polynomial \cite{jones}, Reshetikhin and Turaev \cite{rtribbon} developed a framework unifying and generalising polynomial invariants of links. Their work showed that the algebraic structure of \emph{ribbon categories} is governed by the topology of so-called \emph{ribbon graphs}, opening the door to a fruitful interaction between low-dimensional topology and representation theory. More precisely, given a ribbon category $\A$, Reshetikhin and Turaev defined a functor 
\begin{equation}\label{eval_intro}
\text{RT}_\A\colon\Rib_\A\to\A
\end{equation}
from a category $\Rib_\mathcal{A}$, whose morphisms are graphs decorated with objects and morphisms of $\A$, to the underlying ribbon category $\A$ itself. It assigns to any knot or link an endomorphism of the monoidal unit $\bold{1}_\A$, which is invariant under isotopy. 

The evaluation functor $\text{RT}_\A$ is full but not faithful. Its kernel is described by \emph{skein relations}, which are the topological counterpart of the algebraic relations in $\A$. Prototypical examples of ribbon categories are categories of representations of quantum groups: if $\text{G}$ is a reductive affine algebraic group and $q\in\Bbbk^\times$ is sufficiently generic (which we will always assume), the category $\Rep_q\text{G}$ is equivalent to the category $\Rep\text{ G}$ of algebraic representations of $\text{G}$, but has a non-trivial ribbon structure which is a deformation of the (trivial) one on $\Rep\;\text{G}.$ For $\text{G}=\GL_N$, skein relations depend in a simple way on the dimension $N\in\mathbb{N}$. Replacing $N$ with a formal parameter $t$ in these relations, one obtains the \emph{HOMFLY relations} \begin{equation}\label{homflyrels}
\MyFigure{\blueposx}-\MyFigure{\bluenegx}=\left(q-q^{-1}\right)\MyFigure{\blueidid}, \qquad
\MyFigure{\bluetwista}=q^t\MyFigure{\blueid},\qquad
\MyFigure{\bknot}=\frac{q^t-q^{-t}}{q-q^{-1}}\;\bold{1}_\emptyset,
\end{equation}
where $\bold{1}_\emptyset$ is the empty diagram. The \emph{HOMFLY category} $\RepqG$ is then defined as the category whose morphisms are tangles in $\R^2\times[0,1]$ modulo the HOMFLY relations. This category is a quantum analogue of the Deligne category $\Rep\;\GL_t$ \cite{deligne,comes} and has been extensively studied in \cite{brundan}. It is a universal version of $\Rep_q\GL_N$ in the sense that the latter can be obtained as (a quotient of) the specialization of $\RepqG$ at $t=N$.

The polynomial link invariant obtained for $\A=\RepqG$ via the Reshetikhin--Turaev construction is the \emph{HOMFLY polynomial} \cite{homfly, pt}. Write $H_{q,t}(D)$ for the HOMFLY polynomial of a link diagram $D$. In \cite{jaeger}, Jaeger gave a composition formula allowing one to compute $H_{q,t_1+t_2}(D)$ as a state sum over $H_{q,t_1}(D_1)$ and $H_{q,t_2}(D_2)$, where $D_1,D_2$ are some subdiagrams of $D$. Concretely, a link diagram $D$ is a $4$-valent graph with two distinguished classes of vertices: positive and negative crossings. Given a vertex $v$, we denote by $a$ (resp. $b$) the upper (resp. lower) incoming edge, and by $c$ (resp. $d$) the upper (resp. lower) outgoing edge, as shown in the following pictures: $$\MyFigure{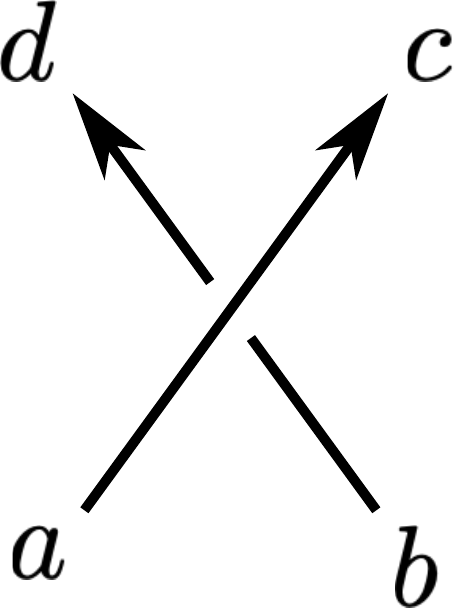}\hspace{5cm}\MyFigure{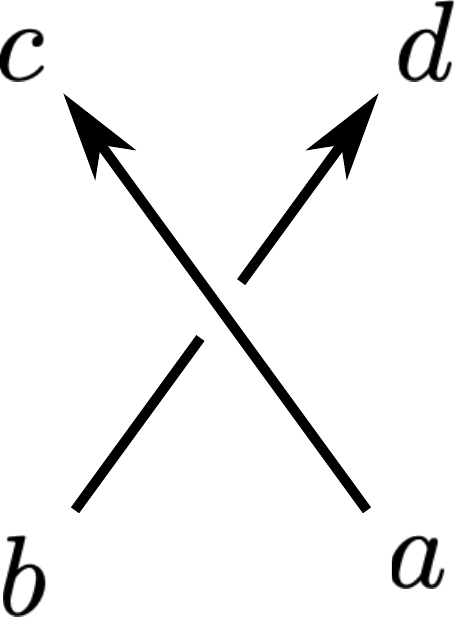}.$$ Let $E_D$ be the set of edges of $D$. An \emph{admissible labelling} is a map $f:E_D\to\{1,2\}$ such that, at any vertex $v$, either $f(a)=f(c)$ and $f(b)=f(d)$, or $f(a)=f(d)>f(b)=f(c).$ When the second condition holds, we say that $v$ is a \emph{cutting vertex}. Given an admissible labelling $f$ and a vertex $v$, we set $$\left\langle v\;|\;f\right\rangle\coloneqq\left\lbrace\begin{array}{ll}
\text{sgn}(v)(q-q^{-1}), & \text{if $v$ is a cutting vertex},\\
1, & \text{otherwise},
\end{array}\right.$$ where $\text{sgn}(v)$ is $1$ if $v$ is a positive crossing and $-1$ otherwise. We define the \emph{interaction of $D$ with $f$} as $$\left\langle D\;|\;f\right\rangle\coloneqq\prod_{v}\left\langle v\;|\;f\right\rangle,$$ where the product runs over the set of vertices of $D$.

\begin{theorem*}[\cite{jaeger}]
For any link diagram $D$, 
\begin{equation}
H_{q,t_1+t_2}(D)=\sum_{\text{$f$ admissible}}\left\langle D\;|\;f\right\rangle\left(q^{-t_1}\right)^{r(D_{f,1})}\left(q^{t_2}\right)^{r(D_{f,2})}H_{q,t_1}(D_{f,1})H_{q,t_2}(D_{f,2}),
\end{equation}
where $D_{f,i}\coloneqq f^{-1}(i)$ and $r(D_{f,i})$ is the rotation number of $D_{f,i}$, for $i\in\{1,2\}.$ \QEDA
\end{theorem*}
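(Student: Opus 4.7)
The plan is to derive Jaeger's formula as a state sum coming from a Levi-type decomposition inside the quantum representation category $\Rep_q\GL_{m+n}$, and then transport the resulting identity back to the universal setting via a polynomial-identity argument. First, I would observe that both sides of the claimed equality are Laurent polynomials in the commuting formal variables $q^{\pm t_1}$ and $q^{\pm t_2}$ over $\mathbb{Z}[q^{\pm 1}]$, so it suffices to verify the identity at $t_1 = m$, $t_2 = n$ for infinitely many pairs $(m,n)\in\mathbb{N}^2$. For such a pair, $H_{q,m+n}(D)$ is the Reshetikhin--Turaev invariant of $D$ coloured by the fundamental representation $V = \mathbb{C}^{m+n}\in\Rep_q\GL_{m+n}$, and $H_{q,t_i}(D_{f,i})$ is the analogous invariant coloured by the fundamental $V_i\in\Rep_q\GL_{t_i}$, where $V_1 = \mathbb{C}^m$, $V_2 = \mathbb{C}^n$ and $V = V_1\oplus V_2$ under the obvious inclusion of Levi subgroup.

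Fixing such $m$ and $n$, the next step is to decompose the R-matrix $R: V\otimes V\to V\otimes V$ along $V^{\otimes 2} = \bigoplus_{i,j}V_i\otimes V_j$. Explicit computation of Jimbo's R-matrix shows that on the diagonal blocks $V_i\otimes V_i$ it restricts to the R-matrix of $\Rep_q\GL_{t_i}$ on $V_i$, while on each mixed block $V_i\otimes V_j$ with $i\neq j$ it is the pure tensor flip into $V_j\otimes V_i$, together with, in exactly one of the two orderings, a Hecke correction proportional to $(q - q^{-1})$ into $V_i\otimes V_j$ itself; the latter is the first HOMFLY relation of \eqref{homflyrels} applied locally. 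Expanding this decomposition at every crossing of $D$ rewrites the RT invariant as a state sum indexed by all maps $f:E_D\to\{1,2\}$. By inspection of the edge labellings produced by each of the five non-vanishing R-matrix components, the non-zero terms are precisely the admissible labellings of $D$, and each crossing contributes the scalar $\langle v\mid f\rangle$, with the sign $\text{sgn}(v)$ arising from the replacement of $R$ by $R^{-1}$ at negative crossings, which flips the sign of the Hecke correction.

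For each admissible $f$, the crossings of $D$ have been resolved in such a way that colour-$1$ and colour-$2$ strands no longer interact through nontrivial braiding, so the state factorises as the product of the RT invariants of the two monochromatic subdiagrams $D_{f,1}$, $D_{f,2}$, viewed inside $\Rep_q\GL_{m+n}$. To compare these with $H_{q,t_i}(D_{f,i})$, computed inside $\Rep_q\GL_{t_i}$, the only discrepancy lies in the ribbon twist: the twist eigenvalue on $V_i$ is $q^{m+n}$ in the ambient category but $q^{t_i}$ in the target, so each unit of Whitney rotation on a colour-$i$ strand contributes an extra factor of $q^{\pm t_j}$ with $j\neq i$. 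Tracking these framing corrections across both colours yields the asymmetric prefactor $(q^{-t_1})^{r(D_{f,1})}(q^{t_2})^{r(D_{f,2})}$ in the statement. The hard part will be pinning down the precise signs in these rotation-number exponents, since it requires consistent conventions for the ribbon twist on $V_i$ and for the orientation of loops in the plane; by contrast, the R-matrix decomposition and the matching with admissible labellings reduce to a mechanical check starting from the HOMFLY relation \eqref{homflyrels}.
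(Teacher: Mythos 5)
The paper does not actually prove this statement: it is quoted from \cite{jaeger} and marked as external (the $\blacksquare$ terminates the citation, not a proof). So there is no "paper's own proof" to compare against directly. What the paper \emph{does} later is build a universal, functorial version of Jaeger's argument (the $\Delta$-linear restriction functor $\res$ of Theorem~\ref{restrictiontheorem} and Lemma~\ref{coprod2}), which proves a more general statement directly in the diagrammatic category $\RepqG$ without any specialization: one applies $\res$ to a closed link diagram, expands the boxes \eqref{box1}--\eqref{box4} at each crossing, cup and cap, and uses $\text{Sk}_{\H_{t,t}}(\R^2)\cong\skg\otimes_{\mathbb{C}(q)}\skg$ to read off the state sum. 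Your approach, by contrast, is to specialize $t_i\to m,n$, decompose Jimbo's $R$-matrix on $V_m\oplus V_n$, and then extend by a polynomial-identity argument. These are genuinely different routes. Yours is closer to Jaeger's original argument and is entirely internal to $\Rep_q\GL_N$; the paper's is manifestly well-defined for formal $t_1,t_2$ and avoids ever fixing a representation, at the cost of a nontrivial well-definedness check (Theorem~\ref{restrictiontheorem}). Both are viable; the paper's has the advantage of functoriality, which is what makes the later extension to skein categories possible.

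Your steps (1)--(3) are sound: the polynomial-in-$q^{\pm t_1},q^{\pm t_2}$ reduction is correct, the block decomposition of $\res_{m,n}(\beta_{m+n})$ is exactly what the paper records in Section~\ref{parabolic_restriction}, and the identification of nonvanishing state-sum terms with admissible labellings follows once one notes that the only nonzero blocks are the two diagonal $\beta_{m}$, $\beta_n$, the two flips $\sigma$, and the Hecke correction on the $(2,1)$ block. The genuine gap is in step (4), and you flag it yourself, but it is more than a matter of conventions. Your heuristic is that the ribbon twist on $V_i$ has eigenvalue $q^{m+n}$ upstairs and $q^{t_i}$ downstairs, so that each unit of rotation on a colour-$i$ strand picks up $q^{\pm t_j}$ with $j\neq i$. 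First, the actual mechanism is not the twist scalar alone but the failure of $\res$ to be a pivotal functor: the correction enters via the isomorphism $\eta^r_\upp= q^{t_2}\,\text{(green)}+q^{-t_1}\,\text{(red)}$ of Remark~\ref{specialistationL}, whose powers accumulate according to the rotation number. Second, your predicted pattern ($q^{\pm t_j}$ with $j\neq i$) disagrees with the exponents $(q^{-t_1})^{r(D_{f,1})}(q^{t_2})^{r(D_{f,2})}$ in the statement as transcribed here, where colour~$i$ is paired with $t_i$; on the other hand, your pattern \emph{does} match the paper's own coproduct $\Delta(\delta)=\delta_1q^{t_2}+q^{-t_1}\delta_2$. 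So there is a convention mismatch you would need to resolve before the specialization argument closes: either the rotation-number orientation convention differs, or the statement here is not in Jaeger's original normalization. Until that is pinned down, the argument cannot be called complete, even though the reduction strategy and the $R$-matrix analysis are correct.
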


This formula can be interpreted as a universal version of the restriction of the natural representation of $\GL_{m+n}$ to $\GL_m\times\GL_n$. Roughly, we have the following picture: 
\begin{figure}[H]
\centering
\includegraphics[scale=0.55]{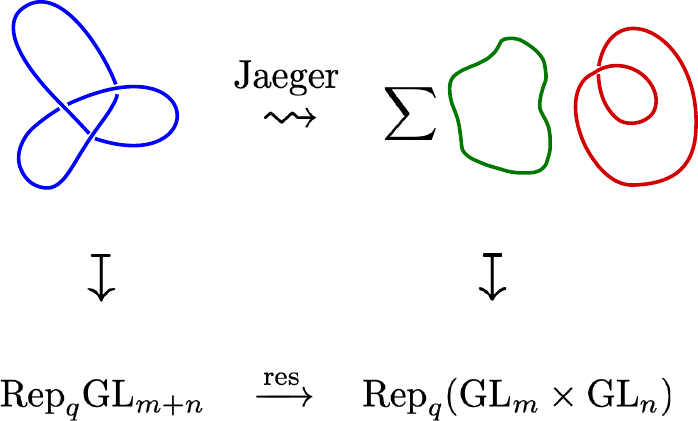}
\caption{Jaeger's formula is a universal version of restriction along $\GL_{m+n}\hookrightarrow\GL_m\times\GL_n.$}
\end{figure}

Inspired by Jaeger's construction, Turaev showed in \cite{turaev} the existence of a coproduct on the \emph{HOMFLY skein algebra} which inspired this paper. Skein theory deals with the study of modules, algebras and categories arising when one tries to generalise quantum link invariants to arbitrary $3$-manifolds. Given a $3$-manifold $M$ and a ribbon category $\mathcal{A}$, the \emph{skein module} $\text{Sk}_\mathcal{A}(M)$ has as generators the isotopy classes of links in $M$ modulo the local/skein relations induced by $\mathcal{A}$ through the Reshetikhin--Turaev construction. They were first introduced in \cite{tur88, prz}. When $M=S\times[0,1]$ for an oriented surface $S$, a multiplication is defined by stacking two copies of the cylinder and rescaling the second coordinate, yielding the notion of the \emph{skein algebra} of the surface $S$. Allowing also open components (i.e. \emph{tangles}) leads to the definition of \emph{skein categories}. Given an oriented surface $S$, the category $\text{SkCat}_{\A}(S)$ \cite{cookethesis,jf,walker} has  configurations of framed coloured points on $S$ as objects and linear combinations of $\A$-coloured tangles in $S\times[0,1]$ as morphisms.

We set $\text{Sk}_{\GL_t}(S)$ for the \emph{HOMFLY skein algebra} of $S$, obtained by taking $\A=\RepqG$. Since the restriction functor is monoidal, Jaeger's formula extends to a \emph{linear} map $$\Delta_f\colon\skg\to\skg\otimes\skg$$ for any framed surface $(S,f).$ This functor is not braided, however, so there is \emph{a priori} no reason for this map to be multiplicative. Yet, Turaev showed the following:

\begin{theorem*}[\cite{turaev}]
Let $(S,f)$ be a framed surface. The linear map
\begin{equation}
\Delta_f\colon\text{Sk}_{\GL_t}(S)\to\text{Sk}_{\GL_t}(S)\otimes\text{Sk}_{\GL_t}(S)
\end{equation}
defined by extending Jaeger's formula is a morphism of algebras. It is coassociative, so $\text{Sk}_{\GL_t}(S)$ is a bialgebra. \QEDA
\end{theorem*}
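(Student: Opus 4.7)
The plan is to realise $\Delta_f$ as the diagrammatic shadow of a parabolic restriction and to prove both claims by local case analysis at crossings. The first step is to check that Jaeger's formula makes sense for link diagrams on a framed surface $(S,f)$: diagrams still have 4-valent vertices at crossings, and rotation numbers are measured against $f$, so the formula reads verbatim. To show that $\Delta_f$ descends to $\text{Sk}_{\GL_t}(S)$ it suffices to verify invariance under the HOMFLY skein relations; these checks are local at a single region of a diagram and coincide with Jaeger's planar verification.

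For multiplicativity, represent $L_1\cdot L_2$ by stacking diagrams $D_1$ above $D_2$ and projecting to $S$. The new positive crossings, where upper $D_1$-strands cross over lower $D_2$-strands, will be called \emph{interface} crossings. An admissible labelling $f$ of $D_1\cdot D_2$ restricts to a pair of admissible labellings $(f_1,f_2)$ of $(D_1,D_2)$ together with a choice of cutting or non-cutting pattern at each interface crossing. Splitting the defining sum for $\Delta_f(L_1\cdot L_2)$ along these two levels of choice, the identity $\Delta_f(L_1\cdot L_2)=\Delta_f(L_1)\cdot\Delta_f(L_2)$ reduces to a single local identity at a positive interface crossing with upper colour $\alpha$ and lower colour $\beta$: summing the non-cutting contribution with the cutting contribution $\mathrm{sgn}(v)(q-q^{-1})$, when $\alpha>\beta$, yields exactly the two strands passing by each other in the tensor product of the two colour factors. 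This is the HOMFLY relation $\MyFigure{\blueposx}-\MyFigure{\bluenegx}=(q-q^{-1})\MyFigure{\blueidid}$ read as a disentangling recipe for two differently coloured strands, together with rotation-number additivity $r((L_1\cdot L_2)_{f,i})=r(L_{1,f_1,i})+r(L_{2,f_2,i})$ under vertical stacking.

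For coassociativity, I would generalise the admissible-labelling formalism to values in $\{1,2,3\}$, with cutting weights $\mathrm{sgn}(v)(q-q^{-1})$ between any pair of strictly decreasing colours across a vertex. Both $(\Delta_f\otimes\id)\circ\Delta_f$ and $(\id\otimes\Delta_f)\circ\Delta_f$ then reorganise as sums over such three-valued admissible labellings, corresponding to the two binary refinements of the trivial three-partition: $\{1\}\mid\{2,3\}\leadsto\{2\}\mid\{3\}$ and $\{1,2\}\mid\{3\}\leadsto\{1\}\mid\{2\}$. Since the interaction coefficient and the rotation-number factors depend only on the three-partition, not on the order in which it was refined, the two iterated coproducts agree term by term.

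The crux is the multiplicativity step. There is no formal braided-monoidal reason for $\Delta_f$ to respect stacking, because parabolic restriction is not a braided functor. The entire statement hinges on the precise form of the HOMFLY relation, which is fine-tuned so that the sum of cutting and non-cutting contributions at an interface crossing collapses to the untwisted product of the two sub-diagrams. Carrying out this local identity rigorously, while keeping track of the rotation-number exponentials $q^{-t_1}$ and $q^{t_2}$ and verifying that the framing contributions at interface cutting vertices cancel pairwise along each component, is the main technical obstacle.
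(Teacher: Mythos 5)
The decomposition you propose for multiplicativity does not actually work, and the claimed local identity at an interface crossing is false as stated. The issues are the following.

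First, an admissible labelling of $D_1\cdot D_2$ does \emph{not} restrict to a pair of admissible labellings $(f_1,f_2)$ of $D_1$ and $D_2$. At a cutting interface crossing (where $D_1$ is the over-strand), admissibility forces $f(a)=f(d)>f(b)=f(c)$; in particular $f(a)\neq f(c)$, so the colour on the $D_1$-arc jumps from $2$ to $1$ across the vertex. Forgetting the interface crossing and viewing this as a labelling of $D_1$ therefore gives a function that is not locally constant on edges of $D_1$, hence not an admissible labelling of $D_1$. Your parametrisation by $(f_1,f_2,\text{cutting choices})$ only accounts for the labellings with \emph{no} cutting interface crossings; the remaining labellings are genuinely extra, and their contribution must be accounted for globally, not encapsulated in a choice at each interface vertex.

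Second, the local identity itself is wrong. For $\alpha>\beta$ at an interface crossing, the non-cutting contribution already \emph{is} the two strands passing each other (coefficient $1$, the two arcs land in different tensor factors, no crossing in either sub-diagram), which coincides with what the product $\Delta_f(L_1)\cdot\Delta_f(L_2)$ produces at that vertex. The cutting contribution, with coefficient $q-q^{-1}$ and a smoothing (the $D_{f,i}$ subdiagrams reroute at the vertex), is an \emph{additional} term that appears on the side of $\Delta_f(L_1\cdot L_2)$ but has no counterpart in $\Delta_f(L_1)\cdot\Delta_f(L_2)$ at that vertex. So "non-cutting $+$ cutting" cannot equal "pass by." This extra smoothing term is precisely the phenomenon illustrated in the paper's Figure 2 and the reason the naive restriction functor fails to respect composition. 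Dealing with it requires either Turaev's delicate global reorganisation (tracking how the smoothing terms interact with the rotation-number weights and with cutting self-crossings along each component), or the paper's completely different route: $\RepqP$ is given a $(\RepqG,\RepqH)$-central-algebra structure, a defect skein TFT $\mathcal{A}$ is built, and the bimodule $\mathcal{A}(S)(\emptyset)\cong\skh$ (Theorem \ref{skeinalgebras}) forces $x\mapsto x\cdot\boldsymbol{1}_\emptyset$ to be an algebra map formally, with no local case analysis at interface crossings.

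Your coassociativity sketch via $\{1,2,3\}$-valued labellings is in line with the paper's Proposition on coassociativity in Section \ref{coproductsection}, but of course it presupposes that $\Delta_f$ is well-defined and multiplicative, so it does not repair the gap above.
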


To illustrate the issue, consider the following picture:
\begin{figure}[H]
\centering
\includegraphics[scale=0.55]{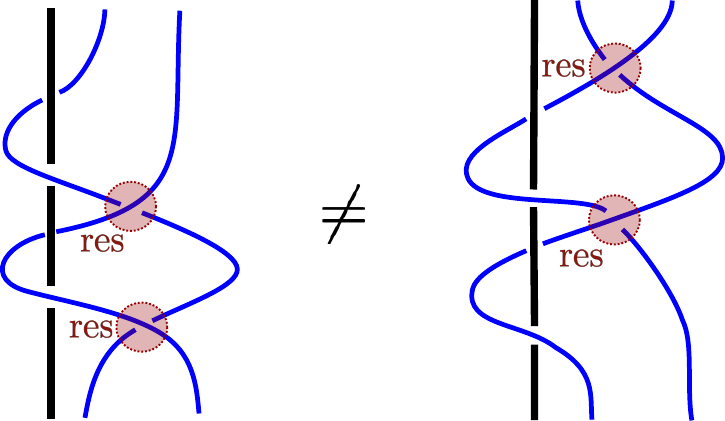}
\caption{The underlying tangles are isotopic. However, because the restriction functor is not braided, it does not preserve crossings, so the resulting graphs no longer represent the same morphism.}
\end{figure}
The naive extension of the restriction functor to the skein category is \emph{not} compatible with composition of tangles. In this paper, we extend Turaev's coproduct to skein categories (i.e. to tangles) using the formalism of \emph{parabolic} restriction and skein theory with defects. Our construction is functorial, explaining the multiplicativity of Turaev's construction. Moreover, it is compatible in a non-obvious way with cutting and gluing surfaces and provides a very local way of computing the coproduct on a given diagram.

\subsection{HOMFLY categories and parabolic restriction} Our first contribution is to define a HOMFLY version of the category $\Rep_q\P$ of quantum representations of the parabolic subgroup $\P\subseteq\GL_{m+n}$ of block triangular matrices (see Section \ref{HOMFLYcats}). Namely, let $\text{G} =\GL_{m+n}$ and $\L=\GL_m\times\GL_n$ be identified with the Levi subgroup of $\text{G}$ consisting of block diagonal matrices. We define the diagrammatic category $\RepqP$ as a certain subcategory of the rigid category underlying $\RepqH=\RepqG\boxtimes\RepqG$, the HOMFLY analogue of $\Rep_q\L$. Together with $\RepqP$, we construct four functors 
\begin{equation*}\label{allthefunctors}
\begin{tikzcd}
                                                             & \RepqP \arrow[rd, "j_t^*"'] &                                                    \\
\RepqG \arrow[ru, "{\iota_t^*}"] \arrow[rr, "\res"] &                             & \RepqH \arrow[lu, "\pi^*_t"', bend right]
\end{tikzcd}, 
\end{equation*} interpolating the ordinary restriction functors between the corresponding categories of representations. In Section \ref{parabolic}, we prove the following (see Theorem \ref{central_functor}):

\begin{theorem}
The functor $\iota_t^*\otimes\pi_t^*$ lifts to a braided monoidal functor $$ \RepqG\boxtimes(\overline{\RepqH})\to Z(\RepqP),$$ where $\overline{(-)}$ stands for the opposite braided category and $Z(\RepqP)$ is the Drinfeld centre of $\RepqP$. Hence, $\RepqP$ is a $(\RepqG,\RepqH)$-central algebra in the sense of \cite{BJS, dgno}. \QEDA
\end{theorem}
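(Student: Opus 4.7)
The plan is to unpack the definition of a $(\RepqG,\RepqH)$-central algebra: concretely, I need to produce, for each pair $(X,Y)$ in $\RepqG\boxtimes\RepqH$, a half-braiding
$$\gamma_{X,Y,-}\colon\iota_t^*(X)\otimes\pi_t^*(Y)\otimes(-)\xrightarrow{\ \cong\ }(-)\otimes\iota_t^*(X)\otimes\pi_t^*(Y)$$
in $\RepqP$, and to check that the resulting assignment assembles into a braided monoidal functor $\RepqG\boxtimes\overline{\RepqH}\to Z(\RepqP)$. I would split the construction into two independent lifts — one of $\iota_t^*$ and one of $\pi_t^*$ — and then combine them using the universal property of the Deligne product.

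For the lift of $\iota_t^*$: objects in the image of $\iota_t^*$ play the role of representations of the whole group and should therefore braid freely with everything in $\RepqP$. I would take the half-braiding to be the diagrammatic crossing in $\RepqP$, defined on the generators of $\iota_t^*$ produced in Section \ref{HOMFLYcats}. The key verification is that this crossing remains a morphism of the subcategory $\RepqP$; this is a direct consequence of the fact that $\iota_t^*$ expands a single HOMFLY strand as a sum over both colours of $\RepqH$, which in particular restores both chiralities of mixed-colour crossings that are individually forbidden by the parabolic condition. For the lift of $\pi_t^*$: the parabolic condition permits crossings only in one of the two possible chiralities between the colours, so only one of the two candidate half-braidings on $\pi_t^*(Y)$ actually lies in $\RepqP$. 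This asymmetry is exactly what forces us to replace $\RepqH$ by $\overline{\RepqH}$ in the source of the functor.

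Once the two candidate half-braidings are in place, naturality and the hexagon axioms for each lift reduce to local skein-theoretic identities — essentially Reidemeister II and III moves on mixed-colour configurations — which follow from the HOMFLY relations \eqref{homflyrels} combined with the defining relations of $\RepqP$. These verifications are lengthy but reduce to a finite generating list once a presentation of $\RepqP$ is fixed.

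The main obstacle is the final compatibility step needed to pass from two independent lifts to a single braided monoidal functor out of $\RepqG\boxtimes\overline{\RepqH}$. Concretely, one must check that the half-braidings $\gamma_{\iota_t^*(X),\pi_t^*(Y)}$ and $\gamma_{\pi_t^*(Y),\iota_t^*(X)}$ are mutually inverse (so the tensor product $\iota_t^*(X)\otimes\pi_t^*(Y)$ indeed acquires a genuine half-braiding in $Z(\RepqP)$), and that the resulting functor intertwines the external braiding on $\RepqG\boxtimes\overline{\RepqH}$ with the braiding of the Drinfeld centre. Both of these ultimately reduce to a careful diagrammatic analysis of how the single "allowed" mixed-colour crossing in $\RepqP$ interacts with the two-sided expansion supplied by $\iota_t^*$; it is at this step that passing to the opposite braiding on the $\RepqH$-factor is genuinely needed to produce matching signs, and it is precisely this interplay — between the fully braided behaviour of $\iota_t^*$ and the one-sided behaviour of $\pi_t^*$ — that expresses the classical fact that $\Rep\P$ is a bimodule category for $\Rep\GL$ and $\Rep\L$ with opposing chiralities.
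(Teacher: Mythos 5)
Your overall strategy is the right one --- assign a half-braiding to each coloured strand, observe that naturality holds only for one chirality of mixed-colour crossing, and trace the opposite braiding on the $\RepqH$-factor to that asymmetry --- and it matches the spirit of the paper's argument. The paper gets there by a somewhat cleaner route: it first builds a monoidal functor $Z\colon\RepqP\to Z(\RepqP)$ defined on \emph{all} of $\RepqP$ (not just the images of $\iota_t^*$ and $\pi_t^*$), using Remark \ref{naturality} to show the relevant crossings are natural isomorphisms, and then simply postcomposes: $Z\circ F$ with $F=\iota_t^*\boxtimes\pi_t^*$. Since $\RepqP$ is monoidally generated by the three coloured strands, your ``two independent lifts'' is essentially $Z$ in disguise, but factoring through $Z$ means you never have to manufacture a compatibility argument between separately constructed lifts --- the monoidal structure on $Z\circ F$ is supplied once, by a single natural isomorphism.

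However, there is a real confusion in your treatment of the $\iota_t^*$ lift. You say the key verification is ``that this crossing remains a morphism of the subcategory $\RepqP$'' and attribute this to $\iota_t^*$ ``restoring both chiralities of mixed-colour crossings that are individually forbidden.'' That is not the obstruction. In the paper's set-up, $\RepqP$ is defined with \emph{all} colourings and orientations of crossings as generators \eqref{repqp7}; both chiralities of every mixed-colour crossing are morphisms of $\RepqP$ by construction. The issue is not membership but \emph{naturality}: a given crossing defines a half-braiding only if arbitrary $\RepqP$-morphisms slide through it, and Remark \ref{naturality} shows this holds for some combinations of colour and chirality and fails for others (a diagrammatic avatar of the $R$-matrix lying in $U_q(\mathfrak{b}^+)\otimes U_q(\mathfrak{b}^-)$). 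The same remark applies to your $\pi_t^*$ discussion: it is not that the disallowed chirality ``does not lie in $\RepqP$'', but that it does not give a natural transformation. Once the obstruction is correctly located at naturality (and checked in $\RepqH$ via the faithful functor $j_t^*=\ev^\P$), the rest of your argument --- monoidality, the braiding compatibility, and the role of $\overline{(-)}$ --- goes through along the lines you sketch.
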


In addition to this central algebra structure, there is another important structure underlying our construction. The category $\RepqH$ is a $(\RepqH,\RepqP)$-bimodule category with action of $\RepqP$ induced by the restriction functor $\pi_t^*.$ As we explain in Section \ref{bimodule} this bimodule structure is centred with respect to the central structures of $\RepqP$ and $\RepqH.$ 

\subsection{Defect skein theory} Central algebras and centred bimodules are, respectively, the 1-morphisms and the 2-morphisms of the Morita $4$-category $\textsc{BrTens}$ of cp-rigid braided tensor categories studied in \cite{BJS}. Brochier, Jordan and Snyder showed that this category is $3$-dualizable, hence by the cobordism hypothesis \cite{lurie, baezdolan}, its objects determine $3$-extended framed TFTs attaching categories to surfaces and functors/vector spaces to three-manifolds. Skein theory is a particular consequence of the existence of this TFT \cite{brochier1, jennyben, cookethesis, benjamin}.\footnote{Strictly speaking, \cite{BJS} only defines a framed TFT whose construction relies on the cobordism hypothesis. The skein-theoretic approach bypasses these requirements, allowing for a direct construction of the theory.}

Within this framework, the central algebra $\RepqP$ provides a natural transformation between the $\RepqG$ and the $\RepqH$ theories that can be described skein-theoretically using the formalism of defect skein modules developed in \cite{jennydavid}. Let $M$ be a bipartite $3$-manifold, that is, a $3$-manifold together with a stratum of codimension 1 (cf. Definition \ref{bipartite}). The $3$-dimensional regions on each side of the surface defect are labelled with $\RepqG$ and $\RepqH$ and the surface defect itself is labelled with $\RepqP$. The \emph{parabolic skein module} of the bipartite $3$-manifold $M$ is then spanned by isotopy classes of graphs embedded in $M$ whose components are coloured according to the region in which they lie. Local relations around the surface defect are produced by projecting from the $3$-dimensional regions, replacing the crossings that appear by the corresponding half-morphisms using the central algebra structure and evaluating in $\RepqP.$

Our second contribution extends this formalism by introducing defects of codimension 2 decorated with centred bimodule structures. In Section \ref{planar_theories}, we describe planar theories with defect lines decorated by $\RepqH$. The local model around the defect is induced by the $(\RepqH,\RepqP)$-bimodule structure of $\RepqH$. This bimodule structure is centred with respect to the central algebra structures of $\RepqP$ and $\RepqH$ (see Section \ref{centred_bimodule} for the details), so these theories lift to $3$-dimensional theories as explained in Section \ref{3d_theories} (see Theorem \ref{3TFT}):

\begin{theorem}
There exists a stratified 2-dimensional framed TQFT $$\mathcal{Z}\colon\text{Bord}_2^{\text{bip,mkd}}\to\textsc{Bimod}$$ assigning $\RepqH$ (resp. $\RepqP$) to an interval labelled with $\L_t$ (resp. $\P_t$). To a bipartite framed surface $S$, it assigns a bimodule whose components are vector spaces spanned by stratified ribbon graphs in $S\times I$, modulo the local relations described above. \QEDA
\end{theorem}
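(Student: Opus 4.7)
The plan is to construct $\mathcal{Z}$ explicitly as a stratified skein TQFT, assembling the input data already produced in the previous sections: the $(\RepqG,\RepqH)$-central algebra structure on $\RepqP$ of Theorem \ref{central_functor}, and the centred $(\RepqH,\RepqP)$-bimodule structure on $\RepqH$ verified in Section \ref{centred_bimodule}. The defect skein-theoretic framework of \cite{jennydavid} already provides a general recipe handling codimension-one defects decorated by central algebras; the new ingredient is to propagate this along codimension-two (line) defects using the centred bimodule structure.

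First, I would fix a generators-and-relations presentation of $\text{Bord}_2^{\text{bip,mkd}}$. Objects are stratified framed 1-manifolds whose open strata are labelled by $\L_t$ or $\P_t$ and whose 0-strata separate two $\L_t$-strata from a $\P_t$-stratum; morphisms are bipartite framed surfaces whose 2-strata are labelled by $\GL_t$ or $\L_t$, whose 1-strata are $\P_t$-labelled surface defects inside $S\times I$, and whose marked 0-strata are $\L_t$-labelled line defects. On objects, I set $\mathcal{Z}(\L_t\text{-interval})=\RepqH$, $\mathcal{Z}(\P_t\text{-interval})=\RepqP$, and glue across 0-strata by the relative Deligne product induced by the $(\RepqH,\RepqP)$-bimodule structure of $\RepqH$. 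On a surface $S$, $\mathcal{Z}(S)$ is the bimodule of stratified ribbon graphs in $S\times I$ modulo local relations stratum by stratum: HOMFLY relations \eqref{homflyrels} on $\GL_t$-regions; HOMFLY relations in $\RepqH$ on $\L_t$-regions; the central-algebra projection of Theorem \ref{central_functor} on $\P_t$-surface defects; and the centred bimodule projection of Section \ref{centred_bimodule} on $\L_t$-line defects. The bimodule structure over $\mathcal{Z}(\partial S)$ is by attaching graphs near the boundary.

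The bulk of the work is checking functoriality under gluing of bordisms. Gluing along $\GL_t$- or $\L_t$-bulk strata reduces to the standard skein-category gluing property established in \cite{cookethesis, jf, walker}; gluing across $\P_t$-surface defects is the central-algebra case already treated in \cite{jennydavid}; gluing across $\L_t$-line defects is the new case, and uses exactly the centredness from Section \ref{centred_bimodule} to interlock the left $\RepqH$- and right $\RepqP$-actions on $\RepqH$ as one identifies two copies of the defect. Symmetric monoidality under disjoint union is automatic from the local nature of the skein construction.

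The main obstacle will be the triple-intersection locus, where an $\L_t$-line defect meets a $\P_t$-surface defect at $\partial S$. Near such a point, both the central-algebra projection onto $\P_t$ along the surface and the centred bimodule projection onto $\L_t$ along the line must agree, regardless of the order in which they are performed. I would resolve this by modelling a neighbourhood as a bipartite disc with an incoming half-line defect and showing that the two resulting projections coincide using the commutativity (up to the specified coherences) of the diagram of functors $\iota_t^*,\pi_t^*,j_t^*$ from Section \ref{parabolic}, combined with the central-algebra and centred-bimodule coherence data. Once this local compatibility is in place, extending the definition from generating bipartite surfaces to all morphisms in $\text{Bord}_2^{\text{bip,mkd}}$ is a routine generators-and-relations argument for stratified 2-bordisms, producing the desired symmetric monoidal functor into $\textsc{Bimod}$.
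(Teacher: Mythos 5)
The paper's proof is remarkably short: it appeals directly to the excision lemma proved immediately beforehand (gluing along a single boundary component $C$ with $C,\overline{C}$ both in $\mathcal{B}_{\P_t}$ or both in $\mathcal{B}_{\L_t}$), and then observes that, because of the \emph{compatible marking} constraint, any gluing of bipartite surfaces decomposes as gluing first along all $\P_t$-coloured boundary components and then along all $\L_t$-coloured ones. Your proposal takes a genuinely different route: a generators-and-relations presentation of the bordism category, and a direct verification of functoriality on the elementary bordisms. Neither is inherently wrong, but what your proposal misses is the single structural device the paper relies on: a compatible marking requires every marked boundary component to lie \emph{entirely} within the $\H_t$-region or \emph{entirely} within the $\P_t$-region (Definition 4.3 in the paper). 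Consequently the boundary $1$-manifolds carry no stratification, the $\L_t$-line defect never meets a marked component, and the category $\text{Bord}^{\text{bip,fr}}_1(2)$ in the theorem has \emph{unstratified} $1$-manifolds as objects.

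This has a direct impact on your proposed "main obstacle." You set up objects as "stratified framed $1$-manifolds whose open strata are labelled by $\L_t$ or $\P_t$ and whose $0$-strata separate two $\L_t$-strata from a $\P_t$-stratum," and you identify the triple-intersection locus at $\partial S$ as the crucial difficulty. But in the paper's bordism category there simply is no such $0$-stratum on the boundary: compatible markings forbid it. You are therefore attempting to prove a stronger statement than the one claimed, and the key new step you would need — the coincidence of the two projections near the putative corner, using the coherence of $\iota_t^*,\pi_t^*,j_t^*$ together with the central-algebra and centred-bimodule data — is only sketched and is precisely the analysis the paper deliberately avoids. Your identification of the local algebraic inputs (the central algebra on $\RepqP$, the centred bimodule $\RepqH$, the defect skein framework of Brown–Jordan) and of $\mathcal{Z}(S)$ as a bimodule of stratified ribbon graphs is correct and matches the paper; the gap is that you never invoke compatibility of markings and so cannot reduce to the excision lemma the way the paper does, and the substitute argument you propose is left incomplete.
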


\begin{figure}[H]
\centering
\includegraphics[scale=0.25]{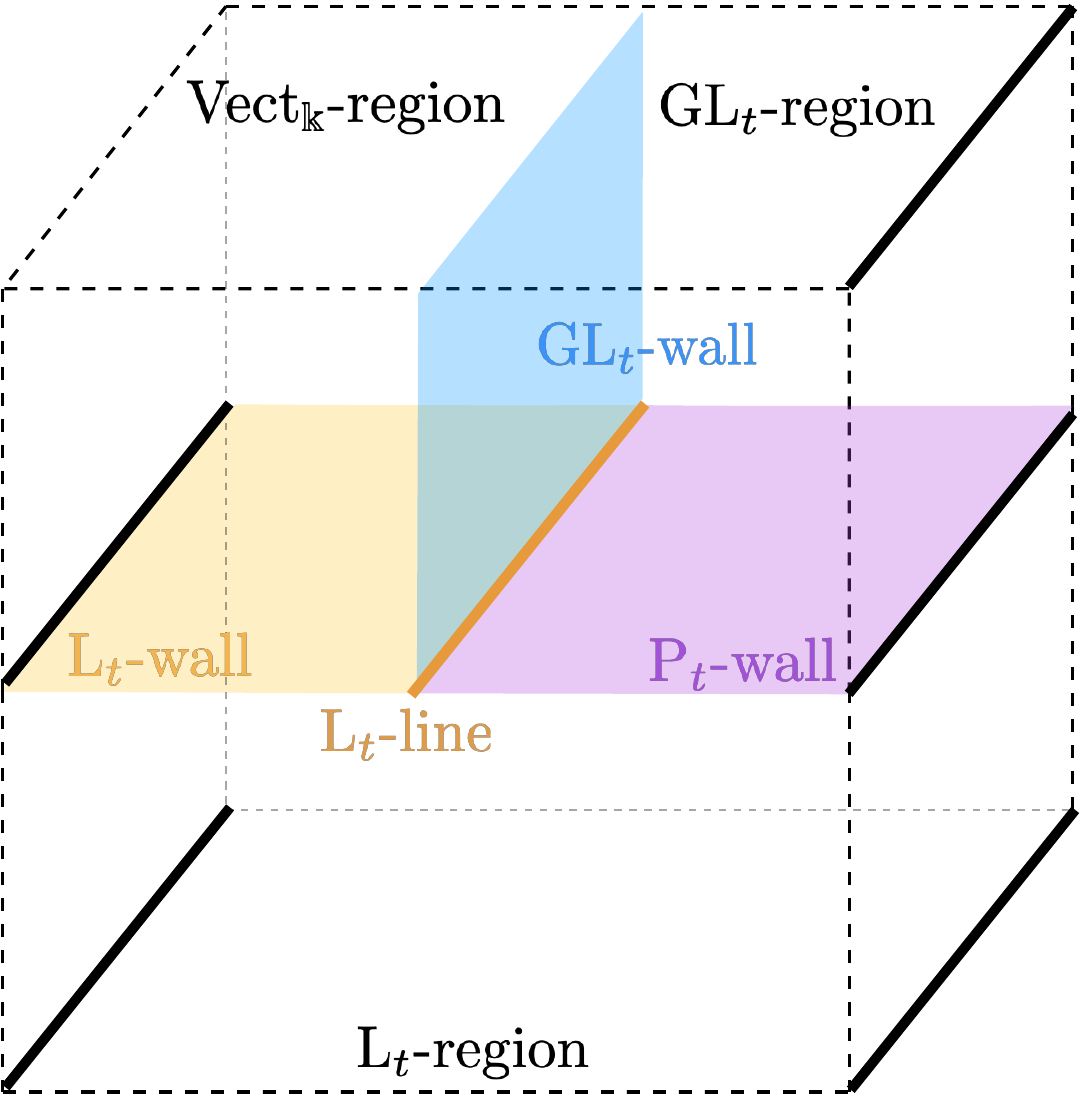}
\caption{Local model around the line defects.}
\end{figure}

\subsection{Turaev's coproduct} Finally, in the last section, we explain how our formalism recovers and extends Turaev's construction in a way which is manifestly compatible with composition of tangles and gluing of surfaces. Taking a well-chosen stratification of a surface $S$ and empty boundary conditions for the TFT above, we get an action of the $\GL_t$-skein algebra on the $\H_t$-skein algebra of $S$ (see Section \ref{coproductsection}):

\begin{theorem}
Let $(S,f)$ be a framed surface. A stratification can be chosen so that $\mathcal{Z}(S)(\emptyset)$ is a $(\skg,\skh)$-bimodule isomorphic to the skein algebra $\skh$ as a vector space. \QEDA
\end{theorem}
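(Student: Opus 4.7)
The plan is to exhibit a stratification of $S$ producing the desired bimodule structure, and then to identify the resulting vector space with $\skh$. A natural choice is to stratify $S$ by a properly embedded arc (or a small separating circle, in the closed case) so that it is divided into a $\GL_t$-labelled region $R_\GL$ and an $\L_t$-labelled region $R_\L$, with the interface curve labelled by $\P_t$ and carrying the point markings prescribed by the centred bimodule structure. Passing to $S\times[0,1]$, this yields a bipartite stratified three-manifold whose local model around the $\P_t$-surface defect and the $\L_t$-line defect is exactly the one supplied by the parabolic central algebra and centred bimodule structures of the previous sections. The left $\skg$-action and right $\skh$-action on $\mathcal{Z}(S)(\emptyset)$ are then obtained by stacking $\GL_t$-coloured and $\L_t$-coloured ribbon tangles in the bulks of $R_\GL\times[0,1]$ and $R_\L\times[0,1]$ respectively, and the two actions commute by functoriality of $\mathcal{Z}$ together with the height-exchange relations built into the defect theory.

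To identify $\mathcal{Z}(S)(\emptyset)$ with $\skh$ as vector spaces, I would construct mutually inverse linear maps. In one direction, an $\L_t$-coloured ribbon graph in $S\times[0,1]$ is sent to its image under the inclusion $R_\L\times[0,1]\hookrightarrow S\times[0,1]$, which is well-defined since the $\L_t$-skein relations embed into the defect skein relations. In the other direction, given a stratified ribbon graph $\Gamma$, I would use the surface defect relations to push every arc of $\Gamma\cap(R_\GL\times[0,1])$ through the $\P_t$-wall via the parabolic restriction functor $\res$, and then the line defect relations, coming from the centred bimodule structure on $\RepqH$, to absorb the residual decorations on the defect into the $\L_t$-bulk, thereby producing an element of $\skh$.

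The main obstacle will be showing that this reduction is well-defined, independently of the order in which strands are pushed across the defect and of the chosen diagrammatic representative of $\Gamma$. This is exactly what the central algebra axioms for $\RepqP$ and the centredness of the bimodule $\RepqH$ have been set up to ensure: the braided coherence of the central functor into $Z(\RepqP)$, together with its bimodule counterpart, forces any two reduction paths to differ only by Reidemeister-type moves already encoded in the local defect relations. Once well-definedness is secured, the two maps above are mutually inverse by construction; unravelling the identification then shows that the right $\skh$-action coincides with right multiplication on $\skh$, while the left $\skg$-action corresponds to restricting $\GL_t$-graphs through the surface defect, in agreement with the coproduct-like structure of the last section.
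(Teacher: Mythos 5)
Your high-level strategy — use the defect skein theory, project stratified graphs onto the defect wall, and identify the result with $\skh$ — matches the paper's. But there are several concrete problems with the stratification you propose, and these cause the argument to break at the key steps.

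First, the labelling is off. In the bipartite surface of Definition~\ref{bipartite}, the two-dimensional strata of $S$ are the $\L_t$-region and the $\P_t$-region (carrying $\RepqH$ and $\RepqP$ respectively), and the one-dimensional interface is the $\L_t$-line, decorated by $\RepqH$ as a centred $(\RepqH,\RepqP)$-bimodule. You have $\GL_t$ and $\L_t$ on the 2-strata and $\P_t$ on the interface, which conflates the three-dimensional bulk labels of $S\times I$ (where the $\GL_t$-bulk sits over the $\P_t$-region of the wall and $\Vect_\Bbbk$ sits over the $\L_t$-region) with the decoration of the wall itself.

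Second, and more seriously, a separating arc or small circle is not the right geometric choice. The paper's stratification takes the $\L_t$-region to be a disjoint union of collar neighbourhoods of the marked boundary components (Figure~\ref{fig:gluing_strat}), with the $\P_t$-region being everything else. This matters for two independent reasons. (a) You need the left action to be by $\skg$, but $\mathcal{A}(S)$ is a functor out of $\text{SkCat}_{\GL_t}(S^{\P_t})$, so you need $S^{\P_t}$ to deformation-retract to $S$; removing boundary collars does this, whereas cutting along a separating arc gives a $\text{Sk}_{\GL_t}(R_\GL)$-action for the proper subsurface $R_\GL$, which is not $\skg$. (b) The central technical point in the paper's proof of Proposition~\ref{skeinalgebra} — the one your proposal never names — is that every open strand of a stratified graph with empty top/bottom conditions is forced to traverse an $\L_t$-collar on its way to a marked component, and there the decomposition $\orid=\ogip+\orip$ (relation~\eqref{RepqHtrel1}) combined with \eqref{morphrels} rewrites every $\bsquare$-coloured strand as a sum of $\gsquare$ and $\redsquare$ pieces. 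With an arbitrary separating arc you do not get this: a blue strand with both endpoints in $R_\GL\cap\partial S$ never meets your $\L_t$-region and cannot be reduced.

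Third, your proposed map $\skh\to\mathcal{A}(S)(\emptyset)$ — ``an $\L_t$-coloured ribbon graph in $S\times I$ is sent to its image under $R_\L\times I\hookrightarrow S\times I$'' — is not well-formed: an element of $\skh$ lives in all of $S\times I$, not in $R_\L\times I$, and the correct inclusion embeds it into the $\H_t$-bulk $S\times[0,\tfrac12)$ below the defect wall, not into a half of the surface. Finally, for closed $S$ (your ``small separating circle'' case), removing a disk from $S$ changes the homotopy type, so you still only get a $\text{Sk}_{\GL_t}(S\setminus\mathbb{D}^2)$-action; the paper handles this with an additional argument (Proposition~\ref{torus}, computing that the loop around the puncture acts by the scalar $\delta$) to show the action descends to $\skg$, and your proposal is silent on this.
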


The action of $\skg$ on the empty diagram induces the Turaev coproduct $$\Delta_f\colon\skg\to\skg\otimes\skg$$ on the HOMFLY skein algebra. The construction is compatible with gluing surfaces and the functoriality implies automatically that $\Delta_f$ is a morphism of algebras.

\subsection{Outlook} In \cite{jennydavid}, Brown and Jordan study a quantization of the $A$-polynomial using the framework of parabolic defect skein modules. Their construction relies on a module over the boundary skein algebra, defined via parabolic restriction for $\mathrm{SL}_2$, rather than a direct map between algebras. Specialising our construction to $t=2$ (and modulo the reduction from $\mathrm{GL}_2$ to $\mathrm{SL}_2$), we recover the structure of the parabolic central algebra appearing in their work. In contrast to their bimodule approach, we can apply repeatedly Turaev's coproduct to construct a morphism of algebras
$$\Delta^N_f \colon \skg \to \skg \otimes \cdots \otimes \skg.$$
Specialising to $t=N$ on the left and to $t_1=\dots=t_N=1$ on the right (see Remark \ref{specialistationL}), we obtain a genuine morphism (rather than a bimodule)
\begin{equation*}\label{coproduct_tori}
\widetilde{\Delta}_f^N \colon \text{Sk}_{\mathrm{GL}_N}(S) \to \text{Sk}_{\mathbb{C}^N}(S)
\end{equation*}
between the $\mathrm{GL}_N$ and the $\mathbb{C}^N$-skein algebras of the surface. This provides a more natural setting for the quantization of the $A$-polynomial. 
 
On the other hand, the HOMFLY skein algebra provides a canonical quantization of the Goldman--Turaev Lie bialgebra \cite{turaev}, which is constructed over the vector space spanned by homotopy classes of loops on a surface $S$. This Lie bialgebra is equipped with a canonical filtration and it is indeed \emph{formal} \cite{alekseeva,alekseevb}, i.e., isomorphic to its associated graded Lie bialgebra. Moreover, recent works (\emph{loc. cit.}) stablish a suprising connection between the Goldman--Turaev Lie bialgebra, the Kashiwara--Vergne conjecture \cite{kv} and the Kontsevich integral \cite{massuyeau}.  For instance, when $S$ is a ``pair of pants'', the formality assertion is equivalent to the KV conjecture. The associated graded of the HOMFLY skein algebra has been described in \cite{schedler}. We expect that Schedler's coproduct admits a construction based on chord diagrams and similar to the one we give in this paper for the Turaev coproduct. This should be a first step towards proving a quantum version of the Goldman--Turaev formality, for which compatibility with cutting and gluing surfaces will likely play a key role.

\section*{Acknowledgements}
I would like to express my gratitude to my advisor Adrien Brochier for the ideas, the discussions and his constant involvement. I also thank my second advisor Emmanuel Wagner for his support, and Jennifer Brown for having shared with me the draft of her paper that inspired some of my results.

\section{Background}

\subsection{A quick review of rigid categories}\label{rigid_categories} We recall in this section some facts about rigid categories that will be useful throughout the paper. A \emph{monoidal category} is a category $\C$ endowed with a functor $\otimes: \mathcal{C}\times \mathcal{C}\to\mathcal{C}$ which is associative and unital up to given natural isomorphisms. Every monoidal category is equivalent to a \emph{strict} monoidal one, so we will assume that $\otimes$ is associative and unital on the nose.

\begin{definition}
A monoidal category $\mathcal{C}$ is \emph{right rigid} if every object $X$ has a right dual $X^*$, i.e., there are morphisms $$\text{ev}_X:X\otimes X^*\to\boldsymbol{1}\qquad\text{and}\qquad\text{coev}_X:\boldsymbol{1}\to X^*\otimes X$$ satisfying the usual \emph{zig-zags identities.} Similarly, we say that $\C$ is \emph{left rigid} if every object $X$ has a left dual ${}^*X$ and we say that it is \emph{rigid} if it is both left and right rigid. 
\end{definition}

Duals are unique up to canonical isomorphism: if $(X^*,\text{ev}_X,\text{coev}_X)$ and $(Y,\widetilde{\text{ev}}_X,\widetilde{\text{coev}}_X)$ are right duals of $X$, then 
\begin{equation}\label{uniquedual}
\varphi\colon Y\xrightarrow{\text{coev}_X\otimes\id_Y}X^*\otimes X\otimes Y\xrightarrow{\id_{X^*}\otimes\widetilde{\text{ev}}_X}X^*
\end{equation}
 is an isomorphism between them and the (co)evaluations are related by $$\widetilde{\ev}_X=\ev_X\circ(\id_X\otimes\varphi),\quad\widetilde{\coev}_X=(\varphi^{-1}\otimes\id_X)\circ\coev_X.$$ The same holds for left dualities. 
 
 When the category $\C$ is rigid, we will always assume that distinguished left and right duals $\{X^*,\text{ev}_X,\text{coev}_X\}$ and $\{{}^*X,\text{ev}'_X,\text{coev}'_X\}$ have been fixed for every object. We thus have left and right duality functors $${}^*(-),(-)^*:\C^\text{op}\to\C$$ that, in general, do not coincide. 
 
 If $(F,J,J_0):\C\to\mathcal{D}$ is a monoidal functor between rigid categories, then $$\ev^F_X:F(X)\otimes F(X^*)\xrightarrow{J_{X,X^*}}F(X\otimes X^*)\xrightarrow{F(\ev_X)}F(\boldsymbol{1})\xrightarrow{J_0}\boldsymbol{1}$$ and $$\coev_X^F:\boldsymbol{1}\xrightarrow{J_0^{-1}} F(\boldsymbol{1})\xrightarrow{F(\coev_X)}F(X^*\otimes X)\xrightarrow{J_{X^*,X}^{-1}}F(X^*)\otimes F(X)$$ exhibit $F(X^*)$ as a right dual of $F(X)$ (and the analogue is true for left duals). If $F(X)^*$ is the distinguished dual of $F(X)$ in $\mathcal{D}$, the isomorphism 
\begin{equation}\label{Fduals}
\varphi_X:F(X^*)\to F(X)^*
\end{equation} 
from \eqref{uniquedual} is natural in $X$.

\begin{definition}
A \emph{pivotal category} is a right rigid category $\C$ (with distinguished duality) endowed with a monoidal natural isomorphism $$\iota_X: X^{**}\to X.$$ A monoidal functor $F:\C\to\mathcal{D}$ between pivotal categories is \emph{pivotal} if the isomorphism
\begin{equation}\label{Fpivotal}
\eta_X:F(X)^{**}\xrightarrow{\varphi_X^*}F(X^*)^*\xrightarrow{\varphi_{X^*}^{-1}}F(X^{**})\xrightarrow{F(\iota^\C_X)}F(X)
\end{equation}
coincides with the pivotal structure of $\mathcal{D}$.
\end{definition}

Using the pivotal structure, we can exhibit the distinguished right duality of $\C$ as a left duality. Namely, the morphisms $$\ev'_X:X^*\otimes X\xrightarrow{\id_{X^*}\otimes\iota_X^{-1}}X^*\otimes X^{**}\xrightarrow{\ev_{X^*}}\boldsymbol{1}$$ and $$\coev'_X:\boldsymbol{1}\xrightarrow{\coev_{X^*}}X^{**}\otimes X^*\xrightarrow{\iota_X\otimes\id_{X^*}} X\otimes X^*$$ satisfy the zig-zag identities. Therefore, for this choice of distinguished left duality, we get a canonical monoidal isomorphism $(-)^*\cong{}^*(-)$.

Recall that a \emph{braided} monoidal category $\C$ is a monoidal category endowed with a natural isomorphism $$c_{X\otimes Y}: X\otimes Y\to Y\otimes X$$ satisfying the well-known hexagon axioms.

\begin{definition}
A \emph{balanced category} is a braided monoidal category $\C$ endowed with a natural isomorphism $$\theta_{X}:X\to X,$$ called \emph{twist}, such that $$\theta_{X\otimes Y}=c_{Y,X}\circ c_{X,Y}\circ(\theta_X\otimes\theta_Y).$$ We say that $\C$ is \emph{ribbon} if it is rigid, balanced and, moreover, $$\theta_{X^*}=\theta_X^*,$$ for all $X\in\C$.
\end{definition}

\begin{proposition}[{\cite[Corollary A.3]{henriques}}]\label{pivotal2}
Let $\C$ be a rigid braided category. Then, there is a one-to-one correspondence between pivotal structures and  twists on $\C$. There are exactly two ways of establishing this correspondence. \QEDA
\end{proposition}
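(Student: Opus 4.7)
The plan is to exploit the Drinfeld (or twist) isomorphism, which is the standard tool relating these two structures. In any rigid braided category $\C$, one defines a natural isomorphism $u_X \colon X \to X^{**}$ by
$$u_X \colon X \xrightarrow{\coev_{X^*} \otimes \id_X} X^{**} \otimes X^* \otimes X \xrightarrow{\id \otimes c_{X^*,X}} X^{**} \otimes X \otimes X^* \xrightarrow{\id \otimes \ev_X'} X^{**},$$
or more conceptually, as the unique natural isomorphism encoding the action of the Drinfeld element. A short graphical computation (using naturality of $c$ and the zig-zag identities) shows that $u$ fails to be monoidal in a very controlled way, namely
$$u_{X \otimes Y} = (u_X \otimes u_Y) \circ c_{Y,X} \circ c_{X,Y},$$
under the canonical identification $(X \otimes Y)^{**} \cong X^{**} \otimes Y^{**}$. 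First I would recall these facts, since they encode precisely the discrepancy between the two structures we wish to compare.

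Given a pivotal structure $\iota_X \colon X^{**} \to X$, I would define $\theta_X \coloneqq \iota_X \circ u_X$. Naturality is immediate; the twist axiom follows by computing
$$\theta_{X \otimes Y} = \iota_{X \otimes Y} \circ u_{X \otimes Y} = (\iota_X \otimes \iota_Y) \circ (u_X \otimes u_Y) \circ c_{Y,X} \circ c_{X,Y} = (\theta_X \otimes \theta_Y) \circ c_{Y,X} \circ c_{X,Y},$$
where the monoidality of $\iota$ is used in the middle equality. Conversely, given a twist $\theta$, one sets $\iota_X \coloneqq \theta_X \circ u_X^{-1}$; naturality is automatic, and the same computation run backwards shows that $\iota_{X \otimes Y} = \iota_X \otimes \iota_Y$, so $\iota$ defines a pivotal structure. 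These assignments are manifestly mutually inverse, establishing the bijection.

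For the final assertion, I would note that there is a second Drinfeld-type isomorphism $u'_X \colon X \to X^{**}$ obtained by the same construction with $c^{-1}$ in place of $c$; equivalently, $u'$ is the Drinfeld isomorphism associated to the reverse braiding on $\C$. The same argument with $u'$ in place of $u$ yields a second bijection between pivotal structures and twists, and the two bijections differ by the natural automorphism $u \circ (u')^{-1}$, which corresponds precisely to squaring (or inverting) the twist. The main technical point to verify cleanly is the monoidality of $\iota$ from the twist data: it requires combining naturality of the braiding with the failure of $u$ to be monoidal, and it is the only step where the hexagon axioms enter in an essential way. The ribbon compatibility $\theta_{X^*} = \theta_X^*$ then follows formally from monoidality of $\iota$ applied to the coevaluation and evaluation of $X$.
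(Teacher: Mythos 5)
The paper does not actually prove this proposition; it is cited from \cite[Corollary A.3]{henriques} and marked with a black square (no proof given). Your approach via the Drinfeld natural isomorphism $u_X\colon X\to X^{**}$ is the standard one and is, as far as I can tell, precisely the argument used in the reference, so the overall route is sound. The core computation — $u_{X\otimes Y}=(u_X\otimes u_Y)\circ c_{Y,X}\circ c_{X,Y}$, hence $\theta\coloneqq\iota\circ u$ is a twist iff $\iota$ is monoidal — is correct, and the two correspondences do indeed arise from $u$ and from the analogous morphism $u'$ built from the inverse braiding (equivalently, the two expressions $\theta^{(1)},\theta^{(2)}$ written out in the paper immediately after the proposition).

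Two issues, one cosmetic and one substantive. Cosmetically, your displayed formula for $u_X$ mixes conventions: after applying $\id\otimes c_{X^*,X}$ you land in $X^{**}\otimes X\otimes X^*$, so the last map should be $\id\otimes\ev_X$ (with $\ev_X\colon X\otimes X^*\to\boldsymbol{1}$ the distinguished right evaluation), not $\ev'_X$; as written the composite does not typecheck without silently invoking a pivotal structure you are trying to construct. Substantively, the final sentence is wrong: the identity $\theta_{X^*}=\theta_X^*$ does \emph{not} follow formally from monoidality of $\iota$. This is exactly what the paper's next result (Proposition A.4 of \cite{henriques}, quoted just below) addresses: given a braided pivotal category, $(\C,\theta^{(i)})$ is ribbon if and only if $\theta^{(1)}=\theta^{(2)}$, which is an extra condition and not automatic. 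The proposition you are proving only asserts a bijection between pivotal structures and twists (balanced structures), and you should not claim ribbon compatibility for free — doing so contradicts the surrounding text. I would also remark that you exhibit two correspondences but do not argue that these are the \emph{only} two; for a complete proof of the "exactly two" clause one needs a uniqueness argument (e.g.\ classifying the monoidal natural automorphisms of the identity that relate $u$ and $u'$, or arguing as in the reference), which is currently missing.
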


In particular, if $\C$ is a braided pivotal category, there are two choices of twists given by $$\theta^{(1)}_X=(\id_X\otimes\ev_{X^*})\circ(c_{X^{**},X}\otimes\id_{X^*})\circ(\id_{X^{**}}\otimes\coev_{X})\circ\iota_X$$ and $$\theta^{(2)}_X=\iota_X^{-1}\circ(\ev_X\otimes\id_{X^{**}})\circ(\id_{X^*}\otimes c_{X^{**},X})\circ(\coev_{X^*}\otimes\id_{X}).$$

\begin{proposition}[{\cite[Proposition A.4]{henriques}}]
Let $\C$ be a braided pivotal category. Then, $(\C,\theta^{(i)})$ is ribbon for either $i=1,2$ if, and only if, $\theta^{(1)}=\theta^{(2)}.$ \QEDA
\end{proposition}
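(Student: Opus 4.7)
The plan is to reduce the proposition to the single identity
$$(\theta^{(1)}_X)^{*} = \theta^{(2)}_{X^{*}}$$
valid in any braided pivotal category $\C$. Once this identity is established, the conclusion is immediate. Indeed, if $\theta^{(1)} = \theta^{(2)}$, setting $\theta := \theta^{(1)}$ we obtain $\theta_{X^{*}} = \theta^{(2)}_{X^{*}} = (\theta^{(1)}_X)^{*} = (\theta_X)^{*}$, so $(\C, \theta)$ is ribbon. Conversely, if $(\C, \theta^{(1)})$ is ribbon, then $\theta^{(1)}_{X^{*}} = (\theta^{(1)}_X)^{*} = \theta^{(2)}_{X^{*}}$ for every $X$, and since $(-)^{*}\colon \C^{\op} \to \C$ is an equivalence, this forces $\theta^{(1)} = \theta^{(2)}$. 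The argument for $i=2$ is entirely analogous, using the symmetric identity $(\theta^{(2)}_X)^{*} = \theta^{(1)}_{X^{*}}$, which follows from the first one by applying $(-)^{*}$ and using $\iota$.

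To prove the key identity I would unfold the defining formulas and compute $(\theta^{(1)}_X)^{*}$ via
$$(\theta^{(1)}_X)^{*} = (\ev_X \otimes \id_{X^{*}}) \circ (\id_{X^{*}} \otimes \theta^{(1)}_X \otimes \id_{X^{*}}) \circ (\id_{X^{*}} \otimes \coev_X).$$
Substituting the explicit expression for $\theta^{(1)}_X$, I would slide the outer evaluation and coevaluation of $X$ through the braiding $c_{X^{**},X}$ using naturality, apply a single zig-zag identity on each side to cancel a pair of $\ev/\coev$, and rearrange the remaining composition into a braiding between $X^{***}$ and $X^{*}$ threaded by the (co)evaluation of $X^{**}$. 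Comparing with the defining formula for $\theta^{(2)}$ evaluated at $X^{*}$, this is exactly $\theta^{(2)}_{X^{*}}$, provided one identifies the leftover copy of $\iota_X$ with $\iota_{X^{*}}^{-1}$ via the standard relation $\iota_{X^{*}} \circ (\iota_X)^{*} = \id_{X^{*}}$, which expresses the monoidality and naturality of the pivotal structure under dualisation.

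The main obstacle is the bookkeeping with the iterated duals $X, X^{*}, X^{**}, X^{***}$ and the careful placement of the pivotal isomorphism at each stage. I would carry out the computation graphically, representing $\theta^{(1)}_X$ as a positive curl on a strand oriented upwards, obtained by looping to the right through $X^{**}$ and returning via $\ev_{X^{*}}$. Taking the right dual reverses the strand orientation and reflects the diagram horizontally, so that the rightward loop defining $\theta^{(1)}$ becomes the leftward loop on $X^{*}$ defining $\theta^{(2)}$. In this string-diagrammatic form the identity $(\theta^{(1)}_X)^{*} = \theta^{(2)}_{X^{*}}$ is visually manifest, and the equivalence of the two conditions in the proposition follows formally from the reduction above.
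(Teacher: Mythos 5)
The paper does not prove this proposition itself; it simply records the statement and cites \cite[Proposition~A.4]{henriques}, so there is no in-text proof to compare against. That said, your strategy is correct and, as far as I can tell, essentially the standard one. The reduction of the proposition to the single identity $(\theta^{(1)}_X)^{*} = \theta^{(2)}_{X^{*}}$ is clean: given the identity, the forward direction ($\theta^{(1)}=\theta^{(2)}\Rightarrow$ ribbon) is immediate, and the converse follows because $(-)^{*}$ is an equivalence of $\C^{\op}$ with $\C$ (one can make your essential-surjectivity step completely explicit by using naturality of the twist along $\iota_Y\colon Y^{**}\to Y$ to transport $\theta^{(1)}_{Y^{**}}=\theta^{(2)}_{Y^{**}}$ to $\theta^{(1)}_{Y}=\theta^{(2)}_{Y}$). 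The identity $\iota_{X^*}\circ(\iota_X)^*=\id_{X^*}$ you invoke is indeed the standard compatibility of a pivotal structure with duality.

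The only thing that keeps this from being a complete proof as written is that the key lemma $(\theta^{(1)}_X)^{*} = \theta^{(2)}_{X^{*}}$ is left at the level of a plan (``I would unfold\dots slide\dots rearrange\dots''). The graphical intuition you give --- taking the right dual of a positive right curl reflects it to a positive left curl on $X^*$ --- is correct and is exactly why the identity holds, but a referee would want the explicit string-diagram (or algebraic) calculation carried through, paying attention to (i) which of the two conventions for the braiding appears in the definition of $\theta^{(2)}$, and (ii) the bookkeeping of $\iota$ across the iterated duals $X,X^*,X^{**},X^{***}$ that you already flagged. If you fill that in carefully, the argument is complete and should match what is done in \cite[Proposition~A.4]{henriques}.
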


\subsection{Graphical languages for monoidal categories} \label{graphical_calculus} The definition of the HOMFLY category $\RepqP$ and its central algebra structure (cf. Sections \ref{HOMFLYcats} and \ref{parabolic}) will rely on the graphical calculus for monoidal categories, that we briefly review here. See \cite{selinger} for an extended survey and \cite[Section 2.1]{henriques} for a nice overview.

\noindent $\boldsymbol{\star}$ \textbf{\underline{Rigid categories}.} For a rigid monoidal category $\mathcal{C}$, we consider directed ``vertical'' planar graphs $\Gamma$ embedded in $\R\times[0,1]$ with endpoints lying in $\R\times\{0,1\}$. By vertical we mean that edges never run horizontally. Local extrema (i.e. cups and caps) will be considered as vertices. The rest of vertices are coupons, with incident edges attached to either the top or the bottom face. For instance, $$\includegraphics[scale=0.8, valign = center]{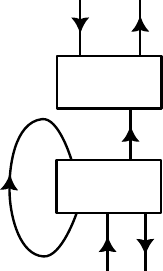}$$ is a planar graph of this type. We decorate edges of the graph with pairs $(C,n)$, where $C\in\mathcal{C}$ and $n\in\mathbb{Z},$ according to the following rule: if an edge is directed upwards, then $n$ will be even; and $n$ will be odd if it is oriented downwards. To make this consistent, $n$ has to change at local maxima and minima: turning counterclockwise increases $n$ by 1, and turning clockwise decreases $n$ by 1. Interpreting $(C,n)$ as $C^{*\overset{\times n}{\cdots}*}$ and $(C,-n)$ by ${}^{*\overset{\times n}{\cdots}*}C$, for every $n\in\mathbb{N}$,  the set of edges incident to the bottom and top faces of a coupon determines two objects $s,t\in\mathcal{C}$. The coupon will thus be decorated with a morphism $f:s\to t$. We consider two planar graphs to be equivalent if they are related by a \emph{rectilinear isotopy} of the plane: this is an isotopy of $\R\times[0,1]$ that does not rotate coupons.

\begin{definition}
The category $\text{Rig}(\C)$ has:
\begin{itemize}
\item \underline{objects}: finite sequences of pairs $(C,n),$ with $C\in\C$ and $n\in\mathbb{Z}$;
\item \underline{morphisms}: equivalence classes of decorated planar graphs as above, whose source and target are determined by the decoration of the bottom and top endpoints, respectively. Composition is given by vertically stacking diagrams. 
\end{itemize}
This category has a monoidal structure given by concatenation of objects and horizontal stacking of morphisms.
\end{definition}

The category $\text{Rig}(\C)$ is the \emph{free rigid category} on $\C$: it comes equipped with a monoidal evaluation functor $$\text{ev}_\C:\text{Rig}(\C)\to\C$$ sending $(C,n)$ to $C^{*\overset{\times n}{\cdots}*}$ and $(C,-n)$ to ${}^{*\overset{\times n}{\cdots}*}C$, for every $n\in\mathbb{N}$. A coupon decorated with $f$ is mapped to $f$ itself, and the image of cups and caps are the distinguished evaluation and coevaluation morphisms of $\C$.

\noindent $\boldsymbol{\star}$ \textbf{\underline{Pivotal categories}.} As explained in the previous section, pivotal structures yield canonical monoidal isomorphisms $(-)^*\cong {}^*(-)$ and $(-)^{**}\cong(-).$ Since multiple duals are canonically identified, a graphical calculus for pivotal categories can be obtained by dropping the integer labels from the graphical calculus for rigid categories. Namely, we consider the same class of graphs as in the previous section, labelled as follows. Edges are coloured just with objects of $\C$. For every coupon, interpreting $(C,\uparrow)$ as $C$ and $(C,\downarrow)$ as $C^*$, incident edges determine again two objects $s,t\in\C$. The coupon is thus labelled with a morphism $f:\tilde{s}\to\tilde{t}$, where $\tilde{s},\tilde{t}$ are objects canonically identified with $s,t$ via the pivotal structure. Two graphs are equivalent if they are related by a planar isotopy. In particular, we allow now isotopies rotating coupons by $2\pi$. Note that this yields a more general class of graphs as before, since, for instance, $$\includegraphics[scale=0.8, valign = c]{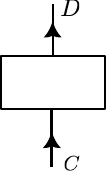}$$ may be decorated by either a morphism of the form $f:C\to D$ or $f:C\to D^{**}$ (among many other possibilities).

As in the previous case, we get a category $\text{Piv}(\C)$ whose morphisms are planar graphs, together with an evaluation functor $$\ev_\C\colon\text{Piv}(\C)\to\C,$$ which is the \emph{free pivotal category} on $\C$ (the pivotal structure being trivial). For instance, $$\ev_\C\left(\includegraphics[scale=0.4, valign=c]{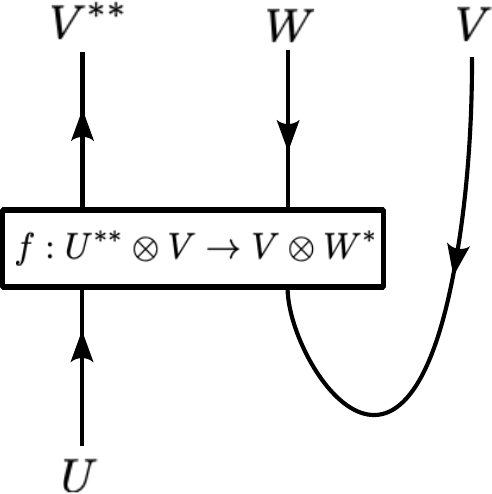}\right)=\left(\iota_V^{-1}\otimes\id_W\otimes\id_V\right)\circ\left(f\otimes\id_{V^*}\right)\circ\left(\iota_U^{-1}\otimes\text{coev}'_V\right),$$ where $\iota$ is the pivotal structure of $\C.$

\noindent $\boldsymbol{\star}$ \textbf{\underline{Braided pivotal categories}.} The graphical calculus for braided pivotal categories is supported by three-dimensional diagrams:

\begin{definition}
Let $\C$ be a braided pivotal category. The category $\text{BrPiv}(\C)$ has:
\begin{itemize}
\item \underline{objects}: finite sequences of pairs $(C,\varepsilon)$, with $C\in\C$ and $\varepsilon\in\{\uparrow,\downarrow\}$;
\item \underline{morphisms}: equivalence classes of directed graphs embedded in $\R^2\times[0,1]$ with endpoints lying in $\R\times\{0\}\times\{0,1\}$ whose vertices are coupons decorated following the same rules as for pivotal categories.
\end{itemize}
\end{definition}

The category $\text{BrPiv}(\C)$ is itself braided pivotal, with trivial pivotal structure and braiding given by crossing strands. Graphs will be represented by diagrams obtained by projecting into $\R\times[0,1]$ and keeping track of the relative position of the strands in crossings. For example,
$$\includegraphics[scale=0.5]{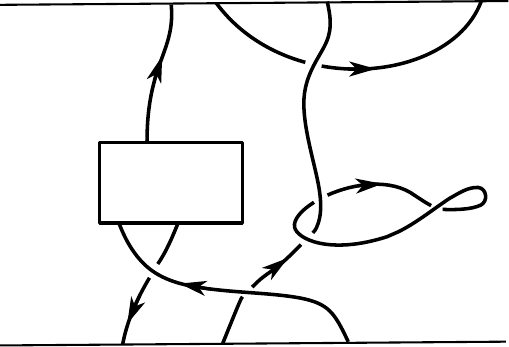}$$
is a diagram representing such a three-dimensional graph. Two graphs are equivalent if they are related by \emph{regular isotopy}, i.e., their diagrams can be obtained from each other by applying a finite sequence of the Reidemeister moves II and III and a planar isotopy. In particular,
\begin{equation}\label{ribbon_twists}
\MyFigure{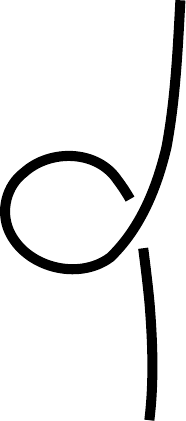}\quad\neq\quad\MyFigure{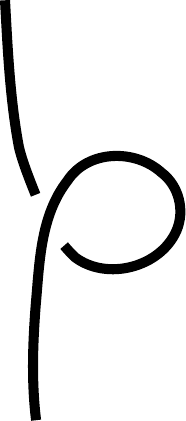}.
\end{equation}
The two diagrams in \eqref{ribbon_twists} represent graphically the two families of twists turning $\text{BrPiv}(\C)$ into a balanced category (cf. Proposition \ref{pivotal2}).

The category $\text{BrPiv}(\C)$ is the \emph{free braided category} on $\C$. Again, we have an evaluation functor $$\text{ev}_\C:\text{BrPiv}(\C)\to\C$$ mapping a positive crossing to the braiding.

\noindent $\boldsymbol{\star}$ \textbf{\underline{Ribbon categories}.} The inequality in \eqref{ribbon_twists} suggests that braided pivotal categories are still not very natural from a topological point of view. Ribbon categories are defined to be the class of braided pivotal categories in which the two terms coincide. The graphical calculus of ribbon categories is supported by so-called ribbon graphs.

A \emph{ribbon graph} is a three-dimensional graph whose edges are thick ribbons. More precisely, it is a compact oriented surface $\Omega$ embedded in $\R^2\times[0,1]$ which decomposes into the following elementary pieces:
\begin{enumerate}
\item \emph{ribbons}, i.e., homeomorphic copies of the square $[0,1]\times[0,1]$ whose core is directed;
\item \emph{annuli}, i.e., homeomorphic copies of the cylinder $\mathbb{S}^1\times[0,1]$ whose core is also directed;
\item \emph{coupons}, with distinguished top and bottom bases.
\end{enumerate}
Ribbons can meet coupons just on their distinguished bases. On the other hand, the choice of an orientation for $\Omega$ determines a ``preferred side''. We demand the free bases of ribbons (those not meeting any coupon) to meet the planes $\R^2\times\{0\}$ and $\R^2\times\{1\}$ orthogonally at $\R\times\{0\}\times\{0\}$ and $\R\times\{0\}\times\{1\}$, respectively, and in such a way that the preferred side of $\Omega$ is turned up. We colour ribbons, coupons and annuli following the same rules as for pivotal categories.

Ribbon graphs will be considered up to isotopy: an \emph{isotopy of ribbon graphs} is an isotopy of $\R^2\times[0,1]$ which is the identity on $\R^2\times\{0,1\}$ and preserves the decomposition into ribbons, annuli and coupons. By applying such an isotopy, any ribbon graph is equivalent to a graph whose ribbons, coupons and annuli are everywhere parallel to the vertical plane. Such a graph can be represented by a \emph{ribbon diagram}, obtained by projecting ribbons and annuli onto their cores and projecting the resulting graph into $\R\times[0,1]$ (see Figure \ref{fig:ribbon_diagram}). Two ribbon diagrams represent the same ribbon graph if they can be obtained from each other by a finite sequence of Reidemeister moves II and III, planar isotopy and movements  of the form $$\MyFigure{drawings/twist1.pdf}\quad\leftrightarrow\quad\MyFigure{drawings/twist2.pdf}.$$

\begin{figure}[t]
\centering
\begin{minipage}{.45\textwidth}
  \centering
  \includegraphics[scale=0.10]{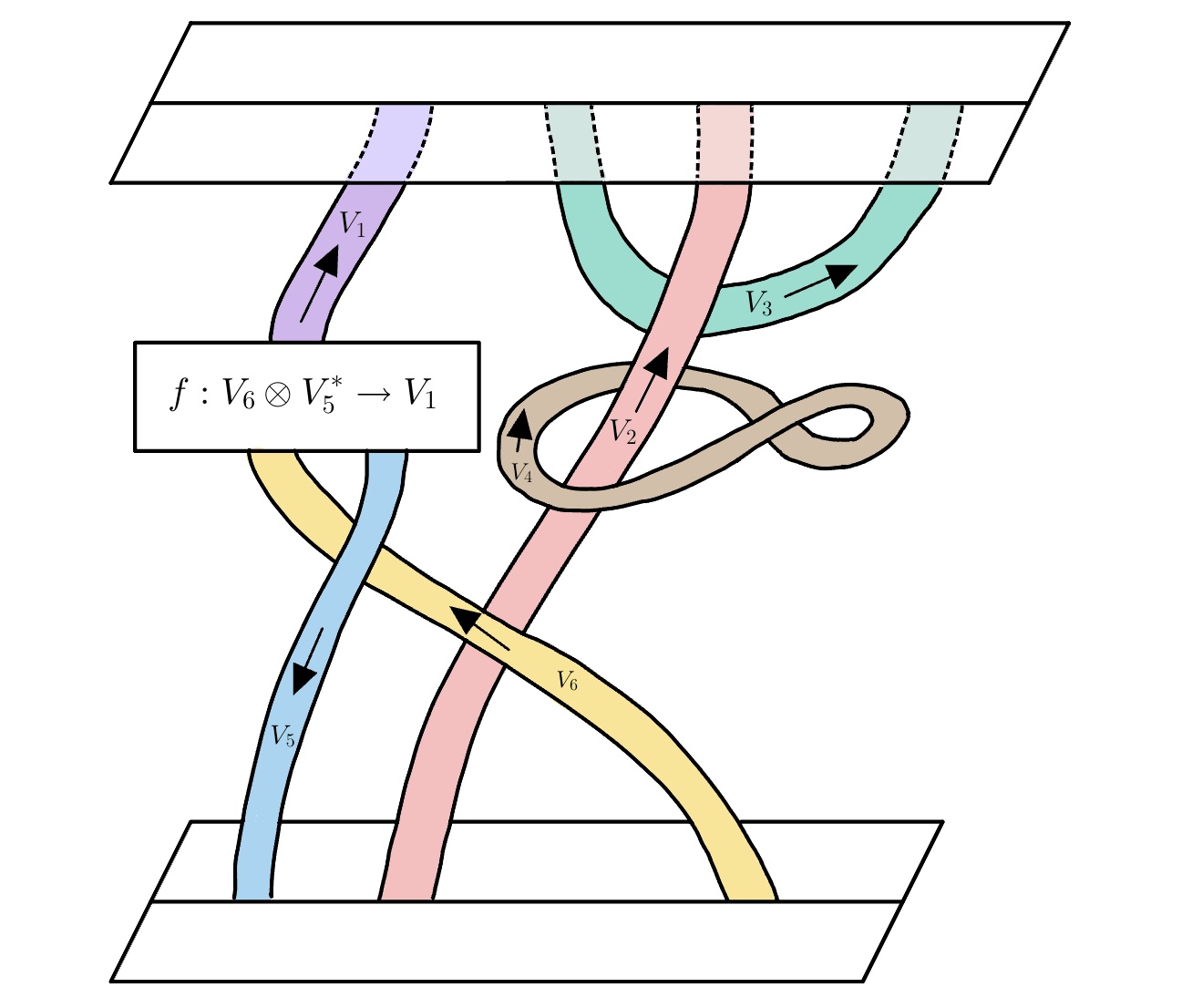}
  \captionof{figure}{Coloured ribbon graph.}
  \label{fig:ribbon_graph}
\end{minipage}
\hfill
\begin{minipage}{.45\textwidth}
  \centering
  \vspace{10pt}
  \includegraphics[scale=0.6]{drawings/ribdig.pdf}
  \captionof{figure}{Ribbon diagram.}
  \label{fig:ribbon_diagram}
\end{minipage}
\end{figure}

\begin{definition}[{\cite[Section I.2]{turaevbook}}]
Let $\C$ be a ribbon category. The category $\Rib(\C)$ has:
\begin{itemize}
\item \underline{objects}: finite sequences of pairs $(C,\varepsilon)$, with $C\in\C$ and $\varepsilon\in\{\uparrow,\downarrow\}$;
\item \underline{morphisms}: isotopy classes of directed ribbon graphs (defined below) embedded in $\R^2\times[0,1]$ and coloured with $\C$ in the usual way.
\end{itemize}
This is a ribbon category, with twist given by $\includegraphics[scale=0.14, valign = c]{drawings/twist1.pdf}$.
\end{definition}

As in the previous cases, $\Rib(\C)$ is the \emph{free ribbon category} on $\C$, and we have an evaluation functor $$\text{ev}_\C:\text{Rib}(\C)\to\C,$$ known as the \emph{Reshetikhin--Turaev evaluation functor}. 

\subsection{Skein categories} The graphical calculus for ribbon categories extends to arbitrary oriented surfaces, yielding the notion of skein category. Skein categories are categorical invariants of oriented surfaces generalising the notion of skein algebras. They were introduced in \cite{walker, jf} and take a ribbon category $\C$ as an algebraic ingredient:

\begin{definition}\textup{(\cite[Definition 1.3]{cookethesis})}
Let $S$ be an oriented surface. A \emph{$\C$-coloured ribbon graph in $S$} is an embedding of a $\C$-coloured ribbon graph into $S\times[0,1]$ such that its free bases are sent to $S\times\{0,1\}$ and the rest lies in $S\times(0,1).$
\end{definition}

\begin{definition}\textup{(\cite[Definition 1.5]{cookethesis})}
Let $\Bbbk$ be a commutative ground ring. The \emph{category $\Rib(\C,S)$ of ribbon graphs in $S$} has:
\begin{itemize}
\item \underline{objects}: finite sets $\left\lbrace x_1^{(C_1,\varepsilon_1)},\ldots,x_m^{(C_m,\varepsilon_m)}\right\rbrace$ of disjoint framed points $x_i\in S$ coloured with pairs $(C_i,\varepsilon_i)$, where $C_i\in\C$ and $\varepsilon_i\in\{\uparrow,\downarrow\}$;
\item \underline{morphisms} from $\left\lbrace x_1^{(C_1,\varepsilon_1)},\ldots,x_m^{(C_m,\varepsilon_m)}\right\rbrace$ to $\left\lbrace y_1^{(D_1,\eta_1)},\ldots,y_n^{(D_m,\eta_n)}\right\rbrace$ are $\Bbbk$-linear combinations of isotopy classes of $\C$-coloured ribbon graphs in $S$ whose bottom and top are, respectively, $\left\lbrace x_1^{(C_1,\varepsilon_1)},\ldots,x_m^{(C_m,\varepsilon_m)}\right\rbrace$ and $\left\lbrace y_1^{(D_1,\eta_1)},\ldots,y_n^{(D_m,\eta_n)}\right\rbrace$.
\end{itemize}
Composition is given by vertically stacking two copies of $S\times[0,1]$ and retracting $S\times[0,2]$ to $S\times[0,1]$. In particular, $\Rib(\C,\R^2)=\Rib(\C).$
\end{definition}

\begin{definition}\textup{(\cite[Definition 1.9]{cookethesis})}
Let $\C$ be a ribbon category and $S$ an oriented surface. The \emph{skein category $\Sk_\C(S)$} is the quotient of $\Rib(\C,S)$ by the following relations: a morphism $\sum_i\lambda_i\Omega_i\sim 0$ if there exists an orientation-preserving embedding $\iota:\R^2\times[0,1]\hookrightarrow S\times[0,1]$ such that:
\begin{enumerate}
\item the intersection of each $\Omega_i$ with the boundary of $\iota(\R^2\times[0,1])$ consists only of transverse ribbons intersecting $\iota(\R\times\{0\}\times\{0\})$ and $\iota(\R\times\{0\}\times\{1\})$;
\item the $\Omega_i$ are equal outside $\iota(\R^2\times[0,1])$;
\item we have $$\sum_i\lambda_i\text{ev}_\C\left(\iota^{-1}\left(\Omega_i\cap\iota(\R^2\times[0,1])\right)\right)=0.$$
\end{enumerate}
\end{definition}

Note that skein categories are not monoidal in general. When $S=C\times I$ for some one-manifold $C$, a tensor product can be defined by stacking two copies of $C\times I$. The following is a direct consequence of the definition:

\begin{proposition}\label{trivialskein}
Let $\C$ be a ribbon category. Then, $$\Sk_\C(\R^2)\simeq\C$$ as ribbon categories. \QEDA
\end{proposition}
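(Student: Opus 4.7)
The plan is to produce mutually inverse ribbon functors between $\C$ and $\Sk_\C(\R^2)$. The proof hinges on identifying $\Rib(\C,\R^2)$ with the free ribbon category $\Rib(\C)$ from Section \ref{graphical_calculus}: a finite set of framed coloured points in $\R^2$ can always be isotoped to lie on the line $\R\times\{0\}$ with standard framing, inducing a canonical ordering, so at the level of objects and generating ribbon graphs the two categories agree.

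First I define a ribbon functor $F\colon \C\to\Sk_\C(\R^2)$ sending an object $X$ to the singleton $\{0^{(X,\uparrow)}\}$ and a morphism $f\colon X\to Y$ to the class of the elementary ribbon graph consisting of two short vertical strands joined by a coupon labelled $f$. On tensor products one places the data side by side, and the braiding and twist of $\Sk_\C(\R^2)$ coming from crossings and $2\pi$-rotations of strands translate directly into those of $\C$ on such elementary morphisms, so $F$ is strict ribbon.

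Second, the universal property of $\Rib(\C)$ yields the Reshetikhin--Turaev evaluation functor $\ev_\C\colon \Rib(\C,\R^2)\to\C$. I would then check that $\ev_\C$ descends to the quotient $\Sk_\C(\R^2)$: if $\sum_i\lambda_i\Omega_i\sim 0$ is a skein relation witnessed by an embedding $\iota$, then the $\Omega_i$ agree outside $\iota(\R^2\times[0,1])$ and their evaluations therefore differ only through the local factors $\ev_\C\bigl(\iota^{-1}(\Omega_i\cap\iota(\R^2\times[0,1]))\bigr)$, whose $\lambda_i$-weighted sum vanishes by hypothesis. Writing $G\colon\Sk_\C(\R^2)\to\C$ for the resulting ribbon functor, the equality $G\circ F=\id_\C$ is immediate since a coupon decorated with $f$ evaluates to $f$.

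The main obstacle is to establish $F\circ G\cong\id$, i.e. to show that every ribbon graph $\Omega$ in $\R^2\times[0,1]$ is equivalent, modulo the skein relations, to the elementary coupon $F(\ev_\C(\Omega))$ with the same endpoints. The strategy is to first isotope $\Omega$ so that its boundary endpoints lie on the line $\R\times\{0\}$ inside $\R^2\times\{0,1\}$, then choose an embedding $\iota$ of $\R^2\times[0,1]$ into $\R^2\times[0,1]$ whose image contains both $\Omega$ and (a representative of) $F(\ev_\C(\Omega))$, and whose distinguished lines $\iota(\R\times\{0\}\times\{0,1\})$ contain the endpoints. The two graphs are then ribbon diagrams inside $\iota$ with equal Reshetikhin--Turaev evaluation in $\C$, so the two-term combination $\Omega-F(\ev_\C(\Omega))$ satisfies the three hypotheses of the skein relation and hence represents zero in $\Sk_\C(\R^2)$. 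Naturality in the endpoints is then automatic, completing the proof that $F$ and $G$ are mutually inverse ribbon equivalences.
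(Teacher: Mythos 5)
The paper offers no argument for this proposition — it is presented as ``a direct consequence of the definition'' with the QED symbol placed directly after the statement — so your task is really to supply the details that the authors leave tacit. Your two-functor strategy (the coupon inclusion $F$ and the evaluation $G$ descending from $\ev_\C$, with $G\circ F=\id$ immediate and $F\circ G\cong\id$ via a skein relation covering all of $\R^2\times[0,1]$) is indeed the natural way to unpack the definition, and it is correct in substance. The heart of the matter — that any ribbon graph and the single coupon labelled with its evaluation become equal after one application of the skein relation with a large ambient cube $\iota$ — is the right observation and is exactly why the statement is a tautology.

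One small point you should tighten: the expression $\Omega - F(\ev_\C(\Omega))$ only typechecks when the source and target of $\Omega$ are precisely the singleton configurations at $0\in\R\times\{0\}$; for a general object $V$ (whose points need not lie on the distinguished line) the endpoints of $\Omega$ and of $F(\ev_\C(\Omega))$ differ, and the skein relation requires the endpoints to lie on $\iota(\R\times\{0\}\times\{0,1\})$. The clean fix is the standard one — show $F$ is fully faithful by your skein argument applied to hom-spaces between singletons, and then show $F$ is essentially surjective by exhibiting, for each $V$, an isomorphism $\eta_V\colon V\to F(G(V))$ in $\Sk_\C(\R^2)$ built from the straight-line isotopy bringing the points of $V$ onto $\R\times\{0\}$ followed by a coupon labelled with the identity of the tensor product. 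This replaces the phrase ``first isotope $\Omega$'' (which, taken literally, changes the objects and so is not available inside the fixed hom-space) by pre- and post-composition with the isomorphisms $\eta_V$, $\eta_W$. With that adjustment your argument is complete, and the compatibility of $F$ with the braiding and twist is, as you note, read off from the diagrams.
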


\subsection{Parabolic restriction}\label{parabolic_restriction} Let $\Bbbk$ be a field of characteristic $0$ and $q\in\Bbbk^\times$. Fix $N\in\mathbb{N}$ and assume that $q$ is not a root of unity. The algebra $U_q(\mathfrak{gl}_N)$ is defined by generators $e_i,f_i,d_j,d_j^{-1}$, $i=1,\ldots, N-1$, $j=1,2,\ldots,N$, and relations
\begin{equation*}
d_id_j=d_jd_i,\quad d_id_i^{-1}=d_i^{-1}d_i=1,
\end{equation*}
\begin{equation*}
d_ie_jd_i^{-1}=q^{\delta_{ij}-\delta_{i,j+1}}e_j,\quad d_if_jd_i^{-1}=q^{-\delta_{i,j}+\delta_{i,j+1}}f_j,
\end{equation*}
\begin{equation*}
e_if_j-f_je_i=\delta_{ij}\frac{d_id_{i+1}^{-1}-d_i^{-1}d_{i+1}}{q-q^{-1}},
\end{equation*}
\begin{equation*}
e_ie_j=e_je_i,\quad f_if_j=f_jf_i,\quad|i-j|\geq 2,
\end{equation*}
\begin{equation*}
e_i^2e_{i\pm 1}-(q+q^{-1})e_ie_{i\pm 1}e_i+e_{i\pm 1}e_i^2=0,
\end{equation*}
\begin{equation*}
f_i^2f_{i\pm 1}-(q+q^{-1})f_if_{i\pm 1}f_i+f_{i\pm 1}f_i^2=0.
\end{equation*}
This algebra becomes a Hopf algebra with the comultiplication  $\Delta: U_q(\mathfrak{gl}_N)\to U_q(\mathfrak{gl}_N)\otimes U_q(\mathfrak{gl}_N)$ defined on generators by $$\Delta(e_i)=d_i^{-1}d_{i+1}\otimes e_i+e_i\otimes 1,\quad\Delta(f_i)=1\otimes f_i+f_i\otimes d_id_{i+1}^{-1},\quad\Delta(d_i^{\pm1})=d_i^{\pm 1}\otimes d_i^{\pm 1}.$$ The counit $\varepsilon:\Uq\to\Bbbk$  and the antipode $S:\Uq\to\Uq$ are given, respectively, by $$\varepsilon(e_i)=0,\quad\varepsilon(f_i)=0,\quad\varepsilon(d_i)=1$$ and $$S(e_i)=-d_id_{i+1}^{-1}e_i,\quad S(f_i)=-f_id_i^{-1}d_{i+1},\quad S(d_i)=d_i^{-1}.$$

\begin{definition}
The \emph{natural representation} $\rho:\Uq\to\End_{\mathbb{C}}(\mathbb{C}^N)$ of $\Uq$ is defined by $$\rho(d_i)=q E_{i,i}+\sum_{i\neq j}E_{j,j},
\quad\rho(e_i)=E_{i,i+1},\quad\rho(f_i)=E_{i+1,i},$$ where $E_{i,j}$ is the $N\times N$ matrix with $1$ in the $(i,j)$-position and $0$ elsewhere. We denote this representation by $V_N$.
\end{definition}

Fix $m,n\in\mathbb{N}$ and consider the subalgebra $U_q(\mathfrak{l})$ of $U_q(\mathfrak{gl}_{m+n})$ generated by $\{d_i,e_j,f_j\;|\;j\neq m\}.$ We may identify $U_q(\mathfrak{l})\cong U_q(\mathfrak{gl}_m)\otimes U_q(\mathfrak{gl}_n)$ by mapping $$d_i\mapsto d_i,\qquad e_j,f_j\mapsto\begin{cases} e_j\otimes 1,f_j\otimes 1, & \text{for $0\leq j\leq m-1$},\\ 1\otimes e_{j-m}, 1\otimes f_{j-m}, &\text{for $m+1\leq j\leq m+n-1$},\end{cases} $$ so that $U_q(\mathfrak{l})$ acts on $V_m$ and $V_n$. Similarly, let $U_q(\mathfrak{p})$ be the subalgebra of $U_q(\mathfrak{gl}_{m+n})$ generated by $\{d_i,e_i,f_j\;|\; j\neq m\}.$ Note that $U_q(\mathfrak{l})$ is a subalgebra of $U_q(\mathfrak{p})$, but also a quotient by the ideal generated by $e_m$. 

\begin{definition}
We write $\Rep_q\GL_{m+n}$, $\Rep_q\L$ and $\Rep_q\P$ for the categories of locally finite dimensional representations of $U_q(\mathfrak{gl}_{m+n})$, $U_q(\mathfrak{l})$ and $U_q(\mathfrak{p}),$ respectively.
\end{definition}

Restricting along the inclusion and projection morphisms, we get functors
\begin{equation}\label{manyfunctors}
\begin{tikzcd}
                                                                              & \Rep_q\P \arrow[rd, "j^*", bend left] &                                              \\
\Rep_q\GL_{m+n} \arrow[ru, "\iota^*"] \arrow[rr, "\text{res}_{m,n}"] &                                                      & \Rep_q\L. \arrow[lu, "\pi^*"']
\end{tikzcd}
\end{equation}
Note that $\text{res}_{m,n}(V_{m+n})\simeq V_m\oplus V_n$ in $\Rep_q\L.$

For any $N\in\mathbb{N}$, the universal $R$-matrix of $U_q(\mathfrak{gl}_{N})$ is an invertible element $\mathcal{R}$ lying in a completion of $U_q(\mathfrak{gl}_{N})\otimes U_q(\mathfrak{gl}_{N})$. It induces a braiding on $\Rep_q\GL_N$ given, in Sweedler notation, by $$c_{V,W}(v,w)\coloneqq\mathcal{R}_{(2)}w\otimes\mathcal{R}_{(1)}v,$$ for $v\in V$ and $w\in W.$ Applied to the fundamental representation, the braiding yields an automorphism $\beta_{N}\colon V_{N}\otimes V_{N}\to V_{N}\otimes V_{N}$ that can be written explicitly as \begin{equation}
\beta_{N}=q\sum_i(E_{i,i}\otimes E_{i,i})+\sum_{i\neq j}(E_{i,i}\otimes E_{i,j})+\left(q-q^{-1}\right)\sum_{i<j}(E_{i,j}\otimes E_{j,i}).
\end{equation}
Taking $N=m+n$ and restricting to $U_q(\mathfrak{l})$, this isomorphism decomposes as \begin{equation}
\text{res}_{m,n}(\beta_{m+m})=\left(\begin{array}{c|c|c|c}
\beta_m & 0 & 0 & 0\\
\hline
0 & 0 & \sigma_{m,n} & 0\\
\hline
0 & \sigma_{n,m} & \left(q-q^{-1}\right)\text{Id}_{m,n} & 0\\
\hline
0 & 0 & 0 & \beta_n
\end{array}\right)\colon (V_m\oplus V_n)^{\otimes 2}\to(V_m\oplus V_n)^{\otimes 2}
\end{equation}
in $\Rep_q\L,$ hence the monoidal functor $\text{res}_{m,n}$ is not braided. 
 
On the other hand, it is also well-known that the category $\Rep_q\P$ is not braided. Let $W\in\Rep_q\GL_{m+n}$ and $V\in\Rep_q\P$. The universal $R$-matrix $\mathcal{R}$ of $U_q(\mathfrak{gl}_n)$ lies in a completion of $U_q(\mathfrak{b}_+)\otimes U_q(\mathfrak{b}_-)\subseteq U_q(\mathfrak{p})\otimes U_q(\mathfrak{b}_-),$ where $\mathfrak{b}_\pm$ are the positive and negative Borel subalgebras of $\mathfrak{gl}_{m+n}.$ Thus, $\mathcal{R}$ has a well-defined action on $V\otimes\iota^*(W)$ and composing with the flip of tensor factors yields an isomorphism of $V\otimes j^*(W)\to j^*(W)\otimes V$ in $\Rep_q\P$ which is natural in $V$. Similarly, if $U\in\Rep_q\L$, the $R$-matrix of $U_q(\mathfrak{l})$ acts on $V\otimes\pi^*(U)$ providing an  isomorphism $V\otimes\pi^*(U)\to\pi^*(U)\otimes V$ natural in $V$. 

\begin{proposition}
The restriction functors $\iota^*\colon\Rep_q\GL_{m+n}\to\Rep_q\P$ and $\pi^*\colon\Rep_q\L\to\Rep_q\P$ lift to a braided functor $$\iota^*\otimes\pi^*\colon\Rep_q\GL_{m+n}\boxtimes\overline{\Rep_q\L}\to Z\left(\Rep_q\P\right),$$ where $\overline{\Rep_q\P}$ is the opposite of $\Rep_q\L$ as a braided category and $Z(\Rep_q\P)$ is the Drinfeld centre of $\Rep_q\P$. \QEDA
\end{proposition}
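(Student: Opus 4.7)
The plan is to produce, for each $W\in\Rep_q\GL_{m+n}$ and $U\in\Rep_q\L$, a half-braiding on $\iota^*(W)\otimes\pi^*(U)$ making it an object of $Z(\Rep_q\P)$, and then verify that the assignment $W\boxtimes U\mapsto\left(\iota^*(W)\otimes\pi^*(U),\gamma\right)$ is braided monoidal. The two half-braidings $\gamma^{\iota^*(W)}$ and $\gamma^{\pi^*(U)}$ have in fact already been written down in the paragraph preceding the proposition: they are
$$\gamma^{\iota^*(W)}_V(v\otimes w)=\mathcal{R}_{(2)}w\otimes\mathcal{R}_{(1)}v,\qquad \gamma^{\pi^*(U)}_V(v\otimes u)=\mathcal{R}^{\mathfrak{l}}_{(2)}u\otimes\mathcal{R}^{\mathfrak{l}}_{(1)}v,$$
where $\mathcal{R}$ is the $R$-matrix of $U_q(\mathfrak{gl}_{m+n})$ (which has components in $U_q(\mathfrak{p})\otimes U_q(\mathfrak{gl}_{m+n})$) and $\mathcal{R}^{\mathfrak{l}}$ is that of $U_q(\mathfrak{l})\subseteq U_q(\mathfrak{p})$. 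I would first check that these really define objects of $Z(\Rep_q\P)$: naturality in $V$ is clear from the construction, and the hexagon axiom for a half-braiding, namely
$$\gamma^{A}_{V\otimes V'}=(\gamma^{A}_{V}\otimes\id_{V'})\circ(\id_V\otimes\gamma^{A}_{V'}),$$
is a direct consequence of the quasitriangularity identity $(\Delta\otimes\id)(\mathcal{R})=\mathcal{R}_{13}\mathcal{R}_{23}$ (respectively for $\mathcal{R}^{\mathfrak{l}}$). Since morphisms in $\Rep_q\GL_{m+n}$ (resp.\ $\Rep_q\L$) commute with $\mathcal{R}$ (resp.\ $\mathcal{R}^{\mathfrak{l}}$) by the centrality of the $R$-matrix, $\iota^*$ and $\pi^*$ upgrade to functors $\Rep_q\GL_{m+n}\to Z(\Rep_q\P)$ and $\Rep_q\L\to Z(\Rep_q\P)$.

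Second, I would check that each of these functors is braided for the appropriate choice of braiding on the source. With the convention that the braiding in $Z(\Rep_q\P)$ between $(A,\gamma^A)$ and $(B,\gamma^B)$ on $A\otimes B$ is given by $\gamma^{B}_{A}\colon A\otimes B\to B\otimes A$, setting $V=\iota^*(W')$ in $\gamma^{\iota^*(W)}_V$ produces exactly the image under $\iota^*$ of the standard braiding $c^{\GL}_{W',W}$, so $\iota^*$ is braided. The analogous calculation for $\pi^*$ returns $\mathcal{R}^{\mathfrak{l}}$ acting in the order opposite to the one defining the braiding on $\Rep_q\L$ that is inherited via $\pi^*$ from the $\mathfrak{p}$-action, so $\pi^*$ intertwines the reversed braiding; equivalently, it is braided only after replacing $\Rep_q\L$ by $\overline{\Rep_q\L}$.

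Third, I would assemble the combined functor. The monoidal structure on $Z(\Rep_q\P)$ puts on the tensor product of two central objects the half-braiding $(\id_A\otimes\gamma^B_V)\circ(\gamma^A_V\otimes\id_B)$, so I set
$$(\iota^*\otimes\pi^*)(W\boxtimes U):=\bigl(\iota^*(W)\otimes\pi^*(U),\;(\id\otimes\gamma^{\pi^*(U)})\circ(\gamma^{\iota^*(W)}\otimes\id)\bigr),$$
and monoidality of this assignment is automatic from the monoidality of $\iota^*$ and $\pi^*$. The braided axiom reduces, on simple tensors $W\boxtimes U$ and $W'\boxtimes U'$, to the braided axioms already verified for each factor together with the identity
$$\gamma^{\iota^*(W')}_{\pi^*(U)}\circ\gamma^{\pi^*(U)}_{\iota^*(W')}=\id,$$
i.e.\ the two half-braidings intertwine the ``mixed'' objects. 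This is the compatibility $\mathcal{R}_{21}\mathcal{R}=1$ restricted to the appropriate summand; more explicitly, because $\mathcal{R}^{\mathfrak{l}}$ and the ``nilpotent'' part of $\mathcal{R}$ act on disjoint factors of $\iota^*(W')\otimes\pi^*(U)$, a triangular decomposition $\mathcal{R}=\mathcal{R}^{\mathfrak{n}}\cdot\mathcal{R}^{\mathfrak{l}}$ reduces the claim to a direct computation.

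The main obstacle is the careful bookkeeping of this last identity and the matching of conventions that forces the replacement of $\Rep_q\L$ by $\overline{\Rep_q\L}$: the two $R$-matrices entering the construction play asymmetric roles in $U_q(\mathfrak{p})$ (one uses a triangular decomposition $\mathfrak{b}_+\otimes\mathfrak{b}_-$ while the other is diagonal on the Levi), and correctly tracking this asymmetry is what yields the opposite braiding on the $\pi^*$-factor and guarantees that the composite braiding on $(\iota^*(W)\otimes\pi^*(U))\otimes(\iota^*(W')\otimes\pi^*(U'))$ factorises as the tensor product of the $\iota^*$-braiding and the $\pi^*$-braiding with no cross terms.
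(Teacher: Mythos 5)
Your overall strategy is the right one and essentially what the paper intends: produce half-braidings on $\iota^*(W)$ and $\pi^*(U)$ from the $R$-matrices, verify the hexagon from quasitriangularity, and then check that the combined assignment on $\A\boxtimes\overline{\B}$ is braided, which amounts to (i) each factor being braided and (ii) the ``cross'' centralisation identity. But there is a genuine error in your formulas and in the claimed cross identity, and both trace back to the same point.

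First, with the formula $\gamma^{\pi^*(U)}_V(v\otimes u)=\mathcal{R}^{\mathfrak{l}}_{(2)}u\otimes\mathcal{R}^{\mathfrak{l}}_{(1)}v$, i.e.\ $\tau\circ\mathcal{R}^{\mathfrak{l}}=c^{\mathfrak{l}}_{V,\pi^*(U)}$, the computation you did for $\iota^*$ applies verbatim to $\pi^*$: setting $V=\pi^*(U')$ returns $c^{\mathfrak{l}}_{U',U}$, the \emph{ordinary} braiding of $\Rep_q\L$. So with this formula $\pi^*$ is \emph{braided}, not anti-braided, contradicting your own claim two sentences earlier, and the combined functor would be from $\Rep_q\GL_{m+n}\boxtimes\Rep_q\L$, not from $\Rep_q\GL_{m+n}\boxtimes\overline{\Rep_q\L}$.

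Second, with those formulas the double braiding $\gamma^{\iota^*(W')}_{\pi^*(U)}\circ\gamma^{\pi^*(U)}_{\iota^*(W')}$ works out to $\mathcal{R}_{21}\mathcal{R}^{\mathfrak{l}}$ acting on $\iota^*(W')\otimes\pi^*(U)$; since the second tensor factor kills $e_m$, the $\mathfrak{n}_+$-terms of $\mathcal{R}_{21}$ die and this reduces to the $U_q(\mathfrak{l})$-monodromy $\mathcal{R}^{\mathfrak{l}}_{21}\mathcal{R}^{\mathfrak{l}}$, which is \emph{not} the identity. The identity $\mathcal{R}_{21}\mathcal{R}=1$ that you invoke is the axiom of a triangular (co-commutative up to a twist) Hopf algebra and fails for $U_q(\mathfrak{gl}_{m+n})$ and $U_q(\mathfrak{l})$; it is not rescued by a triangular factorisation $\mathcal{R}=\mathcal{R}^{\mathfrak{n}}\mathcal{R}^{\mathfrak{l}}$.

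The correct choice of half-braiding for $\pi^*(U)$ is the \emph{inverse} braiding
$$\gamma^{\pi^*(U)}_V=\bigl(c^{\mathfrak{l}}_{\pi^*(U),V}\bigr)^{-1}\colon V\otimes\pi^*(U)\to\pi^*(U)\otimes V,\qquad v\otimes u\mapsto \bigl(\mathcal{R}^{\mathfrak{l},-1}\bigr)_{(1)}u\otimes\bigl(\mathcal{R}^{\mathfrak{l},-1}\bigr)_{(2)}v,$$
which is well-defined because, unlike for $\iota^*(W)$, both legs of $\mathcal{R}^{\mathfrak{l}}$ live in $U_q(\mathfrak{l})\subset U_q(\mathfrak{p})$ and hence both orientations of the crossing make sense. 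With this choice the computation of the braiding on two $\pi^*$-objects returns $(c^{\mathfrak{l}}_{U',U})^{-1}$, i.e.\ the braiding of $\overline{\Rep_q\L}$, so $\pi^*\colon\overline{\Rep_q\L}\to Z(\Rep_q\P)$ is braided; and the double braiding becomes $(\mathcal{R}^{\mathfrak{l}})^{-1}\mathcal{R}$ on $\pi^*(U)\otimes\iota^*(W')$, which \emph{is} the identity: the first tensor factor $\pi^*(U)$ has $e_m$ acting by zero via the projection $U_q(\mathfrak{p})\twoheadrightarrow U_q(\mathfrak{l})$, so every summand of the quasi-$R$-matrix of $U_q(\mathfrak{gl}_{m+n})$ containing an $\mathfrak{n}_+$-root vector in its first leg annihilates $\pi^*(U)$, and what survives is exactly $\mathcal{R}^{\mathfrak{l}}$. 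This asymmetry — only one orientation of the $\iota^*$-crossing is defined at all, while both are defined for $\pi^*$, and one must take the \emph{other} one — is precisely what forces the $\overline{(-)}$ on the $\Rep_q\L$-factor. It is the same phenomenon the paper records diagrammatically in Remark \ref{naturality}, where the box crossing is natural in only one direction.
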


\begin{definition}
We call the structure in the previous proposition the \emph{parabolic central algebra.}
\end{definition}

\subsection{Categories and bimodules} We recall in this section a few categorical notions that will appear repeatedly in our constructions. We fix a ground ring $\Bbbk$ and we set $\Vect$ for the category of $\Bbbk$-modules. A \emph{$\Bbbk$-linear category} is a category $\C$ enriched over $\Vect$. The category $\Cat$ of small $\Bbbk$-linear categories and $\Bbbk$-linear functors is symmetric monoidal:

\begin{definition}
Given $\C,\mathcal{D}\in\Cat$, we define $\C\boxtimes\mathcal{D}$ as the $\Bbbk$-linear category whose
\begin{itemize}
\item \underline{objects} are pairs $(c,d)$ with $c\in\C, d\in\mathcal{D}$;
\item \underline{morphisms} are given by $$\Hom_{\C\boxtimes\mathcal{D}}((c_1,d_1),(c_2,d_2))\coloneqq\Hom_\C(c_1,c_2)\otimes_\Bbbk\Hom_{\mathcal{D}}(d_1,d_2).$$
\end{itemize}
\end{definition}

\begin{definition}
Let $F\colon\C\boxtimes\C^\op\to\mathcal{D}$ be a $\Bbbk$-linear bifunctor and suppose that $\mathcal{D}$ is cocomplete. The \emph{coend of $F$} is the object $$\int^{c\in\C}F(c,c)\coloneqq\mathop{\text{colim}}\left(\begin{tikzcd}
{\mathop{\prod}\limits_{f\colon c\to c'}F(c,c')} \arrow[r, "{F(f,c')}", shift left=2] \arrow[r, "{F(c,f)}"'] & {\mathop{\prod}\limits_{c\in\C}F(c,c)}
\end{tikzcd}\right)\in\mathcal{D}.$$
\end{definition}
If $\mathcal{D}=\Vect$, this is explicitly given by $$\int^{c\in\C}F(c,c)=\bigslant{\bigoplus\limits_{c\in\C}F(c,c)}{\sim},$$ where we quotient by the image of morphisms of the form $$F(c,c')\xrightarrow{F(f,c')-F(c,f')}F(c,c)\oplus F(c',c').$$

\begin{definition}
The category $\textsc{Bimod}$ has:
\begin{itemize}
\item \underline{objects}: small $\Bbbk$-linear categories;
\item \underline{morphisms}: a morphism from $\mathcal{C}$ to $\mathcal{D}$ is a functor of the form $F:\mathcal{C}\boxtimes\mathcal{D}^\text{op}\to\Vect.$
\end{itemize}
\end{definition}

The composition of $F:\mathcal{C}\boxtimes\mathcal{D}^\text{op}\to\Vect$ and $G:\mathcal{D}\boxtimes\mathcal{E}^\text{op}\to\Vect$ is given by the coend $$(G\circ F)(c,e)\coloneqq\int^{d\in\mathcal{D}}F(c,d)\otimes_\Bbbk G(d,e).$$ The identity of $\C$ is the bimodule $\Hom_\C(-,-)$. The tensor product of $\Bbbk$-linear categories endows $\textsc{Bimod}$ with the structure of a symmetric monoidal category.

\begin{remark} 
The category $\Cat$ embeds into $\Bimod.$ Indeed, given a $\Bbbk$-linear functor $F\colon\C\to\mathcal{D}$, we get a bimodule $$F^*\colon\C\boxtimes\mathcal{D}^\op\to\Vect,\quad c\boxtimes d\mapsto\Hom_\mathcal{D}(d,F(c)).$$
\end{remark}

Finally, we recall the following notion of completion:

\begin{definition}
The \emph{Cauchy completion} of a category $\C$ is an additive category $\text{Cauchy}(\C)$ together with an embedding $\C\hookrightarrow\text{Cauchy}(\C)$ such that every idempotent splits in $\text{Cauchy}(\C)$ and $\text{Cauchy}(\C)$ is universal for this property.
\end{definition}

\subsection{Stratified spaces} In Sections \ref{planar_theories} and \ref{3d_theories}, we define a skein theory on 3-manifolds with line and surface \emph{defects}. Here, we introduce the terminology and conventions for stratified spaces that will be used later.

\begin{definition} (i) A \emph{stratified space} is a topological space $X$ endowed with a continuous map $$\phi\colon X\to P,$$ where $P$ is a poset endowed with the topology whose open sets are generated by $P_{>p}\coloneqq\{x\in P\mid x>p\},$ for $p\in P.$ For each $p\in P,$ the subset $X_p\coloneqq\phi^{-1}(p)$ is a \emph{stratum}.

(ii) A \emph{morphism of stratified spaces} between $(X,\phi)$ and $(Y,\psi)$ consists of a pair of continuous maps $f\colon X\to Y$, $F\colon P\to Q$ such that the diagram $$\begin{tikzcd}
X \arrow[d, "\phi"'] \arrow[r, "f"] & Y \arrow[d, "\psi"] \\
P \arrow[r, "F"]                    & Q                  
\end{tikzcd}$$ is commutative. We say that it is a \emph{stratified embedding} if $f$ and $f|_{X_p}$ are embeddings for every $p\in P.$

(iii) A \emph{stratified isotopy} between two stratified embeddings $(f,F), (g,G)\colon X\to Y$ is a morphism $(h,H)\colon X\times I\to Y$ of stratified spaces such that $h(t,-)$ is an embedding for every $t\in I$, $h(-,0)=f$ and $h(-,1)=g$. Here, the stratification on $X\times I$ is given by $\varphi(x,t)=\phi(x)$, where $\phi$ is the stratification on $X$. In particular, a \emph{stratified isotopy of $X$} is a continuous family of embeddings $f_t\colon X\to X$ such that $$\begin{tikzcd}
X \arrow[rd, "\phi"'] \arrow[r, "f_t"] & X \arrow[d, "\phi"] \\
                                       & P                  
\end{tikzcd}$$ commutes for every $t\in[0,1]$ and $f_0=\id_X.$
\end{definition}

We will typically deal with stratified three-manifolds and we will say that $X_p$ is a \emph{line defect} when $\dim(X_p)=1$ and that it is a \emph{surface defect} when $\dim(X_p)=2.$

\begin{definition}\label{bipartite}
A \emph{bipartite surface} is an oriented surface $S$ endowed with a stratification $$\phi\colon S\to(A\geq B \leq C)$$ such that $S_B=\phi^{-1}(B)$ is a smooth curve that forms the common boundary of the open strata $S_A$ and $S_C$.
\end{definition}

\section{HOMFLY categories and ordinary restriction}\label{HOMFLYcats} 

Throughout the rest of the paper, we fix the ground ring to be $$\Bbbk=\bigslant{\mathbb{C}\left(q)[q^{\pm t},\delta\right]}{\left\langle\left(q-q^{-1}\right)\delta=q^t-q^{-t}\right\rangle}.$$ In this section, we recall the definition of the HOMFLY category $\RepqG$ (\cite{brundan, deligne}) and its two-coloured version, $\RepqH$. Next, we define a HOMFLY version of the category $\RepqP$ of quantum representations for a parabolic subgroup $\P\subseteq\GL_{m+n}$.

\subsection{The HOMFLY category $\RepqG$} One of our main objects of study will be the HOMFLY category $\RepqG$, which we introduce following \cite{brundan}. Let $\mathcal{T}$ be the category of $1$-coloured framed oriented tangles, whose objects are finite words over $\{\upp,\downn\}$ and whose morphisms are $\Bbbk$-linear combinations of isotopy classes of framed oriented \emph{tangles} (i.e. ribbon graphs without coupons) in $\R^2\times[0,1]$. This category has a ribbon structure. The tensor product is given by concatenation of objects and by horizontally stacking morphisms. The right dual of an object $w=w_1\cdots w_k$ is the object $w^*\coloneqq w_k^*\cdots w_1^*$, where $(-)^*$ interchanges $\upp$ and $\downn$. The braiding, evaluation, coevaluation and twist are represented by the diagrams
\begin{equation*}
\MyFigure{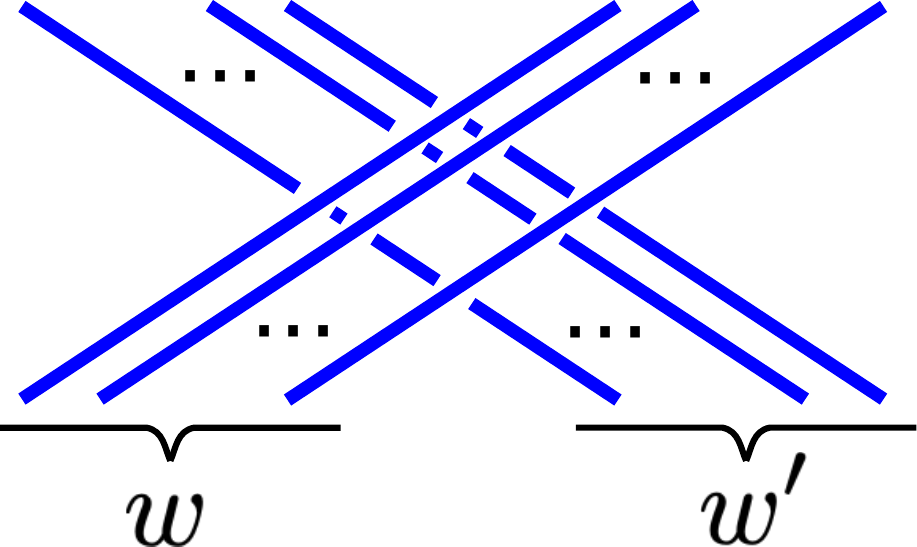},\quad\MyFigure{\evfot},\quad\MyFigure{\coevfot},\quad\MyFigure{\twistfot},
\end{equation*}
where the directions are induced by the depicted words. Exchanging $w$ and $w^*$ in the evaluation and coevaluation maps, we can exhibit $w^*$ as a left dual of $w$; hence, $\mathcal{T}$ has a strictly pivotal structure.

\begin{definition}\label{RepqGdef}
The category $\widetilde{\RepqG}$ is the quotient of $\mathcal{T}$ by the \emph{HOMFLY skein} \eqref{skein}, the \emph{twist} \eqref{twist} and the \emph{dimension} \eqref{dim} relations:
\begin{equation}\label{skein}
\MyFigure{\blueposx}-\MyFigure{\bluenegx}=\left(q-q^{-1}\right)\MyFigure{\blueidid},
\end{equation}
\begin{equation}\label{twist}
\MyFigure{\bluetwista}=q^t\MyFigure{\blueid},
\end{equation}
\begin{equation}\label{dim}
\MyFigure{\bknot}=\delta\;\bold{1}_\emptyset,
\end{equation}
where $\boldsymbol{1}_\emptyset$ is the empty diagram. 
\end{definition}

This category inherits a ribbon structure from $\mathcal{T}$ and it admits a presentation by generators and relations:

\begin{proposition}[{\cite[Theorem 1.1]{brundan}}] \label{HOMFLYgenrel}
$\widetilde{\RepqG}$ is generated, as a $\Bbbk$-linear strict monoidal category, by the objects $\upp$ and $\downn$ and the morphisms $$\MyFigure{\blueposx},\qquad\MyFigure{\blueposz},\qquad\MyFigure{\blueevv},\qquad\MyFigure{\bluecoev},$$ subject to the relations 
\begin{equation}\label{rel1}
\MyFigure{\bluexx}=\left(q-q^{-1}\right)\MyFigure{\blueposx}+\MyFigure{\blueidid},
\end{equation}
\begin{equation}\label{rel2}
\MyFigure{\zigzaga}=\MyFigure{\blueid},\qquad\MyFigure{\zigzagb}=\MyFigure{\blueiddual},
\end{equation}
\begin{equation}\label{rel3}
\left(\MyFigure{\blueposz}\right)^{-1}=\MyFigure{\blueinvz}.
\end{equation}
\begin{equation}\label{rel4}
q^{t}\MyFigure{\bluedimrelation}=\delta\;\boldsymbol{1}_\emptyset.
\end{equation}
\begin{equation}\label{rel5}
\MyFigure{\RIIIa}=\MyFigure{\RIIIb},
\end{equation}\QEDA
\end{proposition}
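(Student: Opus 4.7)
The plan is to realize $\widetilde{\RepqG}$ as a quotient of the free $\Bbbk$-linear strict monoidal category on the four stated generators, and to verify that the five stated relations are complete. I would split the argument into three stages: (i) invoke the standard presentation of the framed oriented tangle category $\mathcal{T}$, (ii) reduce the generating set using the HOMFLY skein and the pivotal structure, and (iii) derive every Reidemeister move and every zig-zag variant from the listed relations.

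For (i), by the Reshetikhin--Turaev theorem (in its strict ribbon form), $\mathcal{T}$ is the free ribbon category on the single self-dual object $\upp$: as a strict monoidal category it is presented by positive and negative crossings in all four orientation patterns together with evaluations and coevaluations for both orientations, modulo Reidemeister II and III in all variants, the zig-zag identities, and the relations expressing framing changes. For (ii), the HOMFLY skein relation \eqref{skein}, rewritten as \eqref{rel1}, expresses the negative crossing of two upward strands as a linear combination of $\MyFigure{\blueposx}$ and $\MyFigure{\blueidid}$, so that negative same-orientation crossings are redundant. The four mixed crossings (up/down and down/up, positive and negative) reduce to the single generator $\MyFigure{\blueposz}$ by conjugation with $\MyFigure{\blueevv}$ and $\MyFigure{\bluecoev}$ using the strictly pivotal identification $\upp^{**}=\upp$; the relation \eqref{rel3} supplies its inverse. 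Similarly, the evaluations and coevaluations for the opposite orientation are obtained from $\MyFigure{\blueevv}$ and $\MyFigure{\bluecoev}$ by strict pivotality.

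For (iii), the two zig-zag identities for $(\MyFigure{\blueevv},\MyFigure{\bluecoev})$ are precisely \eqref{rel2}. The Reidemeister II move for two upward strands follows from \eqref{rel1} by a short algebraic manipulation showing that $\MyFigure{\blueposx}$ becomes invertible modulo $(q-q^{-1})\MyFigure{\blueidid}$. The mixed Reidemeister II moves follow from \eqref{rel3} by a standard snake argument built from \eqref{rel2}. The all-upward Reidemeister III move is \eqref{rel5}. Mixed Reidemeister III moves are derived from the all-upward one by the classical trick of bending each downward strand into an upward one using cups/caps, applying \eqref{rel5}, and straightening with \eqref{rel2}. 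The twist relation \eqref{twist} together with the dimension relation \eqref{dim} reduces to \eqref{rel4} once one expresses the unknot as the closure of a twisted identity via the generators.

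The main obstacle will be stage (iii), and within it the bookkeeping needed to derive every variant of Reidemeister III and of the framing moves from the single relation \eqref{rel5}. The key observation that makes this tractable is that in a rigid strictly pivotal setting, the snake argument is strong enough to reduce any mixed-orientation Reidemeister move to its all-upward analogue; carrying this out uniformly is a finite but intricate diagrammatic check. An elegant way to organise this verification, which I expect to adopt, is to exhibit the category presented by the listed generators and relations as the universal recipient of a strict monoidal functor from $\mathcal{T}$ compatible with \eqref{skein}, \eqref{twist} and \eqref{dim}; universality then identifies it with $\widetilde{\RepqG}$.
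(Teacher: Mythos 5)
The paper does not prove this proposition: it is quoted from Brundan \cite[Theorem 1.1]{brundan}, and the black square at the end of the statement marks a cited fact, not the start of an argument. So there is no in-paper proof to compare against, and I will assess the sketch on its own terms.

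Your strategy --- start from the Turaev/Shum presentation of $\mathcal{T}$, prune the generators using the Hecke quadratic and pivotality, and show that the five listed relations imply all the others --- is the right one and matches Brundan's approach in spirit. Two points, one small and one serious. The small one: $\upp$ is \emph{not} self-dual in $\mathcal{T}$; its dual is $\downn$, and $\mathcal{T}$ is the free ribbon category on one non-self-dual object. Your own phrase ``crossings in all four orientation patterns'' already presupposes two distinct dual objects, so this is just a slip, but it should be fixed. The serious one: stage (iii) is where the entire content of the proposition lives, and your sketch asserts rather than carries it out. Deriving every mixed-orientation Reidemeister move, every zig-zag variant, and --- especially --- the twist relation \eqref{twist} from \eqref{rel1}--\eqref{rel5} is precisely the work of Brundan's proof. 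In particular, the claim that ``\eqref{twist} together with \eqref{dim} reduces to \eqref{rel4}'' needs an actual argument: \eqref{rel4} is a single scalar identity about the empty diagram, whereas \eqref{twist} is an identity between endomorphisms of $\upp$, so you must show that the remaining four relations together with \eqref{rel4} already force the curl to equal $q^t$ times the identity strand --- a genuine diagrammatic derivation, not a rewriting. As written, the outline describes the shape of a correct proof but leaves the load-bearing verification to the reader.
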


\begin{definition}
The \emph{HOMFLY category} $\RepqG$ is the Cauchy completion of $\widetilde{\RepqG}$.
\end{definition}

\begin{remark}\label{universal1}  The HOMFLY category $\RepqG$ interpolates between the categories $\Rep_q\GL_N$ of locally finite representations of $U_q(\mathfrak{gl}_N)$. More precisely, for any integer $N\in\mathbb{N}$, consider the evaluation map $$\varphi_{N}: \Bbbk \to \mathbb{C}(q),\quad  q^t\mapsto q^N,\quad \delta\mapsto\frac{q^N-q^{-N}}{q-q^{-1}}.$$ There exists a $\varphi_{N}$-linear full ribbon functor $$\text{ev}^\text{G}_{N}:\RepqG\to\Rep_q\GL_N$$ sending the object $\boldsymbol{\upp}$ to the natural representation $V_N$. The functor is however not faithful: its kernel is the ideal of negligible morphisms of $\RepqG$ (see \cite[Theorem 1.3]{brundan} for a precise statement).
\end{remark} 

\subsection{The HOMFLY category $\RepqH$} Next, we describe diagrammatically the \emph{two-coloured HOMFLY category} $\RepqH$, interpolating the categories $\Rep_q\L$ of locally finite $U_q(\mathfrak{l})$-modules.

\begin{definition}
The \emph{two-coloured HOMFLY category} $\RepqH$ is the Cauchy completion of $\RepqG\boxtimes_{\mathbb{C}(q)}\RepqG.$ 
\end{definition}

In the previous definition, the symbol means $\boxtimes_{\mathbb{C}(q)}$ means that the tensor product is taken in the category of $\mathbb{C}(q)$-linear categories (instead of $\Bbbk$-linear categories). Therefore, $\RepqH$ is enriched over $\Bbbk\otimes_{\mathbb{C}(q)}\Bbbk$. We will set $$q^{t_1}\coloneqq q^t\otimes 1, \quad q^{t_2}\coloneqq 1\otimes q^t,\quad \delta_1\coloneqq \delta\otimes 1,\quad \text{and}\quad\delta_2\coloneqq 1\otimes\delta$$ for the generators of this ring. The following is straightforward:

\begin{proposition}
$\RepqH$ has a ribbon structure given factor-wise by that of $\RepqG.$ \QEDA
\end{proposition}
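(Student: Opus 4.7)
The plan is to observe that the box product of two ribbon categories over a common ground ring inherits a canonical ribbon structure, and that this structure passes to the Cauchy completion. Essentially every piece of structure is defined \emph{factor-wise}, so every axiom reduces to a pair of axioms, one in each factor, which already hold in $\RepqG$.

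First I would define the monoidal structure on $\RepqG\boxtimes_{\mathbb{C}(q)}\RepqG$ by setting
$$(X_1,Y_1)\otimes(X_2,Y_2)\coloneqq(X_1\otimes X_2,Y_1\otimes Y_2),$$
with unit $(\bold{1},\bold{1})$, and extending on morphisms using the universal property of the tensor product of $\mathbb{C}(q)$-linear categories: given $\mathbb{C}(q)$-bilinear functors
$\otimes_{\RepqG}\colon\RepqG\times\RepqG\to\RepqG$ on each factor, one obtains a $\mathbb{C}(q)$-linear functor
$(\RepqG\boxtimes_{\mathbb{C}(q)}\RepqG)\boxtimes_{\mathbb{C}(q)}(\RepqG\boxtimes_{\mathbb{C}(q)}\RepqG)\to\RepqG\boxtimes_{\mathbb{C}(q)}\RepqG$ whose strict associativity and unitality are inherited from those in each factor. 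The extension of scalars to $\Bbbk\otimes_{\mathbb{C}(q)}\Bbbk$ does not affect any of this, since the structure maps are $\mathbb{C}(q)$-linear and extend to the enriched hom-modules.

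Next, I would define all the additional structure factor-wise. Right duals are $(X,Y)^*\coloneqq(X^*,Y^*)$ with evaluation and coevaluation $\ev_{(X,Y)}\coloneqq\ev_X\boxtimes\ev_Y$ and $\coev_{(X,Y)}\coloneqq\coev_X\boxtimes\coev_Y$; the pivotal isomorphism is $\iota_{(X,Y)}\coloneqq\iota_X\boxtimes\iota_Y$; the braiding is $c_{(X_1,Y_1),(X_2,Y_2)}\coloneqq c_{X_1,X_2}\boxtimes c_{Y_1,Y_2}$; and the twist is $\theta_{(X,Y)}\coloneqq\theta_X\boxtimes\theta_Y$. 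Each axiom — zig-zags, the hexagons, $\theta_{A\otimes B}=c_{B,A}c_{A,B}(\theta_A\otimes\theta_B)$, and the ribbon compatibility $\theta_{A^*}=\theta_A^*$ — involves only compositions and tensor products of these generating structure morphisms, hence decomposes as the box product of the corresponding identity in each copy of $\RepqG$. Since $\RepqG$ is ribbon by Proposition~\ref{HOMFLYgenrel} and the discussion following it, all these identities hold automatically in $\RepqG\boxtimes_{\mathbb{C}(q)}\RepqG$.

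Finally, I would transfer this ribbon structure to the Cauchy completion. The key point is that Cauchy completion is a $2$-monad on $\Cat$ that preserves all the relevant structure: an idempotent $e=(e_1,e_2)$ on $(X,Y)$ splits as the pair of splittings of $e_1$ on $X$ and $e_2$ on $Y$, and the tensor product, braiding, duals and twist extend uniquely to split idempotents because they are natural. Concretely, if $e$ and $f$ are idempotents with splittings $(A,i,p)$ and $(B,j,q)$, one defines the tensor product, braiding, evaluations, etc.\ on $A$ and $B$ by sandwiching with $i,j,p,q$; naturality of each structure map in $\RepqG\boxtimes_{\mathbb{C}(q)}\RepqG$ guarantees that the ribbon axioms are preserved.

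The only mildly subtle step is the last one: verifying that pivotal, braided, and ribbon structures lift to Cauchy completions. But this is standard, and in our setting it is also transparent because every axiom is witnessed by equalities of morphisms in $\RepqG\boxtimes_{\mathbb{C}(q)}\RepqG$ which persist after conjugation by the section–retraction pairs of the idempotents. Hence the full Cauchy completion $\RepqH$ acquires a ribbon structure that is, by construction, the factor-wise ribbon structure of $\RepqG$.
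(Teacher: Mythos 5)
Your proposal is correct and fills in exactly the details the paper leaves to the reader when it labels this proposition "straightforward": the ribbon structure on the $\mathbb{C}(q)$-linear box product is defined factor-wise, all axioms reduce to pairs of axioms in $\RepqG$, and the structure passes to the Cauchy completion by the standard naturality argument. One tiny imprecision: there is no separate "extension of scalars to $\Bbbk\otimes_{\mathbb{C}(q)}\Bbbk$" step—the hom-spaces of $\RepqG\boxtimes_{\mathbb{C}(q)}\RepqG$ are automatically $\Bbbk\otimes_{\mathbb{C}(q)}\Bbbk$-modules by construction—but this does not affect the argument.
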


We will represent this category diagrammatically (cf. Proposition \ref{trivialskein}) using the following colour conventions: $$\gsquare\coloneqq(\bsquare,\emptyset),\quad\redsquare\coloneqq(\emptyset,\bsquare)\quad\text{and}\quad\orsquare\coloneqq\gsquare\oplus\redsquare.$$ The coupons representing the inclusions and projections between the green/red colour and the orange one will be represented as $$\MyFigure{\ginc},\qquad\MyFigure{\gproj},\qquad\MyFigure{\rinc},\qquad\MyFigure{\rproj}.$$
With this notation, the following relations hold in $\RepqH$ by definition:
\begin{itemize}
\item the one-coloured skein, twist and dimension relations:
\begin{equation}\label{RepqGtGt1}
\MyFigure{\blackposx}-\MyFigure{\blacknegx}=\left(q-q^{-1}\right)\MyFigure{\blackidid},
\end{equation}
\begin{equation}
\MyFigure{\blacktwist}=q^{t_\blacksquare}\MyFigure{\blackid},
\end{equation}
\begin{equation}
\MyFigure{\blackdim}=\delta_{\blacksquare}\boldsymbol{1}_\emptyset,
\end{equation}
with all the strands coloured in the same colour $\blacksquare\in\{\gsquare=1,\redsquare=2\}$;
\item the two-coloured crossing relations:
\begin{equation}\label{RepqGtGt4}
\MyFigure{\grposx}=\MyFigure{\grnegx},\qquad\MyFigure{\rgposx}=\MyFigure{\rgnegx};
\end{equation}
\item the relations
\begin{equation}\label{RepqHtrel1}
\MyFigure{\orid}=\MyFigure{\ogip}+\MyFigure{\orip},
\end{equation}
\begin{equation}\label{RepqHtrel2}
\MyFigure{\ggip}=\MyFigure{\gid},\qquad\qquad\MyFigure{\grip}=0,\qquad\qquad\MyFigure{\rgip}=0,\qquad\qquad\MyFigure{\rrip}=\MyFigure{\rid},
\end{equation}
expressing the fact that $\orsquare=\gsquare\oplus\redsquare.$
\end{itemize}
Thanks to relations \eqref{RepqGtGt4}, we will no longer distinguish between positive and negative two-coloured crossings. Moreover, note that the HOMFLY, twist and dimension relations do not hold for the orange object. Instead, we have:

\begin{lemma}\label{ReqHtlemma}
The following relations hold in $\RepqH$:
\begin{equation}
\MyFigure{\oposx}-\MyFigure{\ornegx}=\left(q-q^{-1}\right)\left(\MyFigure{\ogipip}+\MyFigure{\oripip}\right),
\end{equation}
\begin{equation}
\MyFigure{\ortwist}=q^{t_1}\MyFigure{\ogip}+q^{t_2}\MyFigure{\orip},
\end{equation}
\begin{equation}
\MyFigure{\orknot}=\delta_1+\delta_2.
\end{equation}
\end{lemma}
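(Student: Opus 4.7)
The three identities should all reduce to the one-coloured skein, twist and dimension relations \eqref{RepqGtGt1}--\eqref{RepqHtrel2} by decomposing orange strands via the resolution of the identity $\MyFigure{\orid} = \MyFigure{\ogip} + \MyFigure{\orip}$. Write $e_g \coloneqq \MyFigure{\ogip}$ and $e_r \coloneqq \MyFigure{\orip}$ for the two orthogonal idempotents; relations \eqref{RepqHtrel2} give exactly $e_g + e_r = \id_\orsquare$, $e_g^2 = e_g$, $e_r^2 = e_r$, $e_g e_r = e_r e_g = 0$. By naturality, these idempotents can be pushed through any morphism, which will be the engine of all three arguments.

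The dimension relation is the easiest: the orange unknot is the categorical trace of $\id_\orsquare = e_g + e_r$, so by linearity it equals $\mathrm{tr}(e_g) + \mathrm{tr}(e_r)$. By the standard cyclic identity $\mathrm{tr}_\orsquare(\text{incl}_\gsquare \circ \text{proj}_\gsquare) = \mathrm{tr}_\gsquare(\id_\gsquare)$, the two summands evaluate respectively to a green and a red unknot, which by the one-coloured dimension relations give $\delta_1$ and $\delta_2$. The twist relation is only slightly more work: insert $e_g + e_r$ before and after $\MyFigure{\ortwist}$. By naturality of the twist, $\theta_\orsquare$ commutes with $e_g$ and $e_r$, so the mixed cross-terms $e_g \theta e_r$ and $e_r \theta e_g$ vanish by orthogonality. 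The surviving pure terms are $e_g \theta_\orsquare e_g = \theta_\orsquare e_g$ and $e_r \theta_\orsquare e_r = \theta_\orsquare e_r$, which via the inclusion-projection identification of the $\gsquare$ and $\redsquare$ summands become the one-coloured twists and therefore equal $q^{t_1} e_g = q^{t_1} \MyFigure{\ogip}$ and $q^{t_2} e_r = q^{t_2} \MyFigure{\orip}$ by the one-coloured twist relations applied to each colour.

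For the HOMFLY skein relation, insert $e_g + e_r$ on each of the four endpoints of the orange crossing $\MyFigure{\oposx}$, and likewise for $\MyFigure{\ornegx}$. Naturality of the braiding gives $(e_i \otimes e_j) \circ \beta = \beta \circ (e_j \otimes e_i)$ (the crossing swaps the idempotents), so after the insertion only the four sandwiched terms of the form $(e_i \otimes e_j)\, \beta^{\pm}\, (e_j \otimes e_i)$ survive. The two mixed terms, with $\{i,j\} = \{g,r\}$, are precisely bi-coloured crossings wrapped in inclusion-projection; by \eqref{RepqGtGt4}, the positive and negative bi-coloured crossings agree, so these mixed contributions cancel in the difference $\MyFigure{\oposx} - \MyFigure{\ornegx}$. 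The remaining two pure-colour terms reduce through the inclusion-projection identification to the one-coloured positive/negative crossings on green and red, and the one-coloured HOMFLY skein \eqref{RepqGtGt1} produces exactly $(q-q^{-1})\MyFigure{\ogipip}$ and $(q-q^{-1})\MyFigure{\oripip}$.

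The only delicate step is the book-keeping in the skein case, in particular verifying that the naturality-swap argument leaves exactly the two pure-colour sandwiches (and not, say, a contribution from $(e_g \otimes e_r)\,\beta^+\,(e_g \otimes e_r)$); this is where the crossing's action on the two idempotents, rather than their stability, is essential, and I expect this to be the main obstacle to write cleanly. All remaining reductions are routine applications of naturality together with the one-coloured relations inherited from each factor of $\RepqG \boxtimes_{\mathbb{C}(q)} \RepqG$.
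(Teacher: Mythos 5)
Your proof is correct and follows essentially the same route as the paper's: decompose the orange identity via $\id_\orsquare = e_g + e_r$, use naturality of the braiding, twist, and trace to kill (or, in the skein case, pair up and cancel via \eqref{RepqGtGt4}) the mixed terms, and reduce the surviving pure-colour terms to the one-coloured relations. The paper records only the four-term decomposition \eqref{oposx} of the orange crossing and asserts the rest; your version spells out the naturality bookkeeping explicitly, and the ``delicate step'' you flag (ruling out contributions like $(e_g\otimes e_r)\,\beta^+\,(e_g\otimes e_r)$) is in fact already disposed of by your own naturality-plus-orthogonality computation, so there is no gap.
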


\begin{proof}
Applying \eqref{RepqHtrel1}, we can write
\begin{equation}\label{oposx}
\MyFigure{\oposx}=\MyFigure{\ogposx}+\MyFigure{\orposx}+\MyFigure{\ogrx}+\MyFigure{\orgx},
\end{equation} 
and the same is true for the negative crossing. The first relation of the statement thus follows from the green and red skein relations. Likewise, the blue twist and dimension relations are an easy consequence of the green and red ones.
\end{proof}

\begin{remark}\label{universal2}
Again, $\RepqH$ is universal among the categories $\Rep_q\L$. Specialising $t_1$ and $t_2$ at integer values $m,n\in\mathbb{N}$, we get an evaluation morphism $$\varphi_{m,n}:\Bbbk\otimes_{\mathbb{C}(q)}\Bbbk\to\mathbb{C}(q),$$ and there is a canonical evaluation functor $$\text{ev}^{\text{L}}_{m,n}:\RepqH\to\Rep_q\L$$ mapping $\textcolor{mygreen}{\boldsymbol{\uparrow}}$ and $\textcolor{myred}{\boldsymbol{\uparrow}}$ to the natural representations of $\Uqglm$ and $\Uqgln$, respectively, and $\boldsymbol{\textcolor{myorange}{\uparrow}}$ to their direct sum.
\end{remark}

\begin{lemma}\label{coprod2}
For any oriented surface $S$, we have an equivalence of $(\Bbbk\otimes_{\mathbb{C}(q)}\Bbbk)$-linear categories $$\Sk_{\H_{t,t}}(S)\cong\Sk_{\GL_t}(S)\boxtimes\Sk_{\GL_t}(S).$$ In particular, this induces an algebra isomorphism $$\text{Sk}_{\H_{t,t}}(S)\cong\skg\otimes_{\mathbb{C}(q)}\skg.$$
\end{lemma}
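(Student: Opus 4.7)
The plan is to construct explicit quasi-inverse $(\Bbbk\otimes_{\mathbb{C}(q)}\Bbbk)$-linear functors
$$\Phi\colon \Sk_{\GL_t}(S)\boxtimes\Sk_{\GL_t}(S)\longrightarrow \Sk_{\H_{t,t}}(S),\qquad \Psi\colon \Sk_{\H_{t,t}}(S)\longrightarrow \Sk_{\GL_t}(S)\boxtimes\Sk_{\GL_t}(S).$$
To build $\Phi$, I would take a pair of configurations $(P_g,P_r)$ on $S$, perturbed to be disjoint, and view them as a single $\RepqH$-configuration whose points are coloured by $(V_i,\varnothing)=\gsquare$ and $(\varnothing,W_j)=\redsquare$ respectively. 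On a pure tensor of morphisms $\Gamma_g\otimes\Gamma_r$, I would place $\Gamma_g$ inside the slab $S\times[0,\tfrac12]$ and $\Gamma_r$ inside $S\times[\tfrac12,1]$, then extend $(\Bbbk\otimes_{\mathbb{C}(q)}\Bbbk)$-bilinearly. Well-definedness is automatic because the HOMFLY, twist and dimension skein relations for a purely green (resp.\ purely red) strand hold in $\RepqH$ by the very defining relations \eqref{RepqGtGt1}--\eqref{RepqHtrel2} of the two-coloured category.

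Next, to construct $\Psi$, I would use that every object of $\RepqH$ is, after Cauchy-splitting of idempotents, a direct summand of a product $V\boxtimes W$ with $V,W\in\RepqG$, and that the identification $V\boxtimes W\simeq (V\boxtimes\varnothing)\otimes(\varnothing\boxtimes W)$ yields canonical splitting coupons turning a single $\RepqH$-point into a pair of nearby green--red points. This defines $\Psi$ on objects. The crucial input on morphisms is a \emph{separation lemma}: every $\RepqH$-coloured ribbon graph in $S\times[0,1]$ is equivalent, modulo the skein relations, to one whose green subgraph lies in $S\times[0,\tfrac12]$, whose red subgraph lies in $S\times[\tfrac12,1]$, and which has no green--red crossings. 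For this I would first invoke \eqref{RepqHtrel1} to write any orange strand as $\gsquare\oplus\redsquare$, reducing to monochromatic strands. Then the two-coloured crossing relations \eqref{RepqGtGt4}, combined with the ribbon structure, imply $c_{G,R}\circ c_{R,G}=\id$, so that the sub-braiding between green and red is symmetric; an induction on the number of green--red crossings freely switches and eliminates them, allowing a global isotopy in $S\times I$ to separate the two colours into their respective slabs. Sending a graph in this separated form to the pair $(\Gamma_g,\Gamma_r)$ defines $\Psi$ on morphisms.

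Finally I would verify that $\Phi$ and $\Psi$ are mutually quasi-inverse: $\Psi\circ\Phi$ is the identity on the nose for objects and morphisms already in slab form, while $\Phi\circ\Psi$ agrees with the identity after the evident isotopy merging the two slabs. The algebra isomorphism in the statement then follows by taking $\End$ of the empty configuration on both sides, the multiplicative structure (vertical stacking of cylinders $S\times I$) matching because the slab decomposition is preserved by stacking. I expect the main obstacle to be the separation lemma: although the relations \eqref{RepqHtrel1} and \eqref{RepqGtGt4} morally encode that $\RepqH$ decomposes as an external product, one has to verify that they can be applied coherently in the presence of the other skein relations (Reidemeister III moves involving a monochromatic crossing with a transverse strand of the opposite colour, compatibility with twists and coupons, and so on) so that the straightening procedure goes through uniformly on arbitrary surfaces. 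This reduces to a careful diagrammatic bookkeeping argument of the kind familiar from Cooke's thesis, but is the only genuinely non-routine content of the proof.
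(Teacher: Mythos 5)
Your proposal is correct and takes essentially the same approach as the paper: define a colour-separation equivalence between $\Sk_{\H_{t,t}}(S)$ and $\Sk_{\GL_t}(S)\boxtimes\Sk_{\GL_t}(S)$. The paper's proof is a two-line assertion that the colour-separation functor $\Rib_{\RepqH}(S)\to\Sk_{\GL_t}(S)\boxtimes\Sk_{\GL_t}(S)$ is compatible with the skein relations and hence descends to an equivalence on the skein category; you make both directions ($\Phi$ and $\Psi$) explicit and correctly isolate the separation lemma---resting on the transparency relations \eqref{RepqGtGt4} and the decomposition \eqref{RepqHtrel1}---as the technical heart of the argument, which is exactly the content the paper's proof leaves implicit.
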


\begin{proof}
We define a functor $\Rib_{\RepqH}(S)\to\Sk_{\GL_t}(S)\boxtimes\Sk_{\GL_t}(S)$ by separating the green and the red colours on the first and second factor, respectively. By the definition of $\RepqH$, it is  compatible with the skein relations, so it factors through the skein category, yielding the desired equivalence. 
\end{proof}

\subsection{The restriction functor} The restriction functor $$\text{res}_{m,n}\colon\Rep_q\GL_{m+n}\to\Rep_q\L$$ admits also a HOMFLY version that we construct diagrammatically in this section.   We first define a coproduct $\Delta:\Bbbk\to\Bbbk\otimes_{\mathbb{C}(q)}\Bbbk$ by setting
\begin{equation}\label{coproduct}
\Delta\left(q^{t}\right)=q^{t_1}q^{t_2},\quad\Delta(\delta)=\delta_1\; q^{t_2}+q^{-t_1}\;\delta_2
\end{equation}
 on generators and extending $\mathbb{C}(q)$-linearly. 

\begin{lemma}
The map $\Delta:\Bbbk\to\Bbbk\otimes_{\mathbb{C}(q)}\Bbbk$ is well-defined and coassociative.
\end{lemma}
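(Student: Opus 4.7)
The plan is to verify both claims by direct calculation on the generators $q^{\pm t}$ and $\delta$ of $\Bbbk$.

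For well-definedness, since $\Delta$ is declared to be $\mathbb{C}(q)$-linear, I only need to check that the sole defining relation $(q-q^{-1})\delta = q^t - q^{-t}$ of $\Bbbk$ is preserved, i.e.\ that $(q-q^{-1})\Delta(\delta) = \Delta(q^t) - \Delta(q^{-t})$ in $\Bbbk \otimes_{\mathbb{C}(q)} \Bbbk$. Expanding the left-hand side,
$$(q-q^{-1})\bigl(\delta_1 q^{t_2} + q^{-t_1}\delta_2\bigr) = \bigl((q-q^{-1})\delta_1\bigr)q^{t_2} + q^{-t_1}\bigl((q-q^{-1})\delta_2\bigr),$$
and now applying the defining relation in each tensor factor separately gives $(q^{t_1}-q^{-t_1})q^{t_2} + q^{-t_1}(q^{t_2}-q^{-t_2}) = q^{t_1+t_2} - q^{-t_1-t_2}$, which is exactly $\Delta(q^t) - \Delta(q^{-t})$. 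So $\Delta$ descends to a well-defined map on $\Bbbk$.

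For coassociativity, I check that $(\Delta\otimes\id)\circ\Delta = (\id\otimes\Delta)\circ\Delta$ on generators. On $q^t$, writing $\Delta(q^t)=q^t\otimes q^t$, both iterates clearly equal $q^t\otimes q^t\otimes q^t$. On $\delta$, with $\Delta(\delta) = \delta\otimes q^t + q^{-t}\otimes\delta$, one computes
\begin{align*}
(\Delta\otimes\id)\Delta(\delta) &= \Delta(\delta)\otimes q^t + \Delta(q^{-t})\otimes\delta \\
&= \delta\otimes q^t\otimes q^t + q^{-t}\otimes\delta\otimes q^t + q^{-t}\otimes q^{-t}\otimes\delta,
\end{align*}
and similarly $(\id\otimes\Delta)\Delta(\delta) = \delta\otimes\Delta(q^t) + q^{-t}\otimes\Delta(\delta)$ yields the same three terms. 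Since $\Delta$ is $\mathbb{C}(q)$-linear and the check holds on a generating set, coassociativity follows.

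There is no real obstacle here; the statement is a routine verification. The only thing to be careful about is the conventions for the multiple tensor factors (keeping track of which copy of $q^{\pm t}$ and $\delta$ sits in which slot), which is why the key relation to verify $(q-q^{-1})\delta_i = q^{t_i} - q^{-t_i}$ is used factor by factor rather than globally.
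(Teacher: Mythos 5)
Your proof is correct and follows essentially the same route as the paper: verify the defining relation of $\Bbbk$ is preserved (you expand $(q-q^{-1})\Delta(\delta)$ and the paper expands $\Delta(q^t-q^{-t})$, but it is the same telescoping computation), then check coassociativity directly on the generators $q^t$ and $\delta$.
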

\begin{proof}
This is an easy computation. Firstly,
\begin{equation*}
\begin{split}
\Delta(q^t-q^{-t})&=q^{t_1} q^{t_2}-q^{-t_1} q^{-t_2}\\
&=(q^{t_1}-q^{-t_1}) q^{t_2}+q^{-t_1}( q^{t_2}-q^{-t_2})\\
&=(q-q^{-1})(\delta_1 q^{t_2}+q^{-t_1}\delta_2)=(q-q^{-1})\Delta(\delta),
\end{split}
\end{equation*}
so the map is well-defined. For the coassociativity, we have \begin{equation*}
(1\otimes\Delta)\circ\Delta(q^t)=q^{t_1} q^{t_2} q^{t_3}=(\Delta\otimes 1)\circ\Delta(q^t),
\end{equation*}
and
\begin{equation*}
\begin{split}
(1\otimes\Delta)\circ\Delta(\delta)&=\delta_1(q^{t_2} q^{t_3})+q^{-t_1}(\delta_2 q^{t_3}+q^{-t_2}\delta_3)\\
&=(\delta_1 q^{t_2}+q^{-t_1}\delta_2) q^{t_3}+(q^{-t_1} q^{-t_2})\delta_3=(\Delta\otimes 1)\circ\Delta(\delta).
\end{split}
\end{equation*}
\end{proof}

\begin{remark}
Specialising formulas \eqref{coproduct} at $t=m+n$, $t_1=m$ and $t_2=n$, with $m,n\in\mathbb{N}$, we recover the identities $$q^{m+n}=q^m q^n,\qquad [m+n]_q=q^n [m]_q+q^{-m}[n]_q,$$ expressing the quantum dimension of the fundamental representation of $U_q(\mathfrak{gl}_{m+n})$ in terms of the quantum dimensions of the fundamental representations of $\Uqglm$ and $\Uqgln$. Again, we may think of $\Delta$ as interpolating these formulas.
\end{remark}

We introduce the following notation:
\begin{equation}\label{box1}
\MyFigure{\boxx}\;\coloneqq\;\MyFigure{\oposx}+\left(q-q^{-1}\right)\MyFigure{\orgipip},
\end{equation}
\vspace{0.3cm}
\begin{equation}\label{box2}
\MyFigure{\boxz}\;\coloneqq\;\MyFigure{\oposz}-\left(q-q^{-1}\right)q^{-t_2}\MyFigure{\ogrevvcoevv},
\end{equation}
\vspace{0.3cm}
\begin{equation}\label{box3}
\MyFigure{drawings/pivotalbox}\;\coloneqq\;\MyFigure{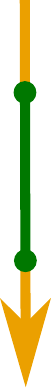}+q^{-t_1}\MyFigure{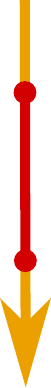},
\end{equation}\label{box4}
\vspace{0.3cm}
\begin{equation}
\MyFigure{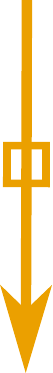}\;\coloneqq\;\left(\MyFigure{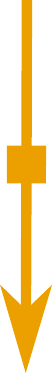}\right)^{-1}\;=\;\MyFigure{drawings/ogipdual.pdf}+q^{t_1}\MyFigure{drawings/oripdual.pdf}.
\end{equation}

\begin{theorem}\label{restrictiontheorem}
 There exists a unique $\Delta$-linear functor 
\begin{equation}\label{restrictionfunctor}
\text{res}_{t}:\RepqG\to\RepqH
\end{equation}
preserving the tensor product and such that
\begin{equation*}
\MyFigure{\blueposx}\mapsto\MyFigure{\boxx},\qquad\MyFigure{\blueposz}\mapsto\MyFigure{\boxz},\qquad\MyFigure{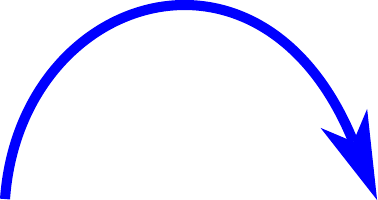}\mapsto\MyFigure{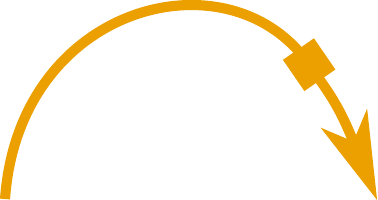},\qquad\MyFigure{drawings/bluecoev.pdf}\mapsto\MyFigure{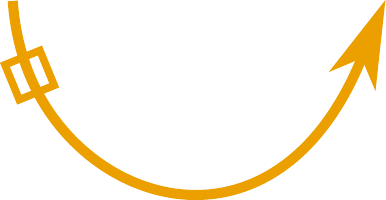}.
\end{equation*}
\end{theorem}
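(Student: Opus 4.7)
The strategy is to use the presentation of $\widetilde{\RepqG}$ by generators and relations given in Proposition \ref{HOMFLYgenrel}: define the would-be functor on the generating objects by $\upp \mapsto \orup$, $\downn \mapsto \orsquare^*$ (extended monoidally), and on the generating morphisms by the prescribed assignments. Uniqueness is then immediate since the values on generators are fixed, and any such functor extends uniquely to the Cauchy completion $\RepqG$ by the universal property of Cauchy completion. For existence, the entire content of the theorem is the verification that each of the five defining relations \eqref{rel1}--\eqref{rel5} is preserved in $\RepqH$; once this is done, the $\Delta$-linear extension of scalars is automatic from the definitions of the box morphisms, which already mention $q^{t_1},q^{t_2},\delta_1,\delta_2$.

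The relations \eqref{rel2}--\eqref{rel4} can be treated by direct expansion. For the zig-zag identities \eqref{rel2}, I would expand the images of the evaluation and coevaluation using \eqref{RepqHtrel1}, then apply the zig-zag identities separately in the green and red colours; the vanishing of mixed green-red cups and caps in \eqref{RepqHtrel2} ensures no cross terms survive. For \eqref{rel3}, the inverse relation reduces, after expanding both \eqref{box2} and \eqref{box4}, to cancellations among the idempotents $\ginc\gproj$ and $\rinc\rproj$ using the monochromatic pivotal structures of $\RepqG\boxtimes\RepqG$. For the dimension relation \eqref{rel4}, which is where the coproduct $\Delta$ enters substantively, I would compute $q^{t_1+t_2}$ times the image of $\MyFigure{\bluedimrelation}$ using the two-coloured twist formula of Lemma \ref{ReqHtlemma} (and the correction term in the definition of $\boxz$), and check that the result equals $\delta_1 q^{t_2}+q^{-t_1}\delta_2 = \Delta(\delta)$, matching the right-hand side under $\Delta$-linearity.

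The HOMFLY skein relation \eqref{rel1} is the next step: both the positive and negative box crossings decompose, via \eqref{RepqHtrel1}, into four crossings of pure colour (green-green, green-red, red-green, red-red) plus the correction term $(q-q^{-1})\orgipip$. The green-green and red-red pieces produce $(q-q^{-1})$ times the respective identity by the monochromatic skein relations \eqref{RepqGtGt1}, while the mixed positive and negative crossings cancel by \eqref{RepqGtGt4}; the correction term is precisely what is needed to account for the "missing" $\orgipip$ piece of $\MyFigure{\blueidid}$.

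The main obstacle is the Reidemeister III move \eqref{rel5}. Expanding all three box crossings via \eqref{box1} produces many terms, which I would organise by the colour assignment $\{g,r\}^3$ of the three strands at the bottom of the diagram. The monochromatic configurations ($ggg$ and $rrr$) are handled by the one-coloured Reidemeister III moves inherited from the two copies of $\RepqG$. The configurations with exactly two strands of the same colour are essentially slide moves governed by \eqref{RepqGtGt4}: each mixed crossing is "transparent" so the three-strand equality reduces, after isotopy, to a monochromatic braiding identity on the other two strands. The delicate part is controlling the terms produced by the correction $(q-q^{-1})\orgipip$ in each of the three box crossings: these generate mixed idempotent insertions that must cancel across the two sides of \eqref{rel5}. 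I expect the bookkeeping here to be the bulk of the work, but it should reduce case by case to one-coloured skein identities plus the monochromatic Reidemeister III relations, with the precise form of the correction in \eqref{box1} being determined exactly so that these cancellations close up.
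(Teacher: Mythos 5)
Your proposal is correct and takes essentially the same approach as the paper's proof: use the presentation of $\widetilde{\RepqG}$ from Proposition \ref{HOMFLYgenrel}, conclude uniqueness from the assignment on generators (plus the universal property of Cauchy completion, which the paper leaves implicit), and reduce existence to verifying that relations \eqref{rel1}--\eqref{rel5} are preserved, with the coproduct $\Delta$ entering only through \eqref{rel4}. Your organization of the Reidemeister~III verification by colour configurations is a slightly different bookkeeping scheme than the paper's (which directly simplifies both sides of \eqref{rel5} using the naturality identities of Remark \ref{naturality}), but both lead to the same short list of surviving terms; one small imprecision is that \eqref{rel1} is the quadratic relation $c^2=(q-q^{-1})c+\mathrm{id}$ on the single generating crossing, so your mention of ``cancellation of mixed positive and negative crossings'' should really be a cancellation of the mixed green-red and red-green pieces of $c^2$ against the correction term, rather than a comparison of a positive against a negative box crossing.
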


\begin{proof}
Proposition \ref{HOMFLYgenrel} provides a presentation of $\RepqG$ by generators and relations. Since $\text{res}_{t}$ is defined on generators, it will be unique if it is well-defined. We check that it preserves relations \eqref{rel1} -- \eqref{rel5}. For the skein relation \eqref{rel1} and the zigzags \eqref{rel2}, this is an easy computation using the corresponding green and red relations and \eqref{RepqHtrel2}. Applying the functor to the right-hand side of \eqref{rel3}, we have
\begin{equation*}
\text{res}_{t}\left(\MyFigure{\blueinvz}\right)=\MyFigure{\ozinv}+\left(q-q^{-1}\right)q^{t_1}\MyFigure{\ogrevcoev}.
\end{equation*}
Composing with $\MyFigure{\boxz}$ and applying the one-colour twist relations to the terms involving the (co)evaluations appearing, we get the identity, so $\eqref{rel3}$ is preserved. On the other hand,
$$q^{t_1}q^{t_2}\MyFigure{\boxdim}=q^{t_1}q^{t_2}\MyFigure{\gdim}+q^{t_1}q^{t_2}\MyFigure{\rdim}-\left(q-q^{-1}\right)\MyFigure{\rgknot}=\left( q^{t_2}\delta_1+q^{-t_1}\delta_2\right)\boldsymbol{1}_\emptyset,$$ so relation \eqref{rel4} is preserved. Finally, using the relations in Remark \ref{naturality}, we obtain
\begin{equation*}
\begin{split}
\text{ev}_{q,t}\left(\MyFigure{\RIIIa}\right)=\MyFigure{\oRIII}&+\left(q-q^{-1}\right)\left(\MyFigure{\rgoxx}+\MyFigure{\orgxx}\right)\\&+\left(q-q^{-1}\right)^2\left(\;\MyFigure{\orposx}\MyFigure{\ogip}\;+\;\MyFigure{\orip}\MyFigure{\ogposx}\;\right)=\text{ev}_{q,t}\left(\MyFigure{\RIIIb}\right),
\end{split}
\end{equation*}
so the Reidemeister relation \eqref{rel5} is also preserved.
\end{proof}

\begin{remark} Along the same lines as Remarks \ref{universal1} and \ref{universal2}, the functor $\res$ interpolates the restriction functors $\text{res}_{m,n}:\Rep_q\GL_{m+n}\to\Rep_q\L$. That is, the diagram $$\begin{tikzcd}[column sep = large]
\RepqG \arrow[r, "\res"] \arrow[d, "{\text{ev}^{\text{G}}_{m+n}}"'] & \RepqH \arrow[d, "{\text{ev}^{\text{L}}_{m,n}}"] \\
\Rep_q\GL_{m+n} \arrow[r, "{\text{res}_{m,n}}"]          & {\Rep_q\L}                       
\end{tikzcd}$$ commutes for any $m,n\in\mathbb{N}.$
\end{remark}

\begin{remark}\label{specialistationL}
The restriction functor $\res$ is strictly monoidal, but it does not preserve the distinguished dualities. The natural isomorphism from \eqref{Fduals} is determined by $$\varphi_{\upp}=\MyFigure{drawings/pivotalbox.pdf}=\MyFigure{drawings/ogipdual.pdf}+q^{-t_1}\MyFigure{drawings/oripdual.pdf}.$$ Moreover, $\res$ is not pivotal: the categories $\RepqG$ and $\RepqH$ are endowed with trivial pivotal structures, but the isomorphism $$\eta^r_\upp\colon\orup=\res(\upp)^{**}\xrightarrow{(\varphi_\upp^r)^{*}}\res(\upp^*)^*\xrightarrow{(\varphi^r_\downn)^{-1}}\res(\upp^{**})\xrightarrow{\res(\text{id}_\downn)}\res(\upp)=\orup$$ from \eqref{Fpivotal} is represented by 
\begin{equation}
\eta^r_\upp= q^{t_2}\MyFigure{\ogip}+q^{-t_1}\MyFigure{\orip},
\end{equation}
which is not the identity of $\orup$ in $\RepqH$. Similarly, for the left dualities induced by the ribbon structures of $\RepqG$ and $\RepqH$, we get an isomorphism
\begin{equation}
\eta^l_\upp=q^{-t_2}\MyFigure{\ogip}+q^{t_1}\MyFigure{\orip}.
\end{equation}
\end{remark}

\begin{remark}\label{naturality}
Using \eqref{oposx} and the definition of the boxes, one easily checks that 
\begin{equation*}
\MyFigure{\boxgooox}=\MyFigure{\gooox},\qquad\MyFigure{\boxooorx}=\MyFigure{\ooorx},\qquad\MyFigure{\boxroooz}=\MyFigure{\roooz},\qquad\MyFigure{\boxooogz}=\MyFigure{\ooogz},
\end{equation*}
but, for instance,
\begin{equation*}
\MyFigure{\boxogoox}\neq\MyFigure{\ogoox},\qquad\MyFigure{\boxrooox}\neq\MyFigure{\rooox}.
\end{equation*} This is a diagrammatic counterpart of the fact that the $R$-matrix of $U_q(\mathfrak{gl}_{m+n})$ belongs to (a completion of) $U_q(\mathfrak{b}^+)\otimes U_q(\mathfrak{b}^-)$, where $\mathfrak{b}^\pm$ are the corresponding positive and negative Borel subalgebras. Consequently, the restriction of the braiding of $\Rep_q\GL_{m+n}$ to $\Rep_q\L$ is natural in the first factor (resp. in the second) for morphisms of $U_q(\mathfrak{l})$-modules commuting with the action of $U_q(\mathfrak{p})$ (resp. $U_q(\mathfrak{p}^-)$).
\end{remark}

\subsection{The planar category $\RepqP$}\label{repqpsection} Finally, we define a category $\RepqP$ based on the graphical calculus for rigid categories introduced in Section \ref{graphical_calculus}. The category $\Rep_q\P$ of locally finite $U_q(\mathfrak{p})$-modules does not admit a pivotal structure compatible with those of $\Rep_q\GL_{m+n}$ and $\Rep_q\L$. This is because the restriction of the pivotal structure of $\Rep_q\GL_{m+n}$ is not natural for morphisms from $\Rep_q\L$. Hence, we define a universal version of $\Rep_q\P$ using the (planar) graphical calculus of rigid categories. As we will see, this calculus extended to surfaces once a framing has been chosen.

As in Section \ref{graphical_calculus}, we consider rectilinear isotopy classes of vertical planar graphs embedded in $\R\times[0,1]$. Edges are coloured with either $\bsquare$, $\gsquare$ or $\redsquare$, with endpoints labelled by integers following the same rules as defined in that section. Vertices are coupons admitting the following decorations:

\begin{equation}\label{repqp2}
\MyFigure{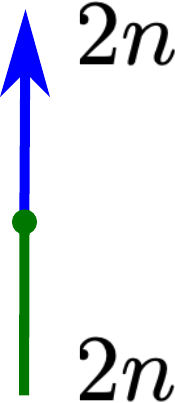},\quad\MyFigure{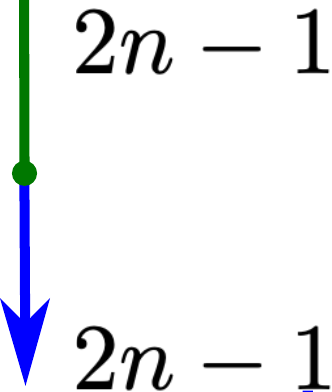},\quad\MyFigure{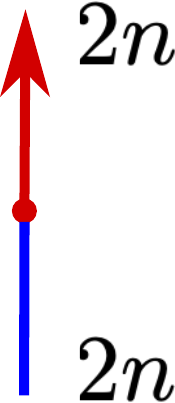},\quad\MyFigure{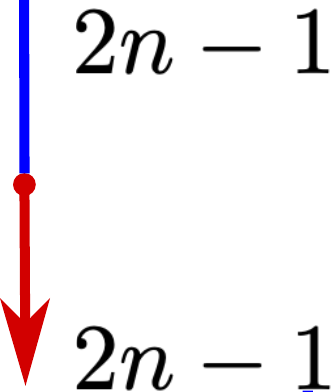},\quad n\in\mathbb{Z};
\end{equation}
\begin{equation}\label{repqp3}
\MyFigure{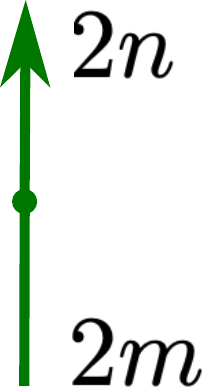},\quad\MyFigure{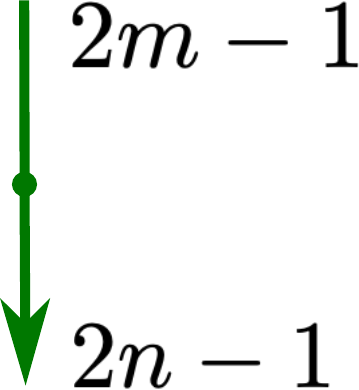},\quad\MyFigure{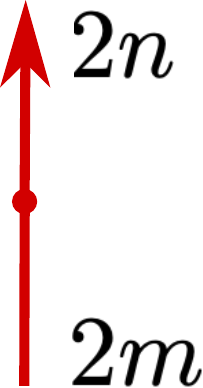},\quad\MyFigure{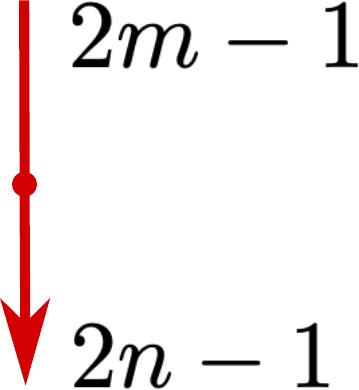},\quad m,n\in\mathbb{Z},\; m\neq n;
\end{equation}
\begin{equation}\label{repqp4}
\MyFigure{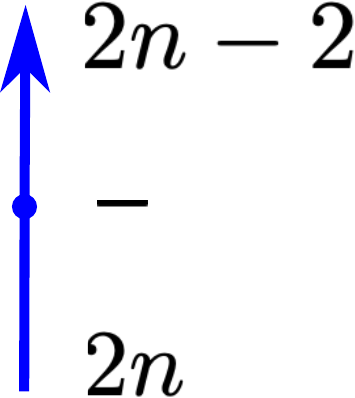},\quad\MyFigure{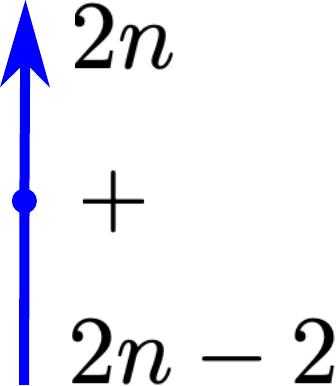},\quad\MyFigure{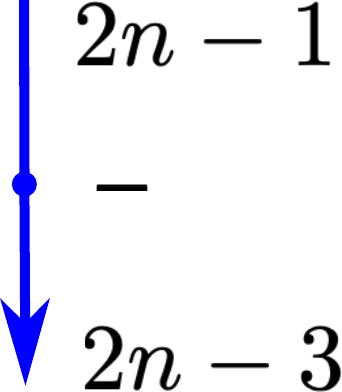},\quad\MyFigure{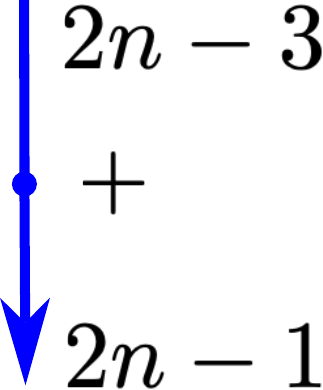},\quad n\in\mathbb{Z}_{\geq 1};
\end{equation}
\begin{equation}\label{repqp5}
\quad\MyFigure{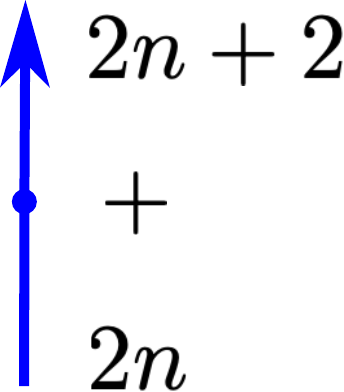},\quad\MyFigure{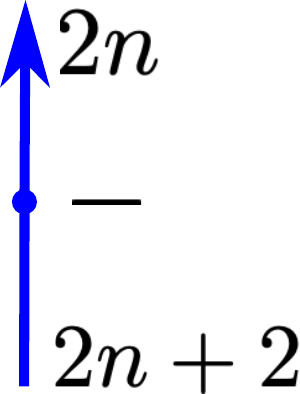},\quad\MyFigure{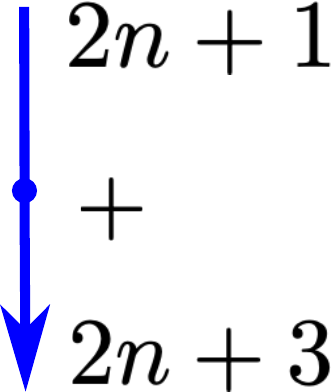},\quad\MyFigure{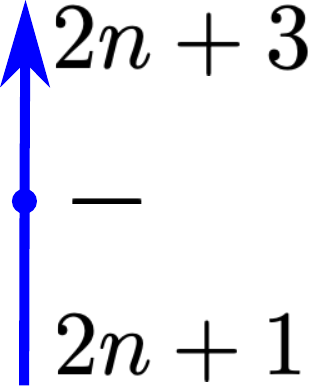},\quad n\in\mathbb{Z}_{\leq -1};
\end{equation}
\begin{equation}\label{repqp6}
\MyFigure{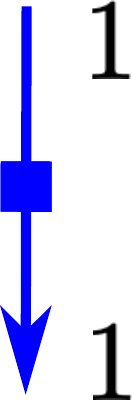},\qquad\MyFigure{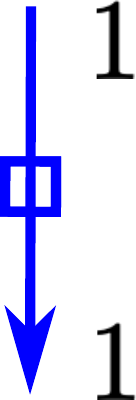}; 
\end{equation}
\begin{equation}\label{repqp7}
\MyFigure{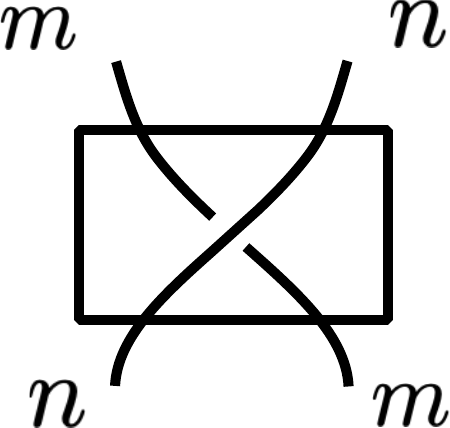},\quad\MyFigure{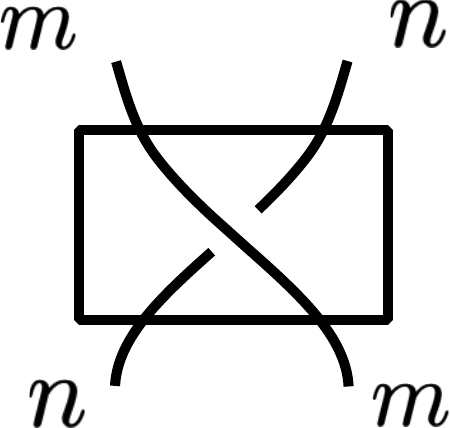}\quad\text{with $m,n\in\mathbb{Z}$ and all possible colourings and orientations}. 
\end{equation}

Let $\text{Rig}(\P)$ be the $(\Bbbk\otimes_{\mathbb{C}(q)}\Bbbk)$-linear category whose objects are sequences of oriented points with integer values compatible with orientations (cf. Section \ref{graphical_calculus}) and coloured with $\bsquare$, $\gsquare$ and $\redsquare$. The hom-spaces are spanned by rectilinear isotopy classes of planar graphs decorated as above. For instance, the diagram $$\includegraphics[scale=0.5, valign = c]{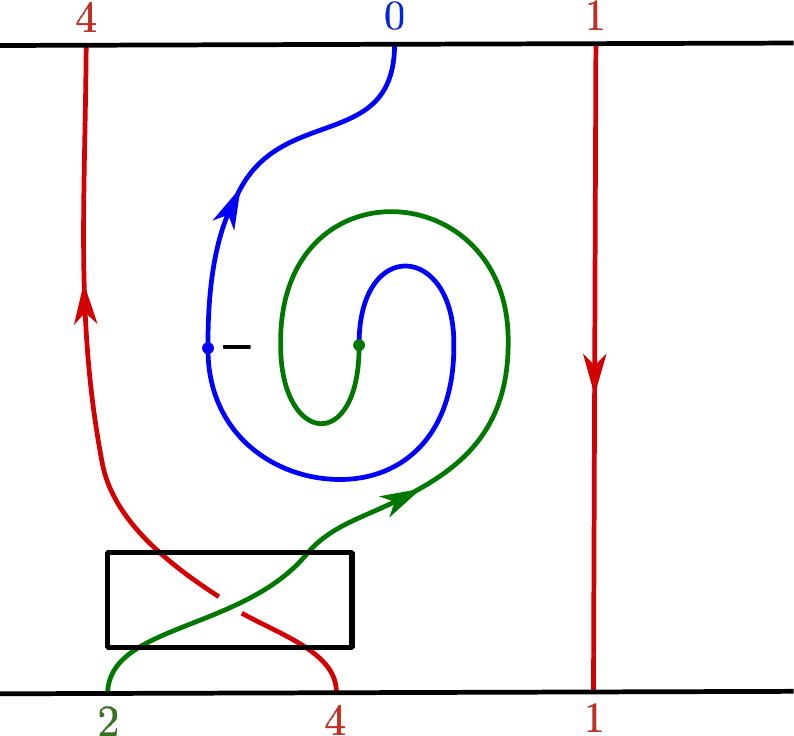}$$ represents a morphism in $\text{Rig}(\P)$. Note that the decoration of the components of the graph is uniquely determined by the source and the target.

We define a functor $$\ev^\P:\text{Rig}(\P)\to\RepqH$$ as follows. On objects, it just forgets the integer labels and switches colours $\bsquare\rightsquigarrow\orsquare$. On morphisms, we interpret generators as follows:
\begin{itemize}
\item the morphisms in \eqref{repqp2} represent the inclusion $\gsquare\equiv V_m\to V\equiv\bsquare$ and the projection $\bsquare\equiv V\to V_n\equiv\redsquare$, so the evaluation functor just forgets the integer labels and switches colours $\bsquare\rightsquigarrow\orsquare$; for instance, $$\MyFigure{drawings/ngbinc.pdf}\mapsto\MyFigure{drawings/ginc.pdf};$$
\item the morphisms in \eqref{repqp3} represent the restriction of the pivotal structure of $\RepqH$, which is trivial, so they are all sent to the corresponding identity;
\item the morphisms in \eqref{repqp4} represent the image of the pivotal structure of $\RepqG$ by the restriction functor (see \eqref{Fpivotal}), hence their images by the evaluation functor are  $$\MyFigure{drawings/pivotal.pdf}\;\mapsto\;\eta^r_\upp=q^{t_2}\MyFigure{\ogip}+q^{-t_1}\MyFigure{\orip} \quad\text{and}\quad\MyFigure{drawings/pivotaldual.pdf}\;\mapsto\;(\eta^r_\upp)^*=q^{t_2}\MyFigure{drawings/ogipdual.pdf}+q^{-t_1}\MyFigure{drawings/oripdual.pdf},$$ and the two remaining morphisms are the inverses of these ones;
\item similarly, for $n\in\mathbb{Z}_{\leq-1}$, we set $$\MyFigure{drawings/pivotall}\;\mapsto\;\eta_\upp^l=q^{-t_2}\MyFigure{drawings/ogip.pdf}+q^{t_1}\MyFigure{drawings/orip.pdf}\quad\text{and}\quad\MyFigure{drawings/pivotalldual.pdf}\;\mapsto\;(\eta^l_\upp)^*=q^{-t_2}\MyFigure{drawings/ogipdual.pdf}+q^{t_1}\MyFigure{drawings/oripdual.pdf},$$ and the two remaining morphisms in \eqref{repqp5} are the inverses of these ones;
\item the morphisms in \eqref{repqp6} represent the natural isomorphism from \eqref{Fduals} identifying the duality induced by the restriction functor with the distinguished duality of $\RepqH$, so $$\MyFigure{drawings/rightiso.pdf}\mapsto\;\varphi^r_\upp=\MyFigure{drawings/pivotalbox.pdf}\qquad\text{and}\qquad\MyFigure{drawings/rightisoinv.pdf}\mapsto\;(\varphi^r_\upp)^{-1}=\MyFigure{drawings/pivotalboxinv.pdf},$$ where $\varphi^r$ is the isomorphism in \eqref{box4};
\item for crossings whose strands are both blue, we apply the same rules as for computing the restriction functor \eqref{restrictionfunctor}, i.e.,$$
\MyFigure{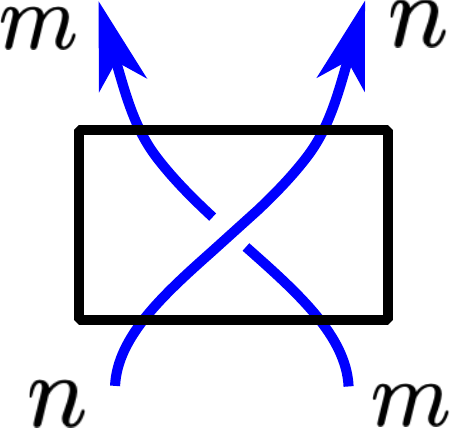}\;\mapsto\;\MyFigure{\oposx}+\left(q-q^{-1}\right)\MyFigure{\orgipip},$$ 
$$\MyFigure{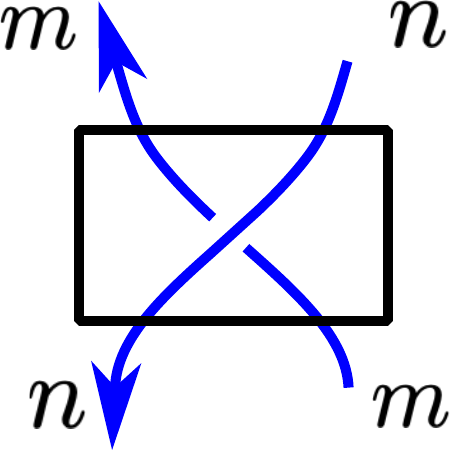}\;\mapsto\;\MyFigure{\oposz}-\left(q-q^{-1}\right)q^{-t_2}\MyFigure{\ogrevvcoevv};$$
\item for the remaining morphisms in \eqref{repqp7}, the evaluation functor just replaces the coupon by the underlying crossing.
\end{itemize}

\begin{lemma}
The evaluation functor $\ev^\P:\text{Rig}(\P)\to\RepqH$ is well-defined.
\end{lemma}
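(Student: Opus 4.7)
The approach is to verify three conditions for a functor on a category of rectilinear isotopy classes of decorated planar graphs: source/target compatibility for the image of each generator, well-definedness of declared inverse pairs, and invariance under rectilinear isotopy. Since $\ev^\P$ is already prescribed on all listed generators, each check amounts to confirming consistency with structure already present in $\RepqH$.

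Source/target matching for \eqref{repqp2}--\eqref{repqp7} is a direct bookkeeping verification after forgetting integer labels and applying $\bsquare\rightsquigarrow\orsquare$. For the invertibility statements, the pivotal coupons in \eqref{repqp4} and \eqref{repqp5} map to $\eta^r_\upp$ and $\eta^l_\upp$, which are linear combinations of the orthogonal idempotents $\MyFigure{\ogip}$ and $\MyFigure{\orip}$ (by \eqref{RepqHtrel2}); these are manifestly invertible with inverses obtained by inverting the scalars, and the duals behave analogously. The morphisms in \eqref{repqp6} map to $\varphi^r_\upp$ and $(\varphi^r_\upp)^{-1}$ of \eqref{box3}--\eqref{box4}, which are mutually inverse by the same orthogonal idempotent argument. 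For the blue-blue crossings in \eqref{repqp7}, the images are the boxes \eqref{box1} and \eqref{box2}, whose mutual inverseness was established in the proof of Theorem \ref{restrictiontheorem} during the verification of \eqref{rel3}; for mixed-colour crossings it follows from \eqref{RepqGtGt4} together with the one-colour skein relations.

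Invariance under rectilinear isotopy then reduces to the axioms of a rigid $\Bbbk\otimes_{\mathbb{C}(q)}\Bbbk$-linear category --- associativity, functoriality, and the zigzag identities for the distinguished dualities --- all of which hold in $\RepqH$. The only delicate moves are those that shift integer labels by dragging coupons past cups or caps; these are handled precisely by the generators in \eqref{repqp3}--\eqref{repqp5}, whose images encode the canonical isomorphisms between $\res$-induced duals and the distinguished duals of $\RepqH$ (cf. Remark \ref{specialistationL}).

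The main obstacle is the invertibility check for the box crossings \eqref{box1}--\eqref{box2}, which is precisely the content of relation \eqref{rel3} already verified in Theorem \ref{restrictiontheorem}; everything else is mechanical manipulation using the orthogonal decomposition $\orup=\gup\oplus\rup$ furnished by \eqref{RepqHtrel1}--\eqref{RepqHtrel2}, so no genuinely new computation is required beyond what has already been done.
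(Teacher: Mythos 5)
Your proof is correct and takes essentially the same approach as the paper, which simply notes that the only relations in $\text{Rig}(\P)$ are rectilinear isotopies and these hold in the rigid category $\RepqH$. You flesh out this one-line observation with useful detail: the source/target bookkeeping, and the point that the ``inverse'' generators in \eqref{repqp4}--\eqref{repqp6} are sent to actual inverses in $\RepqH$, so the invertibility of $\eta^r_\upp$, $\eta^l_\upp$ and $\varphi^r_\upp$ (which follows from the orthogonal decomposition $\orup=\gup\oplus\rup$) is genuinely needed for the assignment to make sense.

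Two small inaccuracies are worth flagging, though they do not affect the validity of the argument. First, the boxes \eqref{box1} and \eqref{box2} are not mutual inverses of each other: they are the images of the $X$- and $Z$-type crossings respectively, and the computation in the proof of Theorem~\ref{restrictiontheorem} shows that \eqref{box2} has an inverse (relation \eqref{rel3}), not that \eqref{box1} and \eqref{box2} are inverses. Moreover, for the crossings in \eqref{repqp7} the evaluation is given by explicit formulas rather than declared inverses, so no invertibility check is needed there at all. Second, the sentence about ``moves that shift integer labels by dragging coupons past cups or caps'' is not quite right: rectilinear isotopy by definition does not rotate coupons, so the integer decorations at coupon boundaries are invariant and no such move occurs. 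The generators in \eqref{repqp3}--\eqref{repqp5} are coupons (morphisms), not relations, and they do not ``handle'' isotopies; rectilinear isotopy invariance is automatic from $\RepqH$ being rigid with fixed distinguished dualities, which is the whole content of the paper's proof.
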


\begin{proof}
This is straightforward, since rectilinear isotopy relation holds in $\RepqH$.
\end{proof}

\begin{definition}
We define the category $\RepqP$ as the quotient of $\text{Rig}(\P)$ by the kernel of $\ev^\P.$
\end{definition}

By construction, the evaluation functor induces a faithful functor 
\begin{equation}\label{inclusion}
j_t^*:\RepqP\to\RepqH
\end{equation}
analogous to the restriction functor $j^*\colon\Rep_q\P\to\Rep_q\L.$

\begin{proposition}
Let $n,m\in\mathbb{N}$ and $\varphi_{m,n}\colon\Bbbk\otimes_{\mathbb{C}(q)}\Bbbk\to\mathbb{C}(q)$ the evaluation morphism defined in Remark \ref{universal2}. Then, there exists a monoidal $\varphi_{m,n}$-linear functor $$\text{ev}^\P_{m,n}\colon\RepqP\to\Rep_q\P$$ mapping $$(\upp,2n)\mapsto\iota^*(V_{m+n}),\quad (\gup, 2n)\mapsto\pi^*(V_m),\quad (\rup,2n)\mapsto \pi^*(V_n),$$ where $\iota^*$ and $\pi^*$ are the restriction functors defined in Section \ref{parabolic_restriction}.
\end{proposition}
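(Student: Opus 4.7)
The plan is to construct the functor by first lifting to the ambient category $\Rig(\P)$ and then descending through the quotient $\RepqP = \Rig(\P)/\ker(\ev^\P)$. The crucial observation is that the restriction functor $j^*\colon\Rep_q\P\to\Rep_q\L$ is faithful, so a prospective functor into $\Rep_q\P$ is completely determined by its composition with $j^*$ and exists as soon as that composition lands in the image of $j^*$.

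First, I would define $\widetilde{\ev}^\P_{m,n}\colon\Rig(\P)\to\Rep_q\P$ on generators. Objects $(\upp,2k)$, $(\gup,2k)$, $(\rup,2k)$ are sent to $\iota^*(V_{m+n})$, $\pi^*(V_m)$, $\pi^*(V_n)$ respectively, with odd-labelled points going to the corresponding duals. The inclusion/projection coupons in \eqref{repqp2} are sent to the canonical $\P$-linear morphisms coming from the decomposition $\iota^*(V_{m+n})\cong\pi^*(V_m)\oplus\pi^*(V_n)$ in $\Rep_q\P$. The pivotal and duality coupons in \eqref{repqp3}--\eqref{repqp6} are sent to the $\P$-linear specialisations at $q^{t_1}=q^m$, $q^{t_2}=q^n$ of the maps $\eta^r_\upp$, $\eta^l_\upp$, $\varphi^r_\upp$ recorded in Remark \ref{specialistationL}. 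Finally, the blue-blue crossings in \eqref{repqp7} are sent to the $R$-matrix $\beta_{m+n}$ of $U_q(\mathfrak{gl}_{m+n})$ regarded as a $\P$-endomorphism of $\iota^*(V_{m+n})^{\otimes 2}$, while the monochromatic and mixed crossings are sent to the corresponding blocks of the $R$-matrix of $U_q(\mathfrak{l})$; Remark \ref{naturality} guarantees that all of these blocks are $\P$-linear. Invariance under rectilinear isotopy is automatic since $\Rep_q\P$ is a rigid $\Bbbk$-linear monoidal category.

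Next, I would verify that the square
$$
\begin{tikzcd}
\Rig(\P) \arrow[r, "\ev^\P"] \arrow[d, "{\widetilde{\ev}^\P_{m,n}}"'] & \RepqH \arrow[d, "{\ev^\L_{m,n}}"] \\
\Rep_q\P \arrow[r, "j^*"] & \Rep_q\L
\end{tikzcd}
$$
commutes on every generator. For the inclusion, projection, pivotal and duality generators this is immediate from the definitions together with Remark \ref{specialistationL}. For the blue-blue crossings it reduces to matching the universal formula \eqref{box1} defining $\res$ with the explicit block decomposition of $\text{res}_{m,n}(\beta_{m+n})$ written down in Section \ref{parabolic_restriction}. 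Once the square commutes on generators, it commutes on all of $\Rig(\P)$; since $\ev^\L_{m,n}\circ\ev^\P$ vanishes on $\ker(\ev^\P)$ by construction and $j^*$ is faithful, $\widetilde{\ev}^\P_{m,n}$ also vanishes on $\ker(\ev^\P)$ and therefore descends to the desired $\varphi_{m,n}$-linear monoidal functor $\ev^\P_{m,n}\colon\RepqP\to\Rep_q\P$. Monoidality is immediate from the definition on generators.

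The main obstacle I anticipate is the bookkeeping for the crossings: one has to match the universal formulas defining $\res$ (together with the box morphisms introduced in \eqref{box1} and \eqref{box2}) with the block structure of the specialised $R$-matrix of $U_q(\mathfrak{gl}_{m+n})$, and propagate the scalars appearing under $\varphi_{m,n}$ (namely $q^{t_i}\mapsto q^{m}, q^{n}$ and $\delta_i\mapsto [m]_q, [n]_q$) through the various pivotal generators. None of these computations is difficult in isolation, but the combinatorics of cases to be checked is where most of the real proof will live.
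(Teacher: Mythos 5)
Your proof is correct and follows essentially the same route as the paper's: both hinge on the faithfulness of $j^*\colon\Rep_q\P\to\Rep_q\L$, which lets one realise $\Rep_q\P$ as a (non-full) subcategory of $\Rep_q\L$ and then lift the composite $\ev^\L_{m,n}\circ j^*_t$ through $j^*$. The paper compresses the generator-by-generator verification into "by construction", whereas you unpack it by first building $\widetilde{\ev}^\P_{m,n}$ on $\Rig(\P)$ and then checking the commuting square before descending; this is a more explicit presentation of the same argument rather than a different proof.
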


\begin{proof}
The restriction functor $j^*\colon\Rep_q\P\to\Rep_q\L$ is faithful so we may describe $\Rep_q\P$ as a subcategory of $\Rep_q\L$. By construction, the image of $$\RepqP\xrightarrow{j^*_t}\RepqH\xrightarrow{\text{ev}^\L_{m,n}}\Rep_q\L$$ lies in this subcategory, so it lifts to a monoidal functor $\text{ev}_{m,n}^\P$ as in the statement.
\end{proof}

We obtain a commutative diagram of functors  $$\begin{tikzcd}
\RepqP \arrow[r, "j^*_t"] \arrow[d, "{\text{ev}^\P_{m,n}}"'] & \RepqH \arrow[d, "{\text{ev}^\text{L}_{m,n}}"] \\
\Rep_q\P \arrow[r, "j^*"]              & \Rep_q\L,                
\end{tikzcd}$$
hence we may interpret again the functor $j^*_t$ as interpolating the functors $j^*$, for $m,n\in\mathbb{N}.$

\section{Universal parabolic restriction}\label{parabolic}

In this section, we construct the algebraic inputs that we will use later for defining our defect skein theory. In the first part of the section, we build a $(\RepqG,\RepqH)$-central algebra structure of $\RepqP$, which is a universal version of the one described in \ref{parabolic_restriction}. Next, we describe a centred $(\RepqH,\RepqP)$-bimodule structure on $\RepqH$.

\subsection{Central algebra structure}\label{central_algebra} We define functors HOMFLY versions $$\iota_t^*:\RepqG\to\RepqP\qquad \text{and} \qquad\pi_t^*:\RepqG\boxtimes\RepqG\to\RepqP$$ of the restriction functors $\Rep_q\GL_{m+n}\xrightarrow{\iota^*}\Rep_q\P$ and $\Rep_q\L\xrightarrow{\pi^*}\Rep_q\P$ from Section \ref{parabolic_restriction}. Let us introduce first some terminology that will be also useful later to extend the graphical calculus for rigid categories to framed surfaces. 

Let $S$ be a framed surface, i.e., a surface $S$ endowed with a trivialization $f=\left(v_p,w_p\right)_{p\in S}$ of its tangent bundle. Let $\alpha:I\to S$ be an immersed curve such that $\dot{\alpha}(0)$ and $\dot{\alpha}(1)$ are in the direction of $w_{\alpha(0)}$ and $w_{\alpha(1)}$, respectively. Consider the map $$u^f_\alpha\colon I\xrightarrow{\dot{\alpha}}\text{T}S\setminus\{0\}\xrightarrow{f} S\times(\R^2\setminus\{0\})\xrightarrow{\text{proj.}}\R^2\setminus\{0\}\xrightarrow{\frac{x}{||x||}}\mathbb{S}^1.$$ By definition, $u^f_\alpha(0)=\pm i$ and we can set $$u^f_\alpha(t)=\exp\left(-i\pi\left(\theta^f_\alpha(t)\mp\frac{1}{2}\right)\right),$$ with $\theta^f_\alpha(0)=0$. The conditions imposed on $\alpha$ imply that $\theta^f_\alpha(1)\in\mathbb{Z}$.

\begin{definition}
The \emph{rotation number} of $\alpha$ with respect to $f$ is the integer $$\rot^f(\alpha)\coloneqq\theta_\alpha^f(1)\in\mathbb{Z}.$$
\end{definition}

For the rest of this subsection, we fix $S=\R\times[0,1]$ with its canonical framing. From a given ribbon diagram $\Gamma$ in $\R\times[0,1],$ we construct a decorated planar diagram $\widehat{\Gamma}$ as follows. Any open strand of $\Gamma$ defines an immersed curve $\alpha:I\to\R\times[0,1]$. If $\alpha$ is oriented upwards at $\alpha(0)$, we decorate this endpoint with $n_{\alpha(1)}=0$; otherwise, we set $n_{\alpha(0)}=1$. A decoration for $\alpha(1)$ is then uniquely determined by setting $n_{\alpha(1)}=n_{\alpha(0)}+\rot^f(\alpha)$. Next, if $\alpha$ is a closed strand (i.e. a link component) with rotation number $r_\alpha=\rot^f(\alpha)$, we add $\left|\frac{r_\alpha}{2}\right|$ dots at $\alpha(0)$. Finally, replace every crossing by the corresponding coupon. For example: $$\Gamma\;=\;\MyFigure{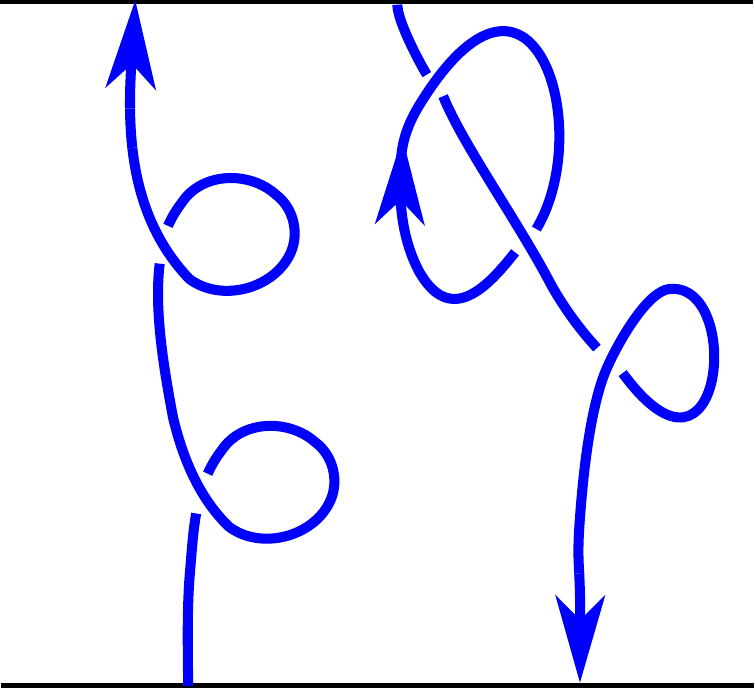}\qquad\rightsquigarrow\qquad\MyFigure{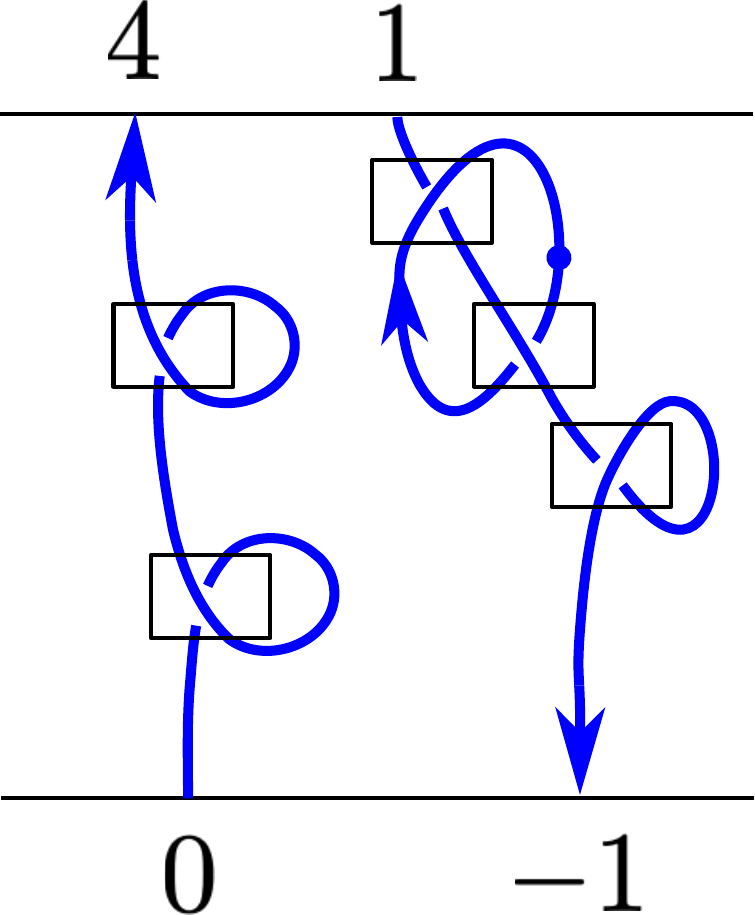}.$$ Precomposing and postcomposing again with the morphisms in \eqref{repqp4}--\eqref{repqp5}, we can modify the source and target of the planar graph obtained so that points oriented upwards (resp. downwards) are all decorated with $0$ (resp. $-1$).  Let $\hat{\Gamma}$ be the decorated planar graph thus obtained. For instance, for the ribbon diagram depicted above, we get $$\hat{\Gamma}=\MyFigure{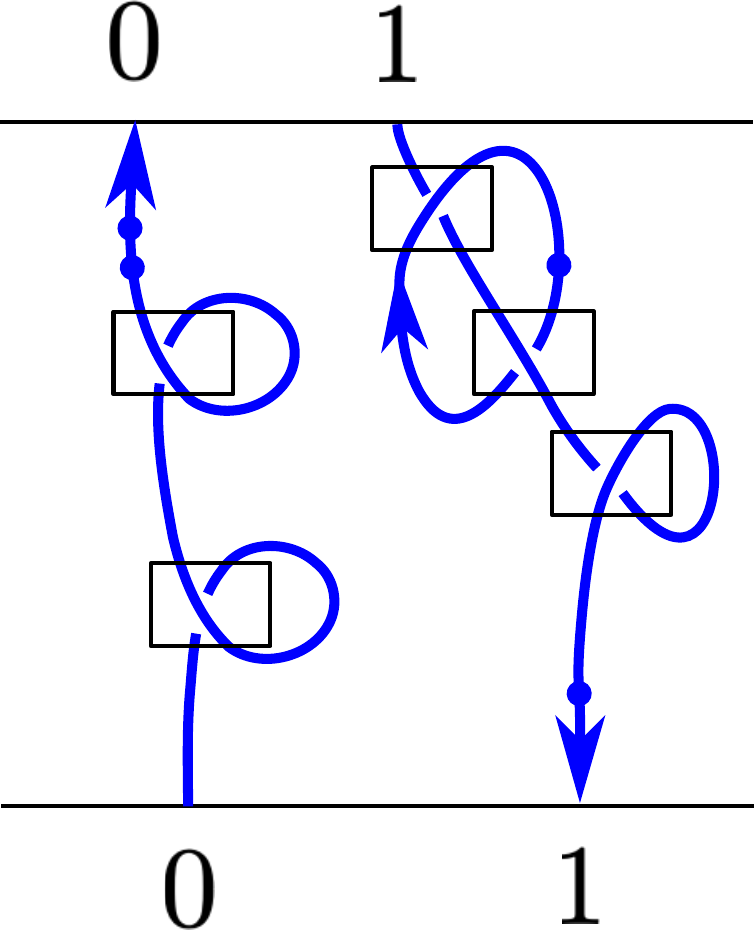}.$$ Roughly speaking, this procedure amounts to forget the braided and the pivotal structure of $\RepqG$, so that the planar graph $\widehat{\Gamma}$ represents the same morphism as $\Gamma$ but in the rigid category underlying the ribbon category $\RepqG$. 

Recall that we have a restriction functor $\iota^*\colon\Rep_q\GL_{m+n}\to\Rep_q\P.$ This functor is faithful, but it does not preserve the distinguished dualities. We reflect this fact diagrammatically by modifying the distinguished duality for $\upp$ as follows. Set $$\varphi_0\coloneqq\MyFigure{\blueid}\qquad\text{and}\qquad\varphi_1\coloneqq\MyFigure{drawings/rightiso.pdf}.$$ Given a sequence $\varepsilon=(\varepsilon_1,\ldots,\varepsilon_k)$ with $\varepsilon_i\in\{0,1\},$ we define 
\begin{equation}\label{dualiso}
\varphi_\varepsilon\coloneqq\varphi_{\varepsilon_1}\otimes\cdots\otimes\varphi_{\varepsilon_k}.
\end{equation}
 Finally, if $D:\varepsilon_1\to\varepsilon_2$ is a planar diagram between two such sequences, we define $$\iota^*_t(D)\coloneqq\varphi_{\varepsilon_2}\circ D\circ\varphi^{-1}_{\varepsilon_1}.$$ 

\begin{proposition}
The assignment $\upp\mapsto\textcolor{blue}{0}$, $\downn\mapsto\textcolor{blue}{1}$, $\textcolor{blue}{\Gamma}\mapsto\iota^*_t(\textcolor{blue}{\widehat{\Gamma}})$ yields a well-defined strict monoidal functor 
\begin{equation}\label{restriction_functorP}
\iota_t^*\colon\RepqG\to\RepqP.
\end{equation}
 Moreover, $\res=j_t^*\circ\iota_t^*,$ where $j_t^*$ is the functor in \eqref{inclusion}.
\end{proposition}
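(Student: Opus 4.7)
The plan is to exploit the faithfulness of $j_t^*\colon\RepqP\to\RepqH$, which holds by the very definition of $\RepqP$ as the quotient of $\text{Rig}(\P)$ by the kernel of $\ev^\P$. I will verify both assertions of the proposition simultaneously by establishing that $j_t^*\circ\iota_t^* = \res$ on the generators of $\widetilde{\RepqG}$, and then deducing the well-definedness of $\iota_t^*$ from the already-known well-definedness of $\res$ (Theorem \ref{restrictiontheorem}).

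First, I would check on each of the four generators of Proposition \ref{HOMFLYgenrel} that $\ev^\P\circ\iota_t^*$ agrees with $\res$. For the positive crossing $\MyFigure{\blueposx}$, the source and target both have integer labels $(0,0)$, so the conjugation by $\varphi_\varepsilon$ is trivial; the coupon in \eqref{repqp7} was defined so that $\ev^\P$ evaluates it to the ``box'' expression \eqref{box1}, matching the image of the crossing under $\res$. The positive twist coupon matches \eqref{box2} in the same way. For the evaluation morphism $\MyFigure{\blueevv}$, the source word is $(\upp,\downn)$, and the hat construction together with the conjugation by $\varphi_\varepsilon$ introduces precisely the correction $\varphi^r_\upp$ from \eqref{box3}; its $\ev^\P$-image is exactly $q^{t_2}\MyFigure{\ogip}+q^{-t_1}\MyFigure{\orip}$, and composing with the plain orange evaluation recovers the image of $\MyFigure{\blueevv}$ under $\res$ as given in Theorem \ref{restrictiontheorem}. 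The coevaluation is handled by a symmetric calculation.

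Second, the well-definedness of $\iota_t^*$ on $\widetilde{\RepqG}$ is then a formal consequence. If two ribbon diagrams $\Gamma_1, \Gamma_2$ represent the same morphism in $\widetilde{\RepqG}$, then $\res(\Gamma_1) = \res(\Gamma_2)$ in $\RepqH$ by Theorem \ref{restrictiontheorem}; by the first step and functoriality, $j_t^*(\iota_t^*(\widehat{\Gamma_i})) = \res(\Gamma_i)$, so faithfulness of $j_t^*$ forces $\iota_t^*(\widehat{\Gamma_1}) = \iota_t^*(\widehat{\Gamma_2})$ in $\RepqP$. The HOMFLY skein, twist and dimension relations \eqref{skein}--\eqref{dim} are sent to zero by the same argument, since they are sent to zero by $\res$. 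Strict monoidality follows from the tensor-wise formula \eqref{dualiso} for $\varphi_\varepsilon$ and the locality of the hat construction. Extension to the Cauchy completion $\RepqG$ is then routine, and the identity $\res = j_t^*\circ\iota_t^*$ holds on all of $\RepqG$ by functoriality.

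The main technical burden, and the potential obstacle, lies in the generator-by-generator verification of the first step. It requires careful bookkeeping of the integer decorations introduced on the endpoints of the (co)evaluation morphisms by the rotation-number conventions, in order to ensure that the conjugation by $\varphi_\varepsilon$ produces exactly the duality-correction morphism $\varphi^r_\upp$ (and its dual) appearing in $\res$ through the general identification \eqref{Fduals}. Once this matching is verified, the rest of the proof is a formal consequence of the faithfulness of $j_t^*$ and the existing properties of $\res$.
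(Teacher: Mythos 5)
Your overall strategy matches the paper's: verify the comparison with $\res$ on generators, then exploit the faithfulness of $j_t^*$ to deduce well-definedness from the well-definedness of $\res$. However, there is a genuine gap in the logical order, concentrated in the phrase ``by the first step and functoriality, $j_t^*(\iota_t^*(\widehat{\Gamma_i})) = \res(\Gamma_i)$''. The assignment $\Gamma\mapsto\iota_t^*(\widehat{\Gamma})$ is not defined by extending from generators; it is a \emph{global} operation on a ribbon diagram $\Gamma$ (assign integer labels by rotation numbers, replace crossings by coupons, then conjugate the whole thing by $\varphi_\varepsilon$). To pass from the generator-by-generator check to arbitrary diagrams, you would need to know already that this global assignment respects composition of diagrams, i.e.\ that $\iota_t^*(\widehat{\Gamma_1\circ\Gamma_2})=\iota_t^*(\widehat{\Gamma}_1)\circ\iota_t^*(\widehat{\Gamma}_2)$. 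That is precisely the functoriality of $\iota_t^*$ you are trying to prove, so invoking ``functoriality'' at that step is circular.

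The point is not a mere formality. When you compose $\widehat{\Gamma}_1$ with $\widehat{\Gamma}_2$, each piece has been normalized by $\varphi_\varepsilon$ so its boundary labels are $0$ or $-1$, whereas the hat of the composite $\widehat{\Gamma_1\circ\Gamma_2}$ carries the unnormalized labels coming from rotation numbers at the interface. The two resulting planar graphs differ by dot/inverse-dot pairs (the pivotal morphisms of \eqref{repqp3}--\eqref{repqp5}) sitting along the seam, and one must check that these can be slid along the strands into cancelling pairs. The paper's proof opens with exactly this observation: the morphisms $\eta^r_\upp$, $\eta^l_\upp$, $\varphi^r_\upp$ commute with all endomorphisms of $\upp$ in $\RepqH$, so the dots slide past coupons and cancel, establishing compatibility with composition \emph{before} the faithfulness argument is deployed. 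Without that step, the chain ``identity on generators $\Rightarrow$ identity on all diagrams $\Rightarrow$ well-definedness'' does not go through. (The monoidal compatibility, by contrast, is indeed immediate since horizontal juxtaposition does not change rotation numbers, so your remark about \eqref{dualiso} is fine for that part.) Supplying the sliding-dots lemma would close the gap and bring your argument in line with the paper's.
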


\begin{proof}
First note that the isomorphisms $\eta^r_\upp$, $\eta^l_\upp$ and $\varphi^r_\upp$ commute with every endomorphism of $\upp$ in $\RepqH$. This implies that $\iota_t^*(\widehat{\Gamma_1\circ\Gamma_2})=\iota_t^*(\widehat{\Gamma}_1)\circ\iota_t^*(\widehat{\Gamma}_2)$ for every pair of composable diagrams $\Gamma_1,\Gamma_2$. Indeed, we can slide the dots appearing between $\Gamma_1$ and $\Gamma_2$ in $\iota_t^*(\widehat{\Gamma}_1)\circ\iota_t^*(\widehat{\Gamma}_2)$ so that they all lie at the end of the corresponding strand. By replacing each  dot - inverse dot pair appearing this way by the identity, we get exactly $\iota_t^*(\widehat{\Gamma_1\circ\Gamma_2})$. For instance, $$\iota^*_t(\widehat{\Gamma}_1)\circ\iota^*_t(\widehat{\Gamma}_2)=\MyFigure{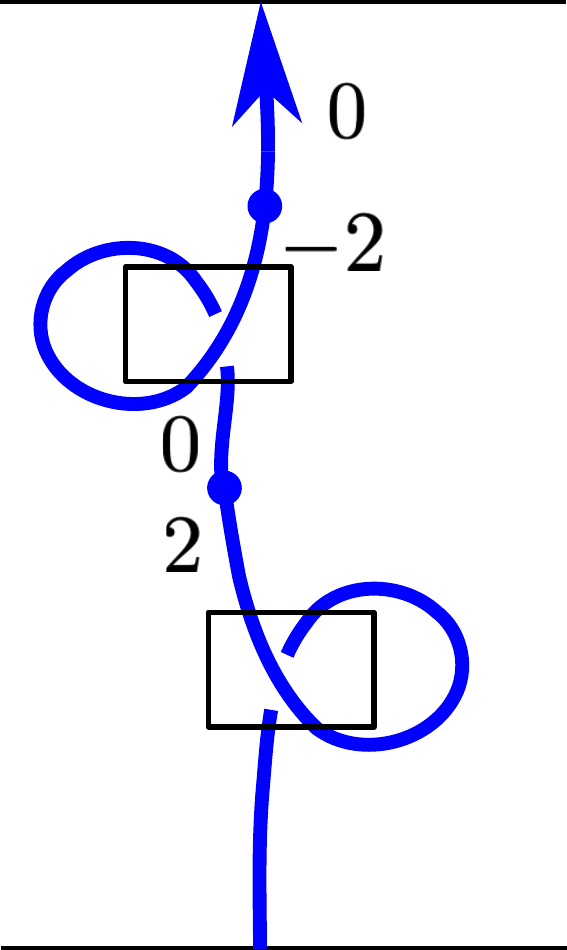}\quad=\quad\MyFigure{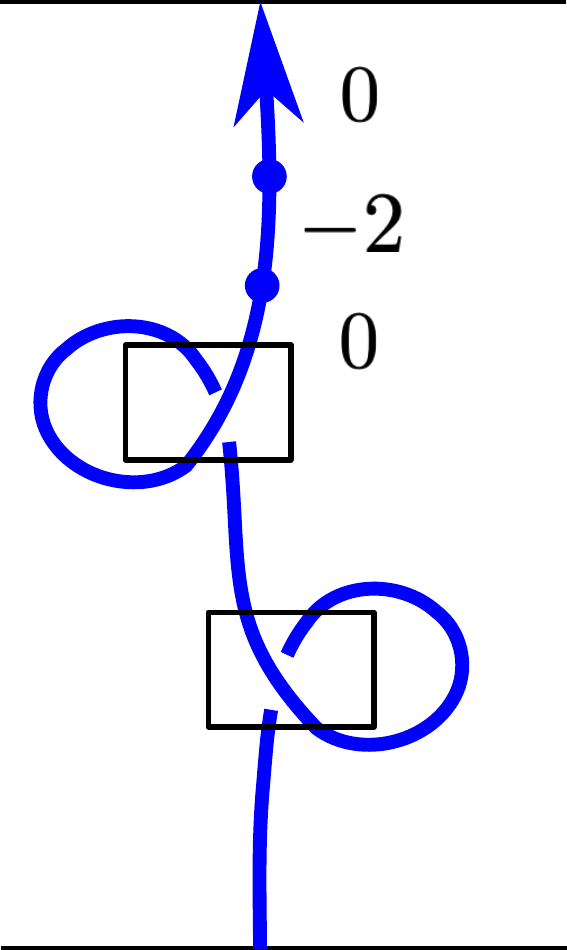}\quad=\quad\MyFigure{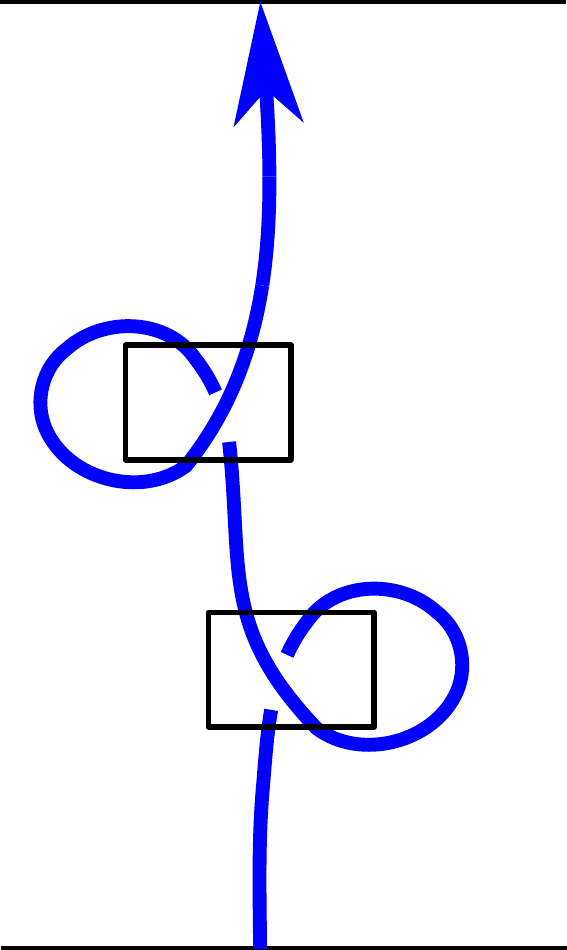}=\iota^*_t(\widehat{\Gamma_1\circ\Gamma_2}).$$
Moreover, we trivially have that $\iota^*_t(\widehat{\Gamma_1\otimes\Gamma_2})=\iota_t^*(\widehat{\Gamma}_1)\otimes\iota_t^*(\widehat{\Gamma}_2),$ hence the functor is strictly monoidal.

On the other hand, it is straightforward to check that, if $\Gamma$ is one of the generators of $\RepqG$ (cf. Proposition \ref{HOMFLYgenrel}), then $j_t^*(\Gamma)\circ\iota_t^*(\widehat{\Gamma})=\res(\Gamma)$. The compatibility with the composition and the tensor product implies then that the same is true for any diagram $\Gamma$. In particular, if $\Gamma_1$ and $\Gamma_2$ are two diagrams representing the same morphism in $\RepqG$, then $$j_t^*\circ\iota_t^*(\widehat{\Gamma}_1)=\res(\Gamma_1)=\res(\Gamma_2)=j_t^*\circ\iota_t^*(\widehat{\Gamma}_2).$$  Since $j_t^*$ is faithful, we get that $\iota_t^*(\widehat{\Gamma}_1)=\iota_t^*(\widehat{\Gamma}_2),$ which proves that the functor is well-defined.
\end{proof}

\begin{proposition}
The assignment $$(\gup,\emptyset)\mapsto\textcolor{mygreen}{0},\quad(\gdown,\emptyset)\mapsto\textcolor{mygreen}{1},\quad\textcolor{mygreen}{\Gamma}\mapsto\textcolor{mygreen}{\widehat{\Gamma}},$$ $$(\emptyset,\rup)\mapsto\textcolor{myred}{0},\quad(\emptyset,\rdown)\mapsto\textcolor{myred}{1},\quad\textcolor{myred}{\Gamma}\mapsto\textcolor{myred}{\widehat{\Gamma}},$$ yields a well defined strict monoidal functor 
\begin{equation}\label{restriction_functor_HtoP}
\pi^*_t\colon\RepqH\to\RepqP.
\end{equation}
\end{proposition}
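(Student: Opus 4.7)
The plan is to adapt the argument given for $\iota^*_t$ in the preceding proposition, with simplifications specific to the monochromatic green and red sectors. Because the pivotal-type coupons of \eqref{repqp3} are sent to identities under $\ev^\P$, the pivotal structure on green (resp.\ red) strands in $\RepqP$ coincides with the trivial pivotal structure on the corresponding copy of $\RepqG$ inside $\RepqH$. Hence no analogue of the conjugation $\varphi_\varepsilon$ from \eqref{dualiso} is required, and one simply sets $\pi^*_t(\Gamma) \coloneqq \widehat{\Gamma}$ with $\widehat{\Gamma}$ constructed exactly as in the $\iota^*_t$ case. Since $\RepqH$ is the Cauchy completion of $\RepqG \boxtimes_{\mathbb{C}(q)} \RepqG$, it is enough to define the functor on each tensor factor and extend by strict monoidality; compatibility with composition follows verbatim from the $\iota^*_t$ proof (rotation-number dots at the junction between stacked diagrams represent identities and cancel in pairs), and compatibility with the tensor product is tautological.

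For well-definedness the plan is to exploit the faithfulness of $j_t^*$ established in Section \ref{repqpsection}. A direct inspection of the evaluation rules for $\ev^\P$ shows that, on purely green (resp.\ red) diagrams, the composition $j_t^* \circ \pi^*_t$ recovers the original morphism in $\RepqH$: the one-colored crossing coupons of \eqref{repqp7} are, by definition of $\ev^\P$, replaced by the plain underlying crossings in $\RepqH$, without the mixed-color correction terms appearing in \eqref{box1}--\eqref{box2} for blue crossings. Consequently, if $\Gamma_1$ and $\Gamma_2$ are green (resp.\ red) ribbon diagrams representing the same morphism in $\RepqH$, then $j_t^*(\widehat{\Gamma}_1) = \Gamma_1 = \Gamma_2 = j_t^*(\widehat{\Gamma}_2)$ in $\RepqH$, and the faithfulness of $j_t^*$ yields $\widehat{\Gamma}_1 = \widehat{\Gamma}_2$ in $\RepqP$.

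The only mild obstacle is verifying that the evaluation of one-colored crossing coupons produces the plain underlying crossing rather than a box-type correction as in the blue case. This is precisely what the evaluation rules for $\ev^\P$ stipulate, and it is consistent with the skein structure: any putative mixed-color counter-term of the form $\MyFigure{\orgipip}$ would vanish once both strands share the same color, owing to the orthogonality relations $\MyFigure{\grip}=0=\MyFigure{\rgip}$ recorded in \eqref{RepqHtrel2}. Thus the proof reduces to a direct simplification of the one for $\iota^*_t$, with no new technical content.
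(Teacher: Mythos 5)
Your proof is correct and follows essentially the same route as the paper: functoriality and strict monoidality are inherited from the $\iota^*_t$ argument (with the simplification that the pivotal structures on the green and red sectors are trivial, so no $\varphi_\varepsilon$-conjugation is needed), and well-definedness reduces to the observation that $j_t^*\circ\pi^*_t=\id_{\RepqH}$ combined with the faithfulness of $j_t^*$. The extra remark about the orthogonality relations $\MyFigure{\grip}=0=\MyFigure{\rgip}$ is a reasonable consistency check but not strictly needed, since the evaluation rule for one-coloured crossings already replaces the coupon by the plain crossing with no mixed-colour term.
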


\begin{proof}
The assignment is functorial and strictly monoidal by the same arguments as in the previous proof. The fact that it is well-defined follows again from the fact that $j_t^*$ is faithful. Indeed, we have that $j_t^*\circ\pi_t^*=\id_{\RepqH}$. So, if $\Gamma_1$ represent the same morphism $\Gamma_2$ in $\RepqH$, then the equality $$j^*_t\circ\pi_t^*(\Gamma_1)=\Gamma_1=\Gamma_2=j^*_t\circ\pi_t^*(\Gamma_2)$$ implies that $\pi_t^*(\Gamma_1)=\pi^*_t(\Gamma_2).$
\end{proof}

To sum up, we have four functors
\begin{equation}\label{allthefunctors}
\begin{tikzcd}
                                                             & \RepqP \arrow[rd, "j_t^*"'] &                                                    \\
\RepqG \arrow[ru, "{\iota_t^*}"] \arrow[rr, "\res"] &                             & \RepqH \arrow[lu, "\pi^*_t"', bend right]
\end{tikzcd}, 
\end{equation}
such that the inner triangle is commutative, but not the outer one. This is a diagrammatic version of the situation described in Section \ref{parabolic_restriction}. We will see now that the functors $\iota_t^*$ and $\pi^*_t$ induce a $(\RepqG, \RepqH)$-central algebra structure on $\RepqP$. By Remark \ref{naturality} and the relations defining $\RepqH$, the families of morphisms given by  $$\MyFigure{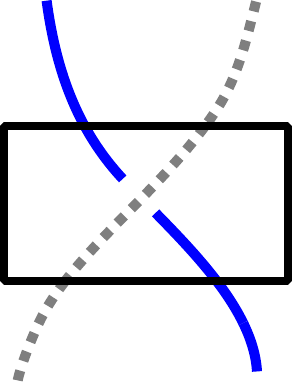},\qquad\MyFigure{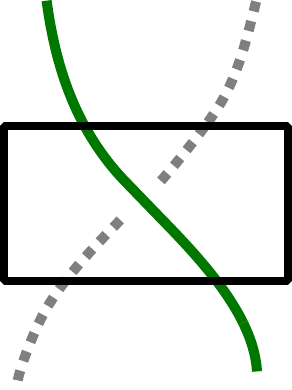}\qquad\text{and}\qquad\MyFigure{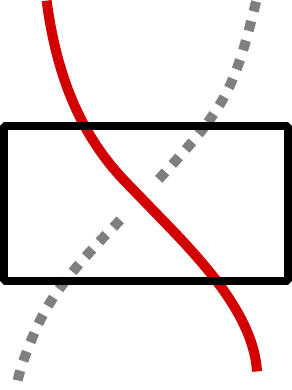},$$ where the dotted strands can be replaced by any colour, are natural isomorphisms in $\RepqP.$ Hence, they define a monoidal functor $$Z\colon\RepqP\to Z(\RepqP).$$ Let
\begin{equation}
\begin{array}{cccc}
F\coloneqq\iota^*_t\boxtimes\pi_t^*: & \RepqG\boxtimes\RepqH & \to & \RepqP,\\
& (\textcolor{blue}{\boldsymbol{u}},\textcolor{mygreen}{\boldsymbol{v}},\textcolor{myred}{\boldsymbol{w}}) & \mapsto & \textcolor{blue}{\boldsymbol{u}}\textcolor{mygreen}{\boldsymbol{v}}\textcolor{myred}{\boldsymbol{w}}\\
& (\textcolor{blue}{\Gamma_1},\textcolor{mygreen}{\Gamma_2},\textcolor{myred}{\Gamma_3}) & \mapsto & \iota^*_t(\textcolor{blue}{\widehat{\Gamma}_1})\textcolor{mygreen}{\widehat{\Gamma}_2}\textcolor{myred}{\widehat{\Gamma}_3}.
\end{array}
\end{equation}

\begin{theorem}\label{central_functor}
$F$ lifts to a braided monoidal functor $$(Z\circ F,J)\colon \RepqG\boxtimes\overline{\RepqH} \to Z(\RepqP),$$ where $\overline{(-)}$ stands for the opposite braided category. In particular, $\RepqP$ is a $(\RepqG,\RepqH)$-central algebra.
\end{theorem}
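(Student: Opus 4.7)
The goal is to equip the composition $Z \circ F$ with a coherence isomorphism $J_{A,B}\colon (Z \circ F)(A) \otimes (Z \circ F)(B) \to (Z \circ F)(A \otimes B)$ making it monoidal, and then to verify that the resulting monoidal functor is braided with respect to the braiding $c^{G} \boxtimes (c^{H})^{-1}$ on the source. Both $\iota_t^*$ and $\pi_t^*$ are strict monoidal on objects, but the tensor product in $\RepqG \boxtimes \RepqH$ amalgamates the blue, green and red components while the tensor product in $\RepqP$ simply concatenates colored strands. Hence for $A = (u_1, v_1, w_1)$ and $B = (u_2, v_2, w_2)$, the target $F(A \otimes B) = u_1 u_2 v_1 v_2 w_1 w_2$ differs from $F(A) \otimes F(B) = u_1 v_1 w_1 u_2 v_2 w_2$ only by a permutation of colored strands.

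I will define $J_{A, B}$ as a composition of elementary half-crossings that moves $u_2$ to the left past $v_1 w_1$ and $v_2$ to the left past $w_1$, each elementary half-crossing being one of the coupons from \eqref{repqp7} of the appropriate color and orientation. The naturality of $J_{A,B}$ in $A$ and $B$ is exactly the content of the functor $Z\colon\RepqP\to Z(\RepqP)$ introduced just before the theorem, which encodes the naturality of colored half-crossings with respect to every morphism of $\RepqP$. Compatibility of $J$ with associators and units is immediate from strict monoidality of the factors.

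The braided compatibility is the substantive part. I will split the verification according to the colors of the two objects involved. For two blue objects $X,Y\in\RepqG$, the positive blue-blue crossing in $\RepqG$ is sent by $\iota_t^*$ to the corresponding blue-blue coupon of \eqref{repqp7}, which coincides with the elementary blue-blue half-braiding used to build $Z$. For two pure green (or red) objects in $\RepqH$, the \emph{inverse} green-green (resp. red-red) crossing is sent by $\pi_t^*$ to the coupon that matches the elementary green (resp. red) half-braiding; this is why the source is $\RepqG \boxtimes \overline{\RepqH}$ rather than $\RepqG \boxtimes \RepqH$. For a blue object together with a green or red object, the braiding in the source is essentially the interchange in the $\boxtimes$ product, whereas on the target it is realised by a mixed half-crossing; the two sides agree by the very definition of $J$.

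The main obstacle is verifying that the pure green and red braidings match via the \emph{opposite} braiding on $\RepqH$: one must carefully compare the crossing coupons produced by $\pi_t^*$ (governed by \eqref{box1}--\eqref{box4}) with the elementary half-crossings defining $Z$. These identities reduce to Reidemeister-III-type relations inherited from $\RepqH$, which are precisely the equalities (and failures of equality) listed in Remark \ref{naturality}. Once these are in place, the hexagon axiom for $(Z \circ F, J)$ follows diagrammatically by sliding strands and repeatedly applying the same relations, which concludes both the braided monoidality and, by the definition of \cite{BJS, dgno}, the central algebra structure on $\RepqP$.
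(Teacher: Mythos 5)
Your plan matches the paper's (very terse) proof in structure and substance: the paper defines $J$ by a single picture consisting of coloured half-crossing coupons moving $u_2$ past $v_1w_1$ and $v_2$ past $w_1$, observes that this gives a monoidal structure on $Z\circ F$, and dismisses the braided compatibility as ``an easy computation.'' Your write-up fills in what that computation is — a colour-by-colour unwinding — and correctly identifies the key convention point (the green and red half-crossings in the definition of $Z$ realise the \emph{inverse} braiding of $\RepqH$, forcing the source to be $\overline{\RepqH}$), as well as where the naturality of the half-braidings in $Z(\RepqP)$ actually lives (Remark~\ref{naturality}).

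One small misattribution to flag: you write that the crossing coupons produced by $\pi_t^*$ are ``governed by~\eqref{box1}--\eqref{box4}.'' Those box formulas are the definition of $\res$ and hence govern $j_t^*$ applied to the \emph{blue} half-crossings of $\iota_t^*$; the functor $\pi_t^*$ simply places the green and red crossings of $\RepqH$ into $\RepqP$ as the corresponding coupons of~\eqref{repqp7}, with no box expansion. The box formulas and Remark~\ref{naturality} enter instead in showing that the blue half-braiding is a \emph{natural} isomorphism on all of $\RepqP$ (including with respect to green and red morphisms), which is the content needed to know $Z$ is well-defined; once $Z$ is established, the pure green/red braiding check is a direct definition-unwinding rather than the main obstacle.

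Binary verdict: essentially correct, same approach as the paper.
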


\begin{proof}
The family of isomorphisms $$\MyFigure{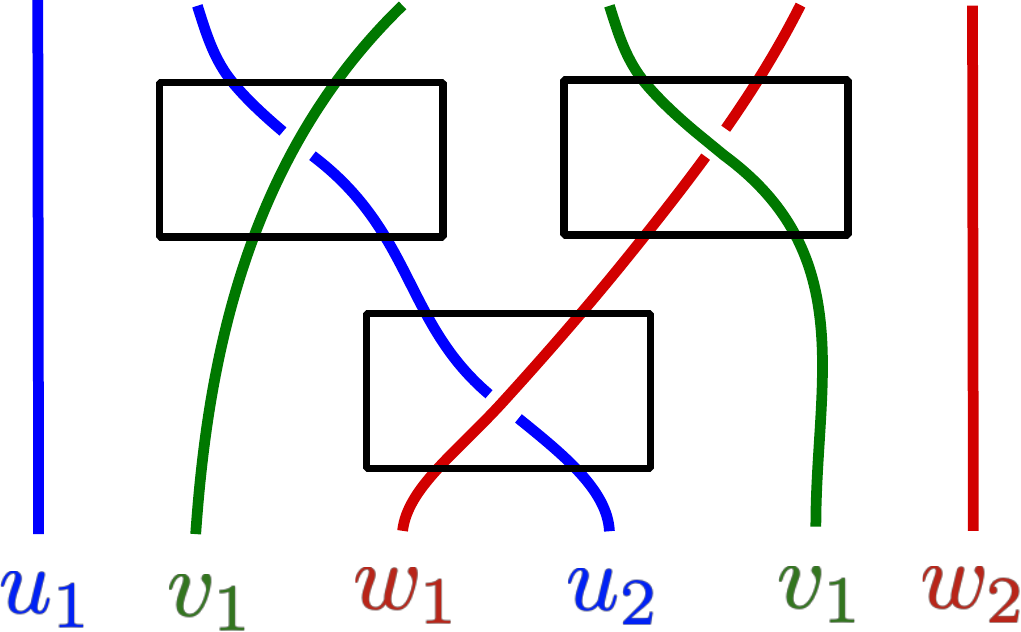}:\; F(\textcolor{blue}{\boldsymbol{u_1}},\textcolor{mygreen}{\boldsymbol{v_1}},\textcolor{myred}{\boldsymbol{w_1}})\otimes F(\textcolor{blue}{\boldsymbol{u_2}},\textcolor{mygreen}{\boldsymbol{v_2}},\textcolor{myred}{\boldsymbol{w_2}})\;\to\;F(\textcolor{blue}{\boldsymbol{u_1u_2}},\textcolor{mygreen}{\boldsymbol{v_1v_2}},\textcolor{myred}{\boldsymbol{w_1w_2}})$$ defines a monoidal structure on $Z\circ F$. Checking that $(Z\circ F,J)$ is braided is an easy computation.
\end{proof}

Note that, for an object $\textcolor{blue}{\boldsymbol{u}}\textcolor{mygreen}{\boldsymbol{v}}\textcolor{myred}{\boldsymbol{w}}$, the half-braiding is given by
$$\MyFigure{drawings/halfbraiding}:\; -\otimes\textcolor{blue}{\boldsymbol{u}}\textcolor{mygreen}{\boldsymbol{v}}\textcolor{myred}{\boldsymbol{w}}\;\to\;\textcolor{blue}{\boldsymbol{u}}\textcolor{mygreen}{\boldsymbol{v}}\textcolor{myred}{\boldsymbol{w}}\otimes -.$$ 

\subsection{The centred bimodule $\RepqH$}\label{centred_bimodule} We will now briefly describe additional structure induced by the functors constructed in the previous section, which will serve as motivation for the topological construction we will give later in this paper.  In addition to the $(\RepqG,\RepqH)$-central structure on $\RepqP$ defined in the previous section, we will also consider the following three central algebras:
\begin{itemize}
\item $\RepqG$ is a $(\Vect_\Bbbk,\RepqG)$-central algebra via the central functor $$\Vect_\Bbbk\boxtimes\RepqG\simeq\RepqG\xrightarrow{\text{id}}\RepqG,$$
\item $\RepqH$ is a $(\Vect_\Bbbk,\RepqH)$-central algebra with central functor $$\Vect_\Bbbk\boxtimes\RepqH\simeq\RepqH\xrightarrow{\text{id}}\RepqH,$$
\item and finally $\RepqH$ is also a $(\RepqH,\RepqH)$-central algebra with central structure $$\RepqH\boxtimes\RepqH\xrightarrow{\text{id}\otimes\text{id}}\RepqH.$$
\end{itemize}
These four algebras define $1$-morphisms in the Morita $4$-category $\textsc{BrTens}$ studied in \cite{BJS}. Composing $\RepqP$ with $\RepqG$, we obtain a central functor $$\Vect_\Bbbk\boxtimes\RepqH\to\RepqG\mathop{\boxtimes}\limits_{\RepqG}\RepqP\simeq\RepqP,$$ where the action of $\RepqH$ is that induced by the functor $\pi^*_t.$ Similarly, composing the two central structures on $\RepqH$, we have another central functor $$\Vect_\Bbbk\boxtimes\RepqH\to\RepqH\mathop{\boxtimes}\limits_{\RepqH}\RepqH\simeq\RepqH,$$ where $\RepqH$ acts on itself via the identity functor. On the other hand, note that $\RepqH$ is a ($\RepqP,\RepqH)$-bimodule via the functor $$\RepqP\boxtimes\RepqH\xrightarrow{j_t^*\otimes\text{id}}\RepqH$$ and it is straightforward to check that the braiding induces a $(\RepqP,\RepqH)$-centred structure (see \cite[Section 3]{BJS} for the definitions). Centred structures are the $2$-morphisms in $\textsc{BrTens}$, so we have the following diagram in $\textsc{BrTens}$: $$\begin{tikzcd}[column sep = large, row sep = large]
\Vect_\Bbbk \arrow[r, "\RepqG"] \arrow[d, "\RepqH"'] & \RepqG \arrow[d, "\RepqP"] \arrow[ld, "\RepqH" description, Rightarrow] \\
\RepqH \arrow[r, "\RepqH"]                           & \RepqH.                                                           
\end{tikzcd}$$ In the language of factorisation algebras (see \cite[Section 3]{BJS} and the figures therein), these structures are governed by embedding disks in the following stratified space:

\begin{figure}[H]
\centering
\includegraphics[scale=0.3]{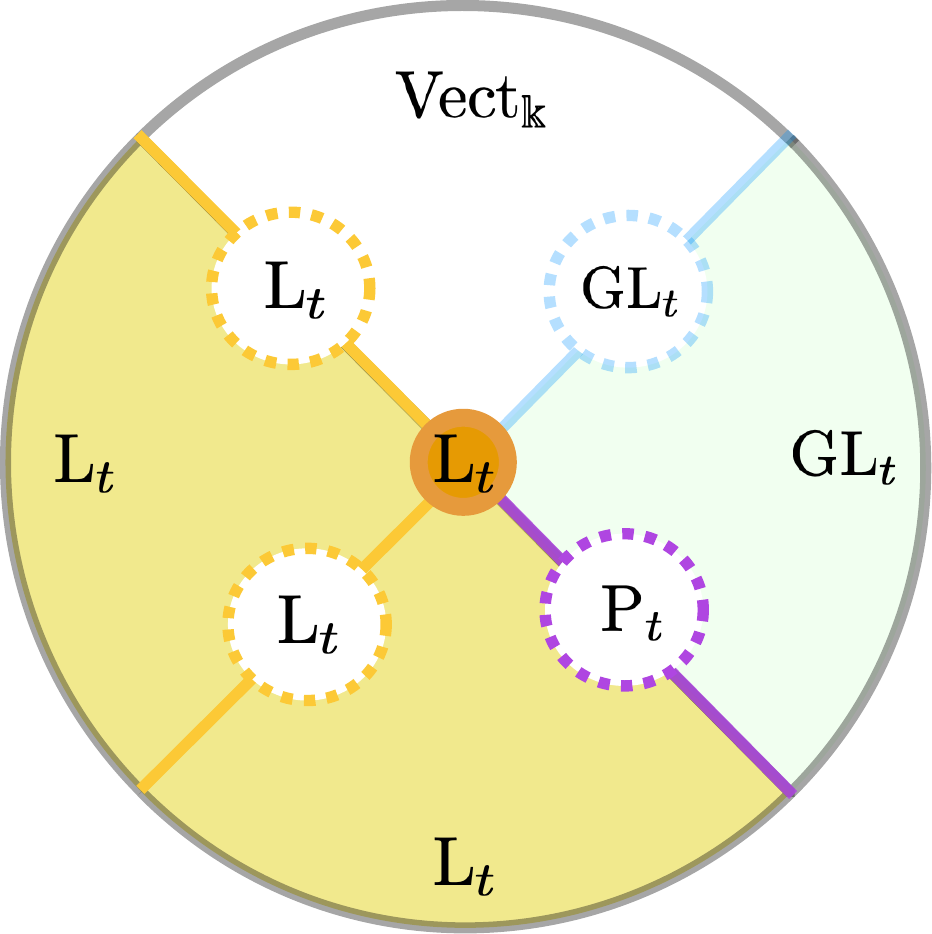}
\caption{Centred $(\RepqP,\RepqH)$-bimodule structure of $\RepqH$ as a factorisation algebra.}
\end{figure}

\section{Defect planar theories} \label{planar_theories}

The graphical calculus for rigid categories extends to surfaces with a chosen framing, producing 2-dimensional framed TFTs from a rigid monoidal category’s algebraic data. The $(\RepqH,\RepqP)$-module structure on $\RepqH$ produces a morphism between the theories associated with $\RepqH$ and $\RepqP$, that we will describe in this section using line defects on surfaces.

\subsection{One-coloured theories}\label{TFT} We describe here the planar theory induced by $\RepqP$. A similar description holds for the rigid category underlying $\RepqH.$

\begin{definition}
Let $M$ be a $d$-dimensional oriented manifold, and let $n \geq d$. An \emph{$n$-dimensional framing} of $M$ is defined as a homotopy class of orientation-preserving isomorphisms $f: TM \oplus \mathbb{R}^{n-d} \to M \times \mathbb{R}^n$ between vector bundles over $M$. When $n = d$, we refer to this as a \emph{framing}.
\end{definition}

Note that if $M=I$ or $M=\mathbb{S}^1$, each choice of orientation admits exactly one $2$-framing. We write $I$, $\mathbb{S}^1$ for the standard positive orientation and $\overline{I}$, $\overline{\mathbb{S}^1}$ for the negative one. 

\begin{definition}
A \emph{marking} on an oriented surface $S$ is a (possibly empty) collection $\mathcal{B}$ of orientation-preserving embeddings $C\hookrightarrow\partial S,$ with $C$ a $1$-dimensional connected oriented manifold. Every marking has a splitting $\mathcal{B}=\mathcal{B}_+\sqcup\mathcal{B}_-$ into positively and negatively oriented submanifolds.
\end{definition}

\begin{definition}
\label{rectilinear_embedding} A \emph{rectilinear embedding} is an embedding $M\hookrightarrow N$ between framed manifolds which preserves the framing up to rescaling in each direction.
\end{definition}

Let $(S,f,\mathcal{B})$ be a marked framed surface. We consider the following class of planar graphs $\Gamma$ embedded in $S$ (see Figure \ref{fig:decorated_planar_graph}):
\begin{itemize}
\item edges are coloured with $\bsquare$, $\gsquare$ or $\redsquare$;
\item the endpoints $\alpha(0)$ and $\alpha(1)$ of every edge $\alpha$ are attached to either a coupon or one of the marked boundary components. Moreover, they are labelled with integers $n_{\alpha(0)}$ and $n_{\alpha(1)}$ such that $n_{\alpha(1)}=n_{\alpha(0)}+\rot^f(\alpha)$;
\item coupons are rectilinearly embedded in the surface and decorated with morphisms of $\RepqP.$
\end{itemize}

\begin{figure}[t]
\centering
\includegraphics[scale=0.8]{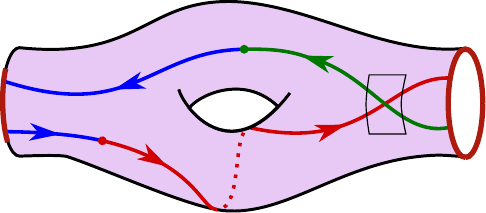}
\caption{Decorated planar graph on a framed surface (we omit the integer labels for simplicity).}
\label{fig:decorated_planar_graph}
\end{figure}

Set $\mathcal{B_+}=\{C_i\}_{i=1,\ldots,m}$ and $\mathcal{B}_-=\{C'_j\}_{j=1,\ldots,n}.$ The intersection of $\Gamma$ with the marking determines a family of configurations of points $(a_{C_1},\ldots,a_{C_m},b_{C'_1},\ldots,b_{C'_n})$ that we refer to as \emph{boundary conditions}. We denote by $\mathcal{Z}^{\P_t}(S,f,\mathcal{B})(a_{C_1},\ldots,a_{C_m},b_{C'_1},\ldots,b_{C'_n})$ the $(\Bbbk\otimes_{\mathbb{C}(q)}\Bbbk)$-linear space generated by planar graphs with given boundary conditions, modulo the following relations:
\begin{itemize}
\item \underline{rectilinear isotopies}: these are isotopies of the surface $S$ fixing $\partial S$ such that coupons remain parallel to the framing throughout the isotopy;
\item \underline{framed skein relations}: $\sum_i\lambda_i(\varphi,\Gamma_i)\sim 0$ if there exists a rectilinear embedding $\iota:[0,1]^2\hookrightarrow S$ such that $\sum_i\lambda_i\iota^{-1}\left(\Gamma\cap\iota([0,1])\right)=0$ in $\RepqP$.
\end{itemize}

As a particular case, we have:
\begin{definition}
Let $(\mathbb{A},f_{\text{rad}})$ be the annulus endowed with its radial framing. The \emph{cylinder category} $\text{Cyl}(\RepqP)$ is the category with:
\begin{itemize}
\item \underline{objects}: finite configurations of coloured directed points on the circle $\mathbb{S}^1$ decorated by integers compatibly with the orientation;
\item \underline{morphisms}: $(\Bbbk\otimes_{\mathbb{C}(q)}\Bbbk)$-linear combinations of planar diagrams on $(\mathbb{A},f_\text{rad}),$ modulo rectilinear isotopy and framed skein relations.
\end{itemize}
Composition is given by inserting one copy of $\mathbb{A}$ into another one.
\end{definition}

We set $$\mathcal{Z}^{\P_t}(I)=\RepqP\quad\text{and}\quad\mathcal{Z}^{\P_t}(\mathbb{S}^1)=\text{Cyl}(\RepqP),$$ and negatively oriented $1$-manifolds are assigned the opposite categories. If $C\in\mathcal{B}_+,$ then $\mathcal{Z}^{\P_t}(C)$ acts on $\mathcal{Z}^{\P_t}(S,f,\mathcal{B})$ by gluing rectangles/cylinders. Namely, every configuration of points on $C$ determines an object $b$ of $\mathcal{Z}^{\P_t}(C)$. If $\Gamma\colon a\to b$ is a morphism in $\mathcal{Z}^{\P_t}(C)$, then gluing $C\times I$ along the marking induces a linear map $$\mathcal{Z}^{\P_t}(S,f,\mathcal{B})(\Gamma)\colon\mathcal{Z}^{\P_t}(S,f,\mathcal{B})(-,b,-)\to\mathcal{Z}^{\P_t}(S,f,\mathcal{B})(-,a,-).$$ The analogue holds for negative orientations, so we obtain a functor $$\mathcal{Z}^{\P_t}(S,f,\mathcal{B}):\left(\bigboxtimes_{C\in\mathcal{B}_+}\mathcal{Z}^{\P_t}(C)\right)\boxtimes\left(\bigboxtimes_{\overline{C}\in\mathcal{B}_-}\mathcal{Z}^{\P_t}(C)\right)^\text{op}\to\Vect.$$

\begin{lemma}
Let $(S,f,\mathcal{B})$ be a marked framed surface with $C,\overline{C}\in\mathcal{B}$ for some $1$-dimensional manifold $C$. Let $\left(\gl_C(S),\gl_C(f),\gl_C(\mathcal{B})\right)$ be the marked framed surface obtained by gluing $S$ along $C$. Then, \begin{equation*}
\mathcal{Z}^{\P_t}\left(\gl_C(S),\gl_C(f),\gl_C(\mathcal{B})\right)(-,-)=\int^{c\in\mathcal{Z}^{\P_t}(C)}\mathcal{Z}^{\P_t}(S,f,\mathcal{B})(-,c,c,-).
\end{equation*}
\end{lemma}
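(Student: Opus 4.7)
The plan is to construct mutually inverse maps $\Phi$ and $\Psi$ between the coend and the space of states on the glued surface, identifying the coend relations with the moves available on $\gl_C(S)$.

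First, I would define the gluing map. Given an object $c$ of $\mathcal{Z}^{\P_t}(C)$ and a planar graph $\Gamma$ representing a class in $\mathcal{Z}^{\P_t}(S,f,\mathcal{B})(-,c,c,-)$, the quotient map $q\colon S\to\gl_C(S)$ identifies the two boundary configurations labelled by $c$ and produces a graph $q(\Gamma)$ on $\gl_C(S)$, in which the matched boundary points become interior points of the edges. This assignment is $\Bbbk$-linear in $\Gamma$, and every rectilinear isotopy and framed skein relation supported in a rectilinear box disjoint from the image of $C$ descends from $S$ to $\gl_C(S)$. Hence it yields a well-defined map on each summand of $\bigoplus_c \mathcal{Z}^{\P_t}(S,f,\mathcal{B})(-,c,c,-)$. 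To check that this factors through the coend, I would verify that for any morphism $\varphi\colon c\to c'$ in $\mathcal{Z}^{\P_t}(C)$, the two contractions of $\varphi$ with $\Gamma$ become, after gluing, equal up to a rectilinear isotopy that pushes $\varphi$ through a collar neighbourhood of the image of $C$. This produces a map
$$\Phi\colon\int^{c\in\mathcal{Z}^{\P_t}(C)}\mathcal{Z}^{\P_t}(S,f,\mathcal{B})(-,c,c,-)\to\mathcal{Z}^{\P_t}\bigl(\gl_C(S),\gl_C(f),\gl_C(\mathcal{B})\bigr)(-,-).$$

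Next, I would build the inverse $\Psi$. Given a graph $\Gamma'$ on $\gl_C(S)$, a rectilinear perturbation places it in general position with respect to the image of $C$: edges meet $C$ transversely and no coupon sits on $C$. The intersection defines a configuration $c$, and cutting along $C$ produces a graph on $S$ whose class in the $c$-summand I would declare to be $\Psi(\Gamma')$.

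The main obstacle is independence of $\Psi$ from the choice of perturbation. Two perturbations are connected by a one-parameter family of rectilinear isotopies and framed skein moves in $\gl_C(S)$, and I would analyse the generic transitions. When a strand slides across $C$, the configuration $c$ changes but the cut graph differs by pre- or post-composition with a morphism in $\mathcal{Z}^{\P_t}(C)$, absorbed by the coend relation. When a coupon crosses $C$, the local picture is a morphism with prescribed boundary, and the two cut graphs on either side are identified by the same morphism, again giving a coend relation. Finally, a framed skein relation supported in a rectilinear box straddling $C$ can be treated by partitioning the box into rectilinear subboxes on each side of $C$; after cutting, the identity on $\gl_C(S)$ becomes a combination of a framed skein relation on $S$ and a morphism in $\mathcal{Z}^{\P_t}(C)$ that the coend absorbs.

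Once $\Psi$ is well-defined, the identities $\Phi\circ\Psi=\id$ and $\Psi\circ\Phi=\id$ follow at once: cutting a glued graph, or gluing a cut graph, restores the original up to rectilinear isotopy. I expect the most delicate step to be the analysis of coupons sitting near $C$ during the isotopy connecting two perturbations, where the coend identification must be applied carefully with respect to orientations and framings of the incident strands.
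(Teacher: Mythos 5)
Your proposal follows essentially the same strategy as the paper (which cites Walker, Theorem~4.4.2) and as the detailed argument the paper gives later for the three-dimensional analogue of this lemma: build a gluing map $\Phi$ descending to the coend, build a cutting map $\Psi$ by putting diagrams in general position with respect to the image of $C$, and check that $\Psi$ is insensitive to the choices. The paper phrases the second half as verifying the universal property of the coend; you construct the inverse explicitly; these are equivalent formulations of the same argument.

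The one place where you deviate in substance is the handling of a framed skein relation whose rectilinear box straddles $C$. You propose partitioning the box into subboxes on either side of $C$, but the skein relation $\sum_i\lambda_i\iota^{-1}(\Gamma_i\cap B)=0$ lives on the whole box $B$ and does not obviously decompose into independent relations on subboxes (the terms $\Gamma_i$ may meet $C$ at different configurations, and the relation in $\RepqP$ is on the composite morphism, not on each piece). The paper's detailed version handles this more cleanly: since all terms $\Gamma_i$ agree outside $B$, one applies a single ambient isotopy of the glued surface carrying $B$ off $C$ (possible because $B\cap C$ is a finite union of arcs), after which the relation is supported away from $C$ and descends directly to a skein relation on $S$. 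You should either adopt this isotopy argument or supply the extra lemma that skein relations on large boxes are generated by skein relations on arbitrarily small boxes, which would justify the subdivision. Apart from this, your analysis of strands and coupons sliding across $C$ giving coend relations is correct and matches the paper's treatment.
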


\begin{proof}
The proof is the same as in \cite[Theorem 4.4.2]{walker}.
\end{proof}

We have now all the ingredients to define a framed TFT with target category $\textsc{Bimod}$. Let $\text{Bord}^\text{fr}_1(2)$ be the category of two-dimensional framed cobordisms, whose objects are $1$-dimensional $2$-framed manifolds and whose morphisms are marked framed cobordisms. It follows straightforwardly that:

\begin{theorem}
The construction above defines a symmetric monoidal functor $$\mathcal{Z}^{\P_t}\colon\text{Bord}_1^\text{fr}(2)\to\textsc{Bimod},$$ hence a 2-dimensional framed TFT. \QEDA
\end{theorem}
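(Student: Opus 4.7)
The plan is to verify the three components of the assertion separately: well-definedness on morphisms, functoriality (composition), and symmetric monoidality. All three should follow from the construction already set up, combined with the gluing lemma immediately preceding the theorem.

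First I would verify that the assignment on morphisms is well-defined. A 2-framed 1-manifold is a disjoint union of copies of $I$, $\overline{I}$, $\mathbb{S}^1$, $\overline{\mathbb{S}^1}$, and the assignment extends by tensor product, producing an object of $\textsc{Bimod}$. A marked framed cobordism $(S,f,\mathcal{B})$ between such 1-manifolds determines, by construction, a functor on the product of the cylinder/rectangle categories associated to the positively and negatively oriented components. The invariance of $\mathcal{Z}^{\P_t}(S,f,\mathcal{B})$ under framed diffeomorphisms rel boundary reduces to the two classes of relations imposed in the construction: rectilinear isotopies of the planar graph in $S$ and framed skein relations. Since framed diffeomorphisms preserve both the framing and the rectilinear structure up to isotopy, and since the skein relations are local and hence invariant under any framed diffeomorphism, the assignment descends to isomorphism classes of cobordisms.

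Second, I would establish functoriality. Given two composable marked framed cobordisms $(S_1,f_1,\mathcal{B}_1)$ and $(S_2,f_2,\mathcal{B}_2)$ with a common boundary component $C$, their composition is exactly the glued surface $\gl_C(S_1\sqcup S_2)$ with the induced framing and marking. The gluing lemma stated above the theorem then gives
\begin{equation*}
\mathcal{Z}^{\P_t}(\gl_C(S_1\sqcup S_2))(-,-)=\int^{c\in\mathcal{Z}^{\P_t}(C)}\mathcal{Z}^{\P_t}(S_1)(-,c)\otimes\mathcal{Z}^{\P_t}(S_2)(c,-),
\end{equation*}
which is precisely the composition of bimodules in $\textsc{Bimod}$. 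For the identity cobordism $C\times I$, I would argue that the cylinder (resp. rectangle) category is its own set of boundary conditions, and the space of rectilinear planar graphs in $C\times I$ with these boundary conditions, modulo rectilinear isotopy and skein relations, coincides with $\Hom_{\mathcal{Z}^{\P_t}(C)}(-,-)$. This follows because a rectilinear embedding $\R\times[0,1]\hookrightarrow C\times I$ identifies morphisms in $\RepqP$ or $\text{Cyl}(\RepqP)$ with the generators of the graph space on $C\times I$, while the quotient relations match exactly. This step is the main technical hinge: one must be careful that the framing on $C\times I$, combined with the rotation-number decoration convention on endpoints, reproduces the integer labels of objects in $\text{Cyl}(\RepqP)$ without further identifications.

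Finally, symmetric monoidality is essentially tautological from the setup. Disjoint union of $1$-manifolds corresponds on the nose to the $\boxtimes$ of the associated categories, and disjoint union of cobordisms gives a bimodule equal to the $\Bbbk$-linear tensor product of the factor bimodules, because planar graphs in $S_1\sqcup S_2$ decompose into independent graphs on each factor and the skein relations are local. The symmetry isomorphisms come from the symmetry of disjoint union on the source and the symmetry of $\boxtimes$ in $\textsc{Bimod}$, and coherence follows automatically. The hardest part I anticipate is writing down carefully the identification of the identity bimodule with $\mathcal{Z}^{\P_t}(C\times I)$, since it requires a clean statement of how the rotation-number bookkeeping interacts with cylindrical composition; once this is done, everything else reduces to applying the gluing lemma.
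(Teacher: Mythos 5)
The paper itself does not spell out a proof here --- the theorem carries a \QEDA and is introduced with ``It follows straightforwardly that\ldots,'' leaving all verification implicit. Your proposal correctly unpacks exactly the checks the paper is leaving to the reader: invariance under framed diffeomorphism rel boundary (via rectilinear isotopy and locality of skein relations), compatibility with composition via the gluing lemma and the coend formula for composition in $\textsc{Bimod}$, identification of $\mathcal{Z}^{\P_t}(C\times I)$ with the identity bimodule $\Hom_{\mathcal{Z}^{\P_t}(C)}(-,-)$, and monoidality from the decomposition of planar graphs over disjoint unions. Your flagging of the identity axiom --- matching rotation-number bookkeeping on $C\times I$ with composition in $\text{Cyl}(\RepqP)$ or $\RepqP$ --- as the technical hinge is apt; this is indeed where the conventions of Section~\ref{graphical_calculus} earn their keep, and it is the step the paper most conspicuously glosses over. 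No gap; your argument is a faithful and adequately detailed expansion of the intended proof.
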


Considering the category $\RepqH$ just as a rigid monoidal category, we can define an associated $2$-dimensional framed TFT that can be described along the same lines as $\mathcal{Z}^{\P_t}$. We will denote this theory  by $\mathcal{Z}^{\H_t}.$

\subsection{Planar model around the defect line}\label{bimodule} The ($\RepqH$, $\RepqP$)-bimodule structure of $\RepqH$ has a topological interpretation in terms of planar diagrams on a stratified square. We ignore for now its $(\RepqH,\RepqP)$-centred structure, which will be described later in Section \ref{local_model_strat} when we address three-dimensional theories on surfaces.  Recall the functor $j^*\colon\RepqP\to\RepqH$ defined in \eqref{inclusion}. It induces two morphisms in \textsc{Bimod}, namely the bimodule functors $$F_1\coloneqq\Hom_{\RepqH}(-,j^*(-))\colon\RepqP\boxtimes \RepqH^\op\to\Vect$$ and $$F_2\coloneqq\Hom_{\RepqH}(j^*(-),-)\colon\RepqH\boxtimes\RepqP^\op\to\Vect.$$ 

Let $I^*$ be the unit interval with a point marked in the middle and $E=I\times I^*$ a square with a horizontal 
 line defect $I\times\left\{\frac{1}{2}\right\}$. We will call the \emph{$\H_t$-region} (resp. \emph{$P_t$-region}) the half-square under (resp. above) the defect. The defect itself will be decorated by $\H_t$ as well and we will call it the \emph{$\H_t$-line}. We consider planar graphs $\Omega$ in $E$ of the following form (see Figure \ref{fig:strat_square}):
\begin{itemize}
\item endpoints are attached to the top and bottom bases of the square;
\item $\Omega$ is coloured by $\RepqP$ (resp. by $\RepqH$) on the $\P_t$-region (resp. on the $\H_t$-region);
\item edges can meet transversally the line defect  at a coupon decorated with a morphism of $\RepqH$ in a compatible way: if the edges coming from the $\H_t$-region (resp. the $\P_t$-region) are coloured with $X_1,\ldots,X_m$ (resp. $Y_1,\ldots, Y_k$), then the coupon is decorated with a morphism $f\colon X_1\otimes\cdots\otimes X_m\to j^*(Y_1)\otimes\cdots\otimes j^*(Y_k).$
\end{itemize}

\begin{figure}[t]
\centering
\includegraphics[scale=0.35]{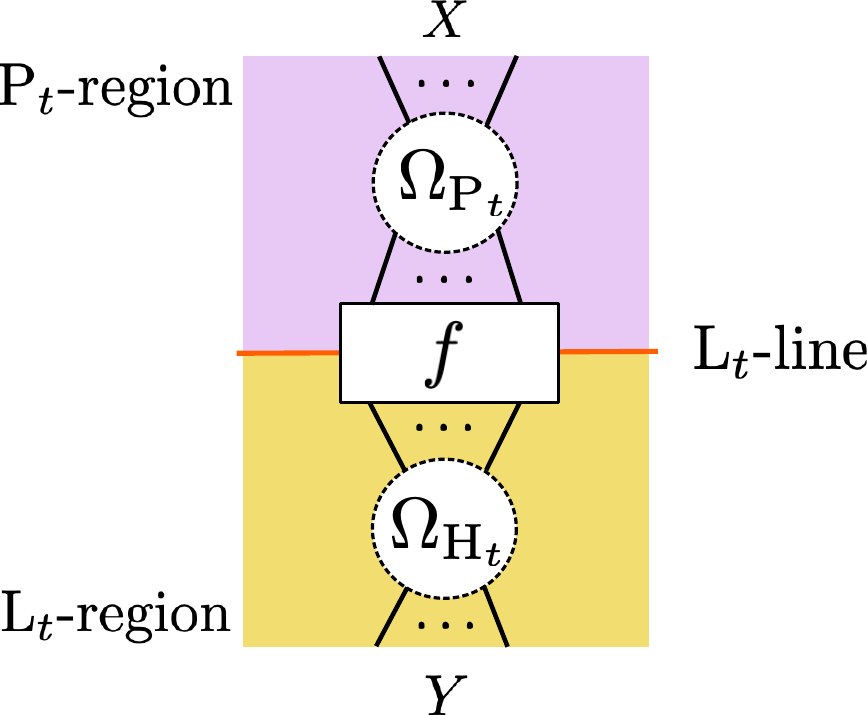}
\caption{Planar graph on a stratified square.}
\label{fig:strat_square}
\end{figure}

Fixing $X$ and $Y$, the assignment $$\includegraphics[scale=0.35, valign=c]{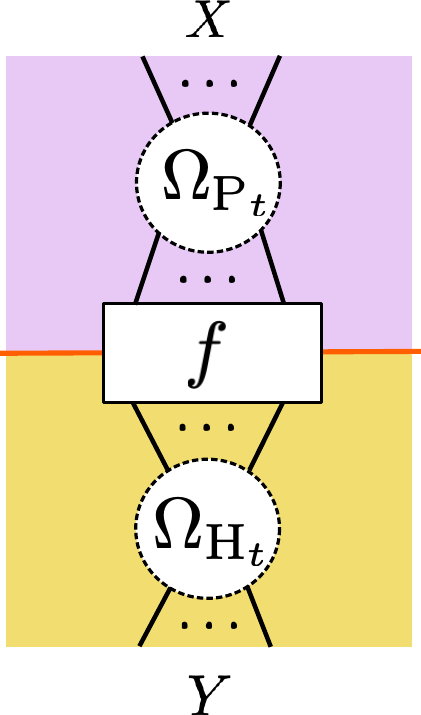}\quad\mapsto\quad j^*(\Omega_{\P_t})\circ f\circ \Omega_{\H_t}\in\Hom_{\RepqH}(Y,j^*(X)),$$ extends to a linear map defined on the $(\Bbbk\otimes_{\mathbb{C}(q)}\Bbbk)$-linear space spanned by those diagrams. Its kernel is generated by:
\begin{itemize}
\item planar skein relations induced by $\RepqH$ and $\RepqP$ on each side of the defect line;
\item relations of the form 
\begin{equation}\label{morphrels}
\includegraphics[scale=0.35, valign=c]{drawings/strat_eva.pdf}\quad=\quad\includegraphics[scale=0.35, valign=c]{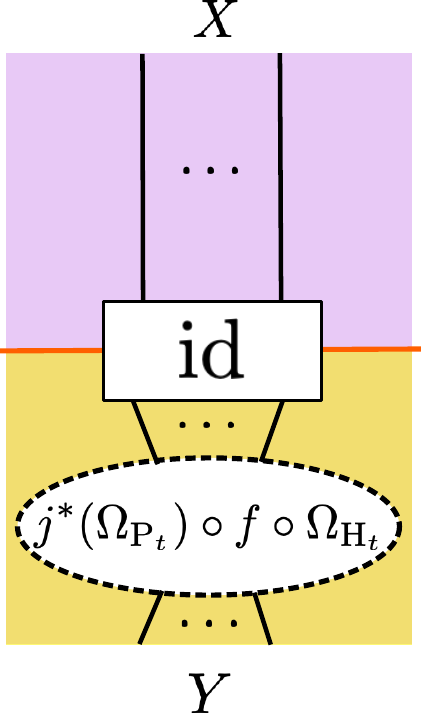}\;.
\end{equation} 
\end{itemize}
Taking the quotient by all of them, we have a diagrammatic description of $F_1$, where the actions of $\RepqP$ and $\RepqH$ are just given by stacking morphisms above and below. Note that, since every stratified diagram $\Omega$ represents a morphism in $\RepqH$, the relations in \eqref{morphrels} implies that $\Omega$ is equivalent to another diagram $\widetilde{\Omega}$ where $\widetilde{\Omega}_{\P_t}$ and $f$ are just identities, that is:
\begin{equation*}
\includegraphics[scale=0.35, valign=c]{drawings/strat_eva.pdf}\quad=\quad\includegraphics[scale=0.35, valign=c]{drawings/strat_rel.pdf}\;.
\end{equation*} 
We will not represent the identity coupon in the pictures, so whenever a set of strands crosses the $\H_t$-line (changing their colour), they must be thought of as attached to an identity coupon. The bimodule $F_2$ can be described equally by just exchanging the $\P_t$ and the $\H_t$-regions in the diagrams.  Similarly, we have functors $$G_1\colon\text{Cyl}(\RepqP)\boxtimes\text{Cyl}(\RepqH)^\op\to\Vect,$$ $$G_2\colon\text{Cyl}(\RepqH)\boxtimes\text{Cyl}(\RepqP)^\op\to\Vect,$$ mapping an object $X\boxtimes Y$ to the linear space spanned by planar diagrams with boundary conditions $(X,Y)$ on a stratified cylinder $\mathbb{S}^1\times I^*,$ modulo skein relations on each side of the defect and the local relations depicted in \eqref{morphrels} near the defect.
 
\begin{lemma}
In $\textsc{Bimod}$, we have $$F_2\circ F_1=\text{id}_{\RepqH}\quad\text{and}\quad G_2\circ G_1=\text{id}_{\text{Cyl}(\RepqH)}.$$
\end{lemma}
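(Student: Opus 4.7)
The plan is to identify both sides with the identity bimodule $\Hom_{\RepqH}(-,-)$ of $\RepqH$. The composition in $\textsc{Bimod}$ is computed by the coend
$$(F_2 \circ F_1)(B_1, B_2) = \int^{X \in \RepqP} \Hom_{\RepqH}(B_1, j^*_t X) \otimes \Hom_{\RepqH}(j^*_t X, B_2),$$
which topologically corresponds to stratified planar diagrams on a square with two horizontal $\H_t$-defect lines enclosing a $\P_t$-strip in the middle and $B_1, B_2$ as bottom and top boundary conditions. I would construct mutually inverse natural transformations $\Phi$ and $\Psi$ between this coend and $\Hom_{\RepqH}(B_1, B_2)$.

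The forward map $\Phi \colon [f, g] \mapsto g \circ f$ is composition of morphisms; it is well-defined on the coend because the identification $[(j^*_t\varphi) \circ f, g] \sim [f, g \circ (j^*_t\varphi)]$ is respected by composition (both sides equal $g \circ (j^*_t\varphi) \circ f$). For the inverse I would use the identity $j^*_t \circ \pi^*_t = \id_{\RepqH}$ established at the end of Section~\ref{central_algebra}: setting $X_0 \coloneqq \pi^*_t(B_2)$ gives $j^*_t X_0 = B_2$, and I define $\Psi(h) \coloneqq [h, \id_{B_2}]$ with middle object $X_0$. The composite $\Phi \circ \Psi = \id$ is then immediate.

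The substantive step, and the main obstacle, is $\Psi \circ \Phi = \id$: for a representative $[f \colon B_1 \to j^*_t X, g \colon j^*_t X \to B_2]$ one must show $[f, g] = [g \circ f, \id_{B_2}]$ in the coend, with the right-hand side taken at $X_0 = \pi^*_t(B_2)$. The strategy is to invoke the coend relation along a morphism $\varphi \colon X \to \pi^*_t(B_2)$ in $\RepqP$ with $j^*_t \varphi = g$; this gives $[f, g] = [f, \id_{B_2} \circ j^*_t\varphi] \sim [(j^*_t\varphi) \circ f, \id_{B_2}] = [g \circ f, \id_{B_2}]$ in a single line. The existence of such a lift $\varphi$ amounts to a fullness statement for $j^*_t$ on morphisms with target in the image of $\pi^*_t$. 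I would justify it by inspection of the generators of $\RepqP$ fixed in Section~\ref{repqpsection}: the boxed crossings \eqref{repqp7}, the pivotal and duality coupons \eqref{repqp3}--\eqref{repqp6}, and the inclusion/projection coupons \eqref{repqp2} are, after application of $\ev^\P$, collectively rich enough to reconstruct every $\RepqH$-morphism between coloured words, with the defining formula \eqref{box1} used to recover ordinary unboxed crossings from boxed ones. An equivalent topological packaging of the same argument would apply the local relation \eqref{morphrels} at each of the two defect lines, absorbing the $\P_t$-strip and the associated coupons into a single $\RepqH$-diagram on the square, which directly represents an element of $\Hom_{\RepqH}(B_1, B_2)$.

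The identity $G_2 \circ G_1 = \id_{\text{Cyl}(\RepqH)}$ is obtained by verbatim repetition of this argument with $I \times I^*$ replaced by $\mathbb{S}^1 \times I^*$: neither the coend computation nor the generator-by-generator lifting depends on the planarity of the underlying surface, so the same $\Phi$ and $\Psi$ work \emph{mutatis mutandis} on the stratified cylinder.
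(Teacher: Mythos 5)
The framework you set up — writing the composition $F_2\circ F_1$ as the coend $\int^{Z\in\RepqP}\Hom_{\RepqH}(B_1,j_t^*Z)\otimes\Hom_{\RepqH}(j_t^*Z,B_2)$ and constructing $\Phi\colon[f,g]\mapsto g\circ f$ and $\Psi\colon h\mapsto[h,\id_{B_2}]$ with middle object $\pi_t^*(B_2)$ — matches the paper, and $\Phi\circ\Psi=\id$ is indeed immediate. But your argument for $\Psi\circ\Phi=\id$ rests on a fullness statement that is false. You want a lift $\varphi\colon X\to\pi_t^*(B_2)$ in $\RepqP$ with $j_t^*\varphi=g$ for every $g\colon j_t^*X\to B_2$. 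Take $X=(\upp,0)$, so $j_t^*X=\orup$, and $B_2=\gup$, so $\pi_t^*(B_2)=(\gup,0)$. The nonzero morphism $g=$ (the projection $\orup\to\gup$, which in the notation of Section~\ref{HOMFLYcats} is the morphism $\MyFigure{\gproj}$) has no lift: $\Hom_{\RepqP}\bigl((\upp,0),(\gup,0)\bigr)=0$, while $\Hom_{\RepqH}(\orup,\gup)$ is one-dimensional. This is precisely the asymmetry that distinguishes the parabolic from the Levi: the generators of $\RepqP$ in \eqref{repqp2} include the inclusion $\gsquare\hookrightarrow\bsquare$ and the projection $\bsquare\twoheadrightarrow\redsquare$, but deliberately \emph{not} the projection $\bsquare\twoheadrightarrow\gsquare$ nor the inclusion $\redsquare\hookrightarrow\bsquare$, because classically the corresponding maps $V_{m+n}\to V_m$ and $V_n\to V_{m+n}$ are $\L$- but not $\P$-equivariant (they fail to commute with $e_m$). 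So your ``inspection of generators'' does not produce the needed lift, and the one-line coend manipulation breaks for this $g$.

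The lemma itself is correct, but injectivity of $\Phi$ genuinely needs more than a single coend relation along a lift of $g$. For instance, writing $\id_{\orsquare}=p_g+p_r$ with $p_g=\MyFigure{\ginc}\circ\MyFigure{\gproj}$ and $p_r$ the analogous red idempotent, one uses the coend relation along the \emph{available} morphisms $\iota_g\colon(\gup,0)\to(\upp,0)$ and $\pi_r\colon(\upp,0)\to(\rup,0)$ to kill the cross-terms (e.g.\ $[p_g,p_r]=[\MyFigure{\gproj},\,p_r\circ\MyFigure{\ginc}]=0$), so that every class reduces to one whose middle object lies in the image of $\pi_t^*$ — but this uses bilinearity and vanishing, not a lift. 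The last paragraph of your proposal, pushing everything through the defect via relation \eqref{morphrels} so that the $\P_t$-strip becomes trivial, \emph{is} the paper's argument (made explicit in the remark directly following the lemma), but it is not an ``equivalent packaging'' of the lifting argument: it is a different and more robust argument, and it is the one you should have used. As stated, your algebraic proof has a gap; the topological sketch you append is the correct route.
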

\begin{proof}
By definition, $$(F_2\circ F_1)(X\boxtimes Y)=\int^{Z\in\RepqP}\Hom_{\RepqH}(X,j^*(Z))\otimes\Hom_{\RepqH}(j^*(Z),Y),$$ and the canonical map $f\otimes g\to g\circ f$ yields an isomorphism between the coend on the right-hand side and $\Hom_{\RepqH}(X,Y).$ The same holds for $G_2\circ G_1.$
\end{proof}

\begin{remark}\label{modifing defect}
We can interpret the proof pictorially. The coend can be represented via diagrams in a square with two horizontal $\H_t$-defects. The region between them is decorated by $\P_t$, while the regions above and below are decorated by $\H_t$. Applying relations \eqref{morphrels}, we have $$\includegraphics[scale=0.3, valign = c]{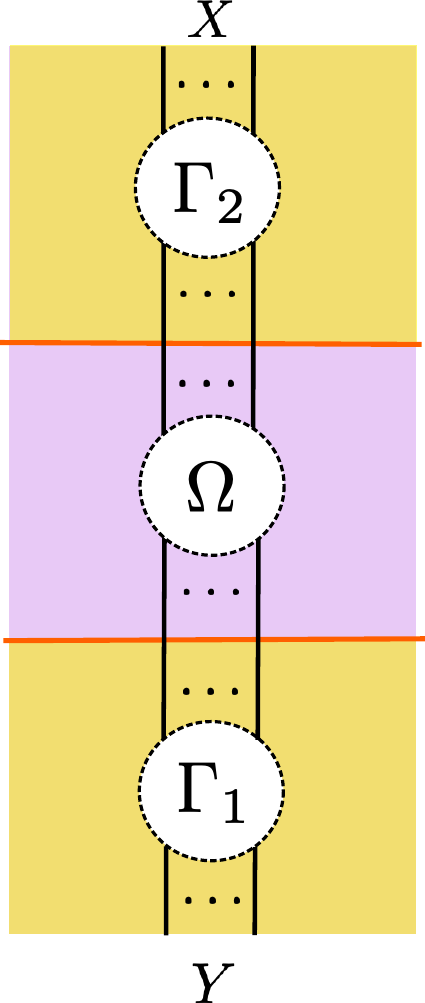}\quad=\quad\includegraphics[scale=0.3, valign = c]{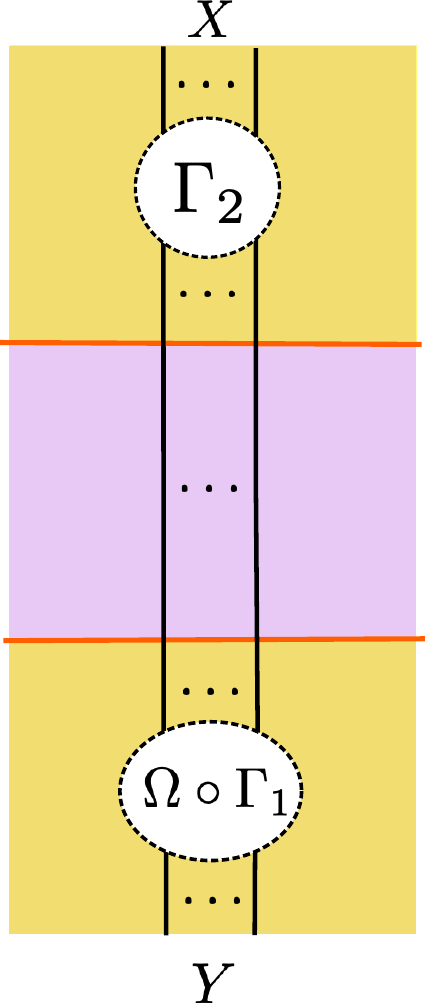}\quad\equiv\quad\includegraphics[scale=0.3, valign = c]{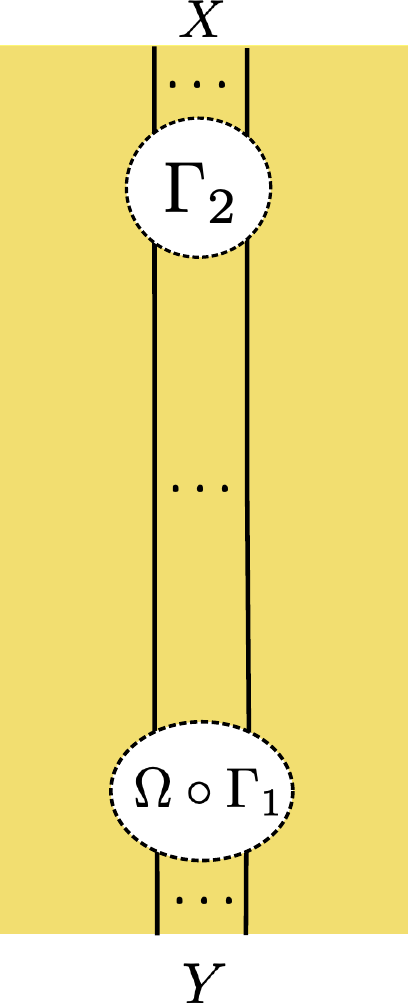}\;.$$ 
This suggests that it may be possible to modify stratifications making regions appear/disappear  decorated by $\P_t$ in the planar theory associated with $\H_t$. As we will see, the $\GL_t$-skein algebra acts on the planar theory associated with $\P_t$. We may use this to induce an action of the $\GL_t$-skein algebra on the $\H_t$-theory, by making $\P_t$-regions appear and acting on them. These ideas will be made precise later on by describing topologically the central algebras and the centred bimodule from Sections \ref{central_algebra} and \ref{centred_bimodule}.
\end{remark}

\subsection{Planar defect theory} The planar theories $\mathcal{Z}^{\P_t}$ and $\mathcal{Z}^{\H_t}$ induced by the rigid categories $\RepqP$ and $\RepqH$, respectively, glue together via the bimodule structure, yielding a two-dimensional theory on stratified surfaces.  Recall that a bipartite surface (cf. Definition \ref{bipartite}) is a surface $S$ with a stratification $$\phi\colon S\to\left(A\geq B\leq C\right).$$ We label $\phi^{-1}(A)$ and $\phi^{-1}(C)$ with $\H_t$ and $\P_t$, respectively, and we call them the \emph{$\H_t$-region} and the \emph{$\P_t$-region}. The $1$-dimensional stratum $\phi^{-1}(B)$ is decorated with $\L_t$ and we refer to it as the \emph{$\H_t$-line.}

\begin{definition}
A \emph{compatible marking} of $(S,f,\phi)$ is a marking $\mathcal{B}$ such that each of its components is contained either in the $\H_t$-region or the $\P_t$-region. Therefore, we have a splitting $\mathcal{B}=\mathcal{B}_{\L_t}\sqcup\mathcal{B}_{\P_t}$.
\end{definition}

Let $(S,f,\phi,\mathcal{B})$ be a bipartite framed surface with a compatible marking. For each $C\in\mathcal{B}$, we set $$\mathcal{Z}(C)\coloneqq\begin{cases}
\mathcal{Z}^{\H_t}(C), & \text{if $C\in\mathcal{B}_{\H_t}$},\\ 
\mathcal{Z}^{\P_t}(C), & \text{if $C\in\mathcal{B}_{\P_t}$}.\\ 
\end{cases}
$$ Take $U_{C}\in\mathcal{Z}(C)$ for each $C\in\mathcal{B}$ and consider the $(\Bbbk\otimes_{\mathbb{C}(q)}\Bbbk)$-linear space spanned by planar graphs as in Section \ref{TFT}, with given boundary conditions, coloured with $\RepqH$ (resp. $\RepqP$) in the $\H_t$-region (resp. the $\P_t$-region) and meeting the defect transversally at coupons decorated with morphisms from $\RepqH$ as explained in Section \ref{bimodule}. We set $\mathcal{Z}(S,f,\phi,\mathcal{B})\left((U_C)_{C\in\mathcal{B}}\right)$ for its quotient by 
\begin{itemize}
\item skein relations induced by $\RepqH$ and $\RepqP$ in the interior of the $2$-dimensional regions;
\item stratified skein relations around the $\H_t$-line: these are the relations induced by the local model described in the previous section.
\end{itemize}
The assignment $$\begin{array}{cccc}
\mathcal{Z}(S,f,\phi,\mathcal{B})\colon & \left(\bigboxtimes\limits_{C\in\mathcal{B}^+}\mathcal{Z}(C)\right)\boxtimes\left(\bigboxtimes\limits_{\bar{C}\in\mathcal{B}_-}\mathcal{Z}(C)\right)^\op & \to & \Vect,\\ & (U_C)_{C\in\mathcal{B}} & \mapsto & \mathcal{Z}(S,f,\phi,\mathcal{B})\left((U_C)_{C\in\mathcal{B}}\right)
\end{array}$$ is well-defined and functorial, since all the relations from $\RepqH$ and $\RepqP$ hold. We shall write simply $\mathcal{Z}(S)$ whenever there is no risk of confusion.

\begin{lemma}[{\cite[Theorem 4.4.2]{walker}}]
Let $(S,f,\phi,\mathcal{B})$ be a bipartite framed surface with compatible marking.  Let $\gl_C(S)$ be the surface obtained by gluing $S$ along a boundary component $C$, with $C,\overline{C}\in\mathcal{B}_{\P_t}$ (alternatively, in $\mathcal{B}_{\L_t}$). Then, $$\mathcal{Z}(\gl_C(S))(-)=\int^{U\in\mathcal{Z}(C)}\mathcal{Z}(S)(-,U,U),$$ where the components decorated by $U$ on the RHS are $C$ and $\overline{C}$. \QEDA
\end{lemma}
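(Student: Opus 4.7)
The plan is to imitate Walker's gluing proof (\cite[Theorem 4.4.2]{walker}), whose validity in this stratified setting hinges on the fact that $C$ lies entirely in one of the two labelled regions. I will construct mutually inverse $\Bbbk$-linear maps
$$\Phi\colon \int^{U\in\mathcal{Z}(C)}\mathcal{Z}(S)(-,U,U)\;\longrightarrow\;\mathcal{Z}(\gl_C(S))(-),\qquad \Psi\colon\mathcal{Z}(\gl_C(S))(-)\;\longrightarrow\;\int^{U\in\mathcal{Z}(C)}\mathcal{Z}(S)(-,U,U),$$
natural in the remaining boundary data, and verify that they are mutually inverse.

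For the forward map $\Phi$: given an object $U\in\mathcal{Z}(C)$ and a planar graph $\Omega\in\mathcal{Z}(S)(-,U,U)$, the two marked copies $C$ and $\overline{C}$ carry identical configurations of framed, coloured, integer-decorated points, so the canonical identification $C\simeq\overline{C}$ under gluing produces a well-defined planar graph $\Phi(U,\Omega)$ on $\gl_C(S)$. Because $C$ is assumed to lie entirely in the $\P_t$- or $\H_t$-region, the image of $\iota(C)$ never crosses the $\H_t$-line, so no new coupons are required at the seam and the stratified skein relations are unaffected. The rotation-number decorations match because $\rot^{\gl_C(f)}$ agrees with $\rot^f$ on strands lying away from $\iota(C)$ and the integer labels at the two glued endpoints of each crossing strand agree by construction. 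To see that $\Phi$ descends to the coend, one observes that a morphism $f\colon U\to U'$ in $\mathcal{Z}(C)$ corresponds to inserting the mapping cylinder $C\times[0,1]$ coloured by $f$ along $\iota(C)$; the two canonical actions $F(\id,f)$ and $F(f,\id)$ then produce graphs differing by a rectilinear isotopy that pushes the coupon $f$ from one side of $\iota(C)$ to the other, which is manifestly a relation in $\mathcal{Z}(\gl_C(S))$.

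For the inverse $\Psi$: given a planar graph $\Gamma$ on $\gl_C(S)$, first apply a small ambient isotopy so that $\Gamma$ meets $\iota(C)$ transversally in finitely many framed points and no coupon crosses $\iota(C)$. Reading off the colours, orientations and (after inserting appropriate $\varphi_\varepsilon$-morphisms as in \eqref{dualiso}) the integer labels at this intersection defines an object $U\in\mathcal{Z}(C)$; cutting $\gl_C(S)$ along $\iota(C)$ yields a planar graph $\Omega\in\mathcal{Z}(S)(-,U,U)$. Set $\Psi(\Gamma)$ to be the class of $(U,\Omega)$ in the coend. Two different transverse positionings of $\Gamma$ with respect to $\iota(C)$ are related by a finite sequence of elementary moves (strand slides across $\iota(C)$ and coupon slides across $\iota(C)$); the former changes the boundary object $U$ by a morphism $f$ and modifies $\Omega$ exactly by pre-/post-composition with $f$ on the two sides of the cut, which is precisely the defining coend relation, while the latter is absorbed by the skein relation allowing one to push a coupon transversally across a local rectilinear chart. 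Rectilinear isotopies and skein relations on $\gl_C(S)$ that are disjoint from $\iota(C)$ descend to the same relations on $\Omega$, proving that $\Psi$ is well-defined.

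It is then immediate from the constructions that $\Phi\circ\Psi$ and $\Psi\circ\Phi$ are the identity: $\Psi\circ\Phi(U,\Omega)$ recovers $(U,\Omega)$ by cutting along $\iota(C)$ which is exactly where the original seam was placed, and $\Phi\circ\Psi(\Gamma)$ reglues the cut graph back into the original $\Gamma$. The main obstacle I would anticipate is the bookkeeping around rotation numbers and the $\varphi_\varepsilon$ isomorphisms: the rectilinear framework forces us to track integer labels at every endpoint, and one must check that the conventions from Section~\ref{repqpsection} are consistent with those induced by the gluing framing $\gl_C(f)$. Once this is granted, the potential interference with the $\H_t$-defect is ruled out by the hypothesis $C,\overline{C}\in\mathcal{B}_{\P_t}$ (or $\mathcal{B}_{\L_t}$), so the argument is genuinely local in a monochromatic region and reduces to Walker's original proof.
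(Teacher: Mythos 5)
Your argument is essentially Walker's excision proof, which is exactly what the paper invokes (the paper gives no proof of this lemma, merely citing \cite[Theorem 4.4.2]{walker}); the later $3$-dimensional analogue is proved in the paper by verifying the universal property of the coend, which is the same topological content you express via explicit mutually inverse maps $\Phi$ and $\Psi$. One small remark: when defining $\Psi$ you do not need to insert the $\varphi_\varepsilon$-morphisms from \eqref{dualiso} when reading off the boundary object $U$ at the cut, since in the planar theory $\mathcal{Z}^{\P_t}$ edges already carry integer labels compatible with rotation numbers, and an internal cut point of a strand inherits a well-defined label from the rule $n_{\alpha(1)}=n_{\alpha(0)}+\rot^f(\alpha)$; the $\varphi_\varepsilon$ are only relevant when translating between the ribbon and rigid graphical calculi (as in the definition of $\iota_t^*$), not when cutting an already-planar diagram.
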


\begin{theorem}\label{planar_theory}
The assignment $$\mathcal{Z}\colon\text{Bord}^{\text{bip,fr}}_1(2)\to\textsc{Bimod}$$ defines a symmetric monoidal functor from the category of $2$-dimensional bipartite framed cobordisms, hence a $2$-dimensional framed TFT.
\end{theorem}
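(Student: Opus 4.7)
The plan is to verify the structure axioms of a symmetric monoidal functor $\mathcal{Z}\colon\text{Bord}_1^{\text{bip,fr}}(2)\to\textsc{Bimod}$: well-definedness on objects and morphisms, preservation of composition (functoriality), preservation of identities, and compatibility with the symmetric monoidal structures (disjoint union on the bordism side and box product on $\textsc{Bimod}$). Since Sections \ref{central_algebra}--\ref{bimodule} have already produced all the algebraic and planar inputs, this last theorem is essentially a bookkeeping step built on top of the gluing lemma.

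On objects, a 1-dimensional bipartite framed manifold $C$ decomposes as a disjoint union of framed intervals and circles, each labelled with $\L_t$ or $\P_t$. Setting $\mathcal{Z}(I_{\L_t})=\RepqH$, $\mathcal{Z}(I_{\P_t})=\RepqP$, $\mathcal{Z}(\mathbb{S}^1_{\L_t})=\text{Cyl}(\RepqH)$, $\mathcal{Z}(\mathbb{S}^1_{\P_t})=\text{Cyl}(\RepqP)$, and opposite categories for reversed orientations, we assign $\mathcal{Z}(C)=\bigboxtimes\mathcal{Z}(-)$ over connected components. On morphisms, the bimodule $\mathcal{Z}(S,f,\phi,\mathcal{B})$ has already been constructed and shown to be well-defined and functorial in the boundary data, thanks to the compatibility of all local skein relations with the $(\RepqH,\RepqP)$-bimodule structure on $\RepqH$.

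For functoriality, composition of cobordisms $S_1\colon A\to B$ and $S_2\colon B\to C$ is realised by gluing along $B$, one component at a time. Applying the gluing lemma repeatedly yields
\begin{equation*}
\mathcal{Z}(S_2\circ S_1)(a,c)=\int^{b\in\mathcal{Z}(B)}\mathcal{Z}(S_1)(a,b)\otimes\mathcal{Z}(S_2)(b,c),
\end{equation*}
which is by definition the composition of $\mathcal{Z}(S_1)$ and $\mathcal{Z}(S_2)$ in $\textsc{Bimod}$. Identity cobordisms $C\times I$ produce the identity bimodule $\Hom_{\mathcal{Z}(C)}(-,-)$ by the argument already used in the unstratified case: planar graphs in $C\times I^2$ with prescribed top and bottom boundary conditions are, modulo rectilinear isotopy and local relations, morphisms in $\mathcal{Z}(C)$. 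The monoidal and symmetric structure is essentially tautological, since a planar graph on $S_1\sqcup S_2$ decomposes uniquely as a pair of planar graphs, one on each component.

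The delicate point, and hence the main obstacle, is the stratified nature of the gluing. The gluing lemma as stated handles each boundary component separately and assumes it is entirely contained either in $\mathcal{B}_{\P_t}$ or in $\mathcal{B}_{\L_t}$, whereas in a generic bipartite cobordism the $\L_t$-line of $S$ may touch $\partial S$ at the gluing locus, so that after gluing the defect line of $S_2\circ S_1$ runs continuously across both pieces. To close this gap one checks that iterating the gluing lemma component-by-component in any order produces the same coend, and that the local stratified skein relations near the $\L_t$-line (Section \ref{bimodule}) are preserved by the gluing. This reduces to verifying that the identification $F_2\circ F_1 = \mathrm{id}_{\RepqH}$ in $\textsc{Bimod}$ is natural with respect to the ambient stratification, which is exactly the content of the centredness of the $(\RepqH,\RepqP)$-bimodule structure on $\RepqH$ from Section \ref{centred_bimodule}; this is what allows coupons on the defect to be slid across gluing boundaries without changing the resulting morphism.
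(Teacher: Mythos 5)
Your proposal correctly identifies the overall shape of the argument — functoriality reduces to the gluing lemma (yielding the coend formula for composition), identities and the symmetric monoidal structure are routine — and that much matches the paper's intent. But the ``delicate point'' you flag, and more importantly the resolution you offer, diverge from what the paper actually does, and the divergence is significant.

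You worry that the $\L_t$-defect line $\phi^{-1}(B)$ may meet $\partial S$ at the gluing locus, so that after gluing the defect runs continuously across both pieces. This situation simply does not arise in $\text{Bord}_1^{\text{bip,fr}}(2)$: by the very definition of a compatible marking, every marked component is contained entirely in the $\H_t$-region or entirely in the $\P_t$-region, so no marked boundary component meets the defect line. The paper's proof makes exactly this observation — ``since we are working with compatible markings, it suffices to observe that we may first glue along the subregion coloured by $\P_t$, and then along that coloured by $\L_t$'' — and the gluing lemma handles each monochromatic step. Your appeal to a potential ordering subtlety is a standard Fubini-type fact for coends and does not need a separate argument.

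Your proposed fix via the centredness of the $(\RepqH,\RepqP)$-bimodule structure also points at the wrong tool. The planar theory $\mathcal{Z}$ of Theorem \ref{planar_theory} uses only the plain bimodule structure of Section \ref{bimodule}; the paper explicitly says there that the centred structure is ``ignored for now'' and is only brought in later, in Section \ref{local_model_strat}, to define the three-dimensional theory $\mathcal{A}$, where strands genuinely cross a two-dimensional wall. For $\mathcal{Z}$ there is no wall to slide coupons through, only the one-dimensional defect, and the bimodule relations \eqref{morphrels} suffice. So your gap-filling step addresses a problem that cannot occur in this bordism category and invokes a structure this theorem does not require.
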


\begin{proof}
By the previous lemma, the construction is compatible with gluing surfaces along boundary components of a single colour.
For the general case, since we are working with compatible markings, it suffices to observe that we may first glue along the subregion coloured by $\P_t$, and then along that coloured by $\L_t.$ The rest is clear.
\end{proof}

\section{Defect three-dimensional theories}\label{3d_theories}

In this section, we extend the theories introduced above to three-dimensional theories. The local models around the surface and line defects are given by the algebraic structures described in Section \ref{parabolic_restriction}.

\subsection{Parabolic surface defects}\label{local_model} 

Following \cite{jennydavid}, we construct a local model for surface defects using the central algebra structure on $\RepqP$ introduced in Section \ref{central_algebra}. Let $S=[0,1]^2$ be endowed with its canonical framing and let $I^*$ be the unit interval marked at $1/2$, so that $E^*\coloneqq[0,1]^2\times I^*$ is a stratified cube with a \emph{framed defect wall} $[0,1]^2\times\left\{1/2\right\}.$ We consider the following class of stratified ribbon graphs in $E^*$ (see Figure \ref{fig:planar}):
\begin{itemize}
\item endpoints are attached to one of the following regions: $$E_0\coloneqq[0,1]\times\{0\}\times\{0\},\qquad E_{1/2}\coloneqq [0,1]\times\{1/2\}\times\{0,1\}\quad\text{or}\quad E_1\coloneqq[0,1]\times\{1\}\times\{1\};$$
\item the defect wall cuts the ribbon graph $\Omega$ into three pieces:
\begin{enumerate}
\item $\Omega^\text{L}\coloneqq\Omega\cap\left([0,1]^2\times\left[0,\frac{1}{2}\right)\right)$ represents a morphism of $\text{SkCat}_{\text{L}_t}([0,1]^2)$;
\item $\Omega^\text{G}\coloneqq\Omega\cap\left([0,1]^2\times\left(\frac{1}{2},1\right]\right)$ represents a morphism of $\text{SkCat}_{\GL_{t}}([0,1]^2)$;
\item $\Omega^\P\coloneqq\Omega\cap\left([0,1]\times\{1/2\}\right)$ is a planar graph as defined in Section \ref{repqpsection}, with the only difference that we allow endpoints to lie in the interior of $S$ if they are attached to either $\Omega^\text{G}$ or $\Omega^\text{L}$;
\end{enumerate}
\item $\Omega^\text{G}$ and $\Omega^\text{L}$ meet the defect transversally at a coupon of $\Omega^\P$ compatible with the colour of the strands.
\end{itemize}

If $\Omega$ is such a ribbon graph, we set $$V\coloneqq\Omega\cap E_0,\quad W\coloneqq\Omega\cap E_1,\quad a\coloneqq\Omega\cap[0,1]\times\{1/2\}\cap\{0\},\quad b\coloneqq\Omega\cap[0,1]\times\{1/2\}\cap\{1\}$$ for the objects of $\RepqG$, $\RepqH$ and $\RepqP$ determined by its endpoints and we say that $\Omega$ is a \emph{stratified $(a,b,V,W)$-ribbon graph on the disk}. Consider the $(\Bbbk\otimes_{\mathbb{C}(q)}\Bbbk)$-linear space $\mathcal{L}_{a,b,V,W}$ spanned by stratified $(a,b,V,W)$-ribbon graphs modulo
\begin{itemize}
\item  local relations occurring on either one side of the defect wall or the defect wall itself. Explicitly, $\sum_i\lambda_i\Omega_i\sim 0$ if any of the following holds: $\sum_i\lambda_i\Omega^\text{G}_i=0$ in $\RepqG$; $\sum_i\lambda_i\Omega_i^\text{L}=0$ in $\RepqH$; or $\sum_i\lambda_i\Omega_i^\P=0$ via the local relations introduced in Section \ref{repqpsection} applied to the defect;
\item \emph{framed stratified isotopy}: that is, by isotopies of $E^*$ fixing the boundary, preserving the interior of the defect and such that coupons in the defect remain parallel to the framing throughout the isotopy.
\end{itemize}
We define an evaluation morphism for stratified ribbon graphs in the following way. To a stratified $(a,b,V,W)$-ribbon graph $\Omega$ as above, we associate a morphism in $\RepqP$ as follows:
\begin{itemize}
\item we modify $\Omega^\text{G}$ as in the definition of the restriction functor \eqref{restriction_functorP}, that is:
\begin{enumerate}
\item we label the initial point of any open strand $\alpha$ by $n_{\alpha(0)}=0$ or $n_{\alpha(0)}=1$ depending on the orientation;
\item we label the endpoint by $n_\alpha+\rot^f{\alpha}$;
\item we use the pivotal structure of $\RepqG$ (morphisms in \eqref{repqp4} and \eqref{repqp5}) to switch the label of every endpoint to $0$ or $1$;
\item we replace the graph $\widetilde{\Omega}^\text{G}$ thus obtained by $\varphi_{\varepsilon_2}^{-1}\circ\widetilde{\Omega}^\text{G}\circ\varphi_{\varepsilon_1},$ where $\varphi_\varepsilon$ is the isomorphism in \eqref{dualiso};
\end{enumerate}
\item we apply the same procedure to $\Omega^\L$ (here the pivotal structure is the trivial one);
\item we switch the decoration of every endpoint of $\Omega^\P$ to either $0$ or $1$ by concatenating with the pivotal structures of $\RepqG$ and $\RepqH$ (morphisms in \eqref{repqp3}, \eqref{repqp4} and \eqref{repqp5});
\item we project the resulting ribbon graph into the defect wall and replace all the crossings appearing by the corresponding half-twist from $Z(\RepqP)$ (cf. Section \ref{TFT}).
\end{itemize}

Let $\widehat{\Omega}$ be obtained by applying this procedure to $\Omega$. Projecting $E_0$ and $E_1$ onto $E_{1/2}$, we have two configurations of labelled points $V_a$ and $W_b$.  The planar graph $\widehat{\Omega}$ represents thus a morphism $f_\Omega\in\Hom_{\RepqP}(V_a,W_b)$ (see Figure  \ref{fig:planar} for an example). We define the \emph{evaluation morphism} by 
\begin{equation}\label{evaluation_morphism}
\begin{array}{cccc}
\text{ev}_{a,b,V,W}: & \mathcal{L}_{a,b,V,W} & \to & \Hom_{\RepqP}(V_a,W_b),\\
& \Omega & \mapsto & f_\Omega,
\end{array}
\end{equation}
on generators and extending linearly.

 \begin{figure}[t]
   \begin{minipage}{0.45\textwidth}
   		\centering
   		\includegraphics[scale=0.3]{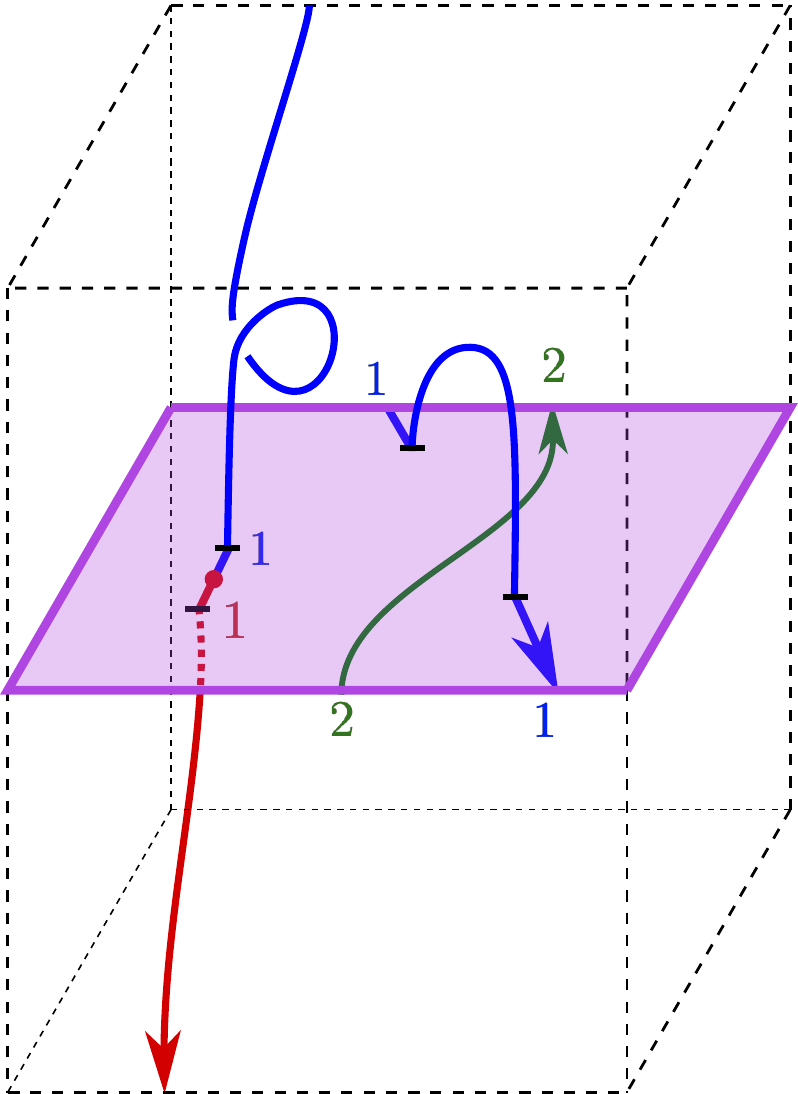}
   \end{minipage}\hfill
   \begin {minipage}{0.1\textwidth}
   		\centering
     	$\mapsto$
   \end{minipage}\hfill
   \begin {minipage}{0.45\textwidth}
   		\centering
     	\includegraphics[scale=0.25]{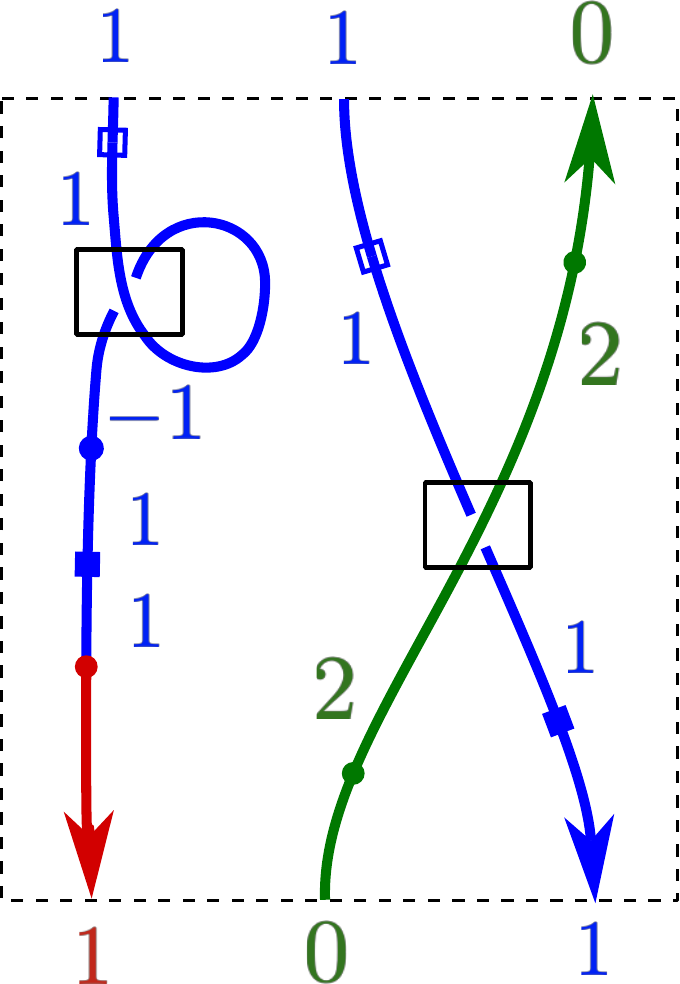}
   \end{minipage}
   \caption{On the left, a stratified ribbon graph on the disk; on the right, its evaluation in $\RepqP$.}
   \label{fig:planar}
\end{figure}

\begin{lemma}\label{evaluation_morphism_lemma}
The evaluation morphism $\text{ev}_{a,b,V,W}$ is well-defined, i.e., $f_\Omega$ does not depend on the equivalence class of $\Omega$ in $\mathcal{L}_{a,b,V,W}$.
\end{lemma}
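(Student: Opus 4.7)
The plan is to check that the evaluation procedure respects each of the relations defining $\mathcal{L}_{a,b,V,W}$. These relations split into three families: local skein relations in the bulk pieces $\Omega^{\text{G}}$ and $\Omega^{\text{L}}$, local relations of $\RepqP$ applied to the defect piece $\Omega^{\text{P}}$, and framed stratified isotopy. The first two families reduce quickly to the well-definedness of the restriction functors $\iota_t^*$ and $\pi_t^*$ and of the evaluation functor $\text{Rig}(\P)\to\RepqP$; the third family is where all the work is concentrated.

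For relations occurring entirely in $\Omega^{\text{G}}$ (resp. $\Omega^{\text{L}}$), the four-step modification procedure described above is, up to the subsequent projection onto the defect wall, precisely the definition of $\iota_t^*\colon\RepqG\to\RepqP$ (resp. $\pi_t^*\colon\RepqH\to\RepqP$). Since both are well-defined $\Bbbk$-linear functors on the corresponding skein categories, any relation holding in $\RepqG$ or $\RepqH$ is transported to a valid relation in $\RepqP$. For relations applied on the defect piece $\Omega^{\text{P}}$, the statement is even more direct: by construction $\text{ev}_{a,b,V,W}(\Omega)$ is obtained by passing $\widehat{\Omega}$ through the evaluation $\text{Rig}(\P)\to\RepqP$, so these relations are killed by definition.

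The main obstacle is framed stratified isotopy. Isotopies supported away from the defect wall reduce to isotopies of ribbon graphs in $S\times I$ coloured by $\RepqG$ or $\RepqH$, already handled by the evaluation functors for $\text{Sk}_{\GL_t}([0,1]^2)$ and $\text{Sk}_{\L_t}([0,1]^2)$. The delicate case is an isotopy supported in a collar of the defect: one may move a strand of $\Omega^{\text{G}}$ (or $\Omega^{\text{L}}$) so that it passes over or under a coupon sitting on the defect, or one may translate such a coupon horizontally along the defect plane. After projection, each such move produces new crossings between the image of the bulk strand and the planar defect graph. The prescription replaces these crossings by the half-twists coming from the central structure, and the fact that two isotopic configurations evaluate to the same morphism of $\RepqP$ is exactly the naturality of the half-braiding built in Theorem \ref{central_functor}. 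Concretely, one would enumerate the local moves near the defect (a bulk strand crossing a coupon, a coupon sliding past a strand, a Reidemeister III move involving one defect strand and two bulk strands) and observe that each of them is an instance of the hexagon/naturality axioms for $Z\circ F$.

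Finally, one checks the remaining bookkeeping: independence from the arbitrary choice of projection onto the defect wall (different generic projections differ by isotopies of the type handled in the previous paragraph), and independence from the auxiliary integer labels $n_{\alpha(\cdot)}$ added before applying $\varphi_\varepsilon$ (these are absorbed by the pivotal-correction coupons \eqref{repqp3}--\eqref{repqp5}, which is precisely why the morphisms $\varphi_\varepsilon$ were introduced). Once framed isotopy invariance is established, linearity extends the argument from generators to arbitrary elements of $\mathcal{L}_{a,b,V,W}$, proving the lemma.
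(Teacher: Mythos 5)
Your proposal follows essentially the same route as the paper's proof: skein relations are handled by the well-definedness of $\iota_t^*$, $\pi_t^*$ and the evaluation functor, and isotopy invariance is reduced to local moves near the defect wall, to be verified against the naturality properties of the central structure. The overall architecture is sound.

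Two points deserve to be made more explicit. First, the paper isolates a specific subtlety that your enumeration of local moves ("a bulk strand crossing a coupon, a coupon sliding past a strand, a Reidemeister III move involving one defect strand and two bulk strands") does not quite cover: a crossing between \emph{two bulk strands} both piercing the defect transversally can be carried by a stratified isotopy from one side of the wall to the other. On the $\GL_t$ side this crossing evaluates via $\iota_t^*$ to a box crossing above the inclusion/projection coupons; on the $\L_t$ side it evaluates via $\pi_t^*$ to an orange crossing below them, and the two planar diagrams produced are not isotopic in $\RepqP$. The paper resolves this by appealing to Remark~\ref{naturality}, the naturality of the box crossing for precisely the morphisms that arise here (inclusions/projections between the three colours); you appeal more coarsely to the naturality of the half-braiding in $Z(\RepqP)$ coming from Theorem~\ref{central_functor}. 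These encode the same underlying fact, but the paper's citation makes it clear exactly which naturality is being used and why it is available. Second, the paper treats the framed Reidemeister I move separately (a consequence of the restriction functor being well-defined and of relations in $\RepqP$ being pulled back from $\RepqH$); your write-up covers the RII/RIII moves via invertibility and Yang--Baxter of the half-braidings but does not explicitly address RI, which must be checked since the bulk categories are ribbon while the defect is only rigid with a chosen pivotal correction. Neither omission is a wrong turn — a careful execution of your plan would surface both points — but they are the two places where the argument actually requires the specific structures introduced in Sections~\ref{HOMFLYcats}--\ref{parabolic} rather than generalities about central algebras.
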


\begin{proof}
We have to check that $f_\Omega$ is invariant under skein and isotopy relations applied to $\Omega$. For skein relations, this is just the fact that the restriction functors $\iota_t^*$ and $\pi_t^*$ (cf. diagram \eqref{allthefunctors}) are well-defined. For the topological relations, first note that stratified isotopy does not fix the defect wall, so the position where a strand intersects the defect may change. In particular, we may move a crossing between two strands intersecting the defect transversally  from one side to the other by applying such an isotopy. For instance, the stratified ribbon graphs represented (as seen from the front) by \begin{equation*}
\MyFigure{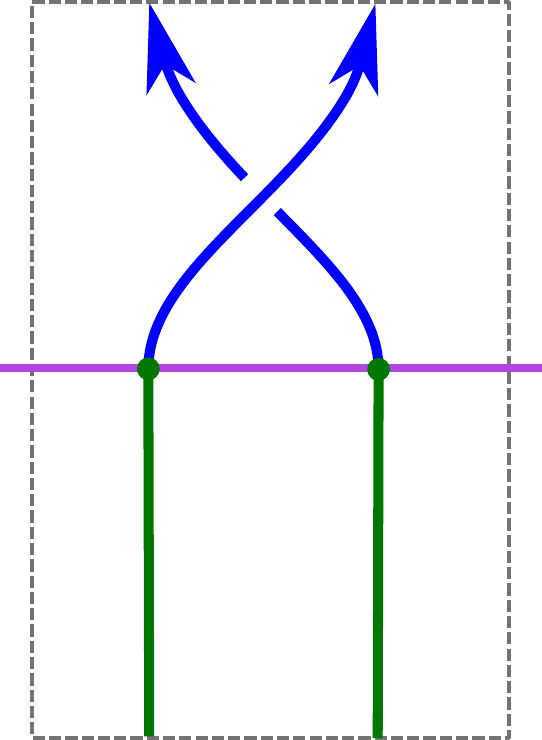}\qquad\text{and}\qquad\MyFigure{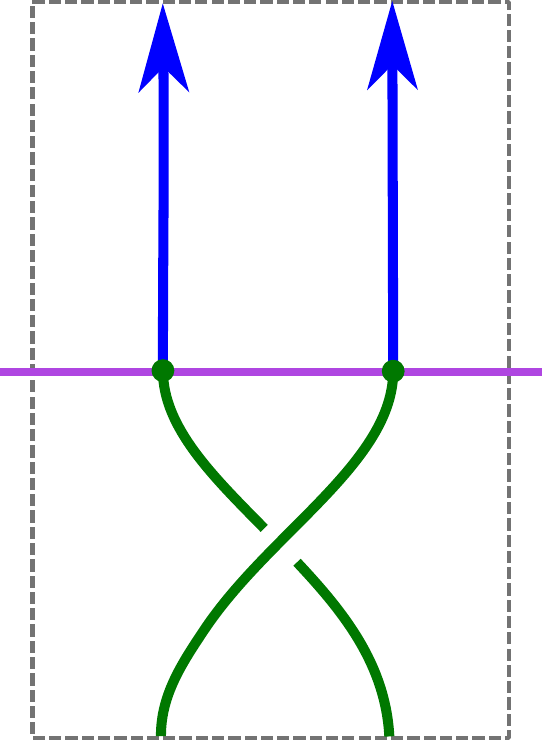}
\end{equation*}
are isotopic, however their evaluations are $$\MyFigure{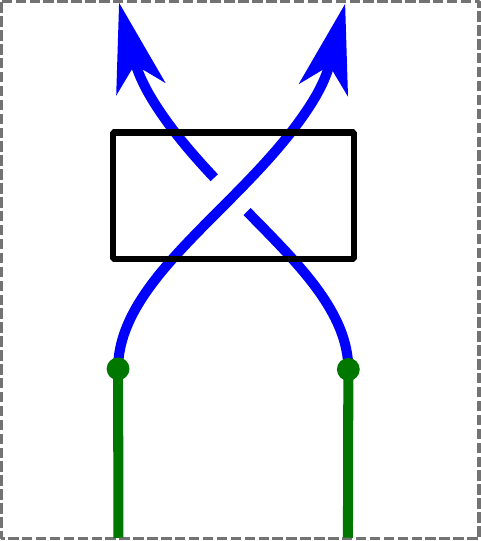}\quad\text{and}\quad\MyFigure{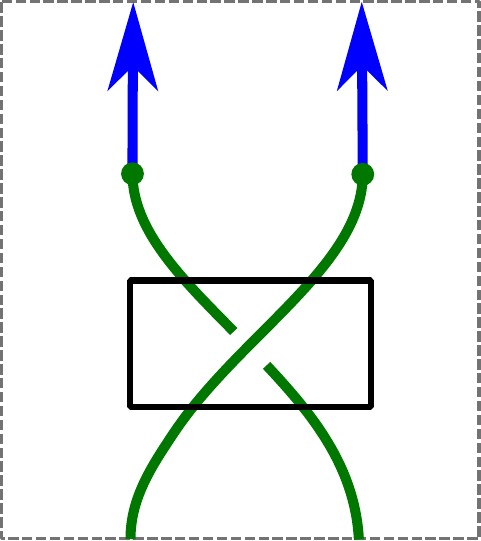},$$ which are not related by isotopy in $\RepqP.$ The naturality of the blue crossing (cf. Remark \ref{naturality}) guarantees that they have the same evaluation. Therefore, since crossings can pass through the defect, we just have to prove that the evaluation is invariant under framed Reidemeister moves happening either on one side of the defect or in the defect itself. For the Reidemeister moves II and III, it suffices to note that crossings are sent to half-braidings in $Z(\RepqP)$, hence they are invertible and satisfy the corresponding Yang-Baxter equation. For the  framed Reidemeister move I $$\MyFigure{drawings/twist1.pdf}\quad\leftrightarrow\quad\MyFigure{drawings/twist2.pdf},$$ this is a consequence of the restriction \ref{restrictionfunctor} being well-defined and the fact that all relations holding in $\RepqP$ come from relations in $\RepqH.$
\end{proof}

\subsection{Defect lines}\label{local_model_strat} We finally introduce defect lines decorated by $\RepqH$ and we use the $(\RepqH,\RepqP)$-centred structure to define local relations around them. Let $E=I^3$ be the cube, which we endow with the stratification defined by the following figure:
\begin{figure}[H]
\centering
\includegraphics[scale=0.3]{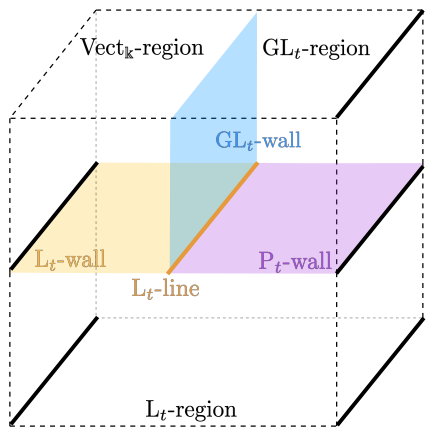}
\caption{We consider two horizontal defect walls decorated with $\RepqH$ and $\RepqP$, and a vertical wall decorated with $\RepqG$. These walls meet at a $1$-dimensional stratum decorated with $\RepqH$. The region below the horizontal wall is the $\H_t$-region, the one over the $\L_t$-wall is the $\Vect_\Bbbk$-region and, finally, the region over the $\P_t$-wall is the $\GL_t$-region.}
\label{fig:doublestrat1}
\end{figure}

We consider the following class of ribbon graphs $\Omega$, which generalises those introduced in the previous section:
\begin{itemize}
\item the endpoints of $\Omega$ lie on the bold intervals and we suppose that they are all at different depths, so that projecting them into the horizontal wall produces no intersections;
\item each part of the graph is decorated according to its region. In particular, the $\Vect_\Bbbk$-region is empty;
\item every edge lies entirely in one of the strata and can be attached to a coupon lying in a stratum whose dimension differs at most by one (that is, they cannot go through the defect line directly from one of the three-dimensional regions).
\end{itemize}
As usual, we consider these graphs up to framed stratified isotopies of the cube. 

Fix a configuration of coloured oriented points $\mathcal{P}$ on the bold intervals and let $E(\mathcal{P})$ be the $(\Bbbk\otimes_{\mathbb{C}(q)}\Bbbk)$-linear space spanned by isotopy classes of ribbon graphs whose endpoints match $\mathcal{P}$. Projecting $\mathcal{P}$ into the horizontal walls, we get two configurations $X$ and $Y$ representing objects of $\RepqH$ and $\RepqP$, respectively. Let $\Omega\in E(\mathcal{P})$ be a ribbon graph. Following the definition of the evaluation morphism in \eqref{evaluation_morphism}, we can project $\Omega$ into the horizontal defects, replace the crossings that appear by the corresponding half-braiding and introduce the pivotal structures, so that we obtain a planar graph $\widetilde{\Omega}$ on a stratified square as in Figure \ref{fig:strat_square}, representing a morphism $f_\Omega\in\Hom_{\RepqH}(Y,j^*(X)).$

\begin{lemma}
The  linear  map defined by the assignment $$\begin{array}{ccc}
E(\mathcal{P}) & \to & \Hom_{\RepqH}(Y,j^*(X)),\\
\Omega & \mapsto & f_\Omega,
\end{array} $$ is well-defined, i.e., it does not depend on the isotopy class of $\Omega$.
\end{lemma}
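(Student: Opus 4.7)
The strategy mirrors the proof of Lemma \ref{evaluation_morphism_lemma}. I need to show that two ribbon graphs $\Omega_1, \Omega_2 \in E(\mathcal{P})$ that are related by a framed stratified isotopy of the cube produce equal morphisms $f_{\Omega_1} = f_{\Omega_2}$ in $\Hom_{\RepqH}(Y, j^*(X))$. By standard arguments, it suffices to reduce to a finite list of elementary moves: framed Reidemeister moves II, III, and the ribbon move I occurring locally (either inside a single stratum, or straddling one of the defect strata).

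First, for moves that happen entirely within one of the three-dimensional regions or within a single defect wall, invariance is immediate: inside the $\GL_t$-region one uses that $\RepqG$ is ribbon; inside the $\H_t$-region (or on the $\H_t$-wall) one uses that $\RepqH$ is ribbon; the planar relations on the $\P_t$-wall are exactly those used to define $\RepqP$ in Section \ref{repqpsection}. So the only nontrivial content concerns moves that push a crossing or a ribbon twist \emph{through} one of the defect strata.

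Next, there are three transitional moves to analyse, corresponding to the three codimension-one strata. Pushing a crossing or twist from the $\GL_t$-region through the $\P_t$-wall into the $\H_t$-region is handled exactly as in Lemma \ref{evaluation_morphism_lemma}: after projection, the crossing becomes the half-braiding coming from the central functor of Theorem \ref{central_functor}, and naturality of this half-braiding (Remark \ref{naturality}) says precisely that the crossing may be slid across the defect without changing its evaluation; the Reidemeister moves II and III then hold because half-braidings are invertible and satisfy the hexagon, and Reidemeister I is governed by the ribbon structures on both sides as before. The analogous argument, using the rigid/ribbon structure of $\RepqH$ together with well-definedness of $j_t^*$, handles pushing a crossing from the $\H_t$-region through the $\H_t$-wall.

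The main obstacle, and the genuinely new point compared to the previous lemma, is invariance under isotopies that involve the codimension-two \emph{defect line} where the $\P_t$-wall, the $\H_t$-wall and the $\GL_t$-wall meet. Here one has to move a strand or coupon from the $\H_t$-wall to the $\P_t$-wall across this line, or conversely to slide a crossing coming from one of the three-dimensional regions past the line. This is exactly the local content of the $(\RepqP, \RepqH)$-centred bimodule structure on $\RepqH$ from Section \ref{centred_bimodule}: the centring datum gives canonical isomorphisms $j_t^*(-)\otimes - \to -\otimes j_t^*(-)$ compatible with both the $\RepqP$- and $\RepqH$-central structures, and under projection these are what the relevant crossings near the line defect evaluate to. Checking that each elementary move through the line defect corresponds to one of the coherence axioms of a centred bimodule (naturality of the centring morphism, compatibility with the two central algebra structures, and the hexagon-type axiom relating them) then yields invariance. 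Combining all cases, $f_\Omega$ depends only on the isotopy class of $\Omega$, so the assignment descends to $E(\mathcal{P})$.
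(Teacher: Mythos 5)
Your overall strategy is sound, and you correctly identify the centred $(\RepqP,\RepqH)$-bimodule structure on $\RepqH$ as the genuinely new ingredient beyond Lemma \ref{evaluation_morphism_lemma}. The paper organizes the proof differently: instead of enumerating elementary Reidemeister moves stratum by stratum, it extends the vertical $\GL_t$-wall downward so as to cut the stratified cube into two subgraphs $\Omega_1$ (under the $\H_t$-wall) and $\Omega_2$ (on either side of the $\P_t$-wall), applies Lemma \ref{evaluation_morphism_lemma} to each half --- using the trivial $(\Vect_\Bbbk,\RepqH)$-central structure for $\Omega_1$ and the parabolic $(\RepqG,\RepqH)$-central structure for $\Omega_2$ --- and then only has to treat isotopies that cross the vertical wall. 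This buys a cleaner reduction and reuses the earlier lemma more directly.

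The place where your argument stops short is the treatment of isotopies through the line defect, which is precisely the crux. You write that ``each elementary move through the line defect corresponds to one of the coherence axioms of a centred bimodule,'' but this is an assertion, not a verification, and the paper makes it concrete: the only case of interest is a crossing involving a strand from the $\H_t$-region, for which the half-braiding is the braiding $\beta$ of $\RepqH$; projecting this crossing onto the $\P_t$-wall yields a coupon decorated with $\pi^*(\beta)$, while projecting (after an isotopy) onto the $\H_t$-wall yields a coupon decorated with $\beta$; and these two planar graphs represent the same morphism in $\Hom_{\RepqH}(Y,j^*(X))$ because $j^*\circ\pi^*=\text{id}_{\RepqH}$ and by relation \eqref{morphrels}. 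Also, a small imprecision: you describe the centring datum as giving isomorphisms $j_t^*(-)\otimes-\to-\otimes j_t^*(-)$, but what is actually used is the comparison between the $\RepqH$-action transmitted via $\pi_t^*$ (and then through $j_t^*$) and the direct $\RepqH$-action; the identity $j_t^*\circ\pi_t^*=\text{id}$ is exactly what collapses this to the $\pi^*(\beta)$-versus-$\beta$ comparison above.
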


\begin{proof}
Extending the vertical wall up to the bottom of the cube cuts the ribbon graph $\Omega$ into two subgraphs: $\Omega_1$, lying under the $\H_t$-wall, and $\Omega_2$, lying under and over the $\P_t$-wall. The planar diagram $\widetilde{\Omega}$ can then be obtained by projecting $\Omega_1$ and $\Omega_2$ separately and gluing them along the $\L_t$-defect line. The projections $\widetilde{\Omega}_1$ and $\widetilde{\Omega}_2$ are obtained as in the definition of the evaluation morphism in \eqref{evaluation_morphism}, using the trivial $(\Vect,\RepqH)$-central structure of $\RepqH$ for $\widetilde{\Omega}_1$ and the $(\RepqG,\RepqH)$-central structure of $\RepqH$ for $\widetilde{\Omega}_2.$ Hence, by the same arguments as in Lemma \ref{evaluation_morphism_lemma}, they do not depend on the isotopy class of $\Omega_1$ and $\Omega_2$. This shows that $f_\Omega$ is invariant under isotopies of the stratified cube occurring entirely on one of the sides of the vertical wall. It remains to check that $f_\Omega$ is also invariant under isotopies making a part of the diagram move through the vertical wall. This is the diagrammatic counterpart of the fact that the bimodule $\RepqH$ is centred: the $\L_t$-region can act on the defect line both through the $\P_t$-wall and through the $\L_t$-wall, and we want to show that both actions in fact coincide. This follows from relation \eqref{morphrels} and the definition of the central structures. Indeed, when we project $\Omega$ into the defect, we replace all the crossings that appear by the corresponding half-braidings. In particular, when one of the strands involved comes from the $\H_t$-region (this is the only case of interest), the half-braiding is the braiding $\beta$ of $\RepqH$ (cf. Section \ref{central_algebra}). Therefore, projecting into the $\P_t$-wall makes a coupon decorated with $\pi^*(\beta)$ appear, while applying an isotopy and projecting into the $\H_t$-wall makes a coupon decorated with $\beta$ itself appear. Relation \eqref{morphrels} and the fact that $j^*\circ\pi^*=\text{id}_{\RepqH}$ imply that both diagrams represent the same morphism in $\Hom_{\RepqH}(Y,j^*(X)).$
\end{proof}

\subsection{Three-dimensional defect theory} We finally introduce the three-dimensional theory with defects. Let $(S,f,\phi)$ be a bipartite framed surface (cf. Definition \ref{bipartite}). We label $\phi^{-1}(A)$ and $\phi^{-1}(B)$ with $\L_t$ and $\phi^{-1}(C)$ with $\P_t$. The stratification $\phi$ induces a stratification of the cylinder $S\times I$ decorated as follows (see Figure \ref{fig:strat_decoration}):
\begin{itemize}
\item the horizontal wall $S\times\left\{\frac{1}{2}\right\}$ is decorated as $(S,\phi)$;
\item the cylinder $\phi^{-1}(C)\times\left(\frac{1}{2},1\right]$ and the vertical walls $\phi^{-1}(B)\times\left(\frac{1}{2},1\right]$ are labelled with $\GL_t$;
\item the cylinder $\phi^{-1}(A)\times\left(\frac{1}{2},1\right]$ is labelled with  $\Vect_\Bbbk$;
\item the region $S\times\left[0,\frac{1}{2}\right)$ under the horizontal wall is entirely decorated with $\H_t$.
\end{itemize}

\begin{figure}[t]
\centering
\includegraphics[scale=0.45]{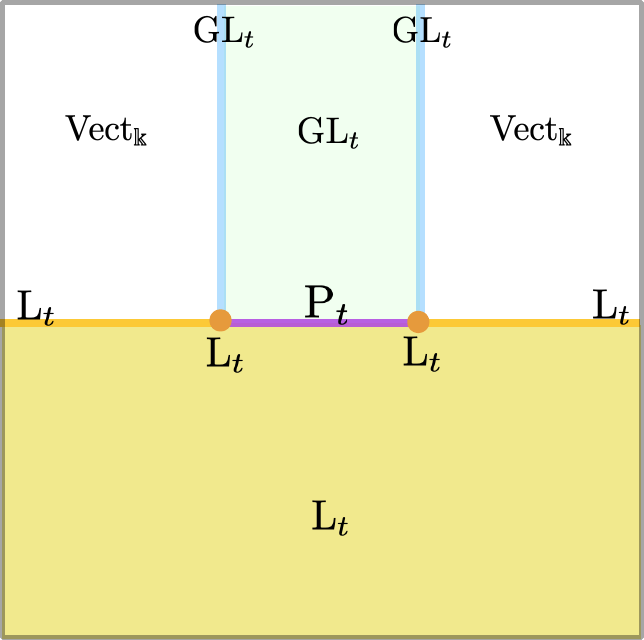}
\caption{Schematic view of the section of a decorated cylinder. The horizontal line represents the defect wall. The orange points are sections of the defect lines. The blue vertical lines represent the $\GL_t$-wall.}
\label{fig:strat_decoration}
\end{figure}
Let $\mathcal{B}$ be a compatible marking of the surface. We consider isotopy classes of ribbon graphs $\Omega$ on $S$ (see Figure \ref{fig:stratangle})  such that:
\begin{itemize}
\item $\Omega$ does not intersect the $\Vect_\Bbbk$-region and it is coloured according to the labelling on the other regions;
\item the endpoints of $\Omega$ are attached to either $S\times\{\pm 1\}$ or one of the components marked on the boundary of the horizontal wall;
\item every edge lies entirely in one of the strata and can be attached to a coupon rectilinearly embedded in a stratum whose dimension differs at most by one (this means, again, that they cannot go directly from the $\GL_t$-region to the $\H_t$-line).
\end{itemize}
Call $S^{\P_t}$ the region of $S$ decorated with $\P_t$ and fix boundary conditions $X\in\text{SkCat}_{\GL_t}(S^{\P_t})$, $Y\in\text{SkCat}_{\H_t}(S)$ and $U_C\in\mathcal{Z}(C),$ for every $C\in\mathcal{B}$.  We set $E^S\left(X,Y,(U_C)_{C\in\mathcal{B}}\right)$ for the linear space spanned by ribbon graphs as above, modulo local relations induced by $\RepqG$ and $\RepqH$ on the  three-dimensional regions, and by the local models described in Sections \ref{local_model} and \ref{local_model_strat} near the defects. The assignment $$\begin{array}{cccc}
\mathcal{A}(S,f,\phi,\mathcal{B})\colon & \left(\bigboxtimes\limits_{C\in\mathcal{B}}\mathcal{Z}(C)\right)\boxtimes\text{SkCat}_{\GL_t}(S^{\P_t})\boxtimes\Skh^\op & \to & \Vect,\\
& \left((U_C)_{C\in\mathcal{B}},X,Y\right) & \mapsto & E^S\left(X,Y,(U_C)_{C\in\mathcal{B}}\right),
\end{array}$$ is functorial. The action on morphisms is given by horizontally gluing cylinders along the components marked on the defect wall, and by vertically stacking diagrams representing morphisms in the skein categories.

\begin{figure}[t]
\centering
\includegraphics[scale=0.25]{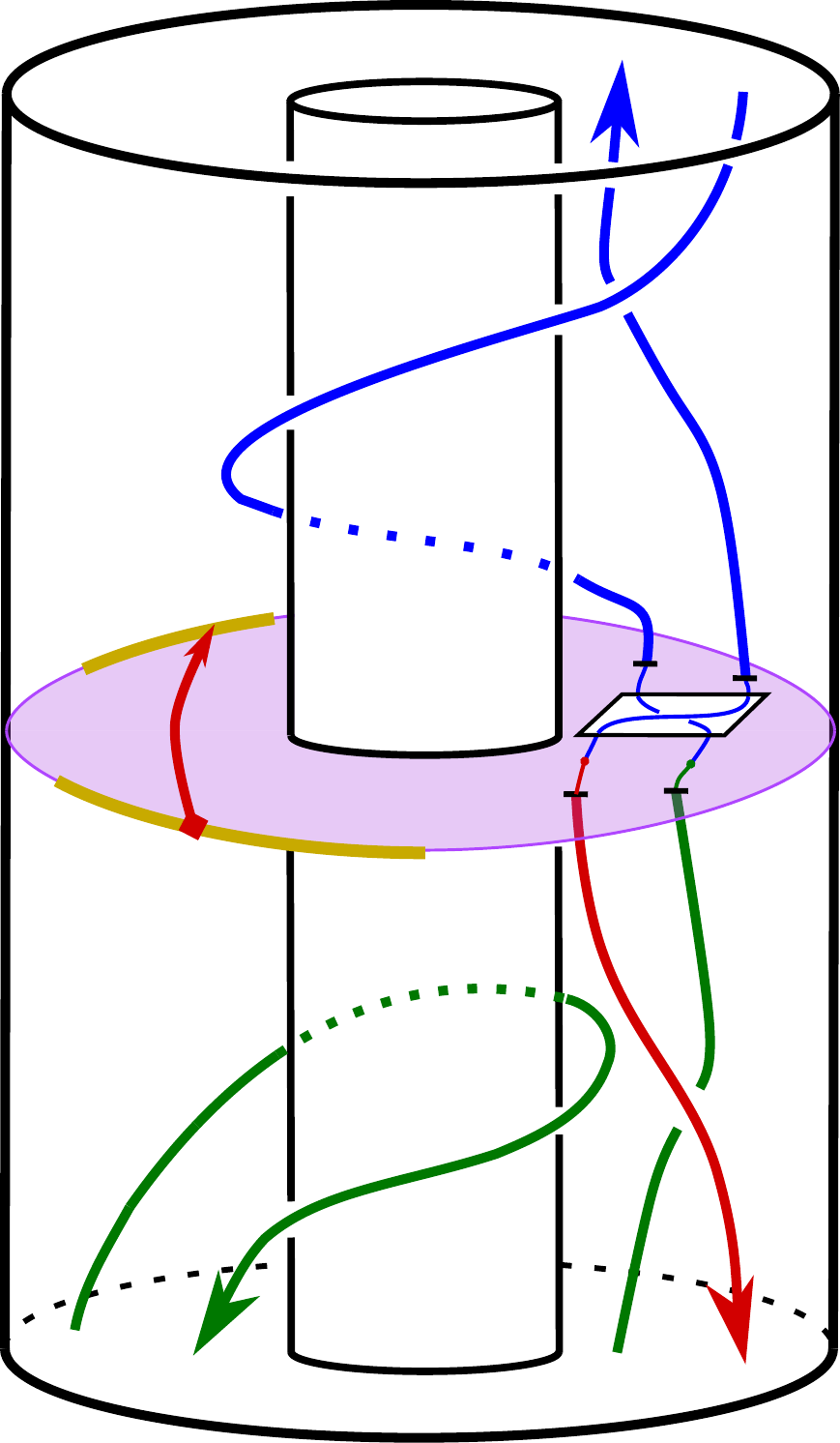}
\caption{Stratified ribbon graph on an annulus entirely labelled by $\P_t$.}
\label{fig:stratangle}
\end{figure}

\begin{lemma}
Let $(S,f,\phi,\mathcal{B})$ be a bipartite framed surface with compatible marking.  Let $\gl_C(S)$ be the surface obtained by gluing $S$ along a boundary component $C$, with $C,\overline{C}\in\mathcal{B}_{\P_t}$ (alternatively, in $\mathcal{B}_{\L_t}$). Then, $$\mathcal{A}(\gl_C(S))(-)=\int^{U\in\mathcal{Z}(C)}\mathcal{A}(S)(-,U,U),$$ where the components decorated with $U$ on the RHS are $C$ and $\overline{C}$. 
\end{lemma}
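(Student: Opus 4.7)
The plan is to adapt the standard excision argument (as in \cite[Theorem 4.4.2]{walker}, already used in the proof of the analogous planar statement in Theorem \ref{planar_theory}) to the stratified three-dimensional setting. The strategy is to construct mutually inverse natural maps between both sides of the claimed equality.

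First, I would construct a \emph{gluing map}
\[
\Phi\colon\int^{U\in\mathcal{Z}(C)}\mathcal{A}(S)(-,U,U)\longrightarrow\mathcal{A}(\gl_C(S))(-).
\]
Given a stratified ribbon graph $\Omega\in E^S(\ldots,U,U)$ with matching boundary conditions $U$ on $C$ and on $\overline{C}$, the gluing $\gl_C(S)\times I$ produces an honest stratified ribbon graph in the glued cylinder, since the stratifications on $C\times I$ and $\overline{C}\times I$ agree by hypothesis ($C\in\mathcal{B}_{\P_t}$ on both sides, or $C\in\mathcal{B}_{\L_t}$ on both sides). The key point is that the coend relations are respected: for any morphism $f\colon U\to U'$ in $\mathcal{Z}(C)$, inserting $f$ along $C\times I$ produces the same element of $\mathcal{A}(\gl_C(S))$ as inserting it along $\overline{C}\times I$, because the two insertions become adjacent in the glued surface and can be isotoped into each other through $\gl_C(S)\times I$.

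Next, I would build the inverse \emph{cutting map} $\Psi$. Given a ribbon graph $\Omega\in\mathcal{A}(\gl_C(S))(-)$, apply a framed stratified isotopy so that $\Omega$ meets $C\times I\subset\gl_C(S)\times I$ transversally, contains no coupon on this slice, and --- crucially --- all intersection strands with the slice appear at a single height compatible with the stratum to which $C$ belongs (the defect level $S\times\{1/2\}$ if $C\in\mathcal{B}_{\L_t}$, or a generic height inside the $\P_t$ region if $C\in\mathcal{B}_{\P_t}$). Cutting along $C\times I$ then produces a stratified ribbon graph in $S\times I$ with boundary object $U\in\mathcal{Z}(C)$ on both $C$ and $\overline{C}$, yielding a class in $\mathcal{A}(S)(-,U,U)$ whose image in the coend is declared to be $\Psi(\Omega)$.

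The main obstacle is verifying that $\Psi$ is well-defined, i.e., independent of the transversalization, and that both maps respect all local relations. For two different transversalizations one can interpolate by a one-parameter family of stratified isotopies, during which the height ordering or the label $U$ may change by basic moves: strand permutations, cups/caps, or replacement of $U$ by $U'$ via a morphism of $\mathcal{Z}(C)$. Each such move corresponds exactly to inserting a morphism of $\mathcal{Z}(C)$ on one side of the cut, which is killed in the coend. When $C\in\mathcal{B}_{\L_t}$ one must additionally check that isotopies crossing the triple locus where the $\L_t$-line, the $\P_t$-wall, and the $\GL_t$-wall meet are controlled by the local models of Sections \ref{local_model} and \ref{local_model_strat}; this is where the centred bimodule structure of $\RepqH$ is used, and the argument reduces, in a neighbourhood of the triple locus, to the identity $j_t^*\circ\pi_t^*=\mathrm{id}_{\RepqH}$ together with relation \eqref{morphrels}. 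Once well-definedness is established, the verifications $\Phi\circ\Psi=\mathrm{id}$ and $\Psi\circ\Phi=\mathrm{id}$ are immediate from the construction, since cutting then gluing, or gluing then cutting, recovers the original graph up to stratified isotopy.
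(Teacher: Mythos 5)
Your overall structure is right --- gluing and cutting maps, well-definedness checks, verification that they are mutually inverse --- and your observation that gluing respects the coend relations is essentially what the paper does. But there is a genuine gap in the construction of the cutting map $\Psi$, and it is exactly the technical heart of the paper's proof.

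The marking $C$ always lies on the boundary of the \emph{horizontal defect wall} $S\times\{1/2\}$, so the boundary-condition objects for $C$ live in the planar category $\mathcal{Z}(C)$ (either $\RepqP$ or $\RepqH$ viewed planarly), not in any skein category of a cylinder. This is true both for $C\in\mathcal{B}_{\L_t}$ and for $C\in\mathcal{B}_{\P_t}$: the cut must produce a configuration of coloured, integer-labelled points at height $1/2$. Your proposal of putting the intersection ``at a generic height inside the $\P_t$ region'' is therefore not compatible with the coend you are targeting --- a generic height lands you in the $\GL_t$ or $\H_t$ bulk, where the intersection is an object of a ribbon/skein category, not of $\mathcal{Z}(C)$. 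Both cases must cut through the defect wall.

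More importantly, you claim this transversalization can be achieved by ``a framed stratified isotopy.'' That cannot work: stratified isotopies preserve strata, so a strand in one of the $3$-dimensional regions can never be carried into the $2$-dimensional defect wall by an isotopy alone. The move that the paper uses, and that your proof is missing, is to apply the \emph{stratified skein relations} from the local models of Sections \ref{local_model} and \ref{local_model_strat}, which precisely allow one to \emph{project} bulk strands onto the defect wall in a neighbourhood of $C\times I$ (this is the content of Figure \ref{fig:cut}). Only after this projection is the graph planar near $C\times I$, and only then does cutting produce an object of $\mathcal{Z}(C)$ and a class in $\mathcal{A}(S)(-,U,U)$. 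Without this step, $\Psi$ is not even defined, and the rest of your interpolation argument for well-definedness does not get off the ground. (A minor stylistic difference: the paper avoids building a map into the coend directly by instead verifying the universal property of the colimit; your direct construction of $\Psi$ is fine once repaired, but the universal-property route sidesteps some bookkeeping. Also, the centred-bimodule / triple-locus discussion you include is not actually needed here, since compatible markings stay away from the $\H_t$-line.)
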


\begin{proof}
Again, this is essentially the same proof as \cite[Theorem 4.4.2]{walker}. Gluing along $C$ induces morphisms $$\gl_U\colon\mathcal{A}(S)(-,U,U)\to\mathcal{A}(\gl_C(S))(-)$$ for every $U\in\mathcal{Z}(C)$. Moreover, if $f\colon U\to V$ is a morphism in $\mathcal{Z}(C)$, $\Omega$ is a stratified ribbon graph representing an element of $\mathcal{A}(S)(-,U,V)$ and $f\star\Omega$ (resp. $\Omega\star f$) are the graphs obtained by gluing $f$ to $\Omega$ along $C$ (resp. $\overline{C}$), then the graphs $\gl_U(f\star\Omega)$ and $\gl_V(\Omega\star f)$ are related by an isotopy supported on a tubular neighbourhood of $\partial S$. Therefore, they define the same element of $\mathcal{A}(\gl_C(S))(-)$ and the diagram
\begin{equation}\label{coend_diagram}
\begin{tikzcd}[column sep = small]
                                                                  & {\mathcal{A}(S)(-,U,U)} \arrow[rd, "\gl_U"]  &                                            \\
{\mathcal{A}(S)(-,U,V)} \arrow[ru, "-\star f"] \arrow[rd, "f\star -"'] &                                          & {\mathcal{A}(\gl_C(S))(-)} \\
                                                                  & {\mathcal{A}(S)(-,V,V)} \arrow[ru, "\gl_V"'] &                                           
\end{tikzcd}
\end{equation}
commutes. We shall show that $\mathcal{A}(\gl_C(S))(-)$ is universal for this property. Let $E$ be a vector space together with linear maps $$\varphi_U\colon\mathcal{A}(S)(-,U,U)\to E$$ making the equivalent diagrams commute. We have to prove that there is a unique linear map $$\varphi\colon\mathcal{A}(\gl_C(S))(-)\to E$$ such that $\varphi_U=\varphi\circ\gl_U$ for every $U\in\mathcal{Z}(C).$ Let $\Omega$ be a stratified ribbon graph in $\gl_C(S)\times I$ representing an element of $\mathcal{Z}(\gl_C(S))(-)$. Applying isotopy and stratified skein relations, we may locally project $\Omega$ onto the defect, so that we may suppose that there is a neighborhood of $C\times I$ where $\Omega$ is represented by a planar graph contained in the defect (see Figure \ref{fig:cut}). Let $U$ be the intersection of $\Omega$ with $C\times\left\{\frac{1}{2}\right\}$, which determines an object of $\mathcal{Z}(C)$. Cutting along $C\times I$, we get a ribbon graph $\Omega_U$ in $S\times I$ defining an element of $\mathcal{A}(S)(-,U,U)$. If $\varphi$ exists, then $\varphi(\Omega)=\varphi_U(\Omega_U)$ so it is uniquely defined.

\begin{figure}[t]
\centering
\includegraphics[scale=0.2, valign = c]{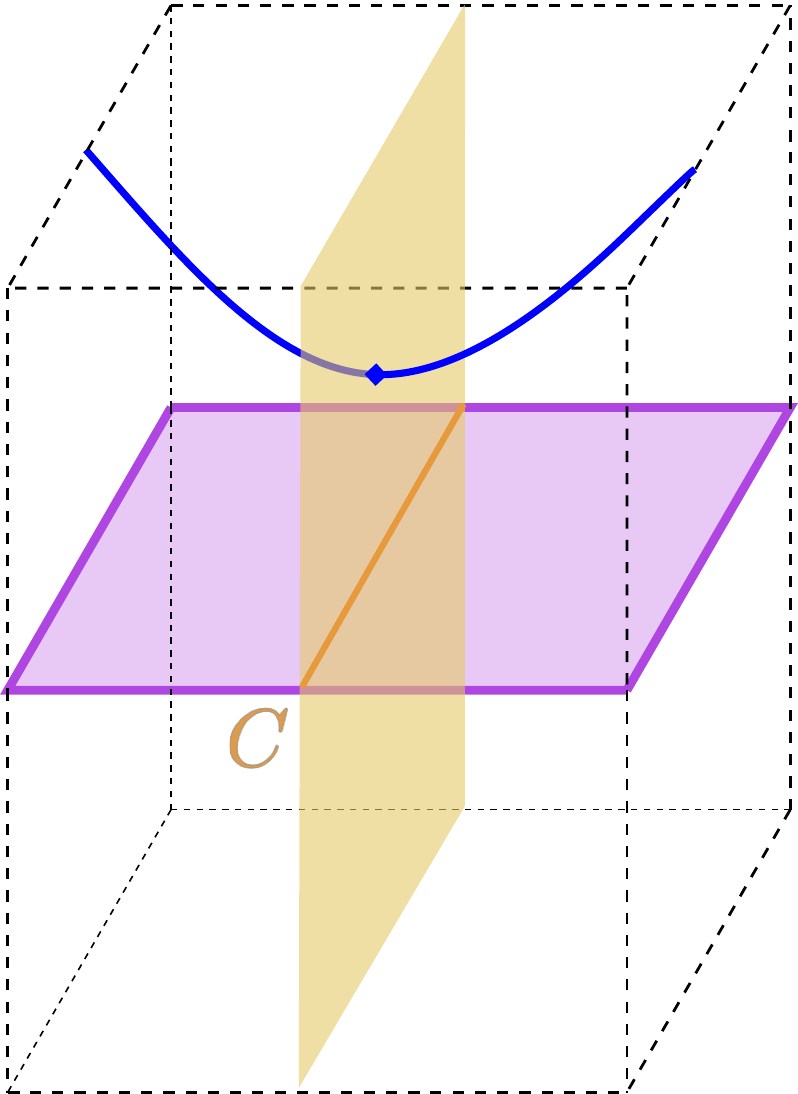}\qquad =\qquad \includegraphics[scale=0.2, valign = c]{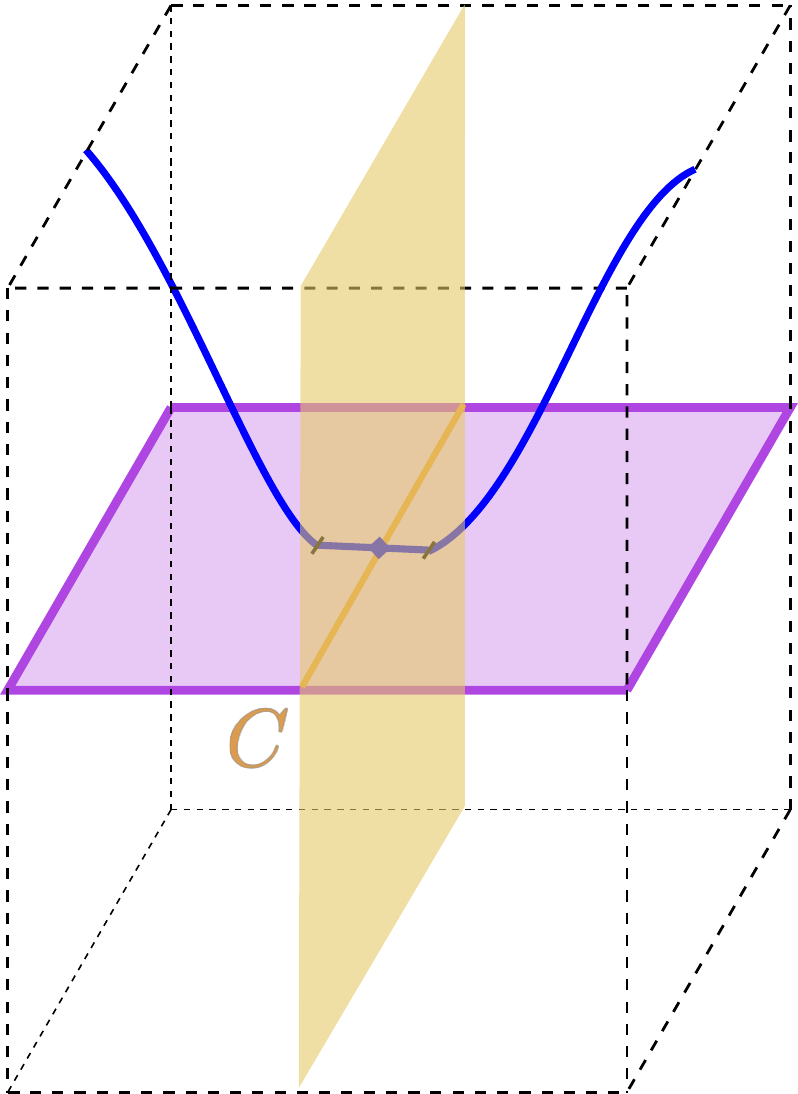}
\caption{Applying isotopy and skein relations, we may always cut a graph along a planar component.}
\label{fig:cut}
\end{figure}

To prove the existence, suppose first that $\widetilde{\Omega}$ is another stratified ribbon graph representing the same element of $\mathcal{A}(\gl_C(S))(-)$ and related to $\Omega$ by an isotopy shifting a collar neighbourhood through $C\times I$. Then, cutting along $C\times I$ yields a ribbon graph $\widetilde{\Omega}_V$ in $S\times I$ and, since we may suppose that it is planar around $C\times I$, the commutativity of \eqref{coend_diagram} implies that $\gl_U(\Omega_U)=\gl_V(\widetilde{\Omega}_V)$. On the other hand, if $\Omega$ and $\widetilde{\Omega}$ are related by an isotopy of $\gl_C(S)$ supported in a cube not intersecting $C\times I$, then $\Omega_U$ and $\widetilde{\Omega}_U$ are related by the same isotopy of $S$, so that they represent the same element of $\mathcal{A}(S)(-,U,U)$ and $\varphi_U(\Omega_U)=\varphi_U(\widetilde{\Omega}_U)$. Therefore, $\varphi$ is compatible with isotopies. Finally, if $\Omega$ and $\widetilde{\Omega}$ are related by a skein relation on a cube not intersecting $C\times I$, then $\Omega_U$ and $\widetilde{\Omega}_U$ are related by the same relation and they represent the same element. If the skein relation occurs in a cube intersecting $S$, we can apply an isotopy moving it off $C\times I$. We get stratified ribbon graphs $\Omega'$ and $\widetilde{\Omega}'$ related by a skein relation on a cube not intersecting $C\times I$, so $\gl_{U'}(\Omega_{U'}')=\gl_{U'}(\widetilde{\Omega}_{U'}')$. Moreover, the isotopy invariance of $\varphi$ implies that $\gl_U(\Omega_U)=\gl_{U'}(\Omega_{U'}')$ and $\gl_U(\widetilde{\Omega})=\gl_{U'}(\widetilde{\Omega}_{U'}).$ This proves that $\varphi$ is compatible with skein relations. 
\end{proof}

\begin{theorem}\label{3TFT}
The assignment $$\mathcal{A}\colon\text{Bord}^{\text{bip,fr}}_1(2)\to\textsc{Bimod}$$ defines a symmetric monoidal functor from the category of $2$-dimensional bipartite framed cobordisms, hence a $2$-dimensional framed TFT. It coincides with the planar theory $\mathcal{Z}$ on one-dimensional manifolds.
\end{theorem}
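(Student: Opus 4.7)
The plan is to mirror the proof of Theorem \ref{planar_theory}, promoting it to three dimensions using the gluing lemma just established. First, I would specify the action of $\mathcal{A}$ on objects: to a $1$-manifold $C$ I assign $\mathcal{Z}(C)$, since a bipartite framed cobordism only has $1$-dimensional $\L_t$ and $\P_t$ strata on its boundary (no surface or line defects occur there). This already yields the final assertion of the theorem, namely that $\mathcal{A}$ and $\mathcal{Z}$ agree on one-dimensional manifolds, essentially by construction.

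Second, I would check functoriality. The composition of two bipartite framed cobordisms $(S_1,f_1,\phi_1,\mathcal{B}_1)$ and $(S_2,f_2,\phi_2,\mathcal{B}_2)$ along a common boundary component $C$ is obtained by a self-gluing of the disjoint union $S_1\sqcup S_2$ along $C\sqcup\overline{C}$. The previous lemma computes $\mathcal{A}(\gl_C(S_1\sqcup S_2))$ as the coend $\int^{U\in\mathcal{Z}(C)}\mathcal{A}(S_1\sqcup S_2)(-,U,U)$, which matches the composition rule in $\textsc{Bimod}$. To handle the general case where $\partial S_1\cap\partial S_2$ contains components of both colours, I would first glue along all $\P_t$-components and then along all $\L_t$-components, applying the gluing lemma iteratively; since compatible markings keep $\P_t$- and $\L_t$-components disjoint, the two classes of gluings commute and the result does not depend on the chosen order. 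Associativity is automatic from the universal property of the coend.

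Third, symmetric monoidality: disjoint union of bipartite framed surfaces corresponds to $\boxtimes$ of bimodules. For a disjoint union $S_1\sqcup S_2$, a stratified ribbon graph in $(S_1\sqcup S_2)\times I$ is canonically the disjoint union of graphs in $S_i\times I$, and all local relations (in the three-dimensional regions as well as around the parabolic wall and the $\L_t$-line, as described in Sections \ref{local_model} and \ref{local_model_strat}) are local, so they respect this decomposition. This gives a canonical isomorphism $\mathcal{A}(S_1\sqcup S_2)\cong\mathcal{A}(S_1)\boxtimes\mathcal{A}(S_2)$, and the symmetry isomorphisms come from exchanging the two factors, which is clearly compatible with composition.

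The main obstacle is the verification that the two orders of gluing (first $\P_t$, then $\L_t$, versus the reverse) indeed yield canonically isomorphic coends. Conceptually this is clear because the two classes of components are disjoint on the boundary and hence the corresponding gluings are supported in disjoint tubular neighbourhoods; formally one needs to observe that a stratified ribbon graph near a boundary annulus can always be isotoped (using the local relations from Sections \ref{local_model} and \ref{local_model_strat}) so that it is planar in that annulus, after which the cut-and-paste argument from the preceding lemma runs independently in each annulus. Once this locality is in place, the proof reduces to a formal manipulation of coends and requires no further calculation.
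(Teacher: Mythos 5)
Your proof follows essentially the same route as the paper's: invoke the gluing lemma for single-colour boundary components, then handle mixed boundaries by first gluing the $\P_t$-components and then the $\L_t$-components, the two classes being disjoint because markings are compatible. You supply more detail than the paper (which disposes of the whole argument in a sentence by reference to Theorem \ref{planar_theory}), particularly on symmetric monoidality and on why the order of gluing does not matter, but the strategy is identical.
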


\begin{proof}
As in Theorem \ref{planar_theory}, the compatibility with gluing/cutting surfaces follows from the previous lemma and the fact that we are considering compatible markings.
\end{proof}

\section{The HOMFLY skein bialgebra} \label{homfly_bialgebra}

We finally recover the Turaev coproduct on the HOMFLY skein algebra as a particular case of the formalism of skein theory with defects developed in the previous sections.

\subsection{The coproduct map}\label{coproductsection} Consider a bipartite framed surface $(S,f,\phi,\mathcal{B})$ with compatible marking. As above, denote by $S^{\P_t}$ the region of $S$ decorated with $\P_t$. Applying the three-dimensional TFT $\mathcal{A}$ defined above, we get a functor $$\mathcal{A}(S)\colon\left(\bigboxtimes\limits_{C\in\mathcal{B}}\mathcal{Z}(C)\right)\boxtimes\text{SkCat}_{\GL_t}(S^\P_t)\boxtimes\Skh\to\Vect.$$ Choose boundary conditions $U_C\in\mathcal{Z}(S)$ for every component $C\in\mathcal{B}$ marked on the boundary, and fix the empty configurations $\emptyset\in\text{SkCat}_{\GL_t}(S^{\P_t})$ and $\emptyset\in\Skh$ on the top and bottom bases of $S\times I$. By construction, the vector space $E^S((U_C)_{C\in\mathcal{B}})$ obtained from $\mathcal{A}(S)$ comes with a left and a right action of the skein algebras $\text{Sk}_{\GL_t}(S^{\P_t})$ and $\skh$, respectively. By choosing an appropriate stratification $\phi$, this vector space is indeed isomorphic to $\skh$:

\begin{figure}[t]
\centering
\includegraphics[scale=0.4, valign=c]{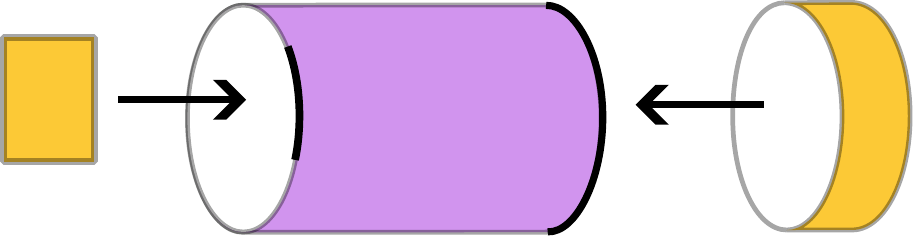} $\qquad\rightsquigarrow\qquad$ \includegraphics[scale=0.4, valign=c]{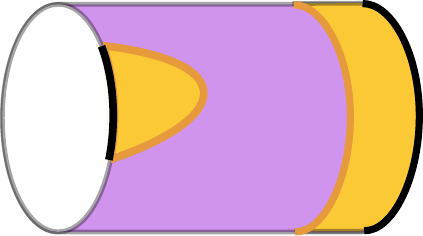}
\caption{Gluing stratified cylinders along the boundary modifies the stratification.}\label{fig:gluing_strat}
\end{figure}

\begin{proposition}\label{skeinalgebra}
Suppose that $\mathcal{B}\neq\emptyset$ and that $\phi$ consists of a family of cylinders inserted along the marked components as in Figure \ref{fig:gluing_strat}. Then, there is a natural family of isomorphisms of vector spaces $$\mathcal{A}(S)(-,\emptyset,\emptyset)\cong\mathcal{Z}^{\H_t}(-).$$ 
\end{proposition}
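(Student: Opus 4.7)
The plan is to construct mutually inverse natural transformations between the two functors, where on the right-hand side $\mathcal{Z}^{\H_t}(-)$ is implicitly pre-composed with the inclusion functor $j^*_t \colon \mathcal{Z}(C) = \text{Cyl}(\RepqP) \to \text{Cyl}(\RepqH)$ on each marked component, so that both sides receive boundary data on the same parabolic cylinder categories.

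In one direction I define
\[
\Phi \colon \mathcal{A}(S)(\boldsymbol{U}, \emptyset, \emptyset) \longrightarrow \mathcal{Z}^{\H_t}(S)\bigl(j^*_t(\boldsymbol{U})\bigr)
\]
by projecting any representative stratified ribbon graph $\Omega$ down into the $\H_t$-region in two stages. For the chosen stratification, near each marked component $C \in \mathcal{B}$ the $\GL_t$-region is a cylinder $C \times I \times (1/2,1]$ with empty top face, so the portion of $\Omega$ sitting there is a ribbon graph ending only on the $\P_t$-wall below and on the vertical $\GL_t$-walls. Applying the evaluation morphism~\eqref{evaluation_morphism} of Section~\ref{local_model}, which is built from the $(\RepqG,\RepqH)$-central algebra structure of $\RepqP$ established in Theorem~\ref{central_functor}, collapses this piece into a planar graph on the defect wall. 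Next, using the local model around the line defect of Section~\ref{local_model_strat}, i.e. the centred bimodule structure of Section~\ref{centred_bimodule}, I absorb each such $\P_t$-graph into the $\H_t$-bulk through $j^*_t$. What remains lies entirely inside $S \times [0,1/2)$ and so defines a class in $\mathcal{Z}^{\H_t}(S)(j^*_t(\boldsymbol{U}))$. Conversely, $\Psi$ sends an $\H_t$-ribbon graph on $S$ to the same graph viewed as a ribbon graph in the $\H_t$-region $S \times [0,1/2)$ of the stratified cube; since no strand enters the $\P_t$- or $\GL_t$-regions, the top and bottom boundary conditions are automatically empty.

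The main obstacle is showing that $\Phi$ is well defined and that $\Psi \circ \Phi$ is the identity. Well-definedness reduces, region by region, to exactly the arguments already carried out in Lemma~\ref{evaluation_morphism_lemma} and its analogue at the end of Section~\ref{local_model_strat}: every stratified skein relation and every stratified isotopy present in $\mathcal{A}$ projects either to an $\RepqH$-skein relation or to a trivial identity, because the central and centred structures have been set up so that the half-braidings appearing during projection are natural and satisfy the hexagon/Yang--Baxter identities. For $\Psi \circ \Phi = \mathrm{id}$, stratified isotopy allows one to re-inflate a reduced graph back into the three-dimensional stratified cube, and the two projections used in $\Phi$ are inverse to these inflations modulo the relations built into $\mathcal{A}$; the composition $\Phi \circ \Psi$ is trivially the identity, since graphs in the image of $\Psi$ are already in reduced form. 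Naturality in the $U_C$ follows from functoriality of $j^*_t$ together with the compatibility of horizontal concatenation along the defect wall with the bimodule action, which is essentially the content of Remark~\ref{modifing defect}.
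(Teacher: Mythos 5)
A small error first: with the stratification chosen here the marked components lie inside the cylindrical $\H_t$-regions, so $\mathcal{Z}(C) = \mathcal{Z}^{\H_t}(C)$ already; both sides of the isomorphism receive boundary data from the same categories and no pre-composition with $j^*_t$ is involved (and in any case $j^*_t$ goes $\RepqP \to \RepqH$, the opposite direction from what you wrote).

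The real gap lies in the claim that ``absorbing into the $\H_t$-bulk through $j^*_t$'' and ``re-inflating'' by stratified isotopy produce mutually inverse maps. Applying $j^*_t$ does turn the projected $\RepqP$-planar graph into an $\RepqH$-planar graph, so your $\Phi$ is well defined, but $j^*_t$ is faithful and not full: morphisms such as the red inclusion $\redsquare \hookrightarrow \orsquare$ or the green projection $\orsquare \twoheadrightarrow \gsquare$ exist in $\RepqH$ but have no $\RepqP$-preimage, since \eqref{repqp2} only supplies the green inclusion and the red projection. An arbitrary $\RepqH$-planar graph on $S$ therefore cannot be turned into a stratified planar graph by simple re-colouring, and pushing it into the three-dimensional $\H_t$-bulk (your $\Psi$) does not fix this: when you project $\Psi(\Gamma)$ back onto the defect wall to compute $\Phi\circ\Psi$, those forbidden coupons land over the $\P_t$-part of the wall and are not valid $\RepqP$-morphisms, so the composite is far from being trivially the identity. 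The missing ingredient is the explicit algebraic surgery that the paper's proof performs: use the cylindrical $\H_t$-regions to apply the decomposition $\orsquare=\gsquare\oplus\redsquare$ of \eqref{RepqHtrel1}, then slide the resulting inclusions/projections along $\bsquare$-coloured strands using \eqref{morphrels} and the naturality of Remark~\ref{naturality}, thereby eliminating all blue strands; conversely, slide any forbidden morphism of an $\RepqH$-planar graph toward the boundary until it lies inside an $\H_t$-cylinder (isotoping closed components near a marked component first). Without these steps there is no argument for surjectivity of $\Phi$ or for the two maps being mutually inverse.
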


\begin{proof}
Fix boundary conditions $U$ and let $\Omega$ be a stratified ribbon graph representing an element of $\mathcal{A}(S)(U,\emptyset,\emptyset)$. Since there is no strand attached to $S\times\{\pm 1\}$, the $3$-dimensional stratified skein relations allow to project $\Omega$ entirely onto the defect, so that it can be represented by a planar graph on $S\times\left\lbrace\frac{1}{2}\right\rbrace$, coloured by $\RepqP$ in the $\P_t$-region and by $\RepqH$ in the $\H_t$-region. We first prove that this graph can be written as a linear combination of diagrams with no strand coloured in $\bsquare.$ Indeed, suppose that $\alpha$ is a component coloured in blue. If $\alpha$ is an open strand, then it is attached to one of the intervals/circles marked on the boundary, hence it traverses the $1$-dimensional $\H_t$-defect. Crossing these defect lines switches its colour to $\orsquare$ near its endpoints. We can then apply relation \eqref{RepqHtrel1} inside the $\H_t$-disks/cylinders and make one of the inclusion/projections appearing on each term cross the defect (relation \eqref{morphrels}) and slide along $\alpha$ (by naturality, cf. Remark \ref{naturality}) to reach the opposite endpoint (see figure below). The part of $\alpha$ lying in the $\P_t$ region is now entirely coloured with $\gsquare$ and $\redsquare$:
\begin{equation*}
\begin{split}
\includegraphics[scale=0.4, valign=c]{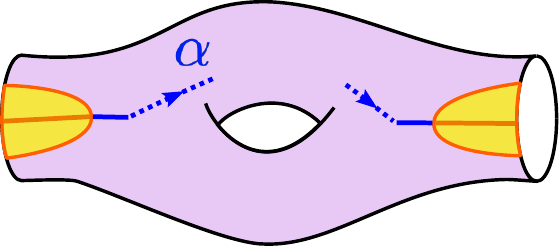}\quad&\stackrel{(\ref{RepqHtrel1})}{=}\quad\includegraphics[scale=0.4, valign=c]{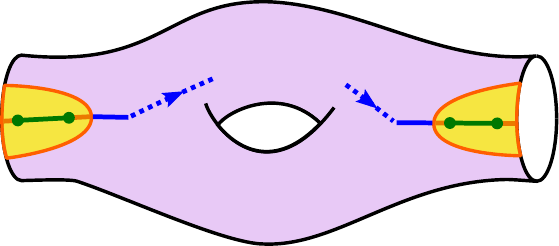}\quad+\quad\includegraphics[scale=0.4, valign=c]{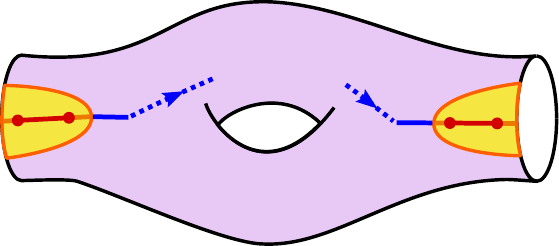}\\
&\stackrel{(\ref{morphrels})}{=}\quad\includegraphics[scale=0.4, valign=c]{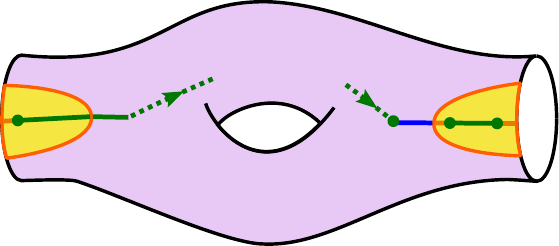}\quad+\quad\includegraphics[scale=0.4, valign=c]{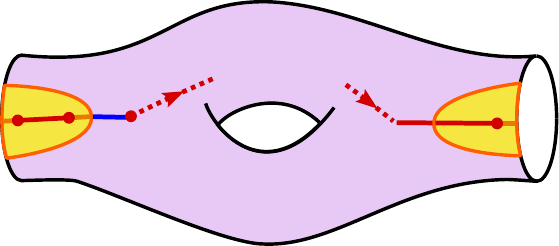}\\
&\stackrel{(\ref{morphrels})}{=}\quad\includegraphics[scale=0.4, valign=c]{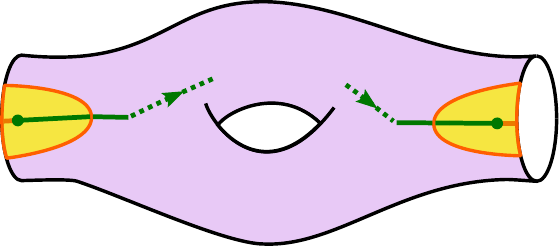}\quad+\quad\includegraphics[scale=0.4, valign=c]{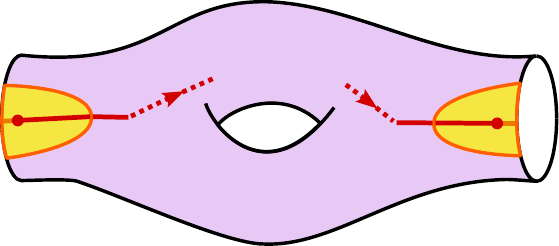}\;.
\end{split}
\end{equation*}
Similarly, if $\alpha$ is a closed strand, we can push it close to a marked component by applying and isotopy. Then, relation \eqref{morphrels} allows us to project it partially into the $\H_t$-region where we can apply the same trick to write it as a combination of a green and a red component. 

Let us call $\widetilde{\Omega}$ a diagram obtained by $\Omega$ by applying the previous transformations. Since it is entirely coloured by $\orsquare$, $\gsquare$ and $\redsquare$, it represents an element of $\mathcal{Z}^{\H_t}(S)(U)$. This element is well-defined, since all the relations applied to switch $\bsquare$ to $\orsquare$ are local relations from $\RepqH$, hence they hold in $\mathcal{Z}^{\H_t}(S)(U)$. We get a linear map $$\mathcal{A}(S)(U,\emptyset,\emptyset)\to\mathcal{Z}^{\H_t}(S)(U),\quad\Omega\mapsto\widetilde{\Omega}.$$ An inverse of this map is constructed as follows. Given a planar diagram on  $\Gamma$ on $S$ coloured by $\RepqH$, we can apply isotopies and naturality to make all the ``forbidden'' morphisms slide along open strands and lie arbitrarily close to the marked components of the boundary, so that they are all inside one of the embedded disks/cylinders defining an $\H_t$-region. Note that this can be done in a unique way so that the intersection of the strand with the $\P_t$-region is coloured in either $\gsquare$ or $\redsquare$. We get this way a well-defined graph $\widehat{\Gamma}$ on the stratified defect wall and it is straightforward to see that $\Gamma\mapsto\widehat{\Gamma}$ defines an inverse for of the linear map below. Naturality follows from the fact that these maps do not modify diagrams in a tubular neighborhood of the marked boundary components.
\end{proof}

Therefore, the presence of regions coloured by $\RepqH$ near the boundary of the defect wall allows one to apply relation \eqref{RepqHtrel1} to components coloured with $\bsquare$, yielding an identification $\bsquare=\orsquare.$ In the case where there are open strands attached to $S\times\{\pm 1\}$, this identification still holds, but only on closed components:

\begin{proposition}\label{quotient}
Let $\mathcal{B}$ and $\phi$ be as in the previous proposition and fix empty boundary conditions on each of the components marked on the defect. Then, $$\mathcal{A}(S)(\emptyset,-,-)\cong\bigslant{\pres(S)(\emptyset,-,-)}{\langle \bsquare=\orsquare\text{ on closed components on the defect wall}\rangle}.$$
\end{proposition}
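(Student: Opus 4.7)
The plan is to adapt the argument of Proposition \ref{skeinalgebra}, paying careful attention to the difference between open and closed components on the defect wall. As before, the main tool is the fact that once the boundary conditions on the top and bottom of $S\times I$ are fixed, every stratified ribbon graph can be pushed, using three-dimensional stratified skein relations and isotopies, onto a planar graph living in the defect wall $S\times\{1/2\}$, which is precisely what is represented by $\pres(S)(\emptyset,-,-)$. This gives a well-defined surjective linear map
\[ \Psi\colon\pres(S)(\emptyset,-,-)\twoheadrightarrow\mathcal{A}(S)(\emptyset,-,-). \]
The core of the proof will be to identify the kernel of $\Psi$ with the ideal generated by the relation $\bsquare=\orsquare$ on closed components.

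First I would show that $\Psi$ factors through the quotient. Let $\alpha$ be a closed component of a planar graph $\Gamma$ on the defect coloured in $\bsquare$. Since $\mathcal{B}\neq\emptyset$ and $\phi$ consists of cylinders inserted along marked components, we can apply an isotopy pushing $\alpha$ close to one such marked component, where there is an embedded $\H_t$-cylinder. Inside this cylinder we apply relation \eqref{RepqHtrel1} to write $\alpha=\alpha_{\textcolor{mygreen}{\blacksquare}}+\alpha_{\textcolor{myred}{\blacksquare}}$, exactly as in the closed-strand case of Proposition \ref{skeinalgebra}; all intermediate steps (sliding inclusions/projections through \eqref{morphrels} and along the strand by naturality, cf. Remark \ref{naturality}) are local relations that hold on both sides, hence $\Psi$ respects this identification. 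This shows $\langle\bsquare=\orsquare\text{ on closed components}\rangle\subseteq\ker\Psi$.

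For the converse inclusion I would construct an inverse of the induced map. Given a stratified ribbon graph $\Omega$ representing an element of $\mathcal{A}(S)(\emptyset,-,-)$, project $\Omega$ onto the defect to obtain a planar graph $\widehat{\Omega}\in\pres(S)(\emptyset,-,-)$, as in Proposition \ref{skeinalgebra}. The assignment $\Omega\mapsto\widehat{\Omega}$ is well-defined modulo the $\bsquare=\orsquare$ relation on closed components: indeed, the only ambiguity is whether to leave a closed blue strand in the $\P_t$-region coloured by $\bsquare$ or to apply the trick above to convert it into its $\gsquare/\redsquare$ decomposition, and these two choices are identified precisely by the quotient relation. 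Crucially, for an open strand attached to $S\times\{\pm 1\}$ the empty boundary conditions on the marked components prevent us from running the same argument (the strand cannot be pushed into an $\H_t$-cylinder without modifying the data at $S\times\{\pm 1\}$), which is exactly why the relation is imposed only on closed components.

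The step I expect to be delicate is the well-definedness of both maps under the full list of relations: compatibility with 3D isotopies and stratified skein relations on the $\mathcal{A}$ side, and with planar skein relations plus naturality on the $\pres$ side. This is essentially a bookkeeping exercise, checking that each local move on one side corresponds, after projection, to a legal move on the other; the only genuinely new phenomenon compared to Proposition \ref{skeinalgebra} is the handling of open strands reaching $S\times\{\pm 1\}$, which forces us to keep their $\bsquare$-colouring and explains the precise form of the quotient. Once well-definedness is established, the fact that the two constructions are mutually inverse follows from the observation that each composition is the identity on representatives in general position, and naturality in the boundary data is automatic since all manipulations are supported away from $S\times\{\pm 1\}$.
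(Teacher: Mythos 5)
Your proposal is correct and follows essentially the same route as the paper: build a map from the planar theory into $\mathcal{A}(S)$, show it is surjective by projecting any stratified ribbon graph onto the defect wall, show that the relation $\bsquare=\orsquare$ on closed defect-wall components lies in the kernel by pushing such a component near an $\H_t$-cylinder and running the relation \eqref{RepqHtrel1}/\eqref{morphrels}/naturality trick from Proposition~\ref{skeinalgebra}, and observe that open strands attached to $S\times\{\pm1\}$ do not generate new relations. You also correctly flag the well-definedness bookkeeping as the delicate part; the paper compresses exactly that into the phrase ``the idea of the proof is the same as in the previous proposition.''

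One small imprecision worth fixing: your explanation that an open strand ``cannot be pushed into an $\H_t$-cylinder without modifying the data at $S\times\{\pm1\}$'' is not quite right, and the attribution to the empty boundary conditions on the marked components is misplaced. Part of an open strand can perfectly well be isotoped into an $\H_t$-cylinder, and the decomposition via \eqref{RepqHtrel1} can be applied there. What actually goes wrong is that the inclusion/projection coupons produced by that decomposition slide along the strand to its fixed endpoints on $S\times\{\pm1\}$ and cannot cancel, because they would have to exit through a region of a different colour. Hence the manipulation merely rewrites the same morphism in a different form and produces no relation; this is what the paper means by ``forbidden'' morphisms that cannot be simplified. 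Your conclusion — that the identification is imposed only on closed components — is correct, but it is worth stating the obstruction accurately, since the distinction between ``cannot push'' and ``can push but cannot cancel'' is the entire content of why the quotient is by closed components only.
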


\begin{proof}
The idea of the proof is the same as in the previous proposition: we can construct linear maps between both spaces by pushing a part of the diagram near one the $\H_t$-regions on the boundary, then apply relation \eqref{morphrels} to project into the $\H_t$-region and make morphisms slide along closed components. This allows us to decompose closed components coloured with $\bsquare$ into linear combinations of green and red diagrams, yielding the identification $\bsquare = \orsquare$. On open strands, this induces no new relations: since they are all attached to $S\times\{\pm 1\}$, ``forbidden'' morphisms cannot be simplified, so making them appear just yields a different way of writing the same morphism, but they induce no relation.
\end{proof}

Combining both propositions, we see that $\mathcal{A}(S)(\emptyset)$ is isomorphic as a linear space to $\skh,$ and by functoriality of $\mathcal{A}(S)$ we have now actions of both $\skg$ and $\skh$:

\begin{theorem}\label{skeinalgebras}
Let $(S,f,\mathcal{B})$ be a marked framed surface with $\mathcal{B}\neq\emptyset$. Then, the $\skg-\skh$-bimodule $\mathcal{A}(S)(\emptyset)$ is isomorphic to $\skh$ as a linear space. In particular, $\skh$ becomes a left $\skg(S)$-module under this identification. \QEDA
\end{theorem}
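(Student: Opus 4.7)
The plan is to obtain this result as a direct specialization of Proposition~\ref{skeinalgebra}, combined with the standard identification between the framed planar theory of a ribbon category and its skein algebra. Specifically, setting the boundary condition $U$ on the marked components to $\emptyset$ (while keeping the empty configurations on $S\times\{0,1\}$ already fixed in the statement of that proposition), Proposition~\ref{skeinalgebra} yields a vector space isomorphism
$$\mathcal{A}(S)(\emptyset,\emptyset,\emptyset) \;\cong\; \mathcal{Z}^{\H_t}(S)(\emptyset).$$
This captures the essential topological content of the theorem, since the passage from three-dimensional ribbon graphs to planar graphs on the defect wall has already been carried out there.

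Next I would identify $\mathcal{Z}^{\H_t}(S)(\emptyset)$ with $\skh$ as a vector space. This uses the general fact that, on a framed surface, the planar theory attached to a ribbon category (viewed merely as rigid) reproduces the skein algebra of that category: a closed ribbon graph in $S\times I$ can be projected along the $I$-direction onto $S\times\{1/2\}$, the resulting over/under crossings being replaced by coupons decorated with the appropriate braidings of $\RepqH$. The framing of $S$ makes this projection compatible with framed isotopy — the framed Reidemeister~I move translates into the twist relation of $\RepqH$, while Reidemeister~II and~III correspond to the invertibility of the braiding and the Yang–Baxter equation, all of which are built into $\RepqH$. A straightforward inverse, extruding a planar diagram into $S\times I$ using the framing, gives the inverse map.

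Combining the two isomorphisms produces $\mathcal{A}(S)(\emptyset)\cong\skh$ as a vector space. The bimodule structure on the left-hand side is automatic from the functoriality of $\mathcal{A}(S)$: stacking morphisms of $\text{SkCat}_{\GL_t}(S^{\P_t})$ (resp. $\Skh$) on the top (resp. bottom) of $S\times I$ provides a left $\text{Sk}_{\GL_t}(S^{\P_t})$-action (resp. right $\skh$-action). Because the stratification $\phi$ prescribed in Proposition~\ref{skeinalgebra} only inserts thin $\H_t$-collars along the marked components (see Figure~\ref{fig:gluing_strat}), the region $S^{\P_t}$ is a framed surface canonically isotopic to $S$, so $\text{Sk}_{\GL_t}(S^{\P_t})\cong\skg$ canonically. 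Transporting the left action along the vector space isomorphism then equips $\skh$ with the claimed left $\skg$-module structure, proving the ``in particular'' clause.

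The only non-bookkeeping step is the identification $\mathcal{Z}^{\H_t}(S)(\emptyset)\cong\skh$. Because the planar theory $\mathcal{Z}^{\H_t}$ was defined from the rigid structure of $\RepqH$ only, one must verify that the coupons needed to encode crossings in the projection are precisely the braidings of $\RepqH$, and that framed isotopy on the $3$-dimensional side matches rectilinear isotopy together with the skein relations on the planar side. This is essentially the content of the graphical calculus recalled in Section~\ref{graphical_calculus}, so no genuine obstacle is expected; the proof of the theorem itself should thus amount to a short combination of Proposition~\ref{skeinalgebra} and this standard correspondence.
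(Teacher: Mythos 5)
Your proposal matches the paper's route: the paper proves the theorem by one sentence — ``Combining both propositions, we see that $\mathcal{A}(S)(\emptyset)$ is isomorphic as a linear space to $\skh$, and by functoriality of $\mathcal{A}(S)$ we have now actions of both $\skg$ and $\skh$'' — i.e.\ precisely by specialising Proposition~\ref{skeinalgebra} to empty boundary conditions and invoking functoriality for the bimodule structure. Your expansion, particularly making explicit the identification $\mathcal{Z}^{\H_t}(S)(\emptyset)\cong\skh$ via projection along the $I$-direction (with framed Reidemeister~I encoded by the twist relation of $\RepqH$ and the isotopy $S^{\P_t}\simeq S$ giving $\text{Sk}_{\GL_t}(S^{\P_t})\cong\skg$), correctly supplies the steps that the paper leaves implicit; you did not invoke Proposition~\ref{quotient}, but that proposition concerns nonempty top/bottom boundary conditions and is not actually needed for the case at hand.
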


We treat now the case of the torus $S=\mathbb{T}^2.$ Let $(\mathbb{T}^2,f,\phi)$ be a framed torus endowed with a stratification $\phi$ whose $H_t$-region consists of only a disk embedded in $\mathbb{T}^2.$ By the same argument as in the proof of Propositions \ref{skeinalgebra} and \ref{quotient}, $$\mathcal{A}(\mathbb{T}^2,f,\phi)(\emptyset)\cong\text{Sk}_{\H_t}(\mathbb{T}^2)$$ Moreover, the $\P_t$-region is homeomorphic to $\mathbb{T}^2\setminus\mathbb{D}^2$, so we have an action $$\text{Sk}_{\GL_t}(\mathbb{T}^2\setminus\mathbb{D}^2)\otimes\text{Sk}_{\H_t}(\mathbb{T}^2)\to\text{Sk}_{\H_t}(\mathbb{T}^2)$$ of the $\GL_t$-skein algebra of this puncture torus. 

\begin{proposition}\label{torus}
The action above descends to an action of the $\GL_t$-skein algebra of the torus.
\end{proposition}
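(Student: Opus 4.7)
The plan is to show that every element in the kernel of the surjection
\[
\pi \colon \text{Sk}_{\GL_t}(\mathbb{T}^2 \setminus \mathbb{D}^2) \twoheadrightarrow \text{Sk}_{\GL_t}(\mathbb{T}^2)
\]
acts trivially on $\mathcal{A}(\mathbb{T}^2,f,\phi)(\emptyset) \cong \text{Sk}_{\H_t}(\mathbb{T}^2)$. The surjection $\pi$ is induced by the inclusion $\mathbb{T}^2 \setminus \mathbb{D}^2 \hookrightarrow \mathbb{T}^2$ and is well-defined because any ribbon graph in $\mathbb{T}^2 \times I$ can be perturbed to avoid a generic cylinder $\mathbb{D}^2 \times I$; its kernel is generated by differences $\Gamma_1 - \Gamma_2$ of ribbon graphs that become isotopic in $\mathbb{T}^2 \times I$ but not in $(\mathbb{T}^2 \setminus \mathbb{D}^2) \times I$.

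First I would decompose any such isotopy into a finite sequence of elementary moves, each supported in a small ball meeting $\mathbb{D}^2 \times I$ and pushing a single strand segment across the disk. By linearity and locality of the action, it suffices to show that the two configurations related by one such elementary move define the same element of $\text{Sk}_{\H_t}(\mathbb{T}^2)$. In the stratified cylinder $(\mathbb{T}^2, \phi) \times I$, the offending $\bsquare$-strand lives in the $\GL_t$-region above the $\P_t$ portion of the horizontal defect. Using the three-dimensional stratified skein relations of Section \ref{local_model}, I would first project the relevant segment onto the $\P_t$ part of the defect wall, and then apply relation \eqref{morphrels} together with the local identification $\bsquare = \orsquare = \gsquare \oplus \redsquare$ coming from \eqref{RepqHtrel1} to transfer the segment across the $\H_t$-line into the $\H_t$ disk, where it becomes a sum of $\gsquare$- and $\redsquare$-colored ribbons.

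Once inside the $\H_t$-region, the segment can be freely isotoped across $\mathbb{D}^2$ (which is entirely contained in that region), then pushed back across the $\H_t$-line on the far side and lifted into the $\GL_t$-region by the reverse procedure. The element of $\text{Sk}_{\H_t}(\mathbb{T}^2)$ so obtained is precisely the action of the isotoped strand, so both sides of the elementary move act identically, and the factorization through $\text{Sk}_{\GL_t}(\mathbb{T}^2)$ follows.

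The main obstacle will be the bookkeeping: I need to track the framings, the pivotal corrections introduced by the coupons \eqref{repqp4}--\eqref{repqp5}, and the box substitutions \eqref{box1}--\eqref{box2} that accompany a blue crossing descending onto the defect wall. That all these corrections are compatible across the $\H_t$-line is exactly the content of the $(\RepqH, \RepqP)$-centred bimodule structure on $\RepqH$ from Section \ref{centred_bimodule}, realized topologically by the local model of Section \ref{local_model_strat}; this centredness is what ultimately forces the two sides of any elementary move to agree.
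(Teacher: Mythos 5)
The student takes a genuinely different route from the paper. The paper's proof invokes the presentation
\begin{equation*}
\text{Sk}_{\GL_t}(\mathbb{T}^2\setminus\mathbb{D}^2)\big/\bigl\langle\alpha=\text{unknot}\bigr\rangle\;\cong\;\text{Sk}_{\GL_t}(\mathbb{T}^2)
\end{equation*}
and then checks that $\alpha$ acts by the scalar $\pi_t^*\circ\res(\text{unknot})$, so the single generating relation is killed. The student, instead, characterises the kernel of $\pi$ topologically as the span of handle-slide moves across $\mathbb{D}^2$ (each supported in a small ball), and shows each such move acts trivially on $\mathcal{A}(\mathbb{T}^2)(\emptyset)\cong\text{Sk}_{\H_t}(\mathbb{T}^2)$ by projecting the offending $\bsquare$-segment onto the $\P_t$-wall, crossing the $\L_t$-line via \eqref{morphrels} and $\bsquare=\gsquare\oplus\redsquare$ from \eqref{RepqHtrel1}, performing the isotopy freely in the $\H_t$-region below (which covers $\mathbb{D}^2$), and reassembling. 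This is essentially the same maneuvre used in the proof of Proposition \ref{skeinalgebra}, and it rests on the well-definedness of the stratified evaluation morphism (Lemma \ref{evaluation_morphism_lemma} and the centredness of the bimodule). The two approaches buy different things: the paper's is one line but relies on a nontrivial claim about the quotient structure — in the HOMFLY setting the annulus skein algebra is much larger than $\Bbbk[\alpha]$, so it is not immediate that the kernel of $\pi$ is the two-sided ideal generated by $\alpha-\delta$ alone; the student's approach sidesteps this entirely, at the cost of the framing/pivotal bookkeeping they explicitly flag (which, once noted to be identical to the verification already carried out in Lemma \ref{evaluation_morphism_lemma}, is not a new obstacle). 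One small simplification you could make: once the $\gsquare/\redsquare$ segments have been isotoped across $\mathbb{D}^2$ in the $\H_t$-region, you do not need to lift them back into the $\GL_t$-region — the two projections already agree as elements of $\text{Sk}_{\H_t}(\mathbb{T}^2)$, which is the target of the comparison.
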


\begin{proof}
A loop $\alpha$ around the puncture acts by multiplication by $$\pi^*_t\circ\res\left(\MyFigure{\bknot}\right)$$ on the $\H_t$-disk hence the action descends to the quotient $$\bigslant{\text{Sk}_{\GL_t}(\mathbb{T}^2\setminus\mathbb{D}^2)}{\left\langle\alpha=\MyFigure{\bknot}\right\rangle}\cong\text{Sk}_{\GL_t}(\mathbb{T}^2).$$
\end{proof}

Let $(S,f)$ be any framed surface and choose $\mathcal{B}\neq\emptyset$ if $\partial S\neq\emptyset.$ Let $\phi$ be a stratification of $S$ as in Propositions \ref{skeinalgebra} and \ref{torus}, so that $\skg$ acts on $\skh$:

\begin{corollary}\label{coprod1}
The morphism $\skg\to\mathcal{A}(S)(\emptyset,\emptyset)$ defined by acting on the empty diagram induces a $\mathbb{C}(q)$-algebra homomorphism $$\widetilde{\Delta}_f\colon\skg\to\skh.$$ \QEDA
\end{corollary}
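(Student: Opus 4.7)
The plan is to extract the algebra homomorphism property directly from the $(\skg,\skh)$-bimodule structure on $\mathcal{A}(S)(\emptyset,\emptyset)$ combined with the identification from Theorem \ref{skeinalgebras} (or Proposition \ref{torus} if $\partial S=\emptyset$). First I would spell out the construction of the map: under the isomorphism $\mathcal{A}(S)(\emptyset,\emptyset)\cong\skh$, the empty diagram $\emptyset\in\mathcal{A}(S)(\emptyset,\emptyset)$ corresponds to the unit $1_{\skh}$, and $\widetilde{\Delta}_f$ is defined by $\widetilde{\Delta}_f(a) \coloneqq a\cdot 1_{\skh}$, where $\cdot$ denotes the left $\skg$-action obtained from functoriality of $\mathcal{A}(S)$ applied to the empty boundary condition $\emptyset\in\text{SkCat}_{\GL_t}(S^{\P_t})$. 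The $\mathbb{C}(q)$-linearity is immediate since both actions are $\mathbb{C}(q)$-linear (via the coproduct $\Delta\colon\Bbbk\to\Bbbk\otimes_{\mathbb{C}(q)}\Bbbk$).

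The first key step is to check that, under the isomorphism in Proposition \ref{skeinalgebra}, the right $\skh$-action on $\mathcal{A}(S)(\emptyset,\emptyset)$ coincides with the multiplication on $\skh$. This is essentially tautological: both actions are given by vertically stacking ribbon graphs in $S\times I$, since the right $\skh$-action comes from the cylindrical region $\phi^{-1}(A)\times\left[0,\tfrac{1}{2}\right)$ lying below the defect wall, which is exactly where the skein algebra multiplication takes place. In particular, $1_{\skh}$ acts as the identity on $\mathcal{A}(S)(\emptyset,\emptyset)$, and for any $x\in\mathcal{A}(S)(\emptyset,\emptyset)\cong\skh$ and $y\in\skh$ one has $x\cdot y=xy$ in $\skh$.

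Given this, the multiplicativity of $\widetilde{\Delta}_f$ follows from a two-line computation using only the bimodule axiom, i.e.\ the commutativity of the left and right actions on $\mathcal{A}(S)(\emptyset,\emptyset)$. Namely, for $a,b\in\skg$:
\begin{equation*}
\widetilde{\Delta}_f(ab)=(ab)\cdot 1_{\skh}=a\cdot(b\cdot 1_{\skh})=a\cdot\widetilde{\Delta}_f(b)=(a\cdot 1_{\skh})\cdot\widetilde{\Delta}_f(b)=\widetilde{\Delta}_f(a)\,\widetilde{\Delta}_f(b),
\end{equation*}
where the fourth equality uses the bimodule axiom applied to $x=1_{\skh}$ and $y=\widetilde{\Delta}_f(b)$. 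Unitality is immediate: $\widetilde{\Delta}_f(1_{\skg})=1_{\skg}\cdot 1_{\skh}=1_{\skh}$.

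The only non-formal ingredient is the compatibility of the bimodule identification with the skein multiplication; this is the step where one needs to be careful. The main obstacle is verifying it uniformly across the two regimes covered by Theorem \ref{skeinalgebras} and Proposition \ref{torus}. For surfaces with non-empty $\mathcal{B}$, this compatibility is visible from the explicit maps $\Omega\mapsto\widetilde{\Omega}$ constructed in Proposition \ref{skeinalgebra}, which send stratified ribbon graphs to their $\H_t$-only projections without touching a neighbourhood of $S\times\{0,1\}$ (so stacking is preserved). For the torus, one invokes Proposition \ref{torus} and the same argument applied to the puncture torus, noting that the loop around the puncture acts on $1_{\skh}$ as the scalar $\res\circ\pi_t^*$ of the unknot, which matches the quantum dimension appearing in the skein algebra relation identifying $\text{Sk}_{\GL_t}(\mathbb{T}^2\setminus\mathbb{D}^2)/\langle\alpha=\bullet\rangle\cong\text{Sk}_{\GL_t}(\mathbb{T}^2)$.
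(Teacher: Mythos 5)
Your proposal is correct and reconstructs exactly the argument the paper leaves implicit (the corollary carries a $\blacksquare$ with no written proof). The mechanism is the standard one for a bimodule free of rank one on one side: writing $\widetilde{\Delta}_f(a)=a\cdot 1$ and chaining the left module axiom, the right unit, the bimodule axiom $(a\cdot x)\cdot y=a\cdot(x\cdot y)$, and the identification of the right $\skh$-action with multiplication on $\skh$. You also correctly isolate the one non-formal input — that the isomorphism of Theorem \ref{skeinalgebras} intertwines the right $\skh$-action with the skein product — and observe why the proof of Proposition \ref{skeinalgebra} yields it: the maps $\Omega\mapsto\widetilde{\Omega}$ and their inverse are supported away from a collar of $S\times\{0\}$, so they commute with stacking from below. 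The remarks on $\mathbb{C}(q)$-linearity (the ground ring changes from $\Bbbk$ to $\Bbbk\otimes_{\mathbb{C}(q)}\Bbbk$ via $\Delta$) and on the closed-surface case via Proposition \ref{torus} are also in order; the only cosmetic slip is writing $\res\circ\pi_t^*$ where the paper has $\pi_t^*\circ\res$, which does not affect the argument.
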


Composing $\widetilde{\Delta}_f$ with the isomorphism in Lemma \ref{coprod2}, we obtain a $\mathbb{C}(q)$-algebra morphism
\begin{equation}\label{turaev_coprod}
\Delta_f\colon \skg\to\skg\otimes_{\mathbb{C}(q)}\skg
\end{equation}
whose restriction to $\Bbbk\cong\Bbbk\cdot\boldsymbol{1}_\emptyset$ is \eqref{coproduct}. By construction, it can be computed on links by taking a diagram on the surface where every crossing, cup and cap has been replaced by their image by the restriction functor \eqref{restrictionfunctor}, and applying relations \eqref{box1}--\eqref{box4} and \eqref{RepqHtrel1} to split the diagram into two coloured diagrams, which we consider as lying in two different copies of $\skg.$ Here by cup and cap we mean a portion of the diagram where the rotation number increases or decreases by $1$.

\begin{proposition}
The coproduct in \eqref{turaev_coprod} is coassociative.
\end{proposition}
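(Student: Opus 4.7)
The plan is to factor both iterates of $\Delta_f$ through a single ``three-coloured'' restriction and then appeal to the functoriality of the TFT construction. The first step is to introduce the Cauchy completion of $\RepqG \boxtimes_{\mathbb{C}(q)} \RepqG \boxtimes_{\mathbb{C}(q)} \RepqG$, which plays for $\GL_{t_1} \times \GL_{t_2} \times \GL_{t_3}$ the role that $\RepqH$ plays for $\L_t$. I then construct an iterated restriction functor, denoted $\res^{(2)}$, from $\RepqG$ to this triple product by a direct generalisation of the construction of $\res$: one decomposes a blue strand as a direct sum of three coloured strands, and defines three-coloured analogues of the boxes \eqref{box1}--\eqref{box4}, now indexed by ordered pairs of three colours (with six ``off-diagonal'' correction terms instead of one).

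The central step is to check that both $(\res \boxtimes \id) \circ \res$ and $(\id \boxtimes \res) \circ \res$ coincide with $\res^{(2)}$. On the ground ring, this is precisely the coassociativity of $\Delta \colon \Bbbk \to \Bbbk \otimes_{\mathbb{C}(q)} \Bbbk$ proved at the beginning of Section \ref{HOMFLYcats}. On the diagrammatic generators (crossings, cups, caps), it is a direct computation: apply the box decomposition to a blue crossing, then re-expand each two-coloured strand thus obtained using \eqref{box1}--\eqref{box4}, and use the naturality spelled out in Remark \ref{naturality} together with the defining relations \eqref{RepqGtGt1}--\eqref{RepqHtrel2} to verify that the two orderings produce the same six-coloured sum.

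Once $\res^{(2)}$ has been established, I transport the equality to skein algebras via the TFT machinery of Sections \ref{planar_theories}--\ref{3d_theories}. A doubly-stratified model of $S \times I$ with two parallel defect walls, the lower one decorated by $\L_t$ and a further one carrying the three-coloured analogue, produces a bimodule construction analogous to Theorem \ref{3TFT}. Acting on the empty diagram as in Section \ref{coproductsection}, and using a three-coloured version of Lemma \ref{coprod2}, this yields a single iterated algebra map $\Delta^{(2)}_f \colon \skg \to \skg \otimes \skg \otimes \skg$. Functoriality of the TFT with respect to the two possible ways of collapsing this tri-partite stratification into a bipartite one then identifies $\Delta^{(2)}_f$ with both $(\Delta_f \otimes \id) \circ \Delta_f$ and $(\id \otimes \Delta_f) \circ \Delta_f$, giving coassociativity.

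The main obstacle is the direct verification of the second paragraph: although elementary, checking that the two iterated box expansions agree requires careful bookkeeping because of the pivotal and half-braiding corrections built into the restriction functor, and one has to ensure that the ``mixed'' terms produced by the two orderings line up. Everything else is automatic from the TFT setup, which is the same mechanism responsible for $\Delta_f$ being a morphism of algebras in the first place.
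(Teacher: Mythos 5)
Your approach matches the paper's in essence: both hinge on the observation that the two iterated box expansions collapse into a single three-coloured expansion, after which coassociativity reduces to checking that this expansion is independent of the ordering. The paper writes out the three-coloured formulas explicitly (for the positive crossing, the $z$-crossing, the cup and the cap) and then observes that they compute both $(\Delta_f \otimes \id)\circ\Delta_f$ and $(\id\otimes\Delta_f)\circ\Delta_f$; crucially, it avoids the extra machinery you propose by working directly with link diagrams in $\skg$, exploiting that $\widetilde{\Delta}_f$ produces no new crossings, so the whole splitting can be carried out from the original diagram in a single step. Your plan is sound but a bit overbuilt: you do not need to rebuild a three-coloured TFT and a new analogue of Lemma \ref{coprod2}; once the identity $(\res\boxtimes\id)\circ\res = (\id\boxtimes\res)\circ\res$ is verified on the generators of $\RepqG$, the statement at the skein-algebra level follows from exactly the same diagrammatic reasoning that produced $\Delta_f$ in the first place. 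One small correction: the three-coloured analogue of \eqref{box1} has \emph{three}, not six, off-diagonal correction terms — one for each unordered pair of colours, since the correction is one-sided in the same way a Jaeger cutting vertex is — and this is what the explicit formulas in the paper's proof show.
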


\begin{proof}
The map $$(\Delta_f\otimes\id)\circ\Delta_f\colon\skg\to\skg\otimes_{\mathbb{C}(q)}\skg\otimes_{\mathbb{C}(q)}\skg$$ can be computed as follows. Given a link diagram $\Gamma$, we first compute $\widetilde{\Delta}_f(\Gamma)\in\skh$ as in the previous paragraph. Then, we split the green part into two colours (green again for the first copy of $\skg$ and violet for the second one) using the same rules: replacing all green crossings, cups and caps by their images by the restriction functor \eqref{restrictionfunctor} and applying relations \eqref{box1}--\eqref{box4} and \eqref{RepqHtrel1} with a proper choice of colours. Since applying $\widetilde{\Delta}$ does not produce new crossings, we can do all this at once from the initial diagram using the following relations:
\begin{equation}
\MyFigure{\boxx}\quad\mapsto\quad\sum_{\gsquare,\vsquare,\redsquare}\;\MyFigure{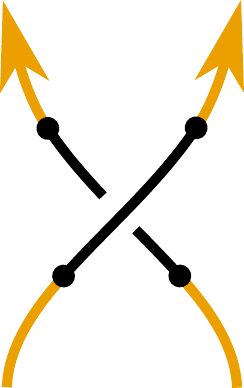}\;+\;\left(q-q^{-1}\right)\left(\MyFigure{\orgipip}+\MyFigure{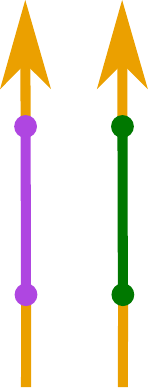}+\MyFigure{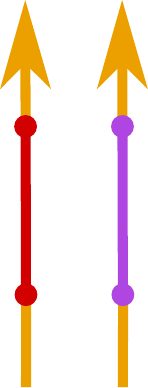}\right),
\end{equation}
\begin{equation}
\MyFigure{\boxz}\quad\mapsto\quad\sum_{\gsquare,\vsquare,\redsquare}\;\MyFigure{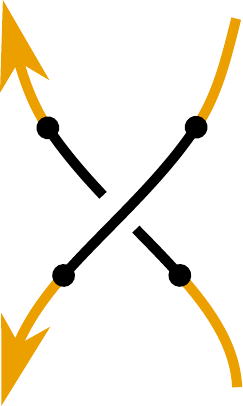}\;+\;\left(q-q^{-1}\right)\left(q^{-t_2}q^{-t_3}\MyFigure{\ogrevvcoevv}+q^{-t_2}\MyFigure{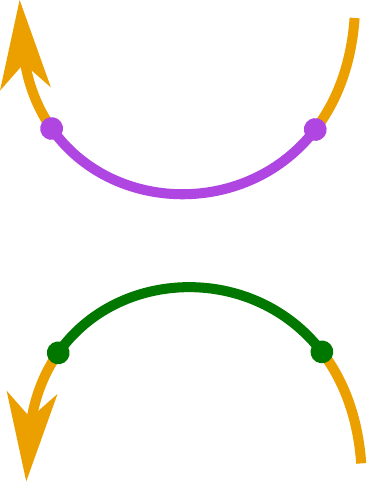}+\MyFigure{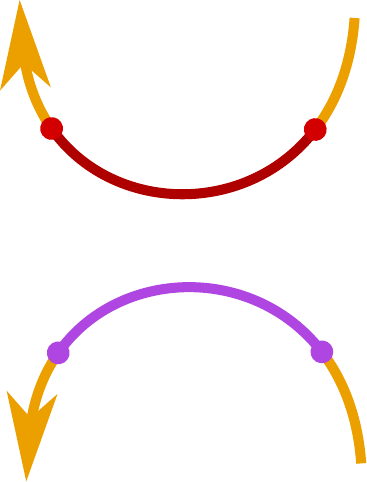}\right),
\end{equation}
\begin{equation}
\MyFigure{drawings/pivotalcoev.pdf}\quad\mapsto\quad\MyFigure{\ogocoev}\;+\;q^{t_1}\MyFigure{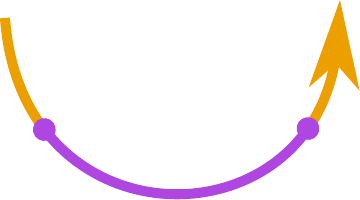}\;+\;q^{t_1}q^{t_2}\MyFigure{\orocoev},
\end{equation}
\begin{equation}
\MyFigure{drawings/pivotalev}\quad\mapsto\quad \MyFigure{\ogoevv}\;+\;q^{-t_1}\MyFigure{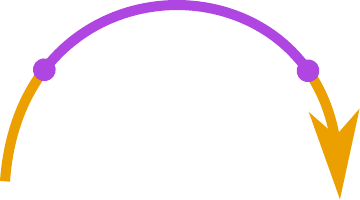}\;+\; q^{-t_1}q^{-t_2}\MyFigure{\oroevv},
\end{equation}
where the sums are taken over all the possible combinations of colours for the involved strands and $$q^{t_1}=q^t\otimes 1\otimes 1,\qquad q^{t_2}=1\otimes q^t\otimes 1, \qquad q^{t_3}=1\otimes 1\otimes q^t,$$ in $\Bbbk\otimes_{\mathbb{C}(q)}\Bbbk\otimes_{\mathbb{C}(q)}\Bbbk.$
The same formulas compute $(\id\otimes\Delta_f)\circ\Delta_f$, hence the coproduct is coassociative.
\end{proof}

\begin{remark}\label{framings}
The coproduct $\Delta_f$ depends on the choice of the framing $f$. For instance, let $S=\mathbb{A}$ be the annulus, which can be equipped with either a radial framing or the framing induced by $\R^2$. Consider first the case where $f$ is the radial framing. We have 
\begin{equation*}
\begin{split}
\includegraphics[scale=0.15, valign = c]{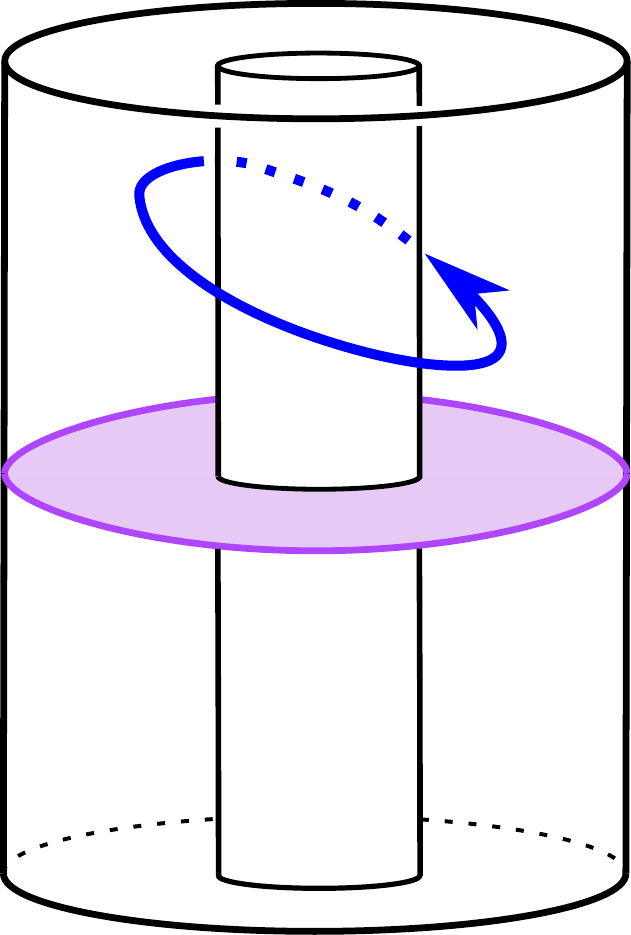}\;=\;\MyFigure{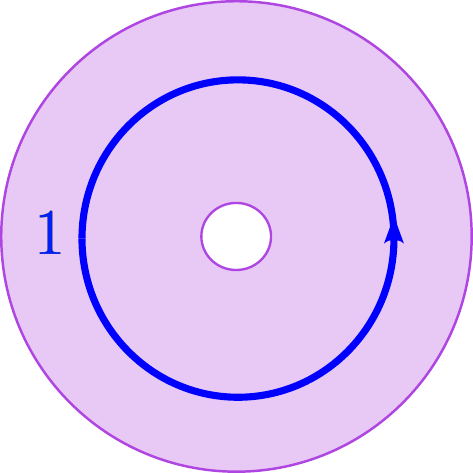}\;&=\;\MyFigure{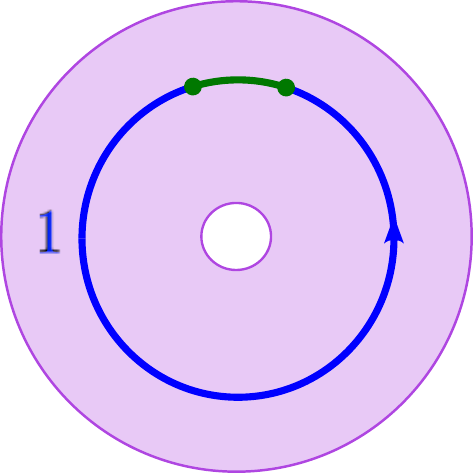}\;+\;\MyFigure{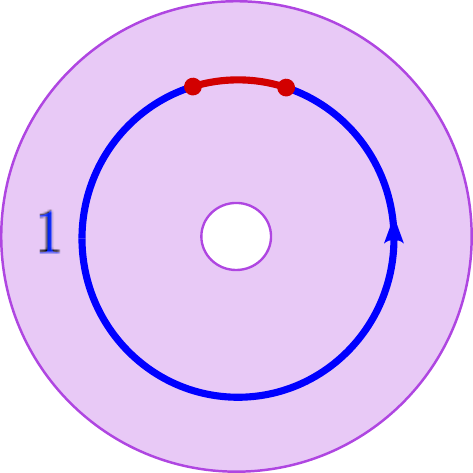}\\
&=\;\MyFigure{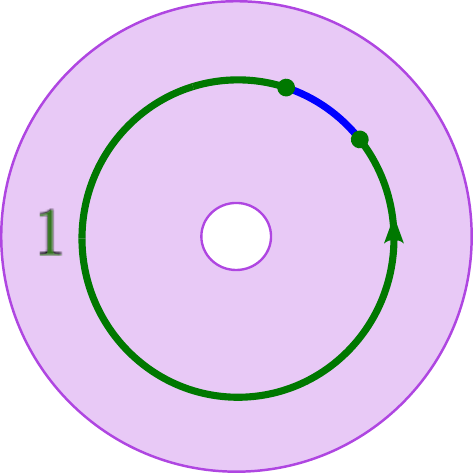}\;+\;\MyFigure{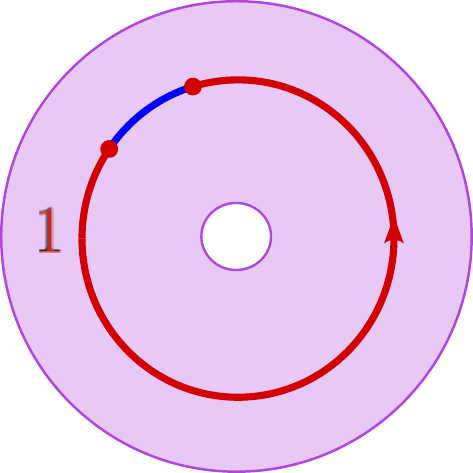}\\
&=\;\MyFigure{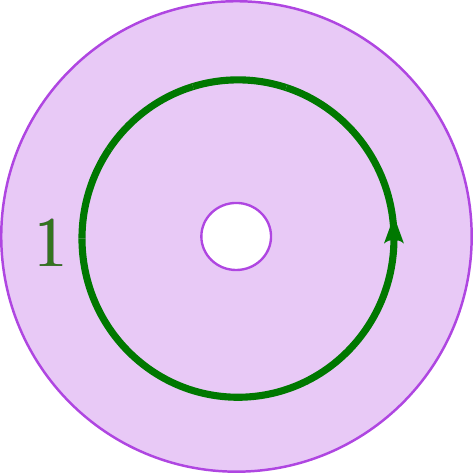}\;+\;\MyFigure{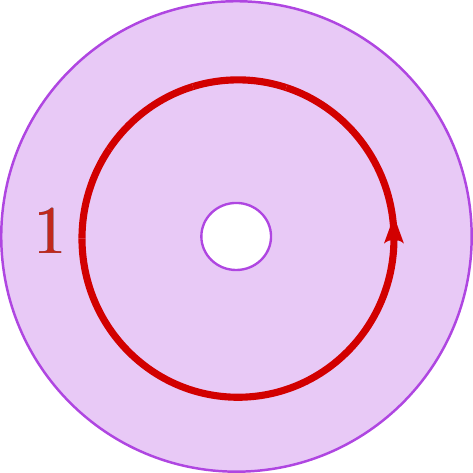}\;=\;\includegraphics[scale=0.15, valign = c]{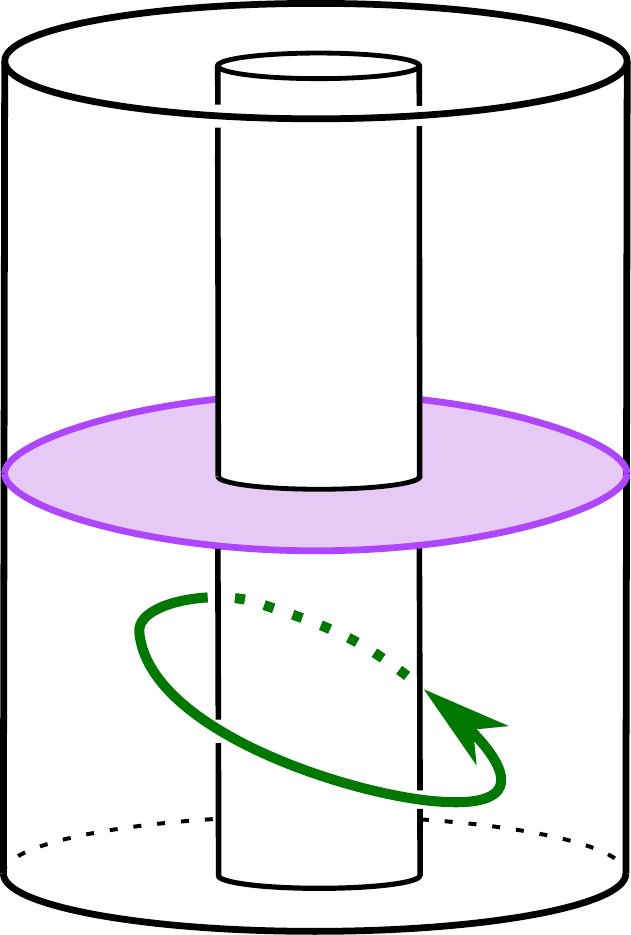}\;+\;\includegraphics[scale=0.15, valign = c]{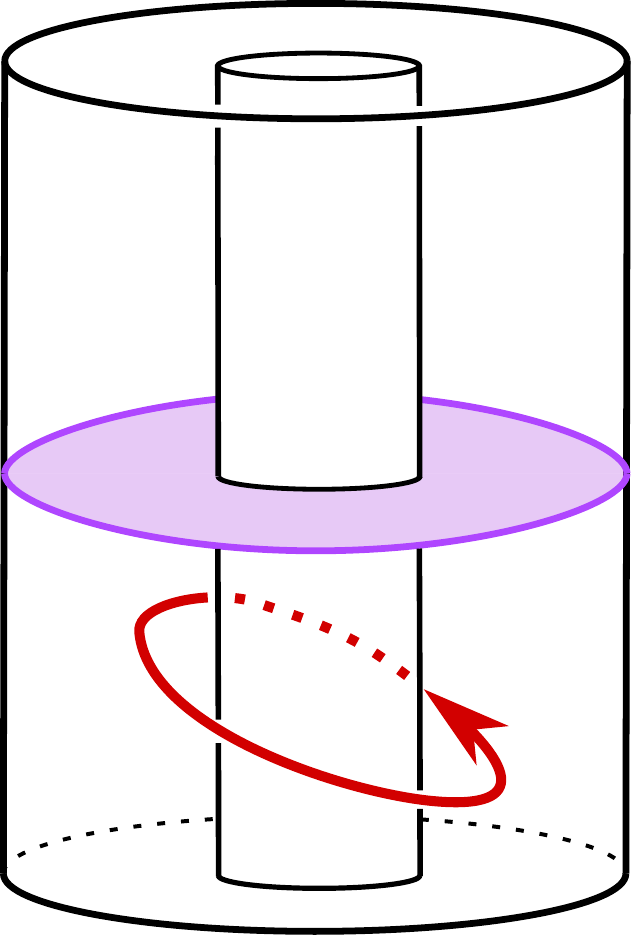}\;.
\end{split}
\end{equation*}
On the other hand, if $f$ is the framing induced by $\R^2$, the local relations projecting the link into the defect make use of the pivotal structure of $\RepqG$:
\begin{equation*}
\begin{split}
\includegraphics[scale=0.15, valign = c]{drawings/exlinka.pdf}\;=\;\MyFigure{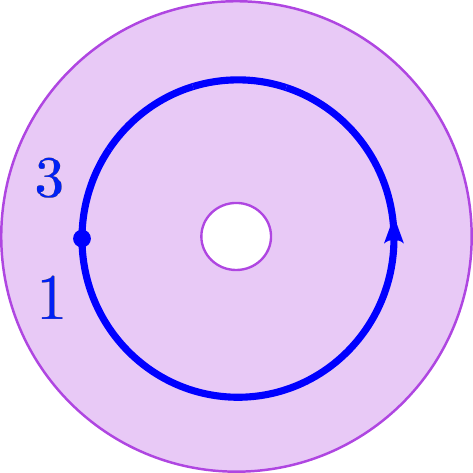}\;=\;q^{t_2}\;\includegraphics[scale=0.15, valign = c]{drawings/exlinke.pdf}\;+\;q^{-t_1}\;\includegraphics[scale=0.15, valign = c]{drawings/exlinkf.pdf}\;,
\end{split}
\end{equation*}
where the last equality follows from the definition of the pivotal structure of $\RepqG$ by doing the same manipulations as in the previous paragraph.
\end{remark}

\subsection{The counit} Let $\overline{\text{Rep}_q}\GL_n$ be the $\mathbb{C}$-linear category obtained by specialising $$q^t\;\rightsquigarrow\; q^n,\qquad\delta\;\rightsquigarrow\;\frac{q^n-q^{-n}}{q-q^{-1}}$$ in Definition \ref{RepqGdef}, with $q\in\mathbb{C}^\times$ and $q^2\neq 1$. This is the category $\dot{\mathcal{OS}}(q-q^{-1},q^n)$ from \cite{brundan}. Let $\varphi_{n}\colon\Bbbk\to\mathbb{C}(q)$ be the evaluation morphism in Remark \ref{universal1}. Then, the evaluation functor $\text{ev}^\text{G}_{q,n}$ factors as $$\RepqG\to\overline{\text{Rep}_q}\GL_n\to\Rep_q\GL_n,$$ where the first arrow is a $\varphi_{n}$-linear functor and the second one is $\mathbb{C}$-linear.

\begin{definition}\textup{(\cite{deligne})} 
Let $\C$ be a ribbon category. A morphism $f:X\to Y$ is \emph{negligible} if $\text{Tr}_\C(f\circ g)=0$ for all $g:Y\to X$ in $\C$.
\end{definition}

\begin{theorem}\textup{(\cite{deligne,brundan})} 
The functor $\overline{\Rep_q}\GL_n\to\Rep_q\GL_n$ above induces a monoidal equivalence $$\Phi\colon\bigslant{\overline{\Rep_q}\GL_n}{\mathcal{N}}\xrightarrow{\approx}\Rep_q\GL_n,$$ where $\mathcal{N}$ is the tensor ideal of $\overline{\Rep_q}\GL_n$ consisting of negligible morphisms.
\end{theorem}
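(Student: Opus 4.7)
The plan is to establish this equivalence in three standard steps: essential surjectivity of the induced functor, fullness, and the identification of $\ker F$ with $\mathcal{N}$, where $F\colon\overline{\text{Rep}_q}\GL_n\to\Rep_q\GL_n$ denotes the specialised evaluation functor. Monoidality of the induced $\Phi$ is then automatic, since $F$ is monoidal and $\mathcal{N}$ is a tensor ideal.

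For essential surjectivity, the image of $F$ contains the natural representation $V_n = F(\upp)$ together with its dual. By quantum Schur--Weyl duality, every simple object of $\Rep_q\GL_n$ occurs as a direct summand of some mixed tensor power $V_n^{\otimes k}\otimes V_n^{*\otimes l}$ (at generic $q$ this is classical). Since $\overline{\text{Rep}_q}\GL_n$ is Cauchy complete by construction, all such summands already have representatives in the source, and $F$ therefore hits every isomorphism class of $\Rep_q\GL_n$.

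For fullness, one invokes \cite[Theorem~1.3]{brundan}: on each pair of mixed tensor powers of $V_n$ and $V_n^*$, the Hecke-algebra (or walled Brauer-algebra) description of the target hom space identifies it with a quotient of the diagrammatic hom space in $\overline{\text{Rep}_q}\GL_n$. I expect this to be the main obstacle of the proof: it is the only non-formal step and genuinely requires quantum Schur--Weyl-type arguments and the classification of representations of the relevant Hecke-type algebras.

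It remains to show $\ker F = \mathcal{N}$. Because $\Rep_q\GL_n$ is semisimple at generic $q$ and every simple object has nonzero quantum dimension, its negligible ideal vanishes; equivalently, the trace pairing on every hom space is nondegenerate. Since $F$ is a ribbon functor it preserves traces. If $f\in\mathcal{N}$ and $g$ is an arbitrary morphism in the target, fullness yields $g = F(\tilde g)$, whence $\text{Tr}(F(f)\circ g) = \text{Tr}(F(f\circ\tilde g)) = \text{Tr}(f\circ\tilde g) = 0$; nondegeneracy forces $F(f)=0$, giving $\mathcal{N}\subseteq\ker F$. Conversely, if $F(f)=0$, then $\text{Tr}(f\circ g) = \text{Tr}(F(f\circ g)) = 0$ for every source morphism $g$, so $f\in\mathcal{N}$. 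Combined with fullness and essential surjectivity, this shows that $F$ descends to an equivalence $\Phi$.
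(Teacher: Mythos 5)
The paper does not prove this statement: it is quoted as a known result with the citation to Deligne and Brundan, and no argument is given. Your proposal should therefore be judged on its own merits rather than against a proof in the paper.

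Your outline is logically sound, but you should be candid that it is not an independent proof so much as a reduction to the same citation the paper relies on. The step you flag as the crux — fullness — is where you invoke \cite[Theorem~1.3]{brundan}, and that theorem, as the paper itself summarises in Remark~\ref{universal1}, already asserts \emph{both} that the evaluation functor is full \emph{and} that its kernel is the ideal of negligible morphisms. So once you have invoked it, the remainder of your argument (the trace computation identifying $\ker F$ with $\mathcal{N}$) is re-deriving something already contained in the reference. That computation is nonetheless correct as a self-contained deduction: given fullness, semisimplicity of $\Rep_q\GL_n$ at $q$ not a root of unity, and the fact that $F$ is a ribbon functor acting as the identity on $\End(\boldsymbol{1})\cong\mathbb{C}(q)$, the two inclusions $\mathcal{N}\subseteq\ker F$ and $\ker F\subseteq\mathcal{N}$ follow exactly as you write them. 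Your essential surjectivity argument via quantum Schur--Weyl and Cauchy completeness is also correct, and your closing observation that $\Phi$ inherits monoidality because $\mathcal{N}$ is a tensor ideal is the right remark.

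Two small cautions. First, the semisimplicity and nondegeneracy-of-trace input you use is only available because the paper's standing assumption (set in Section~\ref{parabolic_restriction}) is that $q$ is not a root of unity; the hypothesis ``$q^2\neq 1$'' stated locally in the counit subsection is weaker, and without the stronger assumption both the statement and your argument would need modification. Second, since $\Rep_q\GL_n$ is defined in the paper as ``locally finite dimensional'' representations, one should really read the target as the subcategory of objects that are finite direct sums of summands of mixed tensor powers of $V_n$; essential surjectivity onto the whole locally finite category would require adding infinite direct sums on the source side. Neither of these affects the correctness of your reasoning under the intended hypotheses, but they are worth keeping in view if you ever need the statement at the stated level of generality.
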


In particular, $\GL_0$ is the trivial group, so that representations are just vector spaces and we have an equivalence $$\Rep_q\GL_0\simeq\Vect.$$ Moreover, if $n=0$, then $\delta=0$ and the dimension relation $$\MyFigure{\bknot}=0$$ in $\overline{\Rep_q}\GL_0$ implies that the identity $\includegraphics[scale=0.15, valign=c]{\blueid}$ is a negligible morphism. Therefore, all nonempty diagrams are zero in $\bigslant{\overline{\Rep_q}\GL_n}{\mathcal{N}}$ and we obtain a $\mathbb{C}(q)$-linear functor $$\mathcal{E}\colon\RepqG\to\bigslant{\overline{\Rep_q}\GL_0}{\mathcal{N}}\simeq\Vect$$ annihilating all non-empty diagrams. If $S$ is any oriented surface, this clearly extends to a functor $$\mathcal{E}^S\colon\Skg\to\Vect.$$

\begin{proposition}
The linear map $$\varepsilon\colon\skg\to\mathbb{C}(q)$$ obtained by restricting $\mathcal{E}^S$ to $\skg$ provides a counit for the Turaev coproduct \eqref{turaev_coprod}.
\end{proposition}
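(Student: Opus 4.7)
The plan is to establish the two counit identities $(\varepsilon \otimes \id) \circ \Delta_f = \id_{\skg}$ and $(\id \otimes \varepsilon) \circ \Delta_f = \id_{\skg}$. I will focus on the first; the second will follow by a symmetric argument from the symmetric role of the two colours in the construction of $\Delta_f$. Since $\Delta_f$ is the composition of $\widetilde{\Delta}_f\colon\skg\to\skh$ with the splitting isomorphism $\skh \cong \skg \otimes_{\mathbb{C}(q)} \skg$ of Lemma \ref{coprod2}, and since $\varepsilon$ kills every non-empty diagram and specialises scalars via $\varphi_0\colon q^{t}\mapsto 1,\ \delta\mapsto 0$, the first step will be to unpack $(\varepsilon \otimes \id) \circ \Delta_f$ as the operation: extract from $\widetilde{\Delta}_f(D)\in\skh$ the ``fully-red'' summand (terms whose green component is empty), then apply the specialisation $(t_1,\delta_1)\to(0,0)$ on the remaining scalars.

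Next I will run a generator-by-generator verification on $\RepqG$. For the positive crossing $\MyFigure{\boxx} = \MyFigure{\oposx} + (q-q^{-1})\MyFigure{\orgipip}$, the correction $\MyFigure{\orgipip}$ has both a non-empty green and a non-empty red component and so vanishes under $\varepsilon\otimes\id$; the orange crossing decomposes via \eqref{oposx} into four terms (gg, gr, rg, rr), of which only the rr term survives, giving back the red positive crossing. For $\MyFigure{\boxz}$ the argument is parallel. For the boundary morphisms $\MyFigure{drawings/pivotalev.pdf}$ and $\MyFigure{drawings/pivotalcoev.pdf}$, the fully-red summands carry scalar prefactors $q^{\pm t_1}$ coming from the pivotal correction $\eta^r_\upp$; these collapse to $1$ upon $t_1\to 0$, yielding the red (co)evaluation. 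Monoidal functoriality of $\widetilde{\Delta}_f$ and of the isomorphism from Lemma \ref{coprod2} will then extend this local analysis to any diagram $D$: $(\varepsilon\otimes\id)\circ\Delta_f(D)$ is $D$ recoloured red, which equals $D$ after identifying the second factor with $\skg$.

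The conceptual reason I expect this to work is that the specialisation $t_1\to 0$ mirrors restriction along the canonical isomorphism $\GL_n\simeq\GL_0\times\GL_n$, so composing with $\varepsilon$ should undo $\res$. The main obstacle I anticipate is the bookkeeping of correction terms: one must verify systematically that every term in the decomposition of a restricted generator which is not the purely-red summand carries a non-empty green component. In particular the mixed inc-proj patterns hidden in $\MyFigure{\orgipip}$, as well as any terms involving $\delta_1$ that arise from closing off green loops, must all be confirmed to vanish under $\varepsilon\otimes\id$. Once this local check is in place on generators, the compatibility with scalars, encoded in the identities $(\varepsilon\otimes\id)\Delta(q^t)=q^t$ and $(\varepsilon\otimes\id)\Delta(\delta)=\delta$ which are immediate from \eqref{coproduct}, will close the proof.
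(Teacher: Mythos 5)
Your proposal is correct and takes essentially the same approach as the paper's (very terse) proof: one observes that applying $\varepsilon$ to one tensor factor kills all summands of $\widetilde{\Delta}_f(D)$ with a non-empty component in that colour and specialises the corresponding scalars $q^{t_i}\mapsto 1$, $\delta_i\mapsto 0$, so that the surviving term is precisely $D$ recoloured in the other colour. The paper works with $(1\otimes\varepsilon)$ (blue $\mapsto$ green), you work with $(\varepsilon\otimes 1)$ (blue $\mapsto$ red) and are careful to note that the $q^{\pm t_1}$ prefactors attached to the red parts of the pivotal (co)evaluation boxes collapse to $1$ after the specialisation — a point the paper glosses over. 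One small caveat: the phrase ``symmetric role of the two colours'' slightly overstates things, since the boxes \eqref{box1}--\eqref{box4} and the coproduct \eqref{coproduct} on scalars are manifestly \emph{not} symmetric in green and red (the $q^{\mp t_1}$ factors sit only on the red side); what is true is that the parallel verification still goes through because the asymmetric scalar factors specialise to $1$ under the relevant $\varepsilon$, exactly as you check for the first identity.
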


\begin{proof}
Recall the local relations \eqref{box1}--\eqref{box4}. Since $\varepsilon$ annihilates non-empty diagrams and maps $q^t$ to $1$, the map $(1\otimes\varepsilon)\circ\Delta_f$ just switch $\bsquare$ with $\gsquare$. The same happens when $\varepsilon$ is applied to the second factor, so it is a counit for $\Delta_f$.
\end{proof}

\small
\newcommand{\etalchar}[1]{$^{#1}$}

\end{document}